\numberwithin{equation}{section}
\newtheoremstyle{nwstyle}
  {15pt} % Space above
  {15pt} % Space below
  {\itshape} % Body font
  {} % Indent amount
  {\bfseries} % Theorem head font
  {.} % Punctuation after theorem head
  {.5em} % Space after theorem head
  {} % Theorem head spec (can be left empty, meaning `normal')
\theoremstyle{nwstyle}
\newtheorem{theorem}{Theorem}[section]
\newtheorem{lemma}[theorem]{Lemma}
\newtheorem{proposition}[theorem]{Proposition}
\newtheorem{corollary}[theorem]{Corollary}
\newenvironment{customthm}[1]
  {\innercustomthm}
  {\endinnercustomthm}
\theoremstyle{definition}
\newtheorem{definition}[theorem]{Definition}
\newtheorem{example}[theorem]{Example}
\theoremstyle{remark}
\newtheorem{remark}[theorem]{Remark}
\newcommand{\nonconsec}[2]{\prescript{\circlearrowleft}{}{\mathbf{I}_{#1}^{#2}}}
\newcommand{\modset}[2]{{\mathbf{I}_{#1}^{#2}}}
\newcommand{\chainset}[2]{{\mathbf{J}_{#1}^{#2}}}
\newcommand{\subs}[3]{\binom{[#1]}{#2}_{#3}}
\newcommand{\imunder}{\dashrightarrow}
\newcommand{\simp}{\mathring{e}}
\newcommand{\tcs}[1]{\tabbedCenterstack{#1}}
\newcommand{\floor}[1]{\lfloor #1 \rfloor}
\newcommand{\ceil}[1]{\lceil #1 \rceil}
\newcommand{\bigfloor}[1]{\Bigl\lfloor #1\Bigr\rfloor}
\newcommand{\bigceil}[1]{\Bigl\lceil #1\Bigr\rceil}
\newcommand{\cmark}{\ding{51}}
\newcommand{\xmark}{\ding{55}}
\newcommand{\modules}{\mathrm{mod}\,}
\let\emptyset\varnothing
\title{New interpretations of the higher Stasheff--Tamari orders}
\author{Nicholas J. Williams}
\address{School of Mathematics and Actuarial Science, University of Leicester, University Road, LE1 7RH}
\email{njw40@le.ac.uk}
\subjclass[2010]{Primary: 05E10; Secondary: 06A07, 52B11}
\keywords{Cyclic polytopes, higher Auslander--Reiten theory, higher Stasheff--Tamari orders, triangulations}
\begin{document}

\begin{abstract}
In 1996, Edelman and Reiner defined the two higher Stasheff--Tamari orders on triangulations of cyclic polytopes and conjectured them to coincide. We open up an algebraic angle for approaching this conjecture by showing how these orders arise naturally in the representation theory of the higher Auslander algebras of type $A$, denoted $A_{n}^{d}$. For this we give new combinatorial interpretations of the orders, making them comparable. We then translate these combinatorial interpretations into the algebraic framework. We also show how triangulations of odd-dimensional cyclic polytopes arise in the representation theory of $A_{n}^{d}$, namely as equivalence classes of maximal green sequences. We furthermore give the odd-dimensional counterpart to the known description of $2d$-dimensional triangulations as sets of non-intersecting $d$-simplices of a maximal size. This consists in a definition of two new properties which imply that a set of $d$-simplices produces a $(2d+1)$-dimensional triangulation.
\end{abstract}

\maketitle

\tableofcontents

\section{Introduction}

The primary aims of this paper are three-fold. Firstly, we give new combinatorial interpretations of the two higher Stasheff--Tamari orders which clarify their natures and make them comparable to each other. Secondly, we give a novel description of triangulations of odd-dimensional cyclic polytopes which gives the counterpart to the even-dimensional description in \cite{ot}. Finally, we show how the higher Stasheff--Tamari orders and triangulations of odd-dimensional cyclic polytopes arise in the algebraic framework of the higher Auslander algebras of type $A$ from \cite{ot}. This opens up an algebraic angle for approaching the conjecture of Edelman and Reiner that the two higher Stasheff--Tamari orders coincide.

\emph{Cyclic polytopes} are the analogues of convex polygons in higher dimensions \cite{gale}. For this reason, cyclic polytopes possess important properties \cite{mcmullen} and are found in many areas of mathematics, such as higher category theory and algebraic $K$-theory \cite{kv-poly,dk-segal,poguntke,djw}, as well as game theory \cite{svs06,svs16}.

There is much that is unknown about cyclic polytopes. In general, there is no formula for the number of triangulations of a cyclic polytope, although specific cases have been solved \cite{as-triang-num}. Such a formula would generalise the Catalan numbers, which count numerous combinatorial objects \cite{stanley-catalan}, including triangulations of convex polygons, binary trees, and bracketings. These Catalan objects are ordered by the \emph{Tamari lattice} \cite{tamari,huang-tamari}, a ubiquitous mathematical structure which arises in many fields, such as computer science, topology, and physics \cite{tamari-festschrift}.

In 1991, Kapranov and Voevodsky defined a higher-dimensional version of the Tamari lattice on triangulations of cyclic polytopes \cite[Definition 3.3]{kv-poly}. Edelman and Reiner then developed this by defining two orders on triangulations of cyclic polytopes, known as the \emph{higher Stasheff--Tamari orders} \cite{er}, where the first higher Stasheff--Tamari order is the same as the order defined by Kapranov and Voevodsky \cite[Proposition 3.3]{thomas-bst}. In their paper, Edelman and Reiner conjectured the two higher Stasheff--Tamari orders to coincide but, despite much work \cite{rambau,err,thomas-bst,thomas,rr}, this conjecture remains open today.

In this paper, we open up a new algebraic approach for proving this conjecture by showing how the higher Stasheff--Tamari orders arise within the representation theory of algebras. Oppermann and Thomas show that triangulations of even-dimensional cyclic polytopes are related to the representation theory of $A_{n}^{d}$ \cite{ot}, the higher Auslander algebras of type $A$. These algebras were introduced by Iyama within the programme of higher Auslander--Reiten theory \cite{iy-aus,iy-high-ar,iy-clus}. This is a new and active area of research within representation theory which has found connections with non-commutative algebraic geometry \cite{himo} and homological mirror symmetry \cite{djl}. The relation between triangulations of even-dimensional cyclic polytopes and the representation theory of $A_{n}^{d}$ is a higher-dimensional version of the relation between triangulations of convex polygons and cluster categories of type $A$. Cluster categories were introduced in \cite{bmrrt} as a categorification of the cluster algebras of Fomin and Zelevinsky \cite{fz1}, which have found applications to Poisson geometry \cite{gsv03,gsv05}, dynamical systems \cite{fz-laurent}, and Teichm\"uller theory \cite{fg06,fg09}. Cluster categories are a powerful tool and have been used to solve open problems in mathematical physics \cite{kel-per}.

Our work in this paper reveals that the connection between triangulations of cyclic polytopes and higher Auslander--Reiten theory is richer than previously known. Not only do triangulations of odd-dimensional cyclic polytopes play a role, but the higher Stasheff--Tamari orders arise naturally in this algebraic framework. This provides new insights into the combinatorial structure of higher Auslander--Reiten theory and paves the way for an algebraic proof of the equivalence of the orders.

In order to produce these algebraic interpretations of the orders, we require new combinatorial interpretations. One reason that the Edelman--Reiner conjecture is difficult is that the definitions of the higher Stasheff--Tamari orders make them hard to compare. The first order is more combinatorial in nature; it is defined by means of covering relations, which are given by \emph{increasing bistellar flips}. A bistellar flip in two dimensions consists of flipping a diagonal in a quadrilateral. The second order is more geometric in nature. Triangulations of a cyclic polytope induce characteristic sections of the cyclic polytope one dimension larger. The second order holds when the characteristic section of one triangulation is above the characteristic section of the other. The advantage of the first order is that its covering relations are clear; the advantage of the second order is that one can directly compute when it holds, rather than having to search for a sequence of increasing bistellar flips. The equivalence of the two orders would unite both of these properties.

The second order ought to admit a combinatorial definition, since it is independent of the geometric realisation of the polytope. A beautiful such definition was given in \cite{thomas}, but this uses a different framework to triangulations, and does not conveniently accommodate the first order. Other combinatorial descriptions of triangulations of cyclic polytopes have conveniently accommodated the first order but not the second \cite{rambau}. In this paper we enable comparison between the orders in a way previously impossible by giving combinatorial descriptions of both orders in the same framework. We use these results to verify the Edelman--Reiner conjecture in several new cases by writing programs in Sage. Furthermore, we use these programs to find a counter-example to the conjecture that the first order is a lattice. As a corollary to the new combinatorial interpretations, we give non-recursive realisations of the minimal embeddings of the second order into Boolean lattices \cite{thomas}.

The key insight in this paper which makes these descriptions possible is that the higher Stasheff--Tamari orders behave differently in odd and even dimensions, which ought therefore to be treated separately. Our first result gives the following new combinatorial descriptions for the first and second higher Stasheff--Tamari orders in even dimensions. We use $\leqslant_{1}$ to denote the first order and $\leqslant_{2}$ to denote the second. We always describe the first order using its covering relations, which are denoted $\lessdot_{1}$. This makes the statements of the results simpler. We refer the reader to Section \ref{back-cyc} for the definition of intertwining $d$-simplices.

\begin{customthm}{A}[Theorem \ref{main-thm-1st} and Theorem \ref{main-thm-2nd}]\label{thm-int-even-comb}
For triangulations $\mathcal{T}$ and $\mathcal{T}'$ of a $2d$-dimensional cyclic polytope, we have that
\begin{enumerate}
\item $\mathcal{T} \lessdot_{1} \mathcal{T}'$ if and only if $\mathcal{T}'$ is the result of replacing a $d$-simplex of $\mathcal{T}$ by one which it intertwines; and
\item $\mathcal{T} \leqslant_{2} \mathcal{T}'$ if and only if no $d$-simplex of $\mathcal{T}'$ intertwines a $d$-simplex of $\mathcal{T}$.
\end{enumerate}
\end{customthm}

The interpretation of the higher Stasheff--Tamari orders for odd dimensions is as follows.

\begin{customthm}{B}[Theorem \ref{thm-hst1-odd} and Corollary \ref{thm-hst2-odd}]\label{thm-int-odd-comb}
For triangulations $\mathcal{T}$ and $\mathcal{T}'$ of a $(2d+1)$-dimensional cyclic polytope, we have that
\begin{enumerate}
\item $\mathcal{T} \lessdot_{1} \mathcal{T}'$ if and only if the set of $d$-simplices of $\mathcal{T}'$ is the result of removing a $d$-simplex from the set of $d$-simplices of $\mathcal{T}$; and 
\item $\mathcal{T} \leqslant_{2} \mathcal{T}'$ if and only if the set of $d$-simplices of $\mathcal{T}$ contains the set of $d$-simplices of $\mathcal{T}'$.
\end{enumerate}
\end{customthm}

Triangulations of even-dimensional cyclic polytopes are better understood than those in odd dimensions. A triangulation of a $2d$-dimensional cyclic polytope can be described as a set of non-intersecting $d$-simplices of maximal size \cite{ot}, just as a triangulation of a convex polygon is a maximal set of non-intersecting arcs. To describe odd-dimensional triangulations we define two new properties for collections of $d$-simplices which we call being \emph{supporting} (Definition \ref{def-support}) and being \emph{bridging} (Definition \ref{def-bridge}). We show that if a collection of $d$-simplices is supporting and bridging, then one can build a triangulation of a $(2d+1)$-dimensional cyclic polytope out of them. This gives the following theorem.  

\begin{customthm}{C}[Theorem \ref{thm-class-odd-dim}]\label{thm-int-odd-desc}
Triangulations of the cyclic polytope $C(m,2d+1)$ are given by sets of $d$-simplices which are supporting and bridging.
\end{customthm}

The new combinatorial descriptions from Theorems \ref{thm-int-even-comb} and \ref{thm-int-odd-comb} allow us to give our algebraic interpretations of the orders. Triangulations of the $2d$-dimensional cyclic polytope with $n+2d$ vertices correspond to tilting $A_{n}^{d}$-modules \cite{ot}. (Tilting modules are modules which induce a weak form of equivalence between algebras \cite{happel}.) For the definition of $A_{n}^{d}$, the higher Auslander algebra of type $A$, see Section \ref{subsect-high-ar}. In even dimensions, the higher Stasheff--Tamari orders induce classical orders on tilting modules introduced in \cite{rs-simp}, as stated in the following theorem.  This was already known for the special case of the Tamari lattice \cite{buan-krause,thomas_tamari}. For the definition of a left mutation and of $\prescript{\bot}{}T$ see Section \ref{subsect-high-ar}.

\begin{customthm}{D}[Theorem \ref{alg-cor-1st} and Theorem \ref{alg-cor-2nd}]\label{thm-int-even-alg}
Let $\mathcal{T}$ and $\mathcal{T}'$ be triangulations of a $2d$-dimensional cyclic polytope corresponding to tilting modules $T$ and $T'$ over $A_{n}^{d}$, the higher Auslander algebra of type $A$. We then have that
\begin{enumerate}
\item $\mathcal{T} \lessdot_{1} \mathcal{T}'$ if and only if $T'$ is a left mutation of $T$; and
\item $\mathcal{T} \leqslant_{2} \mathcal{T}'$ if and only if $\prescript{\bot}{}T \subseteq \prescript{\bot}{}T'$.
\end{enumerate}
\end{customthm}

There is an alternative algebraic framework from \cite{ot}, where triangulations of the $2d$-dimensional cyclic polytope with $n+2d+1$ vertices correspond to so-called cluster-tilting objects for $A_{n}^{d}$. Cluster-tilting objects first arose in cluster categories, where they are the analogues of tilting modules. There also exists a version of Theorem \ref{thm-int-even-alg} in the cluster-tilting framework, which is Theorem \ref{thm-clus-tilt}. 

In odd dimensions we show that triangulations of cyclic polytopes correspond to equivalence classes of maximal green sequences (Theorem \ref{thm-max-green-desc}). Maximal green sequences were introduced in the context of Donaldson--Thomas invariants in mathematical physics \cite{kel-green}. We define higher-dimensional \emph{$d$-maximal green sequences} as sequences of mutations of cluster-tilting objects from the projectives to the shifted projectives, see Section \ref{sect-max-green}. The orders induced in odd dimensions by the higher Stasheff--Tamari orders are very natural, but have not previously been considered. The Edelman--Reiner conjecture here corresponds to a stronger form of the ``no-gap'' conjecture made in \cite{bdp}, cases of which were proven in \cite{g-mc,hi-no-gap}.

The algebraic description of the higher Stasheff--Tamari orders in odd dimensions is as follows. For the definition of the equivalence relation see Section \ref{sect-odd}, and for the definition of an increasing elementary polygonal deformation see Section \ref{odd-first}.

\begin{customthm}{E}[Theorem \ref{thm-max-green}]\label{thm-int-odd-alg}
Let $\mathcal{T}$ and $\mathcal{T}'$ be triangulations of a $(2d+1)$-dimensional cyclic polytope corresponding to equivalence classes of $d$-maximal green sequences $[G]$ and $[G']$ of $A_{n}^{d}$, the higher Auslander algebra of type $A$. We then have that
\begin{enumerate}
\item $\mathcal{T} \lessdot_{1} \mathcal{T}'$ if and only if $[G']$ is an increasing elementary polygonal deformation of $[G]$; and
\item $\mathcal{T} \leqslant_{2} \mathcal{T}'$ if and only if the set of summands of $[G]$ contains the set of summands of $[G']$.
\end{enumerate}
\end{customthm}

Theorem \ref{thm-int-odd-alg} therefore belongs to the cluster-tilting framework, because $d$-maximal green sequences are defined in terms of cluster-tilting objects. As before, there is a version of Theorem \ref{thm-int-odd-alg} in the tilting framework, which is Theorem \ref{thm-alg-odd-hst1}. In this introduction we state Theorem \ref{thm-int-even-alg} in the tilting framework and Theorem \ref{thm-int-odd-alg} in the cluster-tilting framework, because these are the most natural frameworks for the respective theorems.

A corollary of Theorem \ref{thm-int-odd-alg} is that the set of equivalence classes of maximal green sequences of linearly oriented $A_{n}$ is a lattice (Corollary \ref{cor-a-green-lat}). This is because in dimension 3 the two orders are known to be equivalent and known to be lattices \cite{er}.

This paper is structured as follows. In Section \ref{background}, we give relevant background on cyclic polytopes, their triangulations, and the higher Stasheff--Tamari orders. We also describe briefly the higher Auslander--Reiten theory of Iyama. We divide the paper according to odd and even dimensions. In Section \ref{sect-hst} we give our combinatorial interpretations of the higher Stasheff--Tamari orders for even-dimensional cyclic polytopes. We then show how these translate to orders on tilting modules and cluster-tilting objects. We split this section in half, dealing with the first order in Section \ref{even-first} and the second order in Section \ref{even-second}. In Section \ref{sect-odd-desc}, we classify triangulations of $(2d+1)$-dimensional cyclic polytopes in terms of their sets of $d$-simplices. In Section \ref{sect-odd} we give combinatorial descriptions of the higher Stasheff--Tamari orders for odd-dimensional cyclic polytopes, similarly taking each order in turn. We then show how these orders translate to orders on maximal chains of tilting modules and maximal green sequences. We report on the results of our computer programs in Section \ref{sect-comp}.

\subsection*{Acknowledgements}

This paper forms part of my PhD studies. I would like to thank my supervisor Professor Sibylle Schroll for comments on earlier drafts of this paper, as well as for her continuing support and attention. I am grateful to the University of Leicester for funding my PhD in support of her grant from the EPSRC (reference EP/P016294/1). I thank Hugh Thomas for interesting conversations and for making the final observation of Corollary \ref{cor-a-green-lat}. I would also like to thank Vic Reiner and Jordan McMahon for helpful suggestions, and Hipolito Treffinger for bringing the paper \cite{hi-no-gap} to my attention.

\section{Background}\label{background}

In this section we give necessary information on cyclic polytopes, higher Auslander--Reiten theory, and the relation between them shown in \cite{ot}. First, we set out some notational conventions.

We use $[m]$ to denote the set $\{1, \dots, m\}$. By $\binom{[m]}{k}$ we mean the set of subsets of $[m]$ of size $k$. For convenience, if $A=\{a_{0}, \dots, a_{d}\} \subseteq [m]$, where $a_{0}<\dots <a_{d}$, we write $A=(a_{0}, \dots, a_{d})$. We also refer to $A$ as a \emph{$(d+1)$-tuple} here. We always follow the convention that if $A\subseteq [m]$ and $\# A=k+1$, then the elements of $A$ are labelled $A=(a_{0}, \dots, a_{k})$. The same applies to other letters of the alphabet: the upper case letter denotes the tuple; the lower case letter is used for the elements, which are ordered according to their index starting from 0.

\subsection{Cyclic polytopes}\label{back-cyc}

Cyclic polytopes should be thought of as higher-dimensional analogues of convex polygons. General introductions to this class of polytopes can be found in \cite[Lecture 0]{ziegler} and \cite[4.7]{gruenbaum}. According to Gr\"unbaum, the construction of them in current use is due to Gale \cite{gale} and Klee \cite{klee}. They were introduced and studied in the 1950s by Gale \cite{gale-abstract} and Motzkin \cite{motzkin}. (The earlier work of Carath\'eodory \cite{car1907,car1911} is related, but the convex bodies studied in these papers are not cyclic polytopes.)

A subset $X \subset \mathbb{R}^{n}$ is \emph{convex} if for any $x,x' \in X$, the line segment connecting $x$ and $x'$ is contained in $X$. The \emph{convex hull} $\mathrm{conv}(X)$ of $X$ is the smallest convex set containing $X$ or, equivalently, the intersection of all convex sets containing $X$.

The \emph{moment curve} is defined by $p(t):=(t, t^{2}, \dots , t^{\delta}) \subset \mathbb{R}^{\delta}$ for $t \in \mathbb{R}$, where $\delta \in \mathbb{N}_{\geqslant 1}$. Choose $t_{1}, \dots , t_{m} \in \mathbb{R}$ such that $t_{1} < t_{2} < \dots < t_{m}$. The convex hull $\mathrm{conv}(p(t_{1}), \dots , p(t_{m}))$ is a \emph{cyclic polytope} $C(m, \delta)$. More generally, if we have a subset $(x_{0}, \dots, x_{r})=X \subseteq [m]$, then we write $C(X,\delta)$ for the convex hull $\mathrm{conv}(p(t_{x_{0}}), \dots, p(t_{x_{r}}))$.

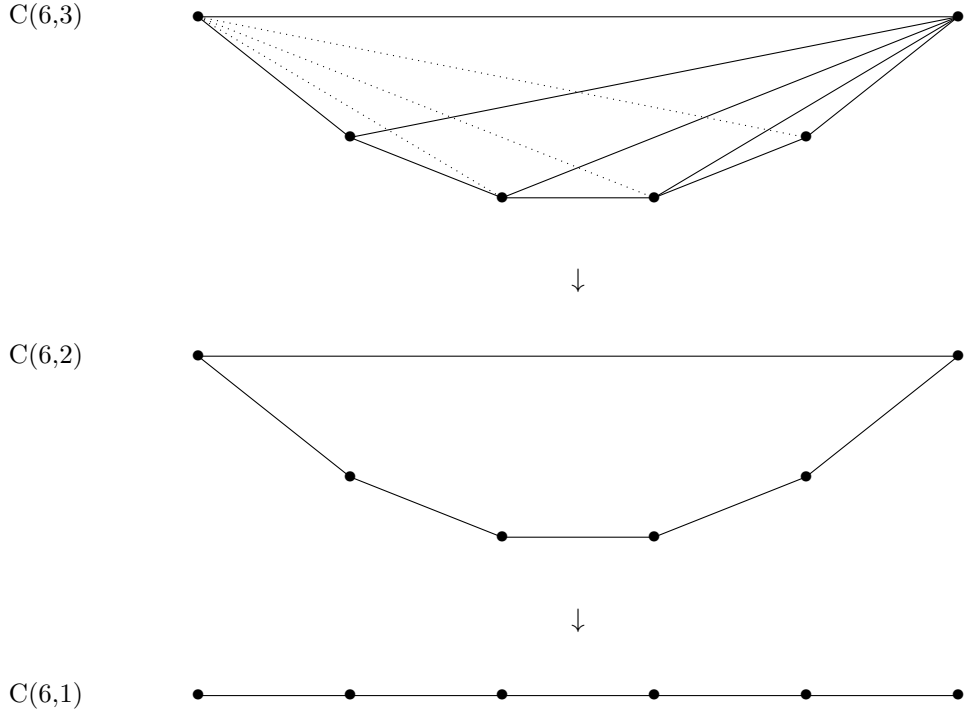
\begin{figure}
\caption{The cyclic polytopes $C(6,1), C(6,2), C(6,3)$}\cite[Figure 2]{er}
\[
\begin{tikzpicture}
% Coordinates for C(6,1)
\coordinate(11) at (-5,0);
\coordinate(12) at (-3,0);
\coordinate(13) at (-1,0);
\coordinate(14) at (1,0);
\coordinate(15) at (3,0);
\coordinate(16) at (5,0);

% Coordinates for C(6,2)
\coordinate(21) at (-5,4.5);
\coordinate(22) at (-3,2.9);
\coordinate(23) at (-1,2.1);
\coordinate(24) at (1,2.1);
\coordinate(25) at (3,2.9);
\coordinate(26) at (5,4.5);

% Coordinates for C(6,3)
\coordinate(31) at (-5,9);
\coordinate(32) at (-3,7.4);
\coordinate(33) at (-1,6.6);
\coordinate(34) at (1,6.6);
\coordinate(35) at (3,7.4);
\coordinate(36) at (5,9);

% Projection arrows
\node(21-ar) at (0,1){$\downarrow$};
\node(32-ar) at (0,5.5){$\downarrow$};

% Polytope labels
\node(1) at (-7,0){C(6,1)};
\node(2) at (-7,4.5){C(6,2)};
\node(3) at (-7,9){C(6,3)};

% Edges for C(6,1)
\draw (11) -- (12) -- (13) -- (14) -- (15) -- (16);

% Edges for C(6,2)
\draw (21) -- (22) -- (23) -- (24) -- (25) -- (26) -- (21);

% Edges for C(6,3)
\draw (31) -- (32) -- (33) -- (34) -- (35) -- (36) -- (31);
\draw (36) -- (34);
\draw (36) -- (33);
\draw (36) -- (32);
\draw[dotted] (31) -- (33);
\draw[dotted] (31) -- (34);
\draw[dotted] (31) -- (35);

% Bullets at vertices
\node at (11){$\bullet$};
\node at (12){$\bullet$};
\node at (13){$\bullet$};
\node at (14){$\bullet$};
\node at (15){$\bullet$};
\node at (16){$\bullet$};
\node at (21){$\bullet$};
\node at (22){$\bullet$};
\node at (23){$\bullet$};
\node at (24){$\bullet$};
\node at (25){$\bullet$};
\node at (26){$\bullet$};
\node at (31){$\bullet$};
\node at (32){$\bullet$};
\node at (33){$\bullet$};
\node at (34){$\bullet$};
\node at (35){$\bullet$};
\node at (36){$\bullet$};
\end{tikzpicture}
\]
\end{figure}

A \emph{triangulation} of a cyclic polytope $C(m, \delta)$ is a subdivision of $C(m, \delta)$ into $\delta$-dimensional simplices whose vertices are elements of $\{p_{t_{1}}, \dots, p_{t_{m}}\}$. We identify a triangulation of $C(m,\delta)$ with the corresponding set of $\delta$-simplices. We write $\mathcal{S}(m,\delta)$ for the set of triangulations of $C(m,\delta)$.

As explained in \cite{rambau}, whether or not a collection of $(\delta+1)$-subsets of $\{t_{1}, \dots, t_{m}\}$ gives the collection of $\delta$-simplices of a triangulation is independent of the values of $t_{1} < \dots < t_{m}$ defining $C(m, \delta)$. We therefore fix $t_{i}=i \in \mathbb{R}$ throughout the paper and use $[m]$ as the vertex set of $C(m,\delta)$. One can thus describe $k$-dimensional simplices in $C(m, \delta)$ as $(k + 1)$-tuples with elements in $[m]$, where $k \leqslant \delta$.  A triangulation can be specified by giving the collection of $(\delta + 1)$-tuples corresponding to the $\delta$-simplices of the triangulation. We refer to the $k$-dimensional faces of a simplex as \emph{$k$-faces}. Given a $k$-tuple $A \subseteq [m]$, we use $|A|_{\delta}$ to refer to the geometric simplex $\mathrm{conv}(A)$ in dimension $\delta$. When the dimension is clear, we will drop the subscript. We will also drop the inner pair of brackets when we take the geometric realisation of an explicit tuple so that $|a_{0}, \dots, a_{d}| := |(a_{0}, \dots, a_{d})|$. Additionally, we always use Roman letters to refer to tuples and Greek letters to refer to geometric simplices. (We relax the distinction between tuples and geometric simplices in the introduction in order to make simpler statements of our main theorems.)

A \emph{circuit} of a cyclic polytope $C(m,\delta)$ is a pair, $(Z_{+}, Z_{-})$, of disjoint sets of vertices of $C(m,\delta)$ which are inclusion-minimal with the property that $\mathrm{conv}(Z_{+}) \cap \mathrm{conv}(Z_{-}) \neq \emptyset$. It follows that $\mathrm{conv}(Z_{+})$ and $\mathrm{conv}(Z_{-})$ intersect in a unique point. Given simplices $|A|, |B|$ in $C(m, \delta)$, we say that $|A|$ and $|B|$ \emph{intersect transversely} if there is a circuit $(Z_{+}, Z_{-})$ of $C(m, \delta)$ such that $A \supseteq Z_{+}$ and $B \supseteq Z_{-}$.

The circuits of a cyclic polytope admit a combinatorial description. Following \cite[Definition 2.2]{ot}, if $A,B \subseteq [m]$ are $(d+1)$-tuples, then we say that $A$ \emph{intertwines} $B$, and write $A \wr B$, if \[a_{0}<b_{0}<a_{1}<b_{1}<\dots<a_{d}<b_{d}.\] If either $A \wr B$ or $B \wr A$, then we say that $A$ and $B$ are \emph{intertwining}. (That is, we use `are intertwining' to refer to the symmetric closure of the relation `intertwines'.)  A collection of increasing $(d+1)$-tuples is called \emph{non-intertwining} if no pair of the elements are intertwining. We also extend the terminology of \cite[Definition 2.2]{ot} to the case where $A$ is a $d$-tuple and $B$ is a $(d+1)$-tuple. Here we say that $A$ \emph{intertwines} $B$, and write $A \wr B$, if \[b_{0}<a_{0}<b_{1}<\dots<a_{d-1}<b_{d}.\] The circuits of $C(m,\delta)$ are then the pairs $(A,B)$ and $(B,A)$ such that $A$ is a $(\lfloor \frac{\delta}{2} \rfloor+1)$-tuple, $B$ is a $(\lceil \frac{\delta}{2} \rceil +1)$-tuple, and $A$ intertwines $B$. This result is originally due to Breen \cite{breen}, but also follows from the description of the oriented matroid given by a cyclic polytope \cite{b-lv,cd,sturmfels}.

A \emph{facet} of $C(m,\delta)$ is a face of codimension one. The \emph{upper facets} of $C(m,\delta)$ are those that can be seen from a very large positive $\delta$-th coordinate. The \emph{lower facets} of $C(m,\delta)$ are those that can be seen from a very large negative $\delta$-th coordinate.

Given a tuple $F \subset [m]$, we say that an element $v \in [m] \setminus F$ is an \emph{even gap} in $F$ if $\#\{ x \in F \mid x > v \}$ is even. Otherwise, it is an \emph{odd gap}. A subset $F \subset [m]$ is \emph{even} if every $v \in [m] \setminus F$ is an even gap. A subset $F \subset [m]$ is \emph{odd} if every $v \in [m] \setminus F$ is an odd gap. By Gale's Evenness Criterion \cite[Theorem 3]{gale}\cite[Lemma 2.3]{er}, given a $\delta$-tuple $F \subset [m]$, we have that $|F|$ is an upper facet if and only if $F$ is an odd subset, and that $|F|$ is a lower facet if and only if $F$ is an even subset.

The collection of lower facets $\{|F|_{\delta+1}\}$ of $C(m,\delta+1)$ gives a triangulation $\{|F|_{\delta}\}$ of $C(m,\delta)$, known as the \emph{lower triangulation}. Likewise, the collection of upper facets of $C(m,\delta+1)$ gives a triangulation of $C(m,\delta)$ known as the \emph{upper triangulation}.

Indeed, every triangulation $\mathcal{T}$ of $C(m, \delta)$ determines a unique piecewise-linear section \[s_{\mathcal{T}}\colon C(m,\delta)\rightarrow C(m,\delta+1)\] of $C(m, \delta +1)$ by sending each $\delta$-simplex $|A|_{\delta}$ of $\mathcal{T}$ to $|A|_{\delta+1}$ in $C(m, \delta+1)$, in the natural way. Similarly, a $\delta$-simplex $|A|$ in $C(m,\delta)$ defines a map $s_{|A|}\colon |A|_{\delta} \rightarrow C(m,\delta+1)$. The \emph{second higher Stasheff--Tamari order} on $\mathcal{S}(m,\delta)$ is defined as \[ \mathcal{T} \leqslant_{2} \mathcal{T}' \iff s_{\mathcal{T}}(x)_{\delta+1} \leqslant s_{\mathcal{T}'}(x)_{\delta+1} ~ \forall x \in C(m, \delta),\] where $s_{\mathcal{T}}(x)_{\delta+1}$ denotes the $(\delta+1)$-th coordinate of the point $s_{\mathcal{T}}(x)$. We write $\mathcal{S}_{2}(m,\delta)$ for the poset on $\mathcal{S}(m,\delta)$ this gives. Given triangulations $\mathcal{T}\in \mathcal{S}(m,\delta)$ and $\mathcal{T}' \in \mathcal{S}(m,\delta+1)$, we say that $\mathcal{T}$ is a \emph{section} of $\mathcal{T}'$ if $s_{\mathcal{T}}(C(m,\delta))$ is contained in $\mathcal{T}'$ as a simplicial subcomplex.

We also use the following different interpretation of the second higher Stasheff--Tamari order. A $k$-simplex $\Sigma$ in $C(m, \delta)$ is \emph{submerged} by the triangulation $\mathcal{T} \in \mathcal{S}(m, \delta)$ if the restriction of the piecewise linear section $s_{\mathcal{T}}$ to the simplex $\Sigma$ has the property that \[s_{\Sigma}(x)_{\delta+1} \leqslant s_{\mathcal{T}}(x)_{\delta+1}\] for all points $x \in \Sigma$. For a triangulation $\mathcal{T}$ of $C(m, \delta)$, the \emph{$k$-submersion set}, $\mathrm{sub}_{k}(\mathcal{T})$, is the set of $k$-simplices $\Sigma$ which are submerged by $\mathcal{T}$. Given two triangulations $\mathcal{T}, \mathcal{T}' \in \mathcal{S}(m,\delta)$, we have that $\mathcal{T} \leqslant_{2} \mathcal{T}'$ if and only if $\mathrm{sub}_{\lceil \frac{\delta}{2} \rceil}(\mathcal{T}) \subseteq \mathrm{sub}_{\lceil \frac{\delta}{2} \rceil}(\mathcal{T}')$ \cite[Proposition 2.15]{er}.

We now define the first higher Stasheff--Tamari order. Consider the cyclic polytope $C(\delta+2,\delta)$. Then any triangulation $\mathcal{T}$ of $C(\delta+2,\delta)$ determines a section $s_{\mathcal{T}}\colon C(\delta+2,\delta)\rightarrow C(\delta+2,\delta+1)$. But $C(\delta+2,\delta+1)$ is a simplex. It therefore has only one triangulation and only two sections: one corresponding to its upper facets and one corresponding to its lower facets. Hence the only two triangulations of $C(\delta+2,\delta)$ are the upper triangulation and the lower triangulation. For example, when $\delta=2$ the polytope $C(\delta+2,\delta)$ is a quadrilateral. This has two triangulations, corresponding to the two possible diagonals.

Let $\mathcal{T} \in \mathcal{S}(m,\delta)$. Suppose that there exists a $(\delta+2)$-tuple $X \subseteq [m]$ such that $\mathcal{T}$ restricts to the lower triangulation of $C(X,\delta)$. Let $\mathcal{T}'$ be the triangulation obtained by replacing the portion of $\mathcal{T}$ inside $C(X,\delta)$ with the upper triangulation of $C(X,\delta)$. We then say that $\mathcal{T}'$ is an \emph{increasing bistellar flip} of $\mathcal{T}$ and that $\mathcal{T}$ is a \emph{decreasing bistellar flip} of $\mathcal{T}'$.

The covering relations of the \emph{first higher Stasheff--Tamari order} are that $\mathcal{T} \lessdot_{1} \mathcal{T}'$ if and only if $\mathcal{T}'$ is an increasing bistellar flip of $\mathcal{T}$. We write $\mathcal{S}_{1}(m,\delta)$ for the poset on $\mathcal{S}(m,\delta)$ this gives and $\leqslant_{1}$ for the partial order itself. The first higher Stasheff--Tamari order was originally introduced by Kapranov and Voevodsky in \cite[Definition 3.3]{kv-poly} as the ``higher Stasheff order'' using a slightly different definition. Thomas showed in \cite[Proposition 3.3]{thomas-bst} that the higher Stasheff order of Kapranov and Voevodsky was the same as the first higher Stasheff--Tamari order of Edelman and Reiner.

General introductions to the higher Stasheff--Tamari orders can be found in \cite{rr} and \cite[Section 6.1]{lrs}.

\subsection{Higher Auslander--Reiten theory}\label{subsect-high-ar}

In this section let $\Lambda$ be a finite-dimensional algebra over a field $K$. We denote by $\mathrm{mod}\,\Lambda$ the category of finite-dimensional right $\Lambda$-modules. Given a module $M \in \mathrm{mod}\,\Lambda$, the subcategory $\mathrm{add}\,M$ consists of summands of direct sums of copies of $M$.

Given a subcategory $\mathcal{X} \subset \modules\Lambda$ and a map $f:X \rightarrow M$, where $X \in \mathcal{X}$ and $M \in \modules\Lambda$, we say that $f$ is a \emph{right $\mathcal{X}$-approximation} if for any $X' \in \mathcal{X}$, the sequence \[\mathrm{Hom}_{\Lambda}(X',X) \rightarrow \mathrm{Hom}_{\Lambda}(X',M)\rightarrow 0\] is exact, following \cite{as-preproj}. \emph{Left $\mathcal{X}$-approximations} are defined dually. 	The subcategory $\mathcal{X}$ is said to be \emph{contravariantly finite} if every $M \in \modules\Lambda$ admits a right $\mathcal{X}$-approximation, and \emph{covariantly finite} if every $M \in \modules\Lambda$ admits a left $\mathcal{X}$-approximation. If $\mathcal{X}$ is both contravariantly finite and covariantly finite, then $\mathcal{X}$ is \emph{functorially finite}.

Higher Auslander--Reiten theory was introduced by Iyama in \cite{iy-aus,iy-high-ar,iy-clus} as a higher-dimensional generalisation of classical Auslander--Reiten theory. For more detailed background to the theory, see the papers \cite{jk-intro,jasso,gko,io,iy-revis}.

The following subcategories provide the setting for the higher theory. Let $\mathcal{M}$ be a functorially finite subcategory of $\mathrm{mod}\,\Lambda$. Then we call $\mathcal{M}$ \emph{$d$-cluster-tilting} if
\begin{align*}
\mathcal{M} &= \{ X \in \modules\Lambda \mid \forall i \in [d-1], \forall M \in \mathcal{M}, \mathrm{Ext}_{\Lambda}^{i}(X,M) = 0 \} \\
&= \{ X \in \modules\Lambda \mid \forall i \in [d-1], \forall M \in \mathcal{M}, \mathrm{Ext}_{\Lambda}^{i}(M,X) = 0 \}.
\end{align*}

In the case $d=1$, the conditions should be interpreted as being trivial, so that $\mathrm{mod}\,\Lambda$ is the unique $1$-cluster-tilting subcategory of $\mathrm{mod}\,\Lambda$. If $\mathrm{add}\,M$ is a $d$-cluster-tilting subcategory, for $M \in \mathrm{mod}\,\Lambda$, then we say that $M$ is a \emph{$d$-cluster-tilting module}.

We say that $\Lambda$ is \emph{weakly $d$-representation-finite} if there exists a $d$-cluster-tilting module in $\mathrm{mod}\,\Lambda$, following \cite[Definition 2.2]{io}. If, additionally, $\mathrm{gl.dim}\,\Lambda \leqslant d$, we say that $\Lambda$ is \emph{$d$-representation-finite $d$-hereditary}, following \cite[Definition 1.25]{jk-nak} and \cite{hio}. (These latter algebras are simply called ``$d$-representation-finite'' in \cite{io}.)

The canonical examples of $d$-representation-finite $d$-hereditary algebras are the higher Auslander algebras of linearly oriented $A_{n}$, introduced by Iyama in \cite{iy-clus}. The construction we give here is based on \cite[Construction 3.4]{ot} and \cite[Definition 5.1]{io}.

Following \cite{ot}, we denote the sets
\begin{align*}
\modset{m}{d} &:= \{(a_{0}, \dots , a_{d}) \in [m]^{d+1} \mid \forall i \in \{ 0, 1, \dots ,d-1 \}, a_{i+1}\geqslant a_{i}+2 \},\\
\nonconsec{m}{d} &:= \{(a_{0}, \dots , a_{d}) \in \modset{m}{d} \mid a_{d}+ 2 \leqslant a_{0} + m \}.
\end{align*}
We say that a $(d+1)$-tuple $A$ is \emph{separated} if $A \in \modset{m}{d}$ for some $m$.

Let $Q^{(d,n)}$ be the quiver with vertices \[Q_{0}^{(d,n)} := \modset{n+2d-2}{d-1}\] and arrows \[Q_{1}^{(d,n)} := \{A \rightarrow A+1_{i} \mid A, A+1_{i} \in Q_{0}^{(d,n)}\},\] where \[1_{i}:= (0, \dots, 0, \overset{i}{1}, 0 , \dots , 0).\] We multiply arrows as if they were functions, so that $\xrightarrow{\alpha}\xrightarrow{\beta}=\beta\alpha$.

Let $A_{n}^{d}$ be the quotient of the path algebra $KQ^{(d,n)}$ by the relations:
\begin{equation*}
A \rightarrow A+1_{i} \rightarrow A+1_{i}+1_{j} = 
\left\{
	\begin{array}{ll}
		A \rightarrow A+1_{j} \rightarrow A+1_{i}+1_{j} & \quad \text{if } A+1_{j} \in Q_{0}^{(d,n)} \\
		0 & \quad \text{otherwise.}
	\end{array}
\right.
\end{equation*}

It is shown in \cite{iy-clus} that the algebra $A_{n}^{d}$ is $d$-representation-finite $d$-hereditary with unique basic $d$-cluster-tilting module $M^{(d,n)}$ and that \[ A_{n}^{d+1} \cong \mathrm{End}_{A_{n}^{d}}M^{(d,n)}.\]

\cite{ot} gives two different algebraic frameworks for triangulations of cyclic polytopes, one using tilting \cite[Section 3, Section 4]{ot} and the other using cluster-tilting \cite[Section 5, Section 6]{ot}. Interesting results arise in both frameworks, so we choose to consider both. For example, the tilting framework reveals connections with the orders on tilting modules studied in \cite{rs-simp,hu-potm}; and the cluster-tilting framework relates to maximal green sequences. For consistency, we primarily prove results in the tilting framework. In Sections \ref{sect-clus-tilt} and \ref{sect-max-green} we indicate the analogues of these results in the cluster-tilting framework. We now explain the two different settings.

\subsubsection{Tilting}\label{sect-back-tilt}

Tilting modules of projective dimension one were defined by Brenner and Butler \cite{bb} as a generalisation of BGP reflection functors \cite{bgp,apr}. Miyashita defined tilting modules of higher projective dimension \cite{miya-tilt}. This was in turn generalised by Cline, Parshall, and Scott, whose definition we use here \cite[Definition 2.3]{cps}. Given a $\Lambda$-module $T$, we say that $T$ is a \emph{tilting} module if:
\begin{enumerate}
\item the projective dimension of $T$ is finite;
\item $\mathrm{Ext}_{\Lambda}^{i}(T,T)=0$ for all $i>0$; that is, $T$ is \emph{rigid};
\item there is an exact sequence $0 \rightarrow \Lambda \rightarrow T_{0} \rightarrow \dots \rightarrow T_{s} \rightarrow 0$ with each $T_{i} \in \mathrm{add}\,T$.
\end{enumerate}

Given a $d$-representation-finite $d$-hereditary algebra $\Lambda$ with $d$-cluster-tilting module $M$ and tilting modules $T, T' \in \mathrm{add}\,M$, we say that $T'$ is a \emph{left mutation} of $T$ if and only if $T=E \oplus X, T'=E \oplus Y$ and there is an exact sequence \[0 \rightarrow X \rightarrow E_{1} \rightarrow \dots \rightarrow E_{d} \rightarrow Y \rightarrow 0\] where $X$ and $Y$ are indecomposable and $E_{i} \in \mathrm{add}\,E$.

Given a tilting module $T \in \mathrm{add}\,M^{(d,n)}$, we denote \[\prescript{\bot}{}T := \{ X \in \mathrm{add}\,M^{(d,n)} \mid \mathrm{Ext}_{A_{n}^{d}}^{i}(X,T)=0 ~ \forall i >0\}.\] The set $T^{\bot}$ is defined dually. Since $\mathrm{gl.dim}\,A_{n}^{d} \leqslant d$ and $\mathrm{add}\,M^{(d,n)}$ is a $d$-cluster-tilting subcategory, we have that \[\prescript{\bot}{}T = \{ X \in \mathrm{add}\,M^{(d,n)} \mid \mathrm{Ext}_{A_{n}^{d}}^{d}(X,T)=0\}.\]

By \cite[Theorem 1.1]{ot}, there are bijections between
\begin{itemize}
\item elements of $\modset{n+2d}{d}$,
\item $d$-simplices of $C(n+2d,2d)$ which do not lie in any lower facet, and
\item indecomposable direct summands of $M^{(d,n)}$.
\end{itemize}
These induce bijections between
\begin{itemize}
\item non-intertwining collections of $\binom{n+d-1}{d}$ $(d+1)$-tuples in $\modset{n+2d}{d}$,
\item triangulations of $C(n+2d,2d)$, and
\item basic tilting $A_{n}^{d}$-modules contained in $M^{(d,n)}$,
\end{itemize}
and also bijections between
\begin{itemize}
\item elements of $\nonconsec{n+2d}{2d}$,
\item internal $d$-simplices of $C(n+2d,2d)$, and
\item indecomposable non-projective-injective direct summands of $M^{(d,n)}$.
\end{itemize}
Here an \emph{internal} simplex of $C(m,2d)$ is one that does not lie in any facet of the polytope.

Given a triangulation $\mathcal{T} \in \mathcal{S}(n+2d,2d)$, we write $e(\mathcal{T})$ for the corresponding set of non-intertwining $(d+1)$-tuples in $\modset{n+2d}{d}$, following \cite{ot}. Given an element $A \in \modset{n+2d}{d}$, we use $M_{A}$ to refer to the corresponding indecomposable summand of $M^{(d,n)}$. The following proposition is key to our algebraic characterisation of the higher Stasheff--Tamari orders in even dimensions.

\begin{proposition}\cite[Theorem 3.6(4)]{ot}\label{prop-key}
We have that $\mathrm{Ext}_{A_{n}^{d}}^{d}(M_{B}, M_{A}) \neq 0$ if and only if $A \wr B$.
\end{proposition}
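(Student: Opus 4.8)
The plan is to unwind the combinatorial construction of $A_{n}^{d}$ and $M^{(d,n)}$ and compute the $d$-th extension group directly. Recall from \cite{ot} that the indecomposable summand $M_{A}$ corresponding to $A = (a_{0}, \dots, a_{d}) \in \modset{n+2d}{d}$ can be described explicitly: under the identification of $A_{n}^{d}$-modules with representations of the quiver $Q^{(d,n)}$ modulo relations, each $M_{A}$ is a certain ``interval'' module whose support is determined by $A$. The higher Auslander--Reiten theory of Iyama gives that $\mathrm{add}\,M^{(d,n)}$ is a $d$-cluster-tilting subcategory of $\mathrm{mod}\,A_{n}^{d}$ with $\mathrm{gl.dim}\,A_{n}^{d} \leqslant d$, so the only possibly-nonzero higher extension group between summands of $M^{(d,n)}$ is $\mathrm{Ext}^{d}$, and these are governed by the $d$-Auslander--Reiten translate $\tau_{d}$. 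The key is that $\tau_{d}$ acts combinatorially on the index set: it should send $M_{A}$ to $M_{A'}$ where $A'$ is obtained from $A$ by an explicit shift of the entries (the higher analogue of the shift appearing in the ordinary type $A$ case), together with the projective-injective summands being exactly those indexed by tuples on the ``boundary'' $\nonconsec{n+2d}{2d}$ versus $\modset{n+2d}{d}$.

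First I would record the explicit formula for $\tau_{d}$ on the summands of $M^{(d,n)}$, citing the relevant statement in \cite{ot} or \cite{iy-clus}: for $A$ corresponding to an internal $d$-simplex, $\tau_{d} M_{A} = M_{A'}$ for a combinatorially determined $A'$, and $\tau_{d} M_{A} = 0$ exactly when $M_{A}$ is projective-injective. Second, I would invoke the $d$-Auslander--Reiten duality available in this setting (Iyama's higher AR theory for $d$-representation-finite $d$-hereditary algebras), which gives a functorial isomorphism $\mathrm{Ext}_{A_{n}^{d}}^{d}(M_{B}, M_{A}) \cong D\,\overline{\mathrm{Hom}}_{A_{n}^{d}}(M_{A}, \tau_{d} M_{B})$, or the cluster-category version $\mathrm{Ext}^{d}(M_{B}, M_{A}) \cong D\,\mathrm{Hom}(M_{A}, \tau_{d} M_{B})$ in $\mathrm{add}\,M^{(d,n)}$. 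Third, I would reduce to a purely combinatorial question: $\mathrm{Hom}_{A_{n}^{d}}(M_{A}, M_{C}) \neq 0$ if and only if the supports of $A$ and $C$ are suitably nested, which by the interval-module description translates into a coordinatewise inequality $c_{i} \leqslant a_{i}$ (or similar) on the tuples. Combining this nesting condition with the shift formula for $\tau_{d}$ applied to $B$ should collapse exactly to the inequalities $a_{0} < b_{0} < a_{1} < b_{1} < \dots < a_{d} < b_{d}$, i.e.\ $A \wr B$.

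The main obstacle will be pinning down and bookkeeping the exact combinatorial formulas: the precise description of $M_{A}$ as a representation, the precise action of $\tau_{d}$ (including getting the off-by-one shifts right and correctly identifying the projective-injective summands where $\tau_{d}$ vanishes), and the precise Hom-nonvanishing criterion between two such modules. Each of these is ``known'' from \cite{ot} and \cite{iy-clus}, but assembling them so that the three nested inequalities emerge cleanly — and checking the boundary cases where $B$ hits a projective-injective, so that $\mathrm{Ext}^{d}(M_{B}, M_{A})$ is forced to vanish and correspondingly $A \wr B$ fails for dimension reasons — is where the real care is needed. An alternative, possibly cleaner route would be to avoid $\tau_{d}$ entirely and instead build an explicit projective resolution of $M_{B}$ of length $d$ (which exists since $\mathrm{gl.dim} \leqslant d$), apply $\mathrm{Hom}_{A_{n}^{d}}(-, M_{A})$, and compute the top cohomology directly; the terms of such a resolution are again combinatorially indexed, and the differential's cokernel at the end should visibly be nonzero precisely when $A \wr B$. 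I would attempt the $\tau_{d}$-duality argument first, as it is shortest, and fall back to the explicit resolution if the duality statement in the literature is not stated in a directly applicable form.
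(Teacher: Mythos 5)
The paper does not prove this proposition at all: it is imported verbatim as a citation of \cite[Theorem 3.6(4)]{ot}, and everything downstream (Theorems \ref{alg-cor-1st}, \ref{alg-cor-2nd}, etc.) treats it as a black box. So there is no ``paper's proof'' to compare against; the relevant comparison is with the proof in Oppermann--Thomas. Your strategy is the right one and is essentially theirs: describe each $M_{A}$ explicitly as a representation of $Q^{(d,n)}$, use that $\mathrm{gl.dim}\,A_{n}^{d}\leqslant d$ together with the $d$-cluster-tilting property to isolate $\mathrm{Ext}^{d}$ as the only possibly nonzero higher extension, compute $\tau_{d}$ combinatorially on the index set (it sends $A$ to $A-(1,\dots,1)$ and kills exactly the projectives, i.e.\ the $A$ with $a_{0}=1$), and convert $\mathrm{Ext}^{d}(M_{B},M_{A})$ into a Hom-space via higher Auslander--Reiten duality. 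Your sanity check on the boundary cases is also consistent: $A\wr B$ forces $b_{0}\geqslant 2$ and $a_{d}\leqslant n+2d-1$, matching the vanishing of $\mathrm{Ext}^{d}(M_{B},-)$ for $M_{B}$ projective and of $\mathrm{Ext}^{d}(-,M_{A})$ for $M_{A}$ injective. (One small slip: the non-projective-injective summands are the ones indexed by $\nonconsec{n+2d}{2d}$, so the projective-injectives are indexed by the complement $\modset{n+2d}{d}\setminus\nonconsec{n+2d}{2d}$, not by $\nonconsec{n+2d}{2d}$ itself.)

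That said, what you have written is a plan rather than a proof. The three ingredients you name --- the explicit representation-theoretic description of $M_{A}$, the formula for $\tau_{d}$, and the Hom-nonvanishing criterion between two such modules --- are precisely where the content lies, and you defer all three. In particular the Hom-nonvanishing criterion is not a single coordinatewise inequality but a pair of interleaved chains (roughly $a_{i}\leqslant c_{i}\leqslant a_{i+1}$ type conditions), and it is the combination of this with the shift $B\mapsto B-(1,\dots,1)$ that produces the strict alternation $a_{0}<b_{0}<a_{1}<\dots<a_{d}<b_{d}$; until that computation is actually carried out, the equivalence with $A\wr B$ is asserted rather than established. If you intend to supply a proof rather than re-cite \cite{ot}, you need to either import those three statements with precise references and verify the inequality bookkeeping, or carry out your fallback plan of writing the length-$d$ projective resolution of $M_{B}$ and computing the top cohomology of $\mathrm{Hom}(-,M_{A})$ explicitly.
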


\subsubsection{Cluster-tilting}\label{sect-back-clus-tilt}

Our set-up for cluster-tilting is slightly different from that of \cite{ot} because we wish to consider orders on triangulations. Hence we consider a subcategory of the derived category rather than the higher cluster category $\mathcal{O}_{A_{n}^{d}}$ they define. This follows the practice of other authors, such as \cite{hi-no-gap}.

Let $\Lambda$ be a $d$-representation-finite $d$-hereditary algebra with $d$-cluster-tilting module $M$. Let $\mathcal{D}_{\Lambda}:=D^{b}(\mathrm{mod}\,\Lambda)$ be the bounded derived category of finitely generated $\Lambda$-modules. We denote the shift functor in the derived category by $[1]$ and its $d$-th power by $[d]:=[1]^{d}$. Let \[\mathcal{U}_{\Lambda}:=\mathrm{add}\,\{M[id] \in \mathcal{D}_{\Lambda} \mid i \in \mathbb{Z}\}\] be a subcategory of $\mathcal{D}_{\Lambda}$. This is a $(d+2)$-angulated category in the sense of \cite{gko}. Consider the subcategory $\mathcal{V}_{\Lambda}:=\mathrm{add}\,(M \oplus \Lambda[d])$ of $\mathcal{U}_{\Lambda}$. We say that $T \in \mathcal{V}_{\Lambda}$ is a \emph{(basic) cluster-tilting object} if $\mathrm{Hom}_{\mathcal{U}_{\Lambda}}(T, T[d]) = 0$ and if $T$ has $m$ indecomposable summands which are pairwise non-isomorphic, where $m$ is the number of indecomposable summands of $\Lambda$ as a $\Lambda$-module. Note that both $\Lambda$ and $\Lambda[d]$ are cluster-tilting objects.

Given two cluster-tilting objects $T,T' \in \mathcal{V}_{\Lambda}$, we say that $T'$ is a \emph{left mutation} of $T$ if $T=E \oplus X$, $T'=E \oplus Y$ and there exists a $(d+2)$-angle \[X \rightarrow E_{1} \rightarrow \dots \rightarrow E_{d} \rightarrow Y \rightarrow X[d].\]

As before, given a cluster-tilting object $T \in \mathcal{V}_{A_{n}^{d}}$, we denote 
\begin{align*}
\prescript{\bot}{}T &:= \{ X \in \mathcal{V}_{A_{n}^{d}} \mid \mathrm{Hom}_{\mathcal{D}_{A_{n}^{d}}}(X,T[i])=0 ~ \forall i >0\}\\
&=\{ X \in \mathcal{V}_{A_{n}^{d}} \mid \mathrm{Hom}_{\mathcal{D}_{A_{n}^{d}}}(X,T[d])=0\}.
\end{align*}

\begin{remark}\label{rmk-clus-tilt}
The indecomposable objects of $\mathcal{V}_{\Lambda}$ are in bijection with indecomposable objects of the cluster category $\mathcal{O}_{\Lambda}$ defined in \cite[Definition 5.22]{ot} by \cite[Theorem 5.2(1)]{ot}. It follows from \cite[Theorem 3.5(i)]{jj-tau} that our cluster-tilting objects correspond to rigid objects in $\mathcal{O}_{\Lambda}$ with $m$ summands.

However, it is not clear for general $\Lambda$ that our cluster-tilting objects always correspond to cluster-tilting objects in this cluster category, as defined in \cite[Definition 5.3]{ot}. We call these \emph{Oppermann--Thomas cluster-tilting objects}, following \cite{jj-tau}. This is because it is not clear that the images of our cluster-tilting objects in $\mathcal{O}_{\Lambda}$ will always satisfy \cite[Definition 5.3(2)]{ot}. But it is true that Oppermann--Thomas cluster-tilting objects always give cluster-tilting objects in our sense, because they must always have $m$ summands, as shown in \cite{reid-trop}. Note that for the algebras $A_{n}^{d}$, however, our notion of cluster-tilting coincides exactly with the notion of Oppermann--Thomas cluster-tilting, since by \cite[Theorem 1.1, Theorem 1.2]{ot} Oppermann--Thomas cluster-tilting objects for $A_{n}^{d}$ are precisely rigid objects with the maximum number of non-isomorphic indecomposable summands.
\end{remark}

By \cite[Theorem 1.1, Theorem 1.2]{ot} there are bijections between:
\begin{itemize}
\item elements of $\nonconsec{n+2d+1}{d}$,
\item internal $d$-simplices of $C(n+2d+1,2d)$,
\item indecomposable objects in $\mathcal{V}_{A_{n}^{d}}$, and
\item indecomposable non-projective-injective direct summands of $M^{(d,n+1)}$.
\end{itemize}
These induce bijections between:
\begin{itemize}
\item non-intertwining collections of $\binom{n+d-1}{d}$ $(d+1)$-tuples in $\nonconsec{n+2d+1}{d}$,
\item triangulations of $C(n+2d+1,2d)$,
\item basic cluster-tilting objects in $\mathcal{V}_{A_{n}^{d}}$, and
\item basic tilting $A_{n+1}^{d}$-modules contained in $M^{(d,n+1)}$.
\end{itemize}
Moreover, the following analogue of Proposition \ref{prop-key} holds in the cluster-tilting framework. Hence, we may translate freely between the tilting and cluster-tilting settings.

\begin{proposition}\cite[Proof of Proposition 6.1]{ot}
We have that $\mathrm{Hom}_{\mathcal{U}_{A_{n}^{d}}}(M_{B}, M_{A}[d]) \neq 0$ if and only if $A \wr B$.
\end{proposition}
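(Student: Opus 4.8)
The plan is to deduce the statement from Proposition \ref{prop-key} by passing to the derived category and then comparing the tilting and cluster-tilting frameworks of \cite{ot}. Since $\mathcal{U}_{A_n^d}$ is by construction a full subcategory of $\mathcal{D}_{A_n^d} = D^b(\mathrm{mod}\,A_n^d)$, we have $\mathrm{Hom}_{\mathcal{U}_{A_n^d}}(M_B, M_A[d]) = \mathrm{Hom}_{\mathcal{D}_{A_n^d}}(M_B, M_A[d])$, so it is enough to compute this graded Hom space in $\mathcal{D}_{A_n^d}$. Each of $M_A$ and $M_B$ lies in $\mathcal{V}_{A_n^d} = \mathrm{add}(M^{(d,n)} \oplus A_n^d[d])$, so it is either an indecomposable summand of the module $M^{(d,n)}$, concentrated in cohomological degree $0$, or a shift $P[d]$ of an indecomposable projective $A_n^d$-module $P$.

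The first route I would try is a direct computation in $\mathcal{D}_{A_n^d}$. If both $M_A$ and $M_B$ are honest modules, then $\mathrm{Hom}_{\mathcal{D}_{A_n^d}}(M_B, M_A[d]) = \mathrm{Ext}^d_{A_n^d}(M_B, M_A)$, and Proposition \ref{prop-key} settles the claim immediately. If $M_A$ is a shifted projective $Q[d]$, then every relevant Hom space vanishes, since one lands in $\mathrm{Ext}^{2d}_{A_n^d}(-, Q)$ or $\mathrm{Ext}^d_{A_n^d}(P, Q)$, both of which are zero because $\mathrm{gl.dim}\,A_n^d \le d < 2d$ and $P$ is projective; one must then also check that the corresponding tuples are never intertwining in this situation, so the equivalence holds vacuously. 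The delicate residual case is $M_B = P[d]$ with $M_A = V$ a module, where the Hom space is simply $\mathrm{Hom}_{A_n^d}(P, V)$, and one has to verify by hand against the explicit labelling that this is nonzero precisely when $A \wr B$.

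A cleaner route, which I would favour, is to transport the whole computation into the tilting framework over $A_{n+1}^d$. Under the bijections recalled just before the statement, the indecomposables $M_A$ and $M_B$ of $\mathcal{V}_{A_n^d}$, indexed by $\nonconsec{n+2d+1}{d}$, correspond to indecomposable non-projective-injective summands $N_A$ and $N_B$ of the $d$-cluster-tilting $A_{n+1}^d$-module $M^{(d,n+1)}$. The crux is then to establish that $\mathrm{Hom}_{\mathcal{D}_{A_n^d}}(M_B, M_A[d]) \neq 0$ if and only if $\mathrm{Ext}^d_{A_{n+1}^d}(N_B, N_A) \neq 0$; granting this, Proposition \ref{prop-key} applied to $A_{n+1}^d$ yields the equivalence with $A \wr B$, using that $A \wr B$ is a purely combinatorial condition on the tuples and hence insensitive to which cyclic polytope or algebra it is read in.

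The main obstacle, in either route, is the same: the category $\mathcal{V}_{A_n^d}$ contains the shifted projectives, which have no counterpart among the summands of $M^{(d,n)}$. One must keep track of how these match the extra vertex distinguishing $C(n+2d+1,2d)$ from $C(n+2d,2d)$ and — for the second route — show that the framework comparison of \cite[Section 5, Section 6]{ot} intertwines the degree-$d$ extension functor over $A_{n+1}^d$ with $\mathrm{Hom}_{\mathcal{U}_{A_n^d}}(-,-[d])$. This bookkeeping is precisely what is carried out in \cite[Proof of Proposition 6.1]{ot}.
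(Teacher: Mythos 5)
The first thing to note is that the paper does not prove this statement at all: it is quoted directly from the proof of Proposition 6.1 of \cite{ot}, so the only ``in-paper proof'' is that citation. Measured against what that citation actually contains, your first route is the right reconstruction and its skeleton is sound: $\mathcal{U}_{A_n^d}$ is a full subcategory of $\mathcal{D}_{A_n^d}$, so the Hom space may be computed in the derived category; when both objects are modules the space is $\mathrm{Ext}^d_{A_n^d}(M_B,M_A)$ and Proposition \ref{prop-key} applies (both tuples then lie in $\modset{n+2d}{d}\subseteq\nonconsec{n+2d+1}{d}$, so no relabelling issue arises in this case); and when $M_A$ is a shifted projective the Hom space dies for degree reasons ($\mathrm{gl.dim}\,A_n^d\leqslant d<2d$, resp.\ projectivity), while $A\wr B$ fails because under the labelling of \cite{ot} the shifted projectives carry the final vertex $n+2d+1$, so no $B$ can intertwine above $A$.

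Where the proposal stops short of being a proof is exactly the case you flag yourself: $M_B=P[d]$ a shifted projective and $M_A$ a module, where the claim reduces to $\mathrm{Hom}_{A_n^d}(P,M_A)\neq 0$ if and only if $A\wr B$. This is the genuinely non-formal step --- it requires the explicit description of the supports of the modules $M_A$ and of which tuple in $\nonconsec{n+2d+1}{d}$ labels which shifted projective, and it cannot be extracted from Proposition \ref{prop-key}, which only governs extensions between modules. Your second route does not repair this: the asserted ``crux'' that the framework comparison identifies $\mathrm{Hom}_{\mathcal{U}_{A_n^d}}(-,-[d])$ with $\mathrm{Ext}^d_{A_{n+1}^d}$ on corresponding objects is the proposition itself in different clothing, so invoking it is circular unless it is proved independently. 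Since the paper likewise delegates everything to \cite{ot}, deferring this bookkeeping is defensible in context, but as a standalone argument what you have is a correct reduction with one substantive case left unverified rather than a complete proof.
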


\section{The higher Stasheff--Tamari orders in even dimensions}\label{sect-hst}

In this section, we give new combinatorial interpretations of the higher Stasheff--Tamari orders on triangulations of even-dimensional cyclic polytopes using the framework for triangulations from \cite{ot}. We then show how these orders translate to orders on tilting $A_{n}^{d}$-modules, giving the algebraic interpretation. We finally note how one can equally interpret the orders algebraically using cluster-tilting objects for $A_{n}^{d}$ instead.

\begin{remark}
The higher Stasheff--Tamari orders behave differently in even and odd dimensions. In even dimensions, both posets are self-dual, whereas neither is self-dual in odd dimensions; likewise, in odd dimensions both posets possess an involution which does not exist in even dimensions \cite[Proposition 2.11]{er}. The first order is a ranked poset in odd dimensions, but neither order is ranked in even dimensions \cite[Corollary 1.2]{rambau}. These properties indicate that the orders are suited to different combinatorial interpretations in odd and even dimensions.
\end{remark}

The following lemma is key to proving our combinatorial interpretations of the higher Stasheff--Tamari orders in even dimensions.

\begin{lemma}\label{up-low-fac}
Let $|A|_{2d}$ and $|B|_{2d}$ be $d$-simplices in $C(m,2d)$. Then $A \wr B$ if and only if we have that $|A \cup B|_{2d+1}$ is a $(2d+1)$-simplex with $|A|_{2d+1}$ the intersection of its lower facets, and $|B|_{2d+1}$ the intersection of its upper facets.
\end{lemma}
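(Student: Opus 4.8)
The plan is to relate the combinatorial notion of intertwining directly to Gale's Evenness Criterion applied to $A \cup B$. Write $A = (a_0, \dots, a_d)$ and $B = (b_0, \dots, b_d)$. Note first that $A \wr B$ forces $A$ and $B$ to be disjoint (the strict inequalities $a_0 < b_0 < a_1 < \dots$ prevent equality), so $A \cup B$ has exactly $2d+2$ elements; since $C(A\cup B, 2d+1)$ is the convex hull of $2d+2$ points on the moment curve in dimension $2d+1$, it is automatically a $(2d+1)$-simplex. So the content is in identifying which $2d$-subsets of $A \cup B$ are upper facets and which are lower facets, and checking that deleting a vertex of $A$ from $A \cup B$ always yields an upper facet while deleting a vertex of $B$ always yields a lower facet (equivalently, that $|A|_{2d+1}$, being the common face of the facets $(A\cup B)\setminus\{b_i\}$, is the intersection of the upper facets, and dually for $|B|_{2d+1}$).

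The key computation: for a vertex $v \in A \cup B$, consider $F = (A \cup B) \setminus \{v\}$, a $(2d+1)$-tuple, and apply the Evenness Criterion in dimension $2d+1$ — so $|F|$ is an upper facet iff $v$ is an odd gap in $F$, i.e. iff $\#\{x \in F \mid x > v\}$ is odd, and a lower facet iff that count is even. Here I would exploit the ``alternating'' shape of the intertwining condition: if the elements of $A \cup B$ are listed in increasing order they alternate $a_0, b_0, a_1, b_1, \dots, a_d, b_d$. Hence for $v = a_i$, the elements of $A \cup B$ strictly larger than $a_i$ are exactly $b_i, a_{i+1}, b_{i+1}, \dots, a_d, b_d$, which number $2(d-i) + 1$ — odd — so removing $a_i$ gives an upper facet; and this count is unchanged when we pass to $F = (A\cup B)\setminus\{a_i\}$ since we removed $v = a_i$ itself, which is not $> v$. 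Symmetrically, for $v = b_i$ the elements of $A \cup B$ larger than $b_i$ are $a_{i+1}, b_{i+1}, \dots, a_d, b_d$, numbering $2(d-i)$ — even — so removing $b_i$ gives a lower facet. Thus the $d+1$ facets obtained by deleting one element of $A$ are precisely the upper facets, their intersection is $|A|_{2d+1}$, and dually the $d+1$ facets obtained by deleting one element of $B$ are the lower facets with intersection $|B|_{2d+1}$. (One should also note these are all $2d+2$ facets of the simplex, so ``the upper facets'' and ``the lower facets'' are exhausted.)

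For the converse, suppose $|A\cup B|_{2d+1}$ is a $(2d+1)$-simplex with $|A|_{2d+1}$ the intersection of its lower facets and $|B|_{2d+1}$ the intersection of its upper facets. Being a simplex forces $\#(A \cup B) = 2d+2$, hence $A$ and $B$ are disjoint of sizes $d+1$ each. The intersection of the upper facets of a simplex on vertex set $A \cup B$ is the face spanned by those vertices lying in \emph{every} upper facet, equivalently the complement of the set of vertices $v$ for which $(A\cup B)\setminus\{v\}$ is an upper facet; so the hypothesis says: $(A\cup B)\setminus\{v\}$ is an upper facet iff $v \in A$, and a lower facet iff $v \in B$. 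Feeding this into the Evenness Criterion as above, for each $v \in A$ the number of elements of $A\cup B$ exceeding $v$ is odd, and for each $v \in B$ it is even. A short induction from the top element downward through $A \cup B$ then forces the alternation $a_d < b_d$ at the top (the largest element has $0$ elements above it, even, so it lies in $B$; the next has $1$ above it, odd, so it lies in $A$, etc.), giving $a_0 < b_0 < a_1 < \dots < a_d < b_d$, i.e. $A \wr B$.

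The main obstacle is bookkeeping rather than conceptual: one must be careful that the parity count is taken within $F = (A\cup B)\setminus\{v\}$ and not within $A \cup B$ (they agree here only because the removed element equals $v$), and one must confirm that the $2d+2$ facets produced this way are genuinely \emph{all} the facets of the simplex so that ``the lower facets'' and ``the upper facets'' are correctly accounted for. I expect the converse direction's induction to be the fiddliest part, though it is entirely elementary.
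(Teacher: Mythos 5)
Your overall route is the same as the paper's: reduce to $\#(A\cup B)=2d+2$ and then apply Gale's Evenness Criterion to the facets $S\setminus\{v\}$ of the simplex $|S|_{2d+1}$, $S:=A\cup B$. Your parity computation is correct (deleting $a_i=s_{2i}$ leaves $2(d-i)+1$ larger elements, so an upper facet; deleting $b_i=s_{2i+1}$ leaves $2(d-i)$, so a lower facet), and your converse direction, which runs this backwards by induction from the top element, is fine and is in fact spelled out more fully than in the paper.

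However, the forward direction as written ends with the two intersections swapped, so it concludes the opposite of the lemma. The upper facets are the $d+1$ sets $S\setminus\{a_i\}$; each of these omits one element of $A$ and contains all of $B$, so
\[
\bigcap_{i=0}^{d}\bigl(S\setminus\{a_i\}\bigr)=S\setminus A=B,
\]
i.e.\ the intersection of the \emph{upper} facets is $|B|_{2d+1}$, not $|A|_{2d+1}$ as you assert (``their intersection is $|A|_{2d+1}$''); dually the intersection of the lower facets $S\setminus\{b_i\}$ is $|A|_{2d+1}$. The parenthetical in your first paragraph contains the same internal contradiction: you correctly identify $|A|$ as the common face of the facets $(A\cup B)\setminus\{b_i\}$, but you have just argued those are the \emph{lower} facets, so $|A|$ is the intersection of the lower facets, not the upper ones. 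The fix is one line, but the orientation is the entire content of the lemma --- it is what distinguishes increasing from decreasing bistellar flips and fixes the direction of the orders throughout the paper --- so as written the argument does not establish the stated claim.
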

\begin{proof}
Let $A \cup B =: S$. If $\# S < 2d+2$, then $A$ and $B$ cannot be intertwining. Hence, suppose that $\# S=2d+2$, so that $|A \cup B|_{2d+1}$ is a $(2d+1)$-simplex. Then, by Gale's Evenness Criterion, the intersection of the lower facets of $S$ is $|s_{0}, s_{2}, \dots, s_{2d}|_{2d+1}$ and the intersection of the upper facets is $|s_{1}, s_{3}, \dots, s_{2d+1}|_{2d+1}$.
\end{proof}

\subsection{First order}\label{even-first}

We start by showing our combinatorial interpretation of the first higher Stasheff--Tamari order.

By \cite[Theorem 4.1]{ot}, triangulations $\mathcal{T},\mathcal{T}' \in \mathcal{S}(m, 2d)$ are bistellar flips of each other if and only if $e(\mathcal{T)}$ and $e(\mathcal{T}')$ have all but one $(d+1)$-tuple in common. This can then be strengthened to the following.

\begin{theorem}\label{main-thm-1st}
For $\mathcal{T}, \mathcal{T}' \in \mathcal{S}(m,2d)$, we have that $\mathcal{T} \lessdot_{1} \mathcal{T}'$ if and only if $e(\mathcal{T}) = \mathcal{U} \cup \{ A\}$  and $e(\mathcal{T}')= \mathcal{U} \cup \{ B\}$  and $A \wr B$.
\end{theorem}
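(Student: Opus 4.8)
The plan is to leverage the known result \cite[Theorem 4.1]{ot}: $\mathcal{T}$ and $\mathcal{T}'$ are bistellar flips of each other precisely when $e(\mathcal{T})$ and $e(\mathcal{T}')$ differ in exactly one $(d+1)$-tuple, say $e(\mathcal{T}) = \mathcal{U} \cup \{A\}$ and $e(\mathcal{T}') = \mathcal{U} \cup \{B\}$. What remains is to determine, given such a pair, whether the flip is \emph{increasing} (i.e.\ $\mathcal{T} \lessdot_1 \mathcal{T}'$) or decreasing, and to show this is governed exactly by the condition $A \wr B$. So the real content is: an increasing bistellar flip replacing the $d$-simplex $|A|$ by $|B|$ occurs if and only if $A \wr B$.

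First I would recall the definition of an increasing bistellar flip: there is a $(2d+2)$-tuple $X \subseteq [m]$ such that $\mathcal{T}$ restricts to the lower triangulation of $C(X,2d)$ and $\mathcal{T}'$ restricts to the upper triangulation. Since $C(X, 2d+1)$ is a $(2d+1)$-simplex, its lower triangulation consists of the single simplex $|X_{\mathrm{even}}|_{2d} := |x_0, x_2, \dots, x_{2d}|$ and its upper triangulation of $|X_{\mathrm{odd}}|_{2d} := |x_1, x_3, \dots, x_{2d+1}|$ (by Gale's Evenness Criterion, exactly as in the proof of Lemma \ref{up-low-fac}). Hence in an increasing bistellar flip, the unique $d$-simplex $|A|$ that is removed must be $|X_{\mathrm{even}}|$ and the unique one $|B|$ that is added must be $|X_{\mathrm{odd}}|$; in particular $X = A \cup B$, $\#(A \cup B) = 2d+2$, and $A = (x_0, x_2, \dots, x_{2d})$, $B = (x_1, x_3, \dots, x_{2d+1})$, which says exactly $a_0 < b_0 < a_1 < b_1 < \dots < a_d < b_d$, i.e.\ $A \wr B$. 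Conversely, if $A \wr B$ with $e(\mathcal{T}) = \mathcal{U} \cup \{A\}$, $e(\mathcal{T}') = \mathcal{U} \cup \{B\}$, then by Lemma \ref{up-low-fac} the tuple $X := A \cup B$ has $|X|_{2d+1}$ a $(2d+1)$-simplex with $|A|$ its lower and $|B|$ its upper intersection. One then checks that $\mathcal{T}$ restricts to the lower triangulation of $C(X, 2d)$: the $d$-simplex $|A| \in \mathcal{T}$ lies in $C(X,2d)$, and since $C(X,2d)$ has only two triangulations (upper and lower) and $|A|$ is the lower one's unique $d$-simplex, $\mathcal{T}$ must restrict to the lower triangulation there. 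By \cite[Theorem 4.1]{ot} (or directly), replacing this portion by the upper triangulation yields $\mathcal{T}'$, so $\mathcal{T} \lessdot_1 \mathcal{T}'$.

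The one point requiring a little care — and what I expect to be the main obstacle — is the claim that $\mathcal{T}$ \emph{restricts} to a triangulation of the subpolytope $C(X, 2d)$, i.e.\ that the simplices of $\mathcal{T}$ meeting $C(X,2d)$ actually cover it and form a subcomplex. That $\mathcal{T}$ contains $|A|$ is immediate, and $|A|$ already triangulates $C(X,2d)$ by itself (it is a full-dimensional simplex on the vertex set $X$ spanning the polytope, being the lower triangulation of $C(X,2d)$); so once we know $|A| \subseteq \mathcal{T}$ and $|A|$ is a maximal cell of $\mathcal{T}$, the restriction is forced and equals the lower triangulation of $C(X,2d)$. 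This uses the fact that a $d$-simplex of a triangulation of $C(m,2d)$ appearing as the lower triangulation of some $C(X,2d)$ with $X = A \cup B$ a $(2d+2)$-set is exactly the "flippable" condition, which is the content of \cite[Theorem 4.1]{ot}. Assembling these observations gives the equivalence; the forward direction extracts $A \wr B$ from the structure of upper/lower triangulations of a simplex, and the reverse direction feeds $X = A \cup B$ back into the definition of increasing bistellar flip.
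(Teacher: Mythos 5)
Your overall strategy is the paper's: invoke \cite[Theorem 4.1]{ot} to reduce to the case where $e(\mathcal{T})$ and $e(\mathcal{T}')$ differ in a single tuple, and then use the upper/lower facet structure of the $(2d+1)$-simplex $|A\cup B|_{2d+1}$ (Lemma \ref{up-low-fac}) to decide which direction the flip goes. However, there is a genuine error in your dimension bookkeeping, and it sits exactly at the step you yourself flag as the crux. You assert that the lower triangulation of $C(X,2d)$ ``consists of the single simplex $|x_0,x_2,\dots,x_{2d}|$'' and later that ``$|A|$ already triangulates $C(X,2d)$ by itself (it is a full-dimensional simplex on the vertex set $X$ spanning the polytope).'' This is false: $|A|$ has $d+1$ vertices and is $d$-dimensional, whereas $C(X,2d)$ is a $2d$-dimensional polytope on $2d+2$ vertices. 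The lower triangulation of $C(X,2d)$ consists of the $d+1$ lower facets of the simplex $|X|_{2d+1}$, each a $2d$-simplex; the tuple $(x_0,x_2,\dots,x_{2d})$ is merely their common \emph{intersection} --- the unique $d$-simplex lying in all lower facets and in no upper facet. Consequently your proposed shortcut for the converse direction (``once we know $|A|\subseteq\mathcal{T}$ and $|A|$ is a maximal cell of $\mathcal{T}$, the restriction is forced'') collapses: $|A|$ is not a maximal cell, and its presence in $\mathcal{T}$ does not by itself force $\mathcal{T}$ to restrict to the lower triangulation of $C(X,2d)$. The correct route, which is what the paper does, is to lean on the \emph{proof} of \cite[Theorem 4.1]{ot}, which shows that when $e(\mathcal{T})$ and $e(\mathcal{T}')$ differ in exactly one tuple the two triangulations are related by a bistellar flip supported on $C(A\cup B,2d)$; the direction of that flip is then determined by which of the two distinguished $d$-simplices each triangulation contains.

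Two smaller points. First, the conclusion of your forward direction (the removed $d$-simplex is $(x_0,x_2,\dots,x_{2d})$ and the added one is $(x_1,x_3,\dots,x_{2d+1})$, whence $A\wr B$) is correct despite the misidentification above, but it needs the observation that both tuples lie in $\nonconsec{m}{d}$, i.e.\ that both are \emph{internal} $d$-simplices of $C(m,2d)$ --- otherwise the exchange would not be visible in $e(\mathcal{T})$ versus $e(\mathcal{T}')$, since $e$ only records $d$-simplices not lying in lower facets. The paper checks this explicitly. Second, once the forward direction is established, the converse follows immediately: given a bistellar flip exchanging $A$ for $B$ with $A\wr B$, the first paragraph already shows it cannot be decreasing, so no separate restriction argument is needed beyond citing \cite[Theorem 4.1]{ot} and its proof.
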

\begin{proof}
Consider a bistellar flip between $\mathcal{T}$ and $\mathcal{T}'$ inside $C(S,2d)$, where $S \in \subs{m}{2d+2}{}$. By Lemma \ref{up-low-fac}, the only $d$-simplex contained in upper facets of $|S|_{2d+1}$ but not any lower facets is $|s_{1}, s_{3}, \dots, s_{2d+1}|_{2d+1}$. Similarly the only $d$-simplex contained in lower facets but not any upper facets is $|s_{0},s_{2}, \dots, s_{2d}|_{2d+1}$. Moreover, these are both internal $d$-simplices in $C(m,2d)$, since $(s_{1}, s_{3}, \dots, s_{2d+1}), (s_{0},s_{2}, \dots, s_{2d}) \in \nonconsec{m}{d}$. Hence an increasing bistellar flip inside $C(S, 2d)$ involves exchanging $(s_{0}, \dots, s_{2d})$ for $(s_{1}, \dots, s_{2d+1})$. Therefore, if $\mathcal{T}'$ is an increasing bistellar flip of $\mathcal{T}$, we have that $e(\mathcal{T}) = \mathcal{U} \cup \{ A\}$  and $e(\mathcal{T}')= \mathcal{U} \cup \{ B\}$  and $A \wr B$.

Conversely, suppose that we have $e(\mathcal{T}) = \mathcal{U} \cup \{ A\}$  and $e(\mathcal{T}')= \mathcal{U} \cup \{ B\}$  and $A \wr B$. By \cite[Theorem 4.1]{ot} and its proof, we have that $\mathcal{T}'$ is the result of a bistellar flip of $\mathcal{T}$ which takes place inside $C(A \cup B, 2d)$. Then, by the previous paragraph, this must be an increasing bistellar flip.
\end{proof}

This theorem can be interpreted algebraically.

\begin{theorem}\label{alg-cor-1st}
Let $\mathcal{T}, \mathcal{T}' \in \mathcal{S}(n+2d,2d)$ with corresponding tilting $A_{n}^{d}$-modules $T$ and $T'$. Then $\mathcal{T} \lessdot_{1} \mathcal{T}'$ if and only if $T'$ is a left mutation of $T$.
\end{theorem}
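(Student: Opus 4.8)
The plan is to combine the newly proven combinatorial characterisation in Theorem \ref{main-thm-1st} with the algebraic dictionary from \cite{ot} and the $\mathrm{Ext}$-formula of Proposition \ref{prop-key}. By Theorem \ref{main-thm-1st}, $\mathcal{T} \lessdot_{1} \mathcal{T}'$ is equivalent to $e(\mathcal{T}) = \mathcal{U} \cup \{A\}$, $e(\mathcal{T}') = \mathcal{U} \cup \{B\}$ with $A \wr B$. Under the bijection of \cite[Theorem 1.1]{ot}, this says precisely that $T = E \oplus M_{A}$ and $T' = E \oplus M_{B}$, where $E$ is the tilting summand corresponding to the common collection $\mathcal{U}$, and $M_{A}, M_{B}$ are indecomposable. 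So the task reduces to showing: given tilting modules $T = E \oplus M_{A}$ and $T' = E \oplus M_{B}$ with $M_{A}, M_{B}$ indecomposable and $A \wr B$, the module $T'$ is a left mutation of $T$ in the sense defined in Section \ref{subsect-high-ar}, and conversely.

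First I would handle the forward direction. The definition of left mutation requires an exact sequence $0 \to M_{A} \to E_{1} \to \dots \to E_{d} \to M_{B} \to 0$ with $E_{i} \in \mathrm{add}\,E$. To produce it, I would use the theory of mutation of tilting modules over $d$-representation-finite $d$-hereditary algebras (this is the Iyama--Oppermann-style mutation, or the Riedtmann--Schofield-style approach adapted to the $d$-cluster-tilting setting): removing the indecomposable summand $M_{A}$ from $T$ leaves $E$, which is an almost-complete tilting module, and the complement of $E$ inside $\mathrm{add}\,M^{(d,n)}$ consists of exactly two indecomposables (when the mutation is possible), fitting into a $d$-term exact sequence as above. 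I would argue that $M_{B}$ is the unique other complement, using that $T'$ is a tilting module containing $E$ and that the only candidate distinct from $M_{A}$ is $M_{B}$; this is essentially forced because $e(\mathcal{T}')= \mathcal{U} \cup \{B\}$ is the only non-intertwining completion of $\mathcal{U}$ other than $\mathcal{U} \cup \{A\}$ that is a triangulation. Then I would verify the sequence runs in the correct direction — $M_A$ on the left, $M_B$ on the right — which is exactly what distinguishes a left mutation from a right mutation. This is where Proposition \ref{prop-key} enters: the direction of the $d$-term sequence is detected by $\mathrm{Ext}^{d}_{A_{n}^{d}}(M_{B}, M_{A}) \neq 0$, which holds iff $A \wr B$, matching our hypothesis; conversely $\mathrm{Ext}^{d}_{A_{n}^{d}}(M_{A}, M_{B}) = 0$ since $B \not\wr A$ (as $A \wr B$ and they cannot both hold). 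So the connecting map of the exchange sequence goes from (a term built from) $M_B$ back to $M_A$, placing $M_A$ as the leftmost nonzero term.

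For the converse, I would start from $T'$ being a left mutation of $T$, so $T = E \oplus X$, $T' = E \oplus Y$ with $X, Y$ indecomposable and an exact sequence $0 \to X \to E_{1} \to \dots \to E_{d} \to Y \to 0$. Writing $X = M_{A}$, $Y = M_{B}$, the existence of this non-split sequence forces $\mathrm{Ext}^{d}_{A_{n}^{d}}(M_{B}, M_{A}) \neq 0$ (the sequence represents a nonzero class, since gl.\,dim\,$A_{n}^{d} \leqslant d$ so all higher Ext vanishes and the $d$-term sequence yields a nonzero element of $\mathrm{Ext}^{d}$), hence $A \wr B$ by Proposition \ref{prop-key}. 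Then $e(\mathcal{T})$ and $e(\mathcal{T}')$ differ only in $A$ versus $B$ with $A \wr B$, so Theorem \ref{main-thm-1st} gives $\mathcal{T} \lessdot_{1} \mathcal{T}'$.

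The main obstacle I anticipate is the first direction: cleanly establishing that the exchange sequence for the mutation $T \to T'$ has exactly $M_A$ and $M_B$ as its end terms and is $d$-term exact, i.e.\ that the almost-complete tilting module $E$ has precisely the two complements $M_A, M_B$ connected by such a sequence. One has to know that $d$-term mutation sequences exist in this setting and that the bijection of \cite{ot} is compatible with flips (which is \cite[Theorem 4.1]{ot}, already invoked in Theorem \ref{main-thm-1st}); the remaining subtlety is purely that the \emph{direction} of the sequence — and hence ``left'' versus ``right'' mutation — is pinned down by the non-vanishing of $\mathrm{Ext}^{d}(M_B, M_A)$ rather than $\mathrm{Ext}^{d}(M_A, M_B)$, which is exactly the asymmetry recorded in Proposition \ref{prop-key}. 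Once that sign/direction bookkeeping is in place, both implications follow formally.
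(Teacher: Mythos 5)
Your proposal is correct and follows essentially the same route as the paper: translate $\lessdot_{1}$ via Theorem \ref{main-thm-1st} into ``$T=E\oplus M_{A}$, $T'=E\oplus M_{B}$ with $A\wr B$'' and then match this with left mutation using Proposition \ref{prop-key} to fix the direction of the exchange sequence. The mutation theory you describe in detail (existence and uniqueness of the two complements of the almost-complete tilting module and of the $d$-term exchange sequence) is exactly what the paper delegates to \cite[Theorem 3.8]{ot}, so the argument is the same modulo that citation.
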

\begin{proof}
Proposition \ref{prop-key} and \cite[Theorem 3.8]{ot} imply that $T'$ is a left mutation of $T$ if and only if $T=E \oplus M_{A}$, $T'=E \oplus M_{B}$ and $A \wr B$. By Theorem \ref{main-thm-1st}, this is true if and only if $\mathcal{T} \lessdot_{1} \mathcal{T}'$.
\end{proof}

\begin{example}\label{ex-even-first}
We illustrate the results of this section with an example. Consider the cyclic polytope $C(6,4)$. This has two triangulations, $\mathcal{T}$ and $\mathcal{T}'$. We have that
\begin{align*}
e(\mathcal{T})&=\{135,136,146\}, \\
e(\mathcal{T}')&=\{136,146,246\}.
\end{align*}
To keep the notation light, we omit brackets and commas from tuples, so that $135=(1,3,5)$. Hence Theorem \ref{main-thm-1st} shows us that $\mathcal{T}'$ is an increasing bistellar flip of $\mathcal{T}$, since $e(\mathcal{T})$ and $e(\mathcal{T}')$ have all but one 3-tuple in common, and $135 \wr 246$.

The algebra $A_{2}^{2}$ is\[
\begin{tikzcd}
& 14 \ar[dr] & \\
13 \ar[ur] \ar[rr,dash,dashed] && 24,
\end{tikzcd}\]which we henceforth abbreviate\[
\begin{tikzcd}
& 2 \ar[dr] & \\
1 \ar[ur] \ar[rr,dash,dashed] && 3.
\end{tikzcd}\]
The 2-cluster-tilting subcategory of $\modules A_{2}^{2}$ is\[
\begin{tikzcd}
& {\tcs{2\\1}} \ar[dr] & \\
{\tcs{1}} \ar[ur] && {\tcs{3\\2}} \ar[dl] \\
& {\tcs{3}}. &
\end{tikzcd}\]
The corresponding internal $d$-simplices of $C(6,4)$ are given by\[
\begin{tikzcd}
& M_{136} \ar[dr] & \\
M_{135} \ar[ur] && M_{146} \ar[dl] \\
& M_{246}. &
\end{tikzcd}\]
Then $\mathcal{T}$ and $\mathcal{T}'$ correspond to the respective tilting modules
\begin{align*}
T&:={\tcs{1}} \oplus {\tcs{2\\1}} \oplus {\tcs{3\\2}}\,, \\
T'&:={\tcs{2\\1}} \oplus {\tcs{3\\2}} \oplus {\tcs{3}}.
\end{align*}
We have that $T'$ is a left mutation of $T$ via the exact sequence \[0 \rightarrow {\tcs{1}} \rightarrow {\tcs{2\\1}} \rightarrow {\tcs{3\\2}} \rightarrow {\tcs{3}} \rightarrow 0.\]
\end{example}

\subsection{Second order}\label{even-second}

In this section we prove our combinatorial and algebraic interpretations of the second order in even dimensions. We start with the combinatorial interpretation of the second order, which is as follows.

\begin{theorem}\label{main-thm-2nd}
Let $\mathcal{T}, \mathcal{T}' \in \mathcal{S}(m,2d)$. Then $\mathcal{T} \leqslant_{2} \mathcal{T}'$ if and only if for every $A \in e(\mathcal{T})$, there is no $B \in e(\mathcal{T}')$ such that $B \wr A$.
\end{theorem}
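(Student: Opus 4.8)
The plan is to characterise the second higher Stasheff--Tamari order via submersion sets, using the interpretation $\mathcal{T} \leqslant_2 \mathcal{T}'$ iff $\mathrm{sub}_d(\mathcal{T}) \subseteq \mathrm{sub}_d(\mathcal{T}')$ recorded from \cite[Proposition 2.15]{er} (here $\delta = 2d$, so $\lceil \delta/2 \rceil = d$). So I would first try to describe $\mathrm{sub}_d(\mathcal{T})$ combinatorially in terms of $e(\mathcal{T})$. The natural guess is that a $d$-simplex $|C|_{2d}$ is submerged by $\mathcal{T}$ precisely when there is no $A \in e(\mathcal{T})$ with $A \wr C$: intuitively, $|A|$ sits above $|C|$ in the characteristic section exactly when $A$ and $C$ intertwine transversely in the right orientation, and by Lemma \ref{up-low-fac} the relation $A \wr C$ says $|C|$ is the lower intersection and $|A|$ the upper intersection of the $(2d+1)$-simplex $|A \cup C|_{2d+1}$ — i.e.\ $|A|$ passes strictly above $|C|$. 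I expect this can be extracted from Proposition \ref{prop-key} together with the geometry of the lower/upper facet description, or cited directly from the structure theory in \cite{er,ot}; possibly a short lemma establishing "$|C|_{2d}$ is submerged by $\mathcal{T}$ iff there is no $A \in e(\mathcal{T})$ with $A \wr C$" is the cleanest intermediate step.

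Granting that description, the theorem follows by a two-way inclusion argument. For the forward direction: if $\mathcal{T} \leqslant_2 \mathcal{T}'$ and $A \in e(\mathcal{T})$, then $|A|$ is trivially submerged by $\mathcal{T}$ (it is literally a simplex of $\mathcal{T}$, so its section lies in $s_{\mathcal{T}}$), hence $A \in \mathrm{sub}_d(\mathcal{T}) \subseteq \mathrm{sub}_d(\mathcal{T}')$, so $|A|$ is submerged by $\mathcal{T}'$; by the combinatorial description of submersion this forces that no $B \in e(\mathcal{T}')$ satisfies $B \wr A$, which is exactly the claim. For the converse: suppose no $A \in e(\mathcal{T})$ is intertwined by any $B \in e(\mathcal{T}')$, and take any $d$-simplex $|C| \in \mathrm{sub}_d(\mathcal{T})$; I must show $|C| \in \mathrm{sub}_d(\mathcal{T}')$, i.e.\ no $B \in e(\mathcal{T}')$ has $B \wr C$.

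The converse is the substantive part, and I expect it to be the main obstacle. The issue is that $C$ need not be a simplex of $\mathcal{T}$, so I cannot directly use the hypothesis, which only controls simplices of $e(\mathcal{T})$. The workaround is that $|C|$ being submerged by $\mathcal{T}$ means $|C|$ lies weakly below the characteristic section $s_{\mathcal{T}}$, which is built from the $d$-simplices in $e(\mathcal{T})$; so $s_{\mathcal{T}}$ restricted to $|C|$ is a piecewise-linear surface covering $|C|$, and any $B \in e(\mathcal{T}')$ with $B \wr C$ would have to pass above $|C|$ and hence above some point where $s_{\mathcal{T}}$ is governed by a particular $A \in e(\mathcal{T})$; one then argues that $B$ must intertwine that $A$ as well, contradicting the hypothesis. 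Making this transitivity-type step precise — passing from "$B$ intertwines $C$" and "$C$ lies below the $A$-part of the section" to "$B$ intertwines $A$" — is where care is needed; it likely rests on a combinatorial lemma about how intertwining interacts with the facet structure of the simplices involved, or on the fact that $\mathrm{sub}_d(\mathcal{T})$ is exactly the down-set generated (in a suitable sense) by $e(\mathcal{T})$. I would isolate this as a separate lemma and prove it using Lemma \ref{up-low-fac} and Gale's Evenness Criterion, reducing to the combinatorics of $(2d+1)$- and $(2d+2)$-element subsets of $[m]$.
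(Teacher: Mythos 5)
Your architecture---first characterise $d$-submersion combinatorially in terms of $e(\mathcal{T})$, then run the two inclusions of submersion sets---is essentially the paper's, and your forward direction is exactly the paper's. The submersion criterion you guess ($|C|$ is submerged by $\mathcal{T}$ iff there is no $A \in e(\mathcal{T})$ with $A \wr C$) is precisely Proposition \ref{desc-sub}; note it cannot simply be cited from \cite{er} or \cite{ot} (only the $d=1$ case is in \cite{er}), so it does need the proof you defer. Where you genuinely diverge is the converse. The paper does not attack the full inclusion $\mathrm{sub}_{d}(\mathcal{T}) \subseteq \mathrm{sub}_{d}(\mathcal{T}')$ head-on: it first proves Lemma \ref{lem-sub}, that $\mathcal{T} \leqslant_{2} \mathcal{T}'$ already follows once the $d$-simplices \emph{of} $\mathcal{T}$ are submerged by $\mathcal{T}'$ (a continuity-plus-circuits argument on the $2d$-simplices of $\mathcal{T}$). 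That reduction dissolves exactly the difficulty you flag, since the hypothesis is then only ever applied to elements of $e(\mathcal{T})$ and no transitivity step is needed. Your alternative---from ``$B \wr C$ for some $B \in e(\mathcal{T}')$'' and ``$|C|$ submerged by $\mathcal{T}$'' deduce ``$B \wr A$ for some $A \in e(\mathcal{T})$''---is also viable: it is the order-reversed dual of Proposition \ref{desc-sub} (from $s_{|B|}(x)_{2d+1} < s_{\mathcal{T}}(x)_{2d+1}$ at an interior point of $|B|$ one extracts, by the same two-case analysis with circuits, an $A \in e(\mathcal{T})$ lying above $|B|$). The two routes are of comparable weight; yours trades Lemma \ref{lem-sub} for the dual lemma, both of which remain to be written out.

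One concrete error to repair before you do: you have the orientation of Lemma \ref{up-low-fac} backwards. The relation $A \wr C$ makes $|A|$ the intersection of the \emph{lower} facets of $|A \cup C|_{2d+1}$ and $|C|$ the intersection of the \emph{upper} facets, so $A \wr C$ means $|C|$ sticks up \emph{above} $|A|$ at their crossing point---which is why it obstructs submersion of $|C|$. In your first paragraph two sign errors cancel (``submerged iff nothing sits above it'' and ``$A \wr C$ puts $|A|$ on top'' are both inverted), so your stated criterion comes out right; but in the converse they do not cancel. There, $B \wr C$ puts $|B|$ \emph{below} $|C|$, hence strictly below $s_{\mathcal{T}}$ at the crossing point since $|C|$ is submerged by $\mathcal{T}$, and it is this inequality that feeds the dual lemma. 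As literally written (``$B$ passes above $|C|$''), the comparison with $s_{\mathcal{T}}$ yields nothing and the contradiction with the hypothesis does not materialise.
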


To prove this theorem we use the following combinatorial characterisation of submersion in even dimensions. This was shown for $d=1$ in \cite[Proposition 3.2]{er}.

\begin{proposition}\label{desc-sub}
Let $\mathcal{T} \in \mathcal{S}(m,2d)$. Let $|A|$ be an internal $d$-simplex in $C(m,2d)$. Then $|A|$ is submerged by $\mathcal{T}$ if and only if there is no $B \in e(\mathcal{T})$ such that $B \wr A$. 
\end{proposition}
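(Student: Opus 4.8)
I want to characterise when an internal $d$-simplex $|A|$ is submerged by a triangulation $\mathcal{T}$ of $C(m,2d)$, i.e.\ when $s_{|A|}(x)_{2d+1} \leqslant s_{\mathcal{T}}(x)_{2d+1}$ for all $x \in |A|$. The natural approach is to pass to the characterisation of $\leqslant_2$ via submersion sets already recorded in the excerpt: by \cite[Proposition 2.15]{er}, $\mathcal{T} \leqslant_2 \mathcal{T}'$ iff $\mathrm{sub}_{d}(\mathcal{T}) \subseteq \mathrm{sub}_{d}(\mathcal{T}')$, and the $d$-simplices actually \emph{used} by a triangulation $\mathcal{T}$ of $C(m,2d)$ are always submerged by $\mathcal{T}$ (each lies on the section $s_{\mathcal{T}}$ itself). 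So $e(\mathcal{T}) \subseteq \mathrm{sub}_d(\mathcal{T})$, and the content is to see which extra $d$-simplices get submerged. The key geometric input is Lemma \ref{up-low-fac}: if $A \wr B$ then $|A \cup B|_{2d+1}$ is a genuine $(2d+1)$-simplex whose lower facets intersect in $|A|_{2d+1}$ and whose upper facets intersect in $|B|_{2d+1}$; in particular $|A|_{2d+1}$ lies strictly \emph{below} $|B|_{2d+1}$ over the common domain $|A \cup B|_{2d} = |A|_{2d} \cap |B|_{2d}$ projected down (more precisely, over the intersection of the two $d$-simplices' shadows).

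\textbf{The "only if" direction (contrapositive).} Suppose there is $B \in e(\mathcal{T})$ with $B \wr A$. I want to show $|A|$ is not submerged, i.e.\ $s_{|A|}(x)_{2d+1} > s_{\mathcal{T}}(x)_{2d+1}$ for some $x$. Since $A \wr B$ is false but $B \wr A$ holds, Lemma \ref{up-low-fac} (with the roles of the two tuples as in its statement, applied to $B$ and $A$) tells us $|B|_{2d+1}$ is the intersection of the lower facets of $|A \cup B|_{2d+1}$ and $|A|_{2d+1}$ the intersection of the upper facets. Now $|B|_{2d}$ and $|A|_{2d}$ intersect transversely in $C(m,2d)$ (their union hosts a circuit $(B,A)$), so their shadows overlap in a full-dimensional region $R$; over the interior of $R$ the section $s_{|A|}$ lies strictly \emph{above} $s_{|B|}$. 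Since $|B| \in \mathcal{T}$, over $R$ the section $s_{\mathcal{T}}$ agrees with $s_{|B|}$, hence $s_{|A|}(x)_{2d+1} > s_{\mathcal{T}}(x)_{2d+1}$ there. So $|A|$ is not submerged, as required.

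\textbf{The "if" direction.} Conversely, suppose $|A|$ is not submerged by $\mathcal{T}$; I must produce $B \in e(\mathcal{T})$ with $B \wr A$. By hypothesis there is a point $x_0$ in the relative interior of $|A|_{2d}$ with $s_{|A|}(x_0)_{2d+1} > s_{\mathcal{T}}(x_0)_{2d+1}$. Pick a $(2d)$-simplex $\sigma$ of $\mathcal{T}$ containing $x_0$ in its shadow, say $\sigma = |B'|_{2d}$ with $B' \in e(\mathcal{T})$; then $s_{\mathcal{T}}(x_0) = s_{|B'|}(x_0)$, so $|A|$ lies strictly above $|B'|$ at $x_0$. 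In particular $|A|$ and $|B'|$ are distinct and their shadows overlap with full dimension, so $|A|$ and $|B'|$ intersect transversely, i.e.\ $A \cup B'$ contains a circuit $(Z_+, Z_-)$ with $A \supseteq Z_+$, $B' \supseteq Z_-$ (or the reverse). The circuits of $C(m,2d)$ are precisely intertwining pairs of $(d+1)$-tuples (as recalled in Section \ref{back-cyc}), so either $Z_+ \wr Z_-$ or $Z_- \wr Z_+$; the orientation — i.e.\ which of $A, B'$ sits above the other over the circuit's intersection point — is determined by Lemma \ref{up-low-fac}. Since $|A|$ is above $|B'|$ at $x_0$, I get $B' \wr A$ on the level of the circuit tuples. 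The only remaining wrinkle is to pass from the sub-tuples $Z_\pm$ to the full tuples: one must argue that $B' \wr A$ holds for the full $(d+1)$-tuples, not just the circuit. This should follow because $A$ and $B'$ each already have exactly $d+1$ elements, and a circuit inside $C(m,2d)$ involving two $d$-simplices must use all $2d+2$ vertices — so $Z_+ = A$, $Z_- = B'$ (up to the orientation choice), giving $B' \wr A$ directly. Taking $B := B'$ finishes the proof.

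\textbf{Main obstacle.} The delicate point is the second direction: turning the single inequality "$|A|$ lies above $|\mathcal{T}|$ somewhere" into the combinatorial statement "some $B \in e(\mathcal{T})$ intertwines $A$", and in particular making sure the circuit detected uses \emph{all} of $A$ and \emph{all} of $B'$ so that the intertwining relation transfers verbatim to the full tuples. This requires carefully invoking the structure of circuits in even-dimensional cyclic polytopes (a circuit pairs a $(d+1)$-tuple with a $(d+1)$-tuple, since $\lfloor 2d/2 \rfloor = \lceil 2d/2 \rceil = d$) together with the transversality of $|A|$ and the chosen $(2d)$-simplex of $\mathcal{T}$. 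Everything else — the reduction to submersion sets, the geometric "above/below" dictionary — is routine given Lemma \ref{up-low-fac} and the definitions already in place.
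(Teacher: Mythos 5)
Your ``only if'' direction is essentially the paper's forward argument and is correct in substance, but the geometry is misstated: the two halves of a circuit of $C(m,2d)$ are $d$-simplices meeting in a \emph{unique point}, not overlapping ``in a full-dimensional region''. What you actually need (and what the paper uses) is that at that unique point $x \in \mathring{|A|}\cap\mathring{|B|}$ one has $s_{|B|}(x)_{2d+1}=s_{\mathcal{T}}(x)_{2d+1}<s_{|A|}(x)_{2d+1}$ by Lemma \ref{up-low-fac}.

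The ``if'' direction has a genuine gap. The simplex of $\mathcal{T}$ that realises $s_{\mathcal{T}}$ at your witness point $x_0$ is a $2d$-simplex, so its vertex tuple $B'$ has $2d+1$ elements and is not an element of $e(\mathcal{T})$, which consists of $(d+1)$-tuples. Your closing step --- ``a circuit involving two $d$-simplices must use all $2d+2$ vertices, so $Z_+=A$ and $Z_-=B'$'' --- therefore does not apply: any circuit of $C(m,2d)$ inside $A\cup B'$ has $Z_-$ a proper $(d+1)$-subtuple of $B'$, and you still need to (i) identify which subtuple, (ii) check it is separated so that it actually lies in $e(\mathcal{T})$, and (iii) settle the orientation $Z_-\wr A$ versus $A\wr Z_-$. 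Point (iii) is the real obstruction: knowing only that $s_{|A|}>s_{\mathcal{T}}$ at the single point $x_0$ does not tell you which half of the circuit is the upper one, because the unique point where $|A|_{2d}$ meets $|Z_-|_{2d}$ need not be $x_0$, and the inequality could a priori reverse there. The paper's proof closes this with a case split you are missing: if $s_{|A|}>s_{\mathcal{T}}$ on all of $\mathring{|A|}$, take any $B\in e(\mathcal{T})$ intertwining $A$ (one exists since $e(\mathcal{T})$ is a maximal non-intertwining collection and $A\notin e(\mathcal{T})$) and evaluate at the unique intersection point of $|A|$ and $|B|$ to force $B\wr A$; otherwise, by continuity $|A|_{2d+1}$ genuinely meets the lifted section, and the circuits of $C(m,2d+1)$ --- one dimension up, pairing a $(d+1)$-tuple with a $(d+2)$-tuple --- produce the interleaving $s'_0<a_0<s'_1<\dots<s'_d<a_d<s'_{d+1}$ inside the $(2d+1)$-tuple $S$ of the ambient $2d$-simplex, whence $B=(s'_0,\dots,s'_d)\in e(\mathcal{T})$ satisfies $B\wr A$. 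Some such device is needed; as written your argument does not go through.
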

\begin{proof}
We prove the backwards direction first. Suppose that $|A|$ is not submerged by $\mathcal{T}$, so that there is a point $x$ in $|A|$ such that $s_{|A|}(x)_{2d+1} > s_{\mathcal{T}}(x)_{2d+1}$. We split into two cases, depending on whether $s_{|A|}(y)_{2d+1} > s_{\mathcal{T}}(y)_{2d+1}$ for all $y \in \mathring{|A|}$, or whether there are also some $y \in \mathring{|A|}$ such that $s_{A}(y)_{2d+1} \leqslant s_{\mathcal{T}}(y)_{2d+1}$.

In the first case, there must exist $B \in e(\mathcal{T})$ such that $A$ and $B$ are intertwining, since $A \notin e(\mathcal{T})$. By Lemma \ref{up-low-fac}, $|A|_{2d+1}$ must the intersection of either the lower facets or the upper facets of the $(2d+1)$-simplex $|A \cup B|_{2d+1}$. We have that $\mathring{|A|}$ and $\mathring{|B|}$ intersect in a unique point $y \in C(m,2d)$, since $(A,B)$ is a circuit. By assumption, $s_{|A|}(y)_{2d+1} > s_{\mathcal{T}}(y)_{2d+1}=s_{|B|}(y)_{2d+1}$. This means that $|A|_{2d+1}$ must be the intersection of the upper facets of $|A \cup B|_{2d+1}$. Hence $B \wr A$ by Lemma \ref{up-low-fac}.

In the second case, by continuity, we must have a point $z$ in $\mathring{|A|}$ such that $s_{|A|}(z)_{2d+1} = s_{\mathcal{T}}(z)_{2d+1}$. Hence, $z$ is a point of intersection between $|A|_{2d+1}$ and the image of $\mathcal{T}$ under $s_{\mathcal{T}}$. The point $z$ must be contained in a $2d$-simplex $|S|_{2d+1}$ of $\mathcal{T}$. Then, by the description of the circuits in $C(m,2d+1)$, there must exist $s'_{i} \in \{ s_{0}, \dots, s_{2d}\}$ such that \[s'_{0}<a_{0}<s'_{1}<a_{1}< \dots < s'_{d}<a_{d}< s'_{d+1}.\] Then $B = (s'_{0}, \dots, s'_{d})$ is a $(d+1)$-tuple of $e(\mathcal{T})$ such that $B \wr A$.

Now we prove the forwards direction by contraposition. If there is a $B \in e(\mathcal{T})$ such that $B \wr A$, then $|A|_{2d+1}$ is the intersection of the upper facets of the $(2d+1)$-simplex $|A \cup B|_{2d+1}$ and $|B|_{2d+1}$ is the intersection of the lower facets, by Lemma \ref{up-low-fac}. We have that $\mathring{|A|}_{2d}$ and $\mathring{|B|}_{2d}$ intersect in a unique point $x$. Then $s_{|B|}(x)_{2d+1} = s_{\mathcal{T}}(x)_{2d+1} < s_{|A|}(x)_{2d+1}$. But this means that $|A|$ is not submerged by $\mathcal{T}$.
\end{proof}

The following lemma shows that, in order to have $\mathcal{T} \leqslant_{2} \mathcal{T}'$, it is sufficient for $\mathcal{T}'$ to submerge the $d$-simplices of $\mathcal{T}$.

\begin{lemma}\label{lem-sub}
Let $\mathcal{T},\mathcal{T}' \in \mathcal{S}(m,2d)$. Then $\mathcal{T}\leqslant_{2} \mathcal{T}'$ if and only if every $d$-simplex of $\mathcal{T}$ is submerged by $\mathcal{T}'$.
\end{lemma}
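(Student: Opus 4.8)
The forward direction is immediate: if $\mathcal{T} \leqslant_{2} \mathcal{T}'$, then $s_{\mathcal{T}}(x)_{2d+1} \leqslant s_{\mathcal{T}'}(x)_{2d+1}$ for all $x \in C(m,2d)$; restricting to any $d$-simplex $|A|$ of $\mathcal{T}$, the section $s_{\mathcal{T}}$ agrees with $s_{|A|}$ on $|A|$, so $s_{|A|}(x)_{2d+1} = s_{\mathcal{T}}(x)_{2d+1} \leqslant s_{\mathcal{T}'}(x)_{2d+1}$ for all $x \in |A|$, which is exactly the statement that $|A|$ is submerged by $\mathcal{T}'$.

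For the converse, I plan to combine Lemma \ref{lem-sub} (the hypothesis: every $d$-simplex of $\mathcal{T}$ is submerged by $\mathcal{T}'$), Proposition \ref{desc-sub}, and Theorem \ref{main-thm-2nd}. Concretely: the hypothesis says that for every $A \in e(\mathcal{T})$ that is an internal $d$-simplex, $|A|$ is submerged by $\mathcal{T}'$, so by Proposition \ref{desc-sub} there is no $B \in e(\mathcal{T}')$ with $B \wr A$. Boundary (non-internal) $d$-simplices of $\mathcal{T}$ lie in facets of $C(m,2d)$ and cannot be intertwined by any $(d+1)$-tuple, so the condition of Theorem \ref{main-thm-2nd} holds trivially for those. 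Hence the combinatorial criterion of Theorem \ref{main-thm-2nd} is satisfied, and therefore $\mathcal{T} \leqslant_{2} \mathcal{T}'$.

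The one point that needs care — and what I expect to be the main obstacle — is that Lemma \ref{lem-sub} is being stated and must be proved before Theorem \ref{main-thm-2nd} is available in full, or else the argument is circular. So I would instead prove Lemma \ref{lem-sub} directly from Proposition \ref{desc-sub} together with the submersion-set characterisation of $\leqslant_{2}$ quoted from \cite[Proposition 2.15]{er}, namely $\mathcal{T} \leqslant_{2} \mathcal{T}'$ iff $\mathrm{sub}_{d}(\mathcal{T}) \subseteq \mathrm{sub}_{d}(\mathcal{T}')$. The forward direction of this is what I gave above. For the converse, suppose every $d$-simplex of $\mathcal{T}$ is submerged by $\mathcal{T}'$; I must show every internal $d$-simplex $\Sigma$ submerged by $\mathcal{T}$ is also submerged by $\mathcal{T}'$. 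By Proposition \ref{desc-sub}, $\Sigma = |A|$ submerged by $\mathcal{T}$ means no $B \in e(\mathcal{T})$ has $B \wr A$. I would then argue: if some $C \in e(\mathcal{T}')$ had $C \wr A$, then the point $x$ where $\mathring{|A|}$ meets $\mathring{|C|}$ satisfies $s_{\mathcal{T}'}(x)_{2d+1} = s_{|C|}(x)_{2d+1} < s_{|A|}(x)_{2d+1}$; but $x$ lies in some $2d$-simplex $|S|$ of $\mathcal{T}$, and since $|A|$ is submerged by $\mathcal{T}$ we have $s_{|A|}(x)_{2d+1} \leqslant s_{\mathcal{T}}(x)_{2d+1} = s_{|S|}(x)_{2d+1}$, whence $|S|$ is a $2d$-simplex of $\mathcal{T}$ not submerged by $\mathcal{T}'$ — and every $2d$-simplex of $\mathcal{T}$ is a union of $d$-simplices (actually, is itself relevant via its lower faces). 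The cleanest route is: $|S|$ not submerged by $\mathcal{T}'$ forces, by the circuit description in $C(m,2d+1)$ exactly as in the second case of the proof of Proposition \ref{desc-sub}, some $d$-face $|A'|$ of $|S|$ with a $D \in e(\mathcal{T}')$ satisfying $D \wr A'$; but $|A'| \in e(\mathcal{T})$ (it is a face of a simplex of $\mathcal{T}$, hence itself a simplex of $\mathcal{T}$), contradicting that every $d$-simplex of $\mathcal{T}$ is submerged by $\mathcal{T}'$ via Proposition \ref{desc-sub} again. This closes the loop without invoking Theorem \ref{main-thm-2nd}, and the genuinely technical ingredient — translating a failure of submersion of a top simplex into an intertwining on one of its $d$-faces — is already packaged inside the proof of Proposition \ref{desc-sub}, so the proof of Lemma \ref{lem-sub} should be short.
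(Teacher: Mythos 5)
Your forward direction is exactly the paper's, and the skeleton of your converse is also the paper's: avoid circularity with Theorem \ref{main-thm-2nd}, reduce to a $2d$-simplex $|S|$ of $\mathcal{T}$ that is not submerged by $\mathcal{T}'$, and use the circuit description of $C(m,2d+1)$ to extract a $d$-face of $|S|$ that is not submerged by $\mathcal{T}'$, contradicting the hypothesis via Proposition \ref{desc-sub}. (The paper reaches the non-submerged $2d$-simplex more directly, by noting that every point of $C(m,2d)$ lies in a $2d$-simplex of $\mathcal{T}$, so it suffices to submerge those; your detour through the $\mathrm{sub}_{d}$-set characterisation and the intersection point of $|A|$ and $|C|$ is harmless but unnecessary.)

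There is, however, a genuine gap at the step ``$|S|$ not submerged by $\mathcal{T}'$ forces, by the circuit description in $C(m,2d+1)$ exactly as in the second case of the proof of Proposition \ref{desc-sub}, some $d$-face $|A'|$ of $|S|$ with $D \in e(\mathcal{T}')$ satisfying $D \wr A'$.'' To invoke a circuit between $|S|_{2d+1}$ and the image of $s_{\mathcal{T}'}$ you need $|S|_{2d+1}$ to actually cross the section, i.e.\ to have points strictly on both sides of it. Your construction supplies only a point strictly above ($s_{|S|}(x)_{2d+1}>s_{\mathcal{T}'}(x)_{2d+1}$); a priori $|S|_{2d+1}$ could lie entirely weakly above the section, in which case there is no transverse intersection and the circuit argument never starts. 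In the second case of the proof of Proposition \ref{desc-sub} the crossing point $z$ is obtained by continuity precisely because that case \emph{assumes} points on both sides, so the ingredient is not in fact already packaged there. The missing piece, which is how the paper's proof proceeds, is this: unless $|S|$ is itself a $2d$-simplex of $\mathcal{T}'$ (hence submerged, by \cite[Lemma 2.15]{ot}), it has an internal $d$-face $|A'|$ not belonging to $\mathcal{T}'$; by hypothesis $|A'|$ is submerged by $\mathcal{T}'$, and being internal and not a simplex of $\mathcal{T}'$ it is \emph{strictly} submerged at some point, which provides the required point of $|S|_{2d+1}$ strictly below the section. With that dichotomy and strictness argument inserted, your proof goes through and coincides with the paper's.
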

\begin{proof}
The forwards direction is clear. Conversely, suppose that every $d$-simplex of $\mathcal{T}$ is submerged by $\mathcal{T}'$. Since every point in $C(m,2d)$ lies in a $2d$-simplex of $\mathcal{T}$, it suffices to show that every $2d$-simplex of $\mathcal{T}$ is submerged by $\mathcal{T}'$. Suppose that there is a $2d$-simplex $|A|$ of $\mathcal{T}$ such that $|A|$ is not submerged by $\mathcal{T}'$. We can assume that the $2d$-simplex $|A|$ has at least one face which is an internal $d$-simplex not belonging to $\mathcal{T}'$. Otherwise, $|A|$ is a $2d$-simplex of $\mathcal{T}'$ by \cite[Lemma 2.15]{ot}.

Hence let $|A'|$ be a $d$-face of $|A|$ which is an internal $d$-simplex not belonging to $\mathcal{T}'$. Then, since $|A'|$ is submerged by $\mathcal{T}'$, there exist points $x \in |A'| \subset |A|$ such that $s_{|A'|}(x)_{2d+1}=s_{|A|}(x)_{2d+1}<s_{\mathcal{T}'}(x)$. Since $|A|$ is not submerged by $\mathcal{T}'$, there must also be $y \in |A|$ such that $s_{|A|}(y)_{2d+1}>s_{\mathcal{T}'}(y)$. Therefore $|A|_{2d+1}$ intersects $s_{\mathcal{T}'}(C(m,2d))$ by continuity. By the description of the circuits of $C(m,2d+1)$, there must either be a $d$-face $|F|$ of $|A|$ and a $(d+1)$-simplex $|G|$ of $\mathcal{T}'$ such that $F \wr G$, or a $(d+1)$-face $|G|$ of $|A|$ and a $d$-simplex $|F|$ of $\mathcal{T}'$ such that $F \wr G$. In the first case, $(g_{0}, \dots, g_{d}) \wr F$, so that $F$ is not submerged by $\mathcal{T}'$ by Proposition \ref{desc-sub}, a contradiction. In the second case, $(g_{1}, \dots, g_{d+1})$ is a $d$-face of $|A|$ which is not submerged by $\mathcal{T}'$ by Proposition \ref{desc-sub}, since $F \wr (g_{1}, \dots, g_{d+1})$, which is also a contradiction. 
\end{proof}

Proposition \ref{desc-sub} and Lemma \ref{lem-sub} together prove Theorem \ref{main-thm-2nd}, since boundary $d$-simplices are in, and hence submerged by, every triangulation. We now interpret this theorem algebraically. The following is entailed by Proposition \ref{desc-sub} and Proposition \ref{prop-key}.

\begin{corollary}\label{sub<->perp}
Let $\mathcal{T} \in \mathcal{S}(n+2d,2d)$ with corresponding tilting $A_{n}^{d}$-module $T$. Let $|A|$ be an internal $d$-simplex in $C(n+2d,2d)$ with corresponding indecomposable $A_{n}^{d}$-module $M_{A}$. Then $|A|$ is submerged by $\mathcal{T}$ if and only if $M_{A} \in \prescript{\bot}{}T$.
\end{corollary}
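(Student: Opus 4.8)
The plan is to read off the statement as an immediate consequence of the two preceding results, Proposition~\ref{desc-sub} and Proposition~\ref{prop-key}, by a short chain of equivalences. First I would recall that, by definition, $M_{A} \in \prescript{\bot}{}T$ means $\mathrm{Ext}_{A_{n}^{d}}^{i}(M_{A},T)=0$ for all $i>0$, which (as noted in Section~\ref{subsect-high-ar}, using $\mathrm{gl.dim}\,A_{n}^{d}\leqslant d$ together with the fact that $\mathrm{add}\,M^{(d,n)}$ is $d$-cluster-tilting) reduces to the single condition $\mathrm{Ext}_{A_{n}^{d}}^{d}(M_{A},T)=0$. Writing $T=\bigoplus_{B\in e(\mathcal{T})} M_{B}$ and using additivity of $\mathrm{Ext}$ in the second argument, this is equivalent to: $\mathrm{Ext}_{A_{n}^{d}}^{d}(M_{A},M_{B})=0$ for every $B\in e(\mathcal{T})$.

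Next I would apply Proposition~\ref{prop-key}, which says $\mathrm{Ext}_{A_{n}^{d}}^{d}(M_{B},M_{A})\neq 0 \iff A\wr B$; applying it with the roles of $A$ and $B$ exchanged, $\mathrm{Ext}_{A_{n}^{d}}^{d}(M_{A},M_{B})\neq 0 \iff B\wr A$. Hence the condition from the previous paragraph, that $\mathrm{Ext}_{A_{n}^{d}}^{d}(M_{A},M_{B})=0$ for all $B\in e(\mathcal{T})$, is equivalent to: there is no $B\in e(\mathcal{T})$ with $B\wr A$. By Proposition~\ref{desc-sub}, this last statement is exactly the condition that $|A|$ is submerged by $\mathcal{T}$. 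Chaining these equivalences gives the corollary.

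The only mild subtlety — and the step I would be most careful about — is making sure the indexing convention in Proposition~\ref{prop-key} is applied in the correct direction: the proposition is stated as $\mathrm{Ext}^{d}(M_{B},M_{A})\neq 0 \iff A\wr B$, so to control $\mathrm{Ext}^{d}(M_{A},M_{B})$ I must substitute consistently, obtaining the condition $B\wr A$ rather than $A\wr B$. This matches precisely the hypothesis ``there is no $B\in e(\mathcal{T})$ such that $B\wr A$'' appearing in Proposition~\ref{desc-sub}, so the two results dovetail without any mismatch. No geometric argument is needed here: all the geometry was already done in Proposition~\ref{desc-sub}, and all the homological algebra in Proposition~\ref{prop-key}; the corollary is purely the bookkeeping that glues them together.
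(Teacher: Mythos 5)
Your proposal is correct and is exactly the paper's argument: the paper states the corollary as "entailed by" Proposition \ref{desc-sub} and Proposition \ref{prop-key}, and your write-up simply spells out the bookkeeping (reducing $\prescript{\bot}{}T$ to the degree-$d$ Ext condition, decomposing $T=\bigoplus_{B\in e(\mathcal{T})}M_{B}$, and applying Proposition \ref{prop-key} with the indices swapped to get the condition ``no $B\in e(\mathcal{T})$ with $B\wr A$''). Your care over the direction of the intertwining relation is exactly the right point to check, and you have it right.
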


\begin{theorem}\label{alg-cor-2nd}
Let $\mathcal{T}, \mathcal{T}' \in \mathcal{S}(n+2d,2d)$ with corresponding tilting $A_{n}^{d}$-modules $T$ and $T'$. Then $\mathcal{T} \leqslant_{2} \mathcal{T}'$ if and only if $\prescript{\bot}{}T \subseteq \prescript{\bot}{}T'$.
\end{theorem}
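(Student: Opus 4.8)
The plan is to deduce Theorem \ref{alg-cor-2nd} directly from the combinatorial characterisation in Theorem \ref{main-thm-2nd} by transporting it along the bijections of \cite[Theorem 1.1]{ot} together with Proposition \ref{prop-key} (as packaged in Corollary \ref{sub<->perp}). The point is that all the real work has already been done; what remains is a dictionary translation.

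First I would recall that under the Oppermann--Thomas correspondence, $\mathcal{T}$ corresponds to $T$ with $e(\mathcal{T})$ the set of $(d+1)$-tuples $A$ for which $M_A$ is a summand of $T$, and similarly for $\mathcal{T}'$ and $T'$. By Theorem \ref{main-thm-2nd}, $\mathcal{T} \leqslant_{2} \mathcal{T}'$ holds if and only if for every $A \in e(\mathcal{T})$ there is no $B \in e(\mathcal{T}')$ with $B \wr A$. By Proposition \ref{prop-key}, the condition ``there is no $B \in e(\mathcal{T}')$ with $B \wr A$'' is equivalent to $\mathrm{Ext}^{d}_{A_n^d}(M_B, M_A) = 0$ for every summand $M_B$ of $T'$, which (since $\mathrm{add}\,M^{(d,n)}$ is $d$-cluster-tilting and $\mathrm{gl.dim}\,A_n^d \leqslant d$, so only degree $d$ matters) says exactly that $M_A \in \prescript{\bot}{}T'$. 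Equivalently, by Corollary \ref{sub<->perp}, this says $|A|$ is submerged by $\mathcal{T}'$.

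So the chain of equivalences is: $\mathcal{T} \leqslant_{2} \mathcal{T}'$ $\iff$ every internal $d$-simplex $|A|$ with $A \in e(\mathcal{T})$ is submerged by $\mathcal{T}'$ $\iff$ $M_A \in \prescript{\bot}{}T'$ for every indecomposable summand $M_A$ of $T$ $\iff$ $\mathrm{add}\,T \subseteq \prescript{\bot}{}T'$. To finish I would argue that $\mathrm{add}\,T \subseteq \prescript{\bot}{}T'$ is equivalent to $\prescript{\bot}{}T \subseteq \prescript{\bot}{}T'$: the forward implication holds because $\prescript{\bot}{}T$ is the largest subcategory of $\mathrm{add}\,M^{(d,n)}$ whose objects have no $\mathrm{Ext}^{d}$ into $T$, hence in particular no $\mathrm{Ext}^{d}$ into any summand of $T$; but $T$ itself lies in $\prescript{\bot}{}T$ by rigidity, so if $\mathrm{add}\,T \subseteq \prescript{\bot}{}T'$ we need to bootstrap from the summands of $T$ to all of $\prescript{\bot}{}T$. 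This is where I expect the only genuine subtlety: one needs that $\prescript{\bot}{}T$ is ``generated'' in the appropriate sense by $T$, or more precisely that $X \in \prescript{\bot}{}T$ forces $X \in \prescript{\bot}{}T'$ whenever all summands of $T$ lie in $\prescript{\bot}{}T'$.

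The cleanest way around this obstacle is to use the standard tilting-theoretic fact (valid since $T$ is a tilting module of finite projective dimension $\leqslant d$) that $\prescript{\bot}{}T = \{X \in \mathrm{add}\,M^{(d,n)} \mid \mathrm{Ext}^{d}(X, T) = 0\}$ coincides with $\mathrm{add}$ of the subcategory built from $T$ by $d$ extension steps, or alternatively to invoke \cite[Theorem 3.8]{ot} which describes mutation and the structure of these perpendicular categories; in the $A_n^d$ setting the finiteness means $\prescript{\bot}{}T$ has only finitely many indecomposables and each is reached from $T$ by a bounded sequence of the $(d+2)$-angles appearing in left mutations, so membership in $\prescript{\bot}{}T'$ propagates along these. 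I would state this as the key lemma, prove it by induction on the number of mutation steps using the long exact sequence in $\mathrm{Ext}^{\bullet}(-, T')$ applied to $0 \to X \to E_1 \to \cdots \to E_d \to Y \to 0$ together with $\mathrm{gl.dim} \leqslant d$, and then the theorem follows. The main obstacle, then, is purely this bootstrapping step from summands of $T$ to all of $\prescript{\bot}{}T$; everything else is the dictionary translation already set up by Theorem \ref{main-thm-2nd}, Proposition \ref{prop-key}, and Corollary \ref{sub<->perp}.
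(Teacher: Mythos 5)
Your dictionary translation in the first half is fine, but the route you chose creates an obstacle that the paper never has to face, and your proposed way over that obstacle does not work as described. By starting from Theorem \ref{main-thm-2nd} you reduce the statement to ``$\mathrm{add}\,T \subseteq \prescript{\bot}{}T'$ if and only if $\prescript{\bot}{}T \subseteq \prescript{\bot}{}T'$'', and the nontrivial direction of this is precisely your bootstrapping step. The exchange sequences $0 \to X \to E_{1} \to \dots \to E_{d} \to Y \to 0$ of \cite[Theorem 3.8]{ot} relate summands of \emph{adjacent tilting modules}; they do not express an arbitrary indecomposable of $\prescript{\bot}{}T$ in terms of summands of $T$. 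Indeed $\prescript{\bot}{}T$ is typically much larger than $\mathrm{add}\,T$ (for $T$ the injective tilting module it is all of $\mathrm{add}\,M^{(d,n)}$), and there is no argument given for why each of its indecomposables is ``reached from $T$'' by such sequences — making that precise would essentially require knowing which tilting modules lie below $T$, i.e.\ the content of the theorem itself. The alleged standard fact that $\prescript{\bot}{}T$ is built from $T$ by $d$ extension steps is likewise not established, and in any case here $\prescript{\bot}{}T$ is defined inside the $d$-cluster-tilting subcategory, so even the classical coresolution results for Miyashita tilting modules would need extra work to restrict correctly. Combinatorially, your missing step is exactly the statement that if no $B \in e(\mathcal{T}')$ intertwines any $A \in e(\mathcal{T})$, then no $B \in e(\mathcal{T}')$ intertwines any tuple submerged by $\mathcal{T}$ — i.e.\ the algebraic shadow of Lemma \ref{lem-sub}, which the paper proves by a genuinely geometric continuity argument, not by homological algebra.

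The paper avoids all of this by not going through Theorem \ref{main-thm-2nd} at all. It uses the Edelman--Reiner characterisation $\mathcal{T} \leqslant_{2} \mathcal{T}' \iff \mathrm{sub}_{d}(\mathcal{T}) \subseteq \mathrm{sub}_{d}(\mathcal{T}')$ directly, together with Corollary \ref{sub<->perp}, under which the indecomposables of $\prescript{\bot}{}T$ \emph{are} exactly the internal $d$-simplices of $\mathrm{sub}_{d}(\mathcal{T})$ (plus the projective-injectives, which lie in every $\prescript{\bot}{}(-)$, just as boundary $d$-simplices lie in every submersion set). The two inclusions then correspond term by term and there is nothing left to prove. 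To repair your argument, either switch to this characterisation, or explicitly invoke Lemma \ref{lem-sub} together with Proposition \ref{desc-sub} to supply the bootstrapping; a purely algebraic proof of that step is not available from the ingredients you cite.
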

\begin{proof}
We use the interpretation of $\mathcal{S}_{2}(n+2d,2d)$ in terms of $d$-submersion sets and apply Corollary \ref{sub<->perp}.

Suppose that $\prescript{\bot}{}T \subseteq \prescript{\bot}{}T'$. Let $|A|$ be a $d$-simplex of $C(n+2d,2d)$. Since $d$-simplices which are on the boundary of $C(n+2d,2d)$ are in, and hence submerged by, every triangulation, we suppose that $|A|$ is internal. Suppose that $|A|$ is submerged by $\mathcal{T}$. Then, by Corollary \ref{sub<->perp}, $M_{A} \in \prescript{\bot}{}T \subseteq \prescript{\bot}{}T'$. Therefore $|A|$ is submerged by $\mathcal{T}'$, and so $\mathcal{T} \leqslant_{2} \mathcal{T}'$.

Suppose that $\mathcal{T} \leqslant_{2} \mathcal{T}'$. Let $M_{A} \in \prescript{\bot}{}T$ be an indecomposable $A_{n}^{d}$-module corresponding to a $d$-simplex $|A|$ in $C(n+2d,2d)$. If $M_{A}$ is projective-injective, then it follows that $M_{A} \in \prescript{\bot}{}T'$ immediately. Hence, we suppose that $M_{A}$ is not projective-injective, so that $|A|$ is an internal $d$-simplex of $C(n+2d,2d)$. It follows by Corollary \ref{sub<->perp} that $|A|$ is submerged by $\mathcal{T}$, and so by $\mathcal{T}'$ as well. Then we have that $M_{A} \in \prescript{\bot}{}T'$, so $\prescript{\bot}{}T \subseteq \prescript{\bot}{}T'$.
\end{proof}

\begin{remark}
Theorems \ref{alg-cor-1st} and \ref{alg-cor-2nd} open up the Edelman--Reiner conjecture to the techniques of homological algebra. The orders in these theorems are higher-dimensional versions of the orders on tilting modules introduced in \cite{rs-simp}. These were shown to have the same Hasse diagram in \cite{hu-potm}, which implies that the two classical orders are equal for a representation-finite algebra. A higher dimensional version of this result would entail the equivalence of the higher Stasheff--Tamari orders in even dimensions.

Indeed, since \cite{hu-potm} shows that the two orders on tilting modules have the same Hasse diagram for $d=1$, this provides a neat algebraic proof of the result that the two higher Stasheff--Tamari orders are equal in dimension two \cite[Theorem 3.8]{er}.
\end{remark}

\begin{example}\label{ex-even-second}
We illustrate how the second higher Stasheff--Tamari order figures in Example \ref{ex-even-first}. Recall that we considered the cyclic polytope $C(6,4)$, which has two triangulations $\mathcal{T}$ and $\mathcal{T}'$, where $e(\mathcal{T})= \{135,136,146\}$ and
$e(\mathcal{T}') = \{136,146,246\}$. These correspond respectively to the tilting $A_{2}^{2}$-modules $T={\tcs{1}} \oplus {\tcs{2\\1}} \oplus {\tcs{3\\2}}$ and $T'={\tcs{2\\1}} \oplus {\tcs{3\\2}} \oplus {\tcs{3}}$.

We have that $\mathcal{T} \leqslant_{2} \mathcal{T}'$, since there are no tuples $B \in e(\mathcal{T}')$ and $A \in e(\mathcal{T})$ such that $B \wr A$. The only pair of tuples respectively from $e(\mathcal{T})$ and $e(\mathcal{T}')$ which are intertwining are $135$ and $246$, but we have $135 \wr 246$.

On the algebraic side, we have that $\prescript{\bot}{}T=\mathrm{add}\,\left\lbrace {\tcs{1}}\, ,\, {\tcs{2\\1}}\, ,\, {\tcs{3\\2}} \right\rbrace$ and $\prescript{\bot}{}T'=\mathrm{add}\,\left\lbrace {\tcs{1}}\, ,\,{\tcs{2\\1}}\, ,\, {\tcs{3\\2}}\, ,\, {\tcs{3}} \right\rbrace$, so that $\prescript{\bot}{}T \subseteq \prescript{\bot}{}T'$.
\end{example}

\subsection{Cluster-tilting}\label{sect-clus-tilt}

By similar arguments, the combinatorial characterisations of the higher Stasheff--Tamari orders given in Theorem \ref{main-thm-1st} and Theorem \ref{main-thm-2nd} translate into the following orders on cluster-tilting objects.

\begin{theorem}\label{thm-clus-tilt}
Let $\mathcal{T}, \mathcal{T}' \in \mathcal{S}(n+2d+1,2d)$ correspond to cluster-tilting objects $T,T' \in \mathcal{V}_{A_{n}^{d}}$.
\begin{enumerate}
\item We have that $\mathcal{T} \lessdot_{1} \mathcal{T}'$ if and only if $T'$ is a left mutation of $T$.
\item We have that $\mathcal{T} \leqslant_{2} \mathcal{T}'$ if and only if $\prescript{\bot}{}T \subseteq \prescript{\bot}{}T'$.
\end{enumerate}
\end{theorem}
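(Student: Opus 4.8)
The plan is to mimic the arguments of Sections \ref{even-first} and \ref{even-second}, replacing tilting modules over $A_{n}^{d}$ by cluster-tilting objects in $\mathcal{V}_{A_{n}^{d}}$, and using the cluster-tilting analogue of Proposition \ref{prop-key} stated just before this section, namely that $\mathrm{Hom}_{\mathcal{U}_{A_{n}^{d}}}(M_{B}, M_{A}[d]) \neq 0$ if and only if $A \wr B$. The combinatorial facts — Theorem \ref{main-thm-1st}, Theorem \ref{main-thm-2nd}, Proposition \ref{desc-sub}, and Lemma \ref{lem-sub} — are statements about triangulations of $C(m,2d)$ and so apply verbatim with $m = n+2d+1$; only the translation from combinatorics to algebra needs to be redone in the cluster-tilting setting.

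For part (1), I would first recall the bijection from \cite[Theorem 1.1, Theorem 1.2]{ot} between triangulations of $C(n+2d+1,2d)$, given by non-intertwining collections $e(\mathcal{T}) \subseteq \nonconsec{n+2d+1}{d}$, and basic cluster-tilting objects in $\mathcal{V}_{A_{n}^{d}}$, sending $\mathcal{T}$ to the object with summands $\{M_{A} \mid A \in e(\mathcal{T})\}$ together with the projective-injective summands. Then I would invoke the cluster-tilting mutation theory: there should be a cluster-tilting analogue of \cite[Theorem 3.8]{ot} — indeed \cite[Theorem 6.1]{ot} and its proof provide exactly this — saying that $T' = E \oplus M_{B}$ is a left mutation of $T = E \oplus M_{A}$ via a $(d+2)$-angle $M_{A} \to E_{1} \to \dots \to E_{d} \to M_{B} \to M_{A}[d]$ precisely when this connecting map $M_{B} \to M_{A}[d]$ is nonzero, i.e.\ when $\mathrm{Hom}_{\mathcal{U}_{A_{n}^{d}}}(M_{B}, M_{A}[d]) \neq 0$, which by the cited proposition is equivalent to $A \wr B$. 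Combining this with Theorem \ref{main-thm-1st} — which says $\mathcal{T} \lessdot_{1} \mathcal{T}'$ iff $e(\mathcal{T}) = \mathcal{U} \cup \{A\}$, $e(\mathcal{T}') = \mathcal{U} \cup \{B\}$, and $A \wr B$ — gives the claim, exactly paralleling the proof of Theorem \ref{alg-cor-1st}.

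For part (2), I would first establish the cluster-tilting version of Corollary \ref{sub<->perp}: for an internal $d$-simplex $|A|$ of $C(n+2d+1,2d)$ with corresponding indecomposable $M_{A} \in \mathcal{V}_{A_{n}^{d}}$, the simplex $|A|$ is submerged by $\mathcal{T}$ if and only if $M_{A} \in \prescript{\bot}{}T$. This follows from Proposition \ref{desc-sub} together with the characterisation $\prescript{\bot}{}T = \{X \in \mathcal{V}_{A_{n}^{d}} \mid \mathrm{Hom}_{\mathcal{D}_{A_{n}^{d}}}(X, T[d]) = 0\}$ and the cluster-tilting analogue of Proposition \ref{prop-key}: $M_{A} \in \prescript{\bot}{}T$ iff $\mathrm{Hom}_{\mathcal{U}_{A_{n}^{d}}}(M_{A}, M_{B}[d]) = 0$ for every summand $M_{B}$ of $T$, iff there is no $B \in e(\mathcal{T})$ with $B \wr A$, iff $|A|$ is submerged by $\mathcal{T}$. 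Then, using the interpretation of $\leqslant_{2}$ via $d$-submersion sets and Lemma \ref{lem-sub} (boundary $d$-simplices being submerged by everything, so only internal ones matter), the argument of Theorem \ref{alg-cor-2nd} carries over word for word: $\mathcal{T} \leqslant_{2} \mathcal{T}'$ iff every internal $d$-simplex submerged by $\mathcal{T}$ is submerged by $\mathcal{T}'$ iff $\prescript{\bot}{}T \subseteq \prescript{\bot}{}T'$, where one handles projective-injective summands separately as they lie in every $\prescript{\bot}{}(-)$.

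The one genuine point requiring care — the ``main obstacle'' — is checking that the mutation theory of cluster-tilting objects in the subcategory $\mathcal{V}_{A_{n}^{d}}$ used here really matches the one from \cite{ot}, since Remark \ref{rmk-clus-tilt} flags that for general $\Lambda$ our cluster-tilting objects need not coincide with Oppermann--Thomas cluster-tilting objects. As noted there, however, for $A_{n}^{d}$ the two notions agree by \cite[Theorem 1.1, Theorem 1.2]{ot}, so the $(d+2)$-angulated mutation of \cite[Section 5, Section 6]{ot} applies and the connecting-morphism criterion for left mutation holds as stated; once this is in place the rest is a routine transcription of Sections \ref{even-first}--\ref{even-second}.
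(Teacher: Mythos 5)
Your proposal is correct and follows exactly the route the paper intends: the paper gives no written proof of Theorem \ref{thm-clus-tilt}, asserting only that the combinatorial characterisations of Theorems \ref{main-thm-1st} and \ref{main-thm-2nd} ``translate by similar arguments'' via the cluster-tilting analogue of Proposition \ref{prop-key}, and your write-up supplies precisely those arguments, including the correct resolution (via Remark \ref{rmk-clus-tilt}) of the only genuine subtlety about the two notions of cluster-tilting object. If anything, your treatment is more detailed than the paper's.
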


\section{Triangulations of odd-dimensional cyclic polytopes}\label{sect-odd-desc}

In this section we characterise triangulations of $(2d+1)$-dimensional cyclic polytopes in terms of their $d$-simplices. This gives the odd-dimensional counterpart to the description in \cite{ot} of triangulations of even-dimensional cyclic polytopes as sets of non-intersecting $d$-simplices of a maximal size. It is far from obvious what the counterpart description for odd dimensions should look like. In a $(2d+1)$-dimensional cyclic polytope, $d$-simplices do not intersect each other, and numbers of simplices vary between triangulations, so that there is no notion of maximal size.

One appealing way of solving these problems might be to describe triangulations of $(2d+1)$-dimensional cyclic polytopes as inclusion-maximal sets of non-intersecting $d$-simplices and $(d+1)$-simplices. This is an approach based on the even-dimensional model. However, there are two issues here. The first issue is that a non-intersecting collection of simplices which is maximal with respect to adding more simplices does not necessarily give a triangulation of a cyclic polytope, as first shown in \cite[Example 4.5]{rambau}. This is why maximality of size is required in even dimensions, rather than simply maximality with respect to inclusion. But, as discussed above, numbers of simplices vary between triangulations in odd dimensions. The second issue is that, by \cite{dey}, a triangulation of a $(2d+1)$-dimensional cyclic polytope is determined by its $d$-simplices, so that including the $(d+1)$-simplices in a description is redundant.

We solve these problems by taking an approach which is distinctive to odd dimensions, rather than using the even-dimensional model. We describe triangulations of $(2d+1)$-dimensional cyclic polytopes in terms of their $d$-simplices by defining two new properties which imply that a given set of $d$-simplices arises from a triangulation. Indeed, we prove the following theorem, which we build up to using a series of lemmas. We extend our notation by using $\simp(\mathcal{T})$ to denote the set of tuples corresponding to internal $d$-simplices of $\mathcal{T}$, where $\mathcal{T}$ is a triangulation of $C(m,2d+1)$. We define the set \[\chainset{m}{d}:=\{(a_{0}, \dots, a_{d}) \in \modset{m}{d} \mid a_{0} \neq 1,\, a_{d} \neq m\}.\]

\begin{theorem}\label{thm-class-odd-dim}
There is a bijection via $\mathcal{T} \mapsto \simp(\mathcal{T})$ between triangulations of $C(m,2d+1)$ and subcollections of $\chainset{m}{d}$ which are supporting and bridging.
\end{theorem}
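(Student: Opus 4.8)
The strategy is to prove injectivity of $\mathcal{T}\mapsto\simp(\mathcal{T})$, then that its image lies in the supporting-and-bridging subcollections of $\chainset{m}{d}$, and finally the reverse inclusion, which is where the real work — and the series of lemmas — is concentrated. Injectivity is essentially formal: by Gale's Evenness Criterion every facet of $C(m,2d+1)$ is itself a $2d$-simplex, so triangulations of $C(m,2d+1)$ do not subdivide its boundary; hence every triangulation contains all $d$-faces of facets and its ``boundary'' $d$-simplices are the same for all triangulations. Since by \cite{dey} a triangulation of $C(m,2d+1)$ is determined by all its $d$-simplices, it is determined by the internal ones alone, i.e. by $\simp(\mathcal{T})$. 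The same criterion, applied to $d$-faces, shows that $|A|$ lies on a facet precisely when $A$ is non-separated, or $a_0=1$, or $a_d=m$; hence the internal $d$-simplices are indexed exactly by $\chainset{m}{d}$ and $\simp(\mathcal{T})\subseteq\chainset{m}{d}$ is well defined.

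For the forward inclusion I would check the clauses of Definition~\ref{def-support} and Definition~\ref{def-bridge} directly in an arbitrary triangulation $\mathcal{T}$ of $C(m,2d+1)$. The supporting condition should encode the mutual consistency of the internal $d$- and $(d+1)$-simplices of $\mathcal{T}$, and I expect each clause to drop out of face-to-face compatibility within $\mathcal{T}$ combined with the circuit structure of $C(m,2d+1)$ — whose circuits pair a $(d+1)$-tuple intertwining a $(d+2)$-tuple — using the same ingredients as in the proofs of Lemma~\ref{up-low-fac} and Proposition~\ref{desc-sub}. The bridging condition should encode that $\mathcal{T}$ fills all of $C(m,2d+1)$: near any lower-dimensional ``slot'' of the polytope every point lies in a $(2d+1)$-simplex of $\mathcal{T}$, which must therefore contribute $d$-faces to $\simp(\mathcal{T})$ of the shape needed to ``bridge'' that slot.

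The reverse inclusion is the heart of the proof. Given $E\subseteq\chainset{m}{d}$ that is supporting and bridging, I would reconstruct a triangulation $\mathcal{T}$ of $C(m,2d+1)$ with $\simp(\mathcal{T})=E$ by induction on $m$; the base case $m=2d+2$ is immediate, since then $C(m,2d+1)$ is a simplex, $\chainset{m}{d}=\emptyset$, and the unique triangulation has no internal $d$-simplices. For the inductive step, partition $E$ into the set $E_m$ of its tuples containing the top vertex $m$ and the remainder $E'\subseteq\chainset{m-1}{d}$. One lemma would show that $E'$ is again supporting and bridging, so that by induction $E'=\simp(\mathcal{T}')$ for a triangulation $\mathcal{T}'$ of $C(m-1,2d+1)$. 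A second lemma would show that $E_m$ records exactly which faces of the part of $\partial C(m-1,2d+1)$ visible from $p(t_m)$ are to be joined with $m$, and that performing this join and attaching the result to $\mathcal{T}'$ yields a triangulation $\mathcal{T}$ of $C(m,2d+1)$ with $\simp(\mathcal{T})=E_m\cup E'=E$, closing the induction.

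The main obstacle is this last step: showing that the complex assembled in the reverse direction genuinely triangulates $C(m,2d+1)$, rather than being only an inclusion-maximal non-overlapping collection of simplices, which by \cite[Example 4.5]{rambau} need not triangulate the polytope. This is precisely what the two new properties must be engineered to control — the bridging condition forcing that the simplices leave no gap (equivalently, that exactly the visible boundary faces are coned off at each inductive step), and the supporting condition forcing that all simplices, together with their pairwise intersections, meet face-to-face. Isolating these two implications, and confirming that the conditions are also necessary so that the forward direction is valid, is the crux of the argument; everything else is bookkeeping with Gale's Evenness Criterion and the circuit structure of odd-dimensional cyclic polytopes.
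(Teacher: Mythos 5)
Your skeleton (injectivity via \cite{dey}, forward inclusion, reverse inclusion by induction on $m$) matches the paper's, but both substantive halves have gaps. For the forward inclusion, "check the clauses directly" understates what is needed: the bridging property is not a consequence of $\mathcal{T}$ "filling the polytope" (every triangulation does that), and it does not fall out of local face-to-face compatibility. The paper proves it (Lemma \ref{bridge}) by induction along increasing bistellar flips from the lower triangulation, using Rambau's theorem that every triangulation of $C(m,2d+1)$ is reachable this way, and showing that a flip can never remove one of the intermediate simplices $|S_k|$; similarly the supporting property (Lemma \ref{support}) is extracted from Rambau's representation of $\mathcal{T}$ as a maximal chain of $2d$-dimensional sections together with \cite[Proposition 2.13]{ot}. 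Some such global input is unavoidable, and your sketch does not identify it.

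The reverse inclusion is where the proposal actually breaks. First, your $E'$ need not lie in $\chainset{m-1}{d}$: a tuple with $a_d=m-1$ is internal for $C(m,2d+1)$ but boundary for $C(m-1,2d+1)$. More fundamentally, deleting the top vertex of a triangulation of $C(m,2d+1)$ does not yield a triangulation of $C(m-1,2d+1)$ (the simplices avoiding $m$ triangulate only the complement of the star of $m$), and conversely coning $p(t_m)$ over the visible facets of a triangulation of $C(m-1,2d+1)$ forces the star of $m$ to be the full cone over the visible boundary, independently of $E_m$ --- so the construction cannot realise a prescribed $E_m$. Concretely, for $E=\emptyset\subseteq\chainset{6}{1}$ your recipe produces the placing triangulation $\{1235,1345,1256,2356,3456\}$ of $C(6,3)$, whose $\simp$ is $\{25,35\}\neq\emptyset$, not the upper triangulation. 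The paper avoids this with a double induction on $m$ \emph{and} $d$: it uses the contraction $X/1$ (Lemma \ref{lem-contract-works}) and the two-fold vertex figure $X\backslash\{1,2\}$ (Lemma \ref{lem-delete-works}), proves the compatibility that $\mathcal{W}$ is a section of $\mathcal{U}\backslash 2$ (Proposition \ref{prop-key-sup}, the technical heart, where both the supporting and bridging hypotheses are actually used), and then reassembles $\mathcal{T}$ via the Rambau--Santos gluing lemma \cite[Lemma 4.7]{rs-baues}. Your single induction on $m$ has no analogue of this lower-dimensional datum and cannot be repaired without introducing one.
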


We begin by characterising the internal $d$-simplices of $C(m,2d+1)$.

\begin{lemma}\label{lem-int-odd}
Let $A\in \subs{m}{d+1}{}$. Then $|A|$ is an internal $d$-simplex of $C(m,2d+1)$ if and only if $A \in \chainset{m}{d}$.
\end{lemma}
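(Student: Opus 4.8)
The plan is to characterise internal $d$-simplices via Gale's Evenness Criterion applied one dimension up. Recall that a $d$-simplex $|A|$ is internal in $C(m,2d+1)$ precisely when it lies in no facet of the polytope, and the facets of $C(m,2d+1)$ are the $|F|$ for $F$ a $(2d+1)$-subset of $[m]$ which is either even (lower facet) or odd (upper facet). So $|A|$ is internal if and only if there is no even or odd $(2d+1)$-subset $F \subseteq [m]$ with $A \subseteq F$. I would translate this condition on the existence of a ``completing'' facet $F$ into the purely arithmetic condition $a_0 \neq 1$ and $a_d \neq m$, i.e.\ membership in $\chainset{m}{d}$ (bearing in mind $A \in \modset{m}{d}$ is already assumed, being a $(d+1)$-subset giving a genuine simplex; though I should double-check whether the lemma as stated is assuming separatedness or whether that needs to be argued too --- since the statement only says $A \in \subs{m}{d+1}{}$, I may need to first observe that a $(d+1)$-tuple which is \emph{not} separated automatically lies on a facet, hence is not internal, so that the separatedness is forced).

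The key steps, in order, are as follows. First, the ``only if'' direction by contraposition: suppose $A \notin \chainset{m}{d}$, so either $a_0 = 1$ or $a_d = m$ (or $A$ is not separated). If $a_0 = 1$: I need to exhibit an even or odd $(2d+1)$-subset $F$ with $1 = a_0 \in A \subseteq F$. The natural choice is to pad $A$ out by inserting, just below each $a_i$ with $i \geq 1$, the element $a_i - 1$ (possible since $A$ is separated, so $a_i - 1 \notin A$ and $a_i - 1 > a_{i-1}$), giving a $(2d+1)$-subset; then check via Gale's criterion that every gap is odd (each $v \notin F$ has an even number of elements of $F$ above it being converted appropriately) --- this makes $F$ an odd set, hence $|F|$ an upper facet containing $|A|$. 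The case $a_d = m$ is symmetric, padding upwards to produce an even set and a lower facet. The non-separated case: if $a_{i+1} = a_i + 1$ for some $i$, the pair $\{a_i, a_{i+1}\}$ forces $|A|$ into a facet by a similar Gale argument. Second, the ``if'' direction, also by contraposition: suppose $|A|$ lies in some facet $|F|$, with $F$ a $(2d+1)$-subset that is even or odd. Since $\#F = 2d+1$ and $\#A = d+1$, there are $d$ elements of $F \setminus A$; I would argue that the parity constraint forces one element of $F$ strictly below $a_0$ and one strictly above $a_d$ cannot both be absent --- more precisely, count gaps: if $a_0 \geq 2$ and $a_d \leq m-1$ and $A$ is separated, one shows no even/odd completion to size $2d+1$ exists because the ``slots'' between consecutive $a_i$ (and the two outer slots) each contribute a controlled parity, and filling them to reach size $2d+1$ forces some gap of the wrong parity. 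This is essentially the converse Gale computation.

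The main obstacle I expect is the bookkeeping in the parity counting --- making the Gale's Evenness Criterion argument airtight for arbitrary $d$, keeping careful track of how many elements of $F$ sit above each non-element $v$, and in particular handling the boundary slots (below $a_0$, above $a_d$) correctly, since these are exactly where the conditions $a_0 \neq 1$, $a_d \neq m$ bite. A clean way to organise this may be to think of $F$ as determined by how many extra vertices are inserted into each of the $d+2$ ``gaps'' $(-\infty, a_0), (a_0,a_1), \dots, (a_{d-1},a_d), (a_d, +\infty)$, subject to total $= d$, and then express the even/odd condition as a system of parity constraints on those gap-counts; one then checks this system is solvable if and only if $a_0 = 1$ or $a_d = m$ (or $A$ non-separated). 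I would also want to be slightly careful that an upper facet and a lower facet genuinely cannot simultaneously fail to contain $|A|$ when $A \in \chainset{m}{d}$ --- but that is exactly what the ``if'' direction establishes, so there is no circularity. Finally, I should remark that Lemma~\ref{up-low-fac} and the surrounding facet discussion in Section~\ref{back-cyc} already supply all the tools (Gale's Evenness Criterion, the notions of even/odd subsets and even/odd gaps), so no new machinery is needed; this is purely a combinatorial verification.
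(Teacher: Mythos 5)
Your plan is correct and coincides with the paper's second argument, which also proceeds via Gale's Evenness Criterion; the paper compresses your parity bookkeeping by observing that an upper (resp.\ lower) facet of $C(m,2d+1)$ is precisely $\{m\}$ (resp.\ $\{1\}$) together with $d$ disjoint pairs of consecutive integers, from which both directions --- including the non-separated case you rightly flag --- follow at once by a pigeonhole on how a separated $(d+1)$-tuple can meet such pairs. One harmless slip in your sketch: the padding for $a_0=1$ produces an even set, i.e.\ a lower facet (and the $a_d=m$ padding an odd set, i.e.\ an upper facet), not the other way round, but this is immaterial since containment in any facet suffices for non-internality.
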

\begin{proof}
One way to see this is to note that $|A|$ is an internal $d$-simplex if and only if there is a $(d+1)$-simplex $|B|$ in $C(m,2d+1)$ such that $A \wr B$, so that the two intersect transversely. But such a $B$ exists if and only if $A \in \chainset{m}{d}$.

Alternatively, one can apply Gale's Eveness Criterion, which entails that the vertices of an upper facet of $C(m,2d+1)$ consist of $m$ together with $d$ disjoint pairs of consecutive integers, and the vertices of a lower facet consist of $1$ together with $d$ disjoint pairs of consecutive integers. Hence $|A|$ is not contained in a facet if and only if $A \in \chainset{m}{d}$.
\end{proof}

The even-dimensional counterpart of the following result was shown in \cite[Proposition 2.13]{ot}.

\begin{lemma}\label{lem-simp-below}
Let $\mathcal{T} \in \mathcal{S}(m,2d+1)$. Let $|A|$ be an internal $d$-simplex of $\mathcal{T}$. Then there is a unique $(2d+1)$-simplex $|A\cup B|$ of $\mathcal{T}$ such that $B$ is a $(d+1)$-tuple with $B \wr A$.
\end{lemma}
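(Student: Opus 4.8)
The plan is to analyse the local structure of the triangulation $\mathcal{T}$ near the internal $d$-simplex $|A|$ and use the known fact (from \cite{dey}) that a triangulation of an odd-dimensional cyclic polytope is determined by its $d$-simplices, together with the circuit description of $C(m,2d+1)$. First I would recall that since $|A|$ is a $d$-simplex of $\mathcal{T}$, it is a face of some full-dimensional $(2d+1)$-simplex of $\mathcal{T}$; write such a simplex as $|A \cup B|$ where $B$ is a $(d+1)$-tuple disjoint from $A$ (it has the right cardinality since $\#(A \cup B) = 2d+2$). The task is then twofold: (i) show that we may choose the complementary $(d+1)$-tuple $B$ so that $B \wr A$, and (ii) show this $B$ is unique.

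For existence, I would argue as follows. The $d$-simplex $|A|$ is internal, so by Lemma \ref{lem-int-odd} we have $A \in \chainset{m}{d}$, meaning $a_0 \neq 1$ and $a_d \neq m$; hence there is room for a $(d+1)$-tuple $B$ with $b_0 < a_0 < b_1 < a_1 < \dots < a_{d-1} < b_d < a_d$, wait — more precisely with $B \wr A$, i.e. $b_0 < a_0 < b_1 < \dots < b_d$ with $a_i$ interleaved; such tuples $B$ are exactly the ones forming a circuit with $A$. The point $|A|$ lies in the interior of $C(m,2d+1)$, and the link of $|A|$ in $\mathcal{T}$ is a triangulation of a $d$-sphere (or the relevant link complex); the $(2d+1)$-simplices of $\mathcal{T}$ containing $|A|$ partition a neighbourhood of the relative interior of $|A|$. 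Consider the piecewise-linear picture one dimension up, or directly: among the $(d+1)$-tuples $B'$ such that $|A \cup B'|$ is a $(2d+1)$-simplex of $\mathcal{T}$, I claim exactly one satisfies $B' \wr A$. The cleanest route is to lift to $C(m,2d+2)$: each $(2d+1)$-simplex $|A \cup B'|_{2d+1}$ of $\mathcal{T}$ corresponds, via a Lemma \ref{up-low-fac}-style analysis of Gale's Evenness Criterion applied to the $(2d+2)$-simplex $|A \cup B'|_{2d+2}$, to a decomposition in which $|A|$ is the intersection of the lower facets precisely when $A$ interleaves $B'$ the ``right'' way, and the intersection of the upper facets in the opposite case. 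Using the characteristic section $s_{\mathcal{T}}$, the simplices of $\mathcal{T}$ lying immediately above $|A|$ (in the $(2d+2)$-coordinate direction) form the set of $|A \cup B'|$ with $B' \wr A$; since $\mathcal{T}$ is a triangulation, there is exactly one such simplex whose relative interior meets the relative interior of $|A|$ from above, giving both existence and uniqueness of $B$.

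I would therefore structure the proof as: (1) reduce to the circuit combinatorics by noting $|A \cup B|$ is a $(2d+1)$-simplex iff $A$ and $B$ interleave (one way or the other), and invoke the circuit description of $C(m,2d+1)$ recalled in Section \ref{back-cyc}; (2) for uniqueness, suppose $|A \cup B|$ and $|A \cup B'|$ were two $(2d+1)$-simplices of $\mathcal{T}$ with $B \wr A$ and $B' \wr A$ — then their relative interiors both meet the relative interior of $|A|$ on the same side, so by the section picture (a triangulation's characteristic section is a well-defined PL map) they must coincide; (3) for existence, use that $|A|$ being internal forces the existence of at least one $(2d+1)$-simplex of $\mathcal{T}$ on each side of $|A|$, and the one ``above'' has the form $|A \cup B|$ with $B \wr A$. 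The main obstacle I anticipate is step (3) together with making precise the ``above/below'' dichotomy: one must carefully set up the correspondence between $(d+1)$-tuples $B'$ complementary to $A$ and the two sides of the hyperplane through $|A|$, which amounts to a careful bookkeeping with Gale's Evenness Criterion in $C(m,2d+2)$, entirely parallel to the proof of Lemma \ref{up-low-fac} but with $A$ of size $d+1$ and the ambient simplex of dimension $2d+2$; once that dictionary is in place, both existence and uniqueness follow from the fact that $s_{\mathcal{T}}$ is a genuine section and $|A|$ is internal.
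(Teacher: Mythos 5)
Your overall strategy is the one the paper uses: the proof in the text observes that the points immediately below $\mathring{|A|}$ (in the last coordinate of $C(m,2d+1)$ itself) lie in a unique $(2d+1)$-simplex $|S|$ of $\mathcal{T}$; since $|S|$ contains those points, $|A|$ cannot lie in any lower facet of $|S|$, and as a $d$-face of a $(2d+1)$-simplex is the intersection of exactly $d+1$ of its $2d+2$ facets while Gale's Evenness Criterion provides exactly $d+1$ upper facets, $|A|$ must be the intersection of the upper facets; Lemma \ref{up-low-fac} then gives $S=A\cup B$ with $B\wr A$, and uniqueness is the uniqueness of $|S|$. Your steps (2) and (3) are recognisably this argument, and your uniqueness argument (two such simplices would share interior points just below $\mathring{|A|}$) is sound.

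However, several of your supporting claims are false and would need repair. First, the orientation is reversed: by Lemma \ref{up-low-fac}, $B\wr A$ means $|A|$ is the intersection of the \emph{upper} facets of $|A\cup B|$, so this simplex sits \emph{below} $|A|$, not above; the simplex immediately above, when it exists, has complementary tuple $B'$ with $A\wr B'$. Second, it is not true that $|A\cup B'|$ is a $(2d+1)$-simplex iff $A$ and $B'$ interleave: any $2d+2$ distinct vertices of $C(m,2d+1)$ span a $(2d+1)$-simplex, because points on the moment curve are in general position. Relatedly, the dichotomy you posit (lower facets for one interleaving pattern, upper facets for the other) fails for a generic complementary tuple $B'$: since $|A|$ has codimension $d+1$ in $|A\cup B'|$, for most $B'$ it lies in a mixture of upper and lower facets and the simplex sits ``beside'' $|A|$; there is no ``hyperplane through $|A|$'' with two sides once $d\geqslant 1$. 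The needed step is only one-directional, namely ruling out that $|A|$ meets a lower facet of the \emph{distinguished} simplex. Third, the lift to $C(m,2d+2)$ is both unnecessary and incorrect as stated: $|A\cup B'|_{2d+2}$ has only $2d+2$ vertices, so it is not a $(2d+2)$-simplex, and the Gale analysis is carried out for $|A\cup B'|_{2d+1}$ inside $C(m,2d+1)$, which is exactly Lemma \ref{up-low-fac}. Finally, two $(d+1)$-tuples do not form a circuit of $C(m,2d+1)$ (circuits there pair a $(d+1)$-tuple with a $(d+2)$-tuple), so the circuit description is not the right tool; the relevant input is Lemma \ref{lem-int-odd}, which guarantees the points below $\mathring{|A|}$ lie in the polytope at all.
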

\begin{proof}
Since $|A|$ is an internal $d$-simplex of $\mathcal{T}$, the points immediately below $|A|$ must lie in a unique $(2d+1)$-simplex $|S|$. Then $|A|$ is a $d$-face of $|S|$, and hence is the intersection of $d+1$ facets of $|S|$. The simplex $|S|$ has $d+1$ upper facets and $d+1$ lower facets, by Gale's Evenness Criterion. Then $|A|$ must be the intersection of the upper facets of $|S|$, otherwise $|A|$ lies in a lower facet of $|S|$, and so $|S|$ cannot contain the points immediately below $|A|$. But then, by Lemma \ref{up-low-fac}, we must have that $S = A \cup B$, where $B \wr A$. 
\end{proof}

By \cite{dey}, we know that it is possible to reconstruct a triangulation of a point configuration in $\mathbb{R}^{\delta}$ on the basis of knowing  only its $\lfloor \frac{\delta}{2} \rfloor$-faces. Hence we can reconstruct a triangulation of $C(m,2d+1)$ from its $d$-simplices alone. However, in the manner of \cite[Lemma 2.15]{ot}, we affirm this result by showing what the reconstructed triangulation looks like.

\begin{lemma}\label{lem-dey}
Let $\mathcal{T} \in \mathcal{S}(m,2d+1)$. Then $\mathcal{T}$ is determined by $\simp(\mathcal{T})$. In particular,
\begin{enumerate}
\item the $(d+1)$-simplices of $\mathcal{T}$ are those\label{d+1}
	\begin{itemize}
	\item whose $d$-faces $|A|$ are either such that $A \in \simp(\mathcal{T})$ or such that $A \notin \chainset{m}{d}$, and
	\item which are such that there is no $d$-simplex of $\mathcal{T}$ which intersects them transversely;
	\end{itemize} 
\item the $k$-simplices of $\mathcal{T}$ for $k>d+1$ are those whose $(d+1)$-faces satisfy (\ref{d+1}).
\end{enumerate}
\end{lemma}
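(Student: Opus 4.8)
The plan is to prove Lemma \ref{lem-dey} in two stages: first establish that the sets of simplices described in (\ref{d+1}) and (2) are \emph{contained} in $\mathcal{T}$, and then establish the reverse inclusions, so that together with Lemma \ref{lem-int-odd} and the fact that $\simp(\mathcal{T})$ records exactly the internal $d$-simplices of $\mathcal{T}$, the triangulation $\mathcal{T}$ is completely recovered. Throughout I would use the standard facts that $\mathcal{T}$, being a triangulation, is a simplicial complex closed under taking faces, and that any two simplices of $\mathcal{T}$ meet in a common face and in particular never intersect transversely (this is the defining property of a subdivision). The boundary $d$-simplices, namely those $|A|$ with $A \notin \chainset{m}{d}$ by Lemma \ref{lem-int-odd}, lie in some facet of $C(m,2d+1)$ and hence are faces of \emph{every} triangulation; this is why the first bullet of (\ref{d+1}) allows a $d$-face of a $(d+1)$-simplex to be either in $\simp(\mathcal{T})$ or outside $\chainset{m}{d}$.

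For the forward inclusion in (\ref{d+1}): let $|C|$ be a $(d+1)$-simplex of $\mathcal{T}$. Since $\mathcal{T}$ is a simplicial complex, every $d$-face $|A|$ of $|C|$ is a $d$-simplex of $\mathcal{T}$, so either $|A|$ is internal — whence $A \in \simp(\mathcal{T})$ — or $|A|$ is a boundary simplex, whence $A \notin \chainset{m}{d}$ by Lemma \ref{lem-int-odd}; this gives the first bullet. The second bullet is immediate because distinct simplices of a subdivision cannot intersect transversely, and no simplex intersects itself transversely. For the reverse inclusion, suppose $|C|$ is a $(d+1)$-simplex satisfying both bullets; I must show $|C| \in \mathcal{T}$. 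Pick an interior point $x$ of $|C|$. It lies in the interior of a unique $(2d+1)$-simplex $|S|$ of $\mathcal{T}$, or in a proper face thereof. Using the circuit description of $C(m,2d+1)$ — whose circuits pair a $(d+1)$-tuple with a $(d+1)$-tuple, or a $d$-tuple with a $(d+2)$-tuple — one argues that if $|C| \notin \mathcal{T}$ then $|C|$ must intersect the relative interior of some simplex $|S'|$ of $\mathcal{T}$ transversely, and analysing the possible circuit types (as in the proof of Lemma \ref{lem-sub}) one locates inside $|C|$ and inside $|S'|$ a $d$-face and a $(d+1)$-face, one from each, that are intertwining; the $d$-face of $|C|$ is either internal, hence in $\simp(\mathcal{T})$ and a genuine $d$-simplex of $\mathcal{T}$, contradicting the second bullet via transversality, or it is a boundary $d$-face, but boundary $d$-simplices cannot intertwine a $(d+1)$-simplex since by Lemma \ref{lem-int-odd} intertwining forces membership in $\chainset{m}{d}$. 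Either way we reach a contradiction, so $|C| \in \mathcal{T}$.

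Part (2) then follows by downward induction on dimension combined with the simplicial-complex property: a $k$-simplex for $k > d+1$ lies in $\mathcal{T}$ if and only if all of its $(d+1)$-faces do, because $\mathcal{T}$ is closed under faces (giving the ``only if''), and conversely if every $(d+1)$-face of a $k$-simplex $|C|$ belongs to $\mathcal{T}$ then every vertex of $|C|$ is compatible, and one uses again that a simplex not in $\mathcal{T}$ would have to intersect some simplex of $\mathcal{T}$ transversely, which by the circuit description would already be witnessed on a $(d+1)$-face — here one invokes that any circuit involves at most a $(d+2)$-tuple on one side, so transversal intersection of $|C|$ with a simplex of $\mathcal{T}$ restricts to transversal intersection of a $(d+1)$-face of $|C|$ with a face of that simplex. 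Finally, since Lemma \ref{lem-int-odd} identifies $\chainset{m}{d}$ with the internal $d$-simplices and the boundary $d$-simplices are forced, knowing $\simp(\mathcal{T})$ determines all $d$-simplices, then all $(d+1)$-simplices by (\ref{d+1}), then all higher simplices by (2); hence $\mathcal{T}$ is determined by $\simp(\mathcal{T})$, recovering the theorem of \cite{dey} in this case.

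The main obstacle I anticipate is the reverse inclusion in (\ref{d+1}): making rigorous the step ``if $|C| \notin \mathcal{T}$ then $|C|$ intersects some simplex of $\mathcal{T}$ transversely, and this transversal intersection is witnessed by an intertwining pair of low-dimensional faces.'' This is exactly the kind of continuity-plus-circuits argument carried out in the proof of Lemma \ref{lem-sub}, and the care needed is in checking that when $|C|$ is not itself a face of the ambient $(2d+1)$-simplex $|S|$ containing a generic interior point of $|C|$, the obstruction is genuinely a transversal crossing of the correct dimensions rather than merely a shared higher face; one must also handle the two circuit types ($(d{+}1,d{+}1)$ and $(d,d{+}2)$) separately, as in Lemma \ref{lem-sub}, and in the $(d,d{+}2)$ case extract the relevant $d$-subtuple of the $(d+2)$-tuple that intertwines a $d$-face of $|C|$.
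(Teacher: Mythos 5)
Your proposal is correct and follows essentially the same route as the paper's proof: the forward direction from the fact that a triangulation is closed under faces and its simplices never intersect transversely, and the converse by locating a transversal intersection with a simplex of $\mathcal{T}$ and splitting according to which half of the witnessing circuit lies in the candidate simplex, with part (2) reduced to part (1) because any such circuit is already carried by a $(d+1)$-face. One correction to your write-up: $C(m,2d+1)$ has a single circuit type, a $(d+1)$-tuple intertwining a $(d+2)$-tuple (a $d$-simplex against a $(d+1)$-simplex), not the two types $(d{+}1,d{+}1)$ and $(d,d{+}2)$ you list; the two cases in the converse come from which half of this one circuit type sits inside $|C|$, and since $d$-simplices of $C(m,2d+1)$ never intersect each other transversely, your case analysis is unaffected by this slip.
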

\begin{proof}
It follows from Gale's Evenness Criterion that for $k<d$, every $k$-simplex lies on the boundary of $C(m,2d+1)$, and hence can be ignored.

Let $|A|$ be a $(d+1)$-simplex of $\mathcal{T}$. Then clearly $|A|$ cannot intersect any $d$-simplices of $\mathcal{T}$ transversely. Moreover, every $d$-face $|B|$ of $|A|$ is either internal, so that $B \in \simp(\mathcal{T})$, or not internal, so that $B \notin \chainset{m}{d}$. If $|A|$ is a $k$-simplex of $\mathcal{T}$ for $k>d+1$, then all of the $(d+1)$-faces of $|A|$ must satisfy (\ref{d+1}) for these reasons.

Conversely, if $|A|$ is not a $k$-simplex of $\mathcal{T}$ for some $k>d+1$, then $|A|$ must intersect a $(2d+1)$-simplex of $\mathcal{T}$ transversely. By the description of the circuits of $C(m,2d+1)$, either $|A|$ has a $d$-face $|A_{d}|$ which intersects a $(d+1)$-simplex $|B_{d+1}|$ of $\mathcal{T}$ transversely, or $|A|$ has a $(d+1)$-face $|A_{d+1}|$ which intersects a $d$-simplex $|B_{d}|$ of $\mathcal{T}$ transversely. In the first case, $A_{d}$ cannot be in $\simp(\mathcal{T})$, so any $(d+1)$-face of $|A|$ containing $|A_{d}|$ does not satisfy (\ref{d+1}). In the second case, $|A_{d+1}|$ does not satisfy (\ref{d+1}).
\end{proof}

It is useful to think of increasing bistellar flips in the following way in odd dimensions. This is the odd-dimensional version of Lemma \ref{up-low-fac}.

\begin{lemma}\label{lem-odd-flips}
Let $A \in \subs{m}{d+1}{}$ and $B \in \subs{m}{d+2}{}$. Then $A \wr B$ if and only if we have that $|A \cup B|_{2d+2}$ is a $(2d+2)$-simplex with $|A|_{2d+2}$ the intersection of its lower facets, and $|B|_{2d+2}$ the intersection of its upper facets.
\end{lemma}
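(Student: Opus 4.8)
The plan is to mirror the proof of Lemma~\ref{up-low-fac}, with $2d$ replaced by $2d+1$. First I would dispose of the degenerate case. If $A$ and $B$ are not disjoint, then $\#(A\cup B)\leqslant 2d+2$, so $|A\cup B|_{2d+2}$ is a simplex of dimension at most $2d+1$ and in particular is not a $(2d+2)$-simplex; on the other hand the chain $b_{0}<a_{0}<b_{1}<\dots<a_{d}<b_{d+1}$ defining $A\wr B$ has $2d+3$ distinct entries, forcing $\#(A\cup B)=2d+3$, so $A\wr B$ also fails. Hence both sides of the claimed equivalence are false, and I may assume $A\cap B=\emptyset$, so that $\#(A\cup B)=(d+1)+(d+2)=2d+3$ and $S:=A\cup B=(s_{0},\dots,s_{2d+2})$ is the vertex set of a $(2d+2)$-simplex.

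Next I would read off the upper and lower facets of $|S|_{2d+2}$ from Gale's Evenness Criterion. The facets of $|S|$ are exactly the $|S\setminus\{s_{i}\}|$ for $0\leqslant i\leqslant 2d+2$, and the number of elements of $S\setminus\{s_{i}\}$ lying above $s_{i}$ equals $2d+2-i$; thus $s_{i}$ is an even gap in $S\setminus\{s_{i}\}$ precisely when $i$ is even. By Gale's Evenness Criterion, $|S\setminus\{s_{i}\}|$ is therefore a lower facet when $i$ is even and an upper facet when $i$ is odd. Intersecting the lower facets gives the face on vertex set $\{s_{1},s_{3},\dots,s_{2d+1}\}$, a $d$-simplex on $d+1$ vertices, and intersecting the upper facets gives the face on vertex set $\{s_{0},s_{2},\dots,s_{2d+2}\}$, a $(d+1)$-simplex on $d+2$ vertices.

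Finally I would match this with the intertwining condition. Writing $A=(a_{0},\dots,a_{d})$ and $B=(b_{0},\dots,b_{d+1})$, the relation $A\wr B$ holds if and only if in the sorted tuple $S=A\cup B$ the entries of $A$ occupy the odd positions $s_{1},s_{3},\dots,s_{2d+1}$ and the entries of $B$ occupy the even positions $s_{0},s_{2},\dots,s_{2d+2}$; equivalently, $a_{j}=s_{2j+1}$ and $b_{j}=s_{2j}$ for all $j$. By the facet computation above, this holds if and only if $|A|_{2d+2}$ is the intersection of the lower facets of $|S|_{2d+2}$ and $|B|_{2d+2}$ is the intersection of its upper facets. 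Since the facet structure of $|S|$ determines which subtuple of $S$ plays the role of $A$ and which plays the role of $B$, the equivalence follows in both directions.

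I do not expect any genuine obstacle here: the whole argument is a parity count via Gale's Evenness Criterion, entirely parallel to Lemma~\ref{up-low-fac}. The only point requiring care is the bookkeeping of parities — one should verify that for a $(2d+2)$-simplex, which has the odd number $2d+3$ of vertices, the smaller tuple $A$ (the intertwiner) lands on the lower facets rather than the upper ones, consistently with the convention already fixed in Lemma~\ref{up-low-fac}.
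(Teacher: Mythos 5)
Your proposal is correct and follows essentially the same route as the paper's proof: dispose of the case $\#(A\cup B)<2d+3$, then apply Gale's Evenness Criterion to identify the lower facets of $|S|_{2d+2}$ as those omitting an even-position vertex and the upper facets as those omitting an odd-position vertex, so that their intersections are $|s_{1},s_{3},\dots,s_{2d+1}|$ and $|s_{0},s_{2},\dots,s_{2d+2}|$ respectively, which matches the intertwining condition $b_{0}<a_{0}<b_{1}<\dots<a_{d}<b_{d+1}$. Your parity bookkeeping, including the shift relative to Lemma \ref{up-low-fac}, is accurate.
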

\begin{proof}
Let $A \cup B =: S$. If $\# S < 2d+3$, then $A$ and $B$ cannot be intertwining. Hence suppose that $\# S =2d+3$, so that $|A \cup B|_{2d+2}$ is a $(2d+2)$-simplex. We then apply Gale's Evenness Criterion. The vertices of a lower facet of $C(S, 2d+2)$ miss out an even entry of $S$. The intersection of these tuples is $(s_{1},s_{3}, \dots, s_{2d+1})$. The vertices of an upper facet of $C(A \cup B, 2d+2)$ miss out an odd entry of $S$. Their intersection is $(s_{0}, s_{2}, \dots, s_{2d+2})$. Therefore $A \wr B$ if and only if $|A|_{2d+2}$ is the intersection of the lower facets of $|S|_{2d+2}$ and $|B|_{2d+2}$ is the intersection of its upper facets.
\end{proof}

\begin{corollary}\label{cor-odd-flips}
Let $A \in \subs{m}{d+1}{}$ and $B \in \subs{m}{d+2}{}$ such that $A\wr B$. Then the only internal $d$-simplex of the lower triangulation of $C(A\cup B,2d+1)$ is $|A|$, and the upper triangulation of $C(A \cup B, 2d+1)$ has no internal $d$-simplices.
\end{corollary}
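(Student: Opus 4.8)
The plan is to reduce the statement to the observation that $C(A\cup B,2d+1)$ has exactly $2d+3$ vertices and therefore admits only one internal $d$-simplex in total. First I would note that $A\wr B$ forces $A$ and $B$ to be disjoint and to interleave, so $\#(A\cup B)=2d+3=(2d+1)+2$. Since the combinatorics of triangulations of a cyclic polytope depends only on the order type of its vertex set, I may relabel the vertices of $C(A\cup B,2d+1)$ order-isomorphically by $[2d+3]$; then $A\wr B$ pins down $A=(2,4,\dots,2d+2)$ and $B=(1,3,\dots,2d+3)$.

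The key step is then: $C([2d+3],2d+1)$ has a unique internal $d$-simplex, namely $|A|$. By Lemma~\ref{lem-int-odd}, the internal $d$-simplices are exactly the $|T|$ with $T\in\chainset{2d+3}{d}$; and if $T=(a_{0},\dots,a_{d})\in\chainset{2d+3}{d}$ then $a_{0}\geqslant 2$, $a_{d}\leqslant 2d+2$ and $a_{i+1}-a_{i}\geqslant 2$ for all $i$, so that $a_{d}-a_{0}\geqslant 2d$ while also $a_{d}-a_{0}\leqslant(2d+2)-2=2d$; equality throughout forces $a_{0}=2$, every gap $a_{i+1}-a_{i}$ to equal $2$, and hence $T=A$. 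Consequently the lower triangulation and the upper triangulation of $C([2d+3],2d+1)$ each contain at most one internal $d$-simplex, and the only candidate is $|A|$; it remains to decide for each whether $|A|$ genuinely appears.

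For this I would invoke Lemma~\ref{lem-odd-flips}: $C(A\cup B,2d+2)$ is a $(2d+2)$-simplex with $|A|_{2d+2}$ the intersection of its lower facets and $|B|_{2d+2}$ the intersection of its upper facets, and the lower (respectively upper) triangulation of $C(A\cup B,2d+1)$ is the simplicial complex generated by the lower (respectively upper) facets of this simplex. A facet of the simplex omits exactly one vertex, and it contains the face $|A|$ precisely when the omitted vertex lies outside $A$, i.e.\ in $B$; as in the proof of Lemma~\ref{lem-odd-flips} (or directly from Gale's Evenness Criterion) these are exactly the lower facets. Hence $|A|$ lies in every lower facet, so it appears in the lower triangulation, which therefore has $|A|$ as its unique internal $d$-simplex; while $|A|$ lies in no upper facet, so it does not appear in the upper triangulation, which therefore has no internal $d$-simplices.

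The only point requiring care, I expect, is transferring Lemma~\ref{lem-int-odd} through the order-isomorphic relabelling: one must remember that ``no two consecutive entries'' and ``does not use the extreme vertices'' refer to the vertex set $A\cup B$ of the polytope at hand and not to $[m]$. Given that, the remaining ingredients — the one-line gap count identifying $\chainset{2d+3}{d}$ with $\{A\}$, and the facet bookkeeping via Gale's criterion and Lemma~\ref{lem-odd-flips} — are routine.
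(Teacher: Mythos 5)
Your proposal is correct and follows exactly the route the paper intends: the paper's own proof is the one-line citation of Lemma \ref{lem-int-odd} and Lemma \ref{lem-odd-flips}, and your argument is precisely the expansion of that citation (the gap count showing $\chainset{2d+3}{d}=\{A\}$ after relabelling, plus the facet bookkeeping showing $|A|$ lies in every lower facet and in no upper facet). No gaps; the relabelling caveat you flag is the only subtlety and you handle it correctly.
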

\begin{proof}
This follows from Lemma \ref{lem-int-odd} and Lemma \ref{lem-odd-flips}.
\end{proof}

Hence we think of bistellar flips in odd dimensions as replacing a $d$-simplex with a $(d+1)$-simplex which intersects it transversely. Note that these form two halves of a circuit.

We now derive the properties which characterise triangulations of odd-dimensional cyclic polytopes. The following lemma is shown for $d=1$ in \cite[Lemma 4.3]{er}.

\begin{lemma}\label{support}
Let $\mathcal{T} \in \mathcal{S}(m,2d+1)$. Suppose that $|A|$ is an internal $d$-simplex of $\mathcal{T}$. Then there is a $d$-tuple $A'$ such that $A' \wr A$ and for every $(d+1)$-tuple $B \subset A \cup A'$ we have that $|B|$ is a $d$-simplex of $\mathcal{T}$.
\end{lemma}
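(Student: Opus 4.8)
The plan is to read off $A'$ directly from the unique $(2d+1)$-simplex of $\mathcal{T}$ sitting immediately below $|A|$, which is handed to us by Lemma \ref{lem-simp-below}. That lemma gives a $(d+1)$-tuple $B=(b_{0},\dots,b_{d})$ with $B \wr A$ — that is, $b_{0}<a_{0}<b_{1}<a_{1}<\dots<b_{d}<a_{d}$ — such that $|A \cup B|$ is a $(2d+1)$-simplex of $\mathcal{T}$. The point is that $B$ already interleaves $A$ almost in the way we want: I would set $A' := (b_{1},\dots,b_{d})$, so that the displayed chain of inequalities restricts to $a_{0}<b_{1}<a_{1}<b_{2}<\dots<a_{d-1}<b_{d}<a_{d}$, which is precisely the condition $A' \wr A$ for a $d$-tuple $A'$ and a $(d+1)$-tuple $A$ (recalling that a $d$-tuple here has entries $a'_{0},\dots,a'_{d-1}$, with $a'_{i}=b_{i+1}$). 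This settles the first half of the statement.

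For the second half, note that $A$ and $B$ are disjoint since the interleaving inequalities are strict, so $A \cup B$ has $2d+2$ elements and $A \cup A' = (A \cup B) \setminus \{b_{0}\} \subseteq A \cup B$ has $2d+1$ elements. Because the $2d+2$ corresponding points lie on the moment curve in $\mathbb{R}^{2d+1}$, they are affinely independent, so $|A \cup B|$ is a genuine $(2d+1)$-simplex; it belongs to the triangulation $\mathcal{T}$, and hence so does every one of its faces, since $\mathcal{T}$ is a simplicial complex. In particular, for any $(d+1)$-element subtuple $C \subseteq A \cup A' \subseteq A \cup B$, the simplex $|C|$ is a face of $|A \cup B|$ and therefore a $d$-simplex of $\mathcal{T}$, which is exactly what is required.

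I do not expect any real obstacle beyond spotting the correct $A'$: once one thinks of discarding the minimal vertex $b_{0}$ of the simplex directly below $|A|$, both required properties drop out of the combinatorial description of that simplex together with closure of triangulations under taking faces. The only point demanding a little care is the indexing convention — that a ``$d$-tuple'' has $d$ entries and that in the mixed-arity case the relation $A' \wr A$ reads $a_{0}<a'_{0}<a_{1}<\dots<a'_{d-1}<a_{d}$ — but this is precisely the shape produced by truncating $B$, so no genuine computation is needed.
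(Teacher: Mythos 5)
Your proof is correct, and it takes a genuinely different route from the paper's. The paper proves Lemma \ref{support} by invoking Rambau's theorem to represent $\mathcal{T}$ as a maximal chain $\mathcal{T}_{0}\lessdot_{1}\dots\lessdot_{1}\mathcal{T}_{r}$ in $\mathcal{S}_{1}(m,2d)$, locating a $2d$-dimensional section $\mathcal{T}_{i}$ containing $|A|$, and then applying the even-dimensional result \cite[Proposition 2.13]{ot} to $\mathcal{T}_{i}$ to produce a $2d$-simplex $|A\cup A'|$ of $\mathcal{T}$ with $A'\wr A$. You instead use the paper's own Lemma \ref{lem-simp-below} (the odd-dimensional analogue of that proposition, already established at this point in the text) to get the unique $(2d+1)$-simplex $|A\cup B|$ of $\mathcal{T}$ with $B\wr A$, and take $A'=(b_{1},\dots,b_{d})$; the indexing check that truncating $B$ yields exactly the mixed-arity relation $A'\wr A$ is right, and the face condition is immediate since $A\cup A'\subset A\cup B$ and $\mathcal{T}$ is closed under taking faces. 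Your argument is shorter and more self-contained — it avoids Rambau's sectioning theorem entirely and in fact yields slightly more, namely that \emph{every} $(d+1)$-subtuple of the full $(2d+2)$-set $A\cup B$ gives a $d$-simplex of $\mathcal{T}$ — whereas the paper's route has the mild expository advantage of exhibiting the $2d$-simplex $|A\cup A'|$ as living in a single even-dimensional section, mirroring the even-dimensional source of Definition \ref{def-support}. Either witness $A'$ serves the purpose, since the lemma only asserts existence.
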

\begin{proof}
By \cite[Theorem 1.1]{rambau}, $\mathcal{T}$ can be represented by a chain of triangulations $\mathcal{T}_{0}\lessdot_{1}\dots\lessdot_{1}\mathcal{T}_{r}$ in $\mathcal{S}_{1}(m,2d)$. There is therefore a triangulation $\mathcal{T}_{i}$ of $C(m,2d)$ which is a section of $\mathcal{T}$ and which contains the $d$-simplex $|A|$. By \cite[Proposition 2.13]{ot}, there is a $2d$-simplex $|A \cup A'|$ of $\mathcal{T}_{i}$, where $A' \wr A$. This $2d$-simplex $|A \cup A'|$ is, moreover, a $2d$-simplex of $\mathcal{T}$. It then follows that every $d$-face of $|A \cup A'|$ is a $d$-simplex of $\mathcal{T}$.
\end{proof}

\begin{definition}\label{def-support}
Let $X \subseteq \chainset{m}{d}$. We say that $X$ is \emph{supporting} if for any $A \in X$ there is a $d$-tuple $A'$ such that $A' \wr A$ and, for every $(d+1)$-tuple $B \subset A \cup A'$ such that $B \in \chainset{m}{d}$, we have that $B \in X$.
\end{definition}

The inspiration for the following lemma comes from \cite[Proposition 3.3, D3, and Proposition 4.2, T3]{er}, which concern simpler versions of the property for dimensions 2 and 3, applied to submersion sets.

\begin{lemma}\label{bridge}
Let $\mathcal{T} \in \mathcal{S}(m,2d+1)$. Let \[|A|:=|x_{0}, \dots, x_{i-1},a_{i}, \dots, a_{j}, x_{j+1}, \dots, x_{d}|,\,|B|:=|x_{0}, \dots, x_{i-1},b_{i}, \dots, b_{j}, x_{j+1}, \dots, x_{d}|\] be internal $d$-simplices of $\mathcal{T}$, where possibly $i=0$ or $j=d$, or both. Suppose these are such that $(a_{i}, \dots, a_{j}) \wr (b_{i}, \dots, b_{j})$. Then \[|S_{k}|:=|x_{0}, \dots, x_{i-1},a_{i}, \dots, a_{k-1}, b_{k}, \dots, b_{j},x_{j+1} \dots, x_{d}|\] is an internal $d$-simplex of $\mathcal{T}$ for all $i \leqslant k \leqslant j+1$.
\end{lemma}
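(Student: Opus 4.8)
The plan is to reduce the statement to the even-dimensional result \cite[Proposition 2.13]{ot} by passing through a suitable section of $\mathcal{T}$, in the spirit of the proof of Lemma \ref{support}. First I would invoke \cite[Theorem 1.1]{rambau} to represent $\mathcal{T}$ by a maximal chain $\mathcal{T}_{0} \lessdot_{1} \dots \lessdot_{1} \mathcal{T}_{r}$ in $\mathcal{S}_{1}(m,2d)$, giving a family of sections of $\mathcal{T}$ by triangulations of $C(m,2d)$. The key point to establish is that one of these sections simultaneously contains both $|A|$ and $|B|$ as $d$-simplices; since $|A|$ and $|B|$ are $d$-faces of $(2d+1)$-simplices of $\mathcal{T}$, and each section of $\mathcal{T}$ traverses all the $(2d+1)$-simplices, I expect that the section $\mathcal{T}_{t}$ obtained just after the last flip whose flipping domain $C(S,2d)$ has $A$ or $B$ among its new $d$-simplices works; more robustly, one can argue that the $d$-simplices $|A|$ and $|B|$, being internal and lying in a common circuit-free configuration, must both appear in the section passing through the relevant $(2d+1)$-cells.

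Once I have a triangulation $\mathcal{S}$ of $C(m,2d)$ which is a section of $\mathcal{T}$ and contains both $|A|$ and $|B|$, I would work inside $\mathcal{S}$. Here $|A|$ and $|B|$ are genuine $d$-simplices of a $2d$-dimensional triangulation sharing the boundary vertices $x_{0},\dots,x_{i-1},x_{j+1},\dots,x_{d}$, with their ``free'' parts $(a_{i},\dots,a_{j})$ and $(b_{i},\dots,b_{j})$ intertwining. The heart of the argument is then a purely combinatorial claim about $\mathcal{S}$: each intermediate tuple $S_{k}$ (for $i\leqslant k\leqslant j+1$) is a $d$-simplex of $\mathcal{S}$, hence of $\mathcal{T}$, and each is internal because $S_{k} \in \chainset{m}{d}$ — this last point follows since $a_{0}=b_{0}$-type endpoints and the fixed $x$'s control the extreme entries, and separation of $S_{k}$ is inherited from that of $A$ and $B$ together with $(a_{i},\dots,a_{j}) \wr (b_{i},\dots,b_{j})$. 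For the claim that $S_{k} \in \mathcal{S}$, I would use the description of circuits of $C(m,2d)$: if $|S_{k}|$ were not in $\mathcal{S}$, it would have to intersect some $d$-simplex of $\mathcal{S}$ transversely, and I would derive a contradiction with the fact that $|A|$ and $|B|$ both belong to the non-intertwining collection $e(\mathcal{S})$, exploiting that any tuple intertwining $S_{k}$ would, by the nested structure $a_{i}<b_{i}<\dots$, also intertwine $A$ or $B$.

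The main obstacle is the first step: guaranteeing a single section of $\mathcal{T}$ containing \emph{both} $|A|$ and $|B|$. Lemma \ref{support} only needed one $d$-simplex, so a section through the $(2d+1)$-cell below $|A|$ sufficed; here one must track the interaction between the two cells below $|A|$ and $|B|$. I anticipate handling this either by an induction on $j-i$ (peeling off one flip at a time and using the case $S_{k}=A$ or $S_{k}=B$ as base cases, which are immediate), or by arguing directly that the union of the $(2d+1)$-simplices of $\mathcal{T}$ having $|A|$ or $|B|$ as a $d$-face forms a sub-configuration admitting a section that extends to all of $\mathcal{T}$ — using that $\mathcal{T}$ restricts to the lower (or upper) triangulation on the relevant $C(A\cup A', 2d)$ and $C(B\cup B', 2d)$ from Lemma \ref{support}. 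Either way, once both simplices sit in one $2d$-dimensional section, the remaining combinatorics is routine bookkeeping with Gale's Evenness Criterion and the circuit description, exactly as in the proof of Lemma \ref{up-low-fac}.
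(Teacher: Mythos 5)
Your reduction to a $2d$-dimensional section cannot work, because the key step --- finding a single section of $\mathcal{T}$ containing both $|A|$ and $|B|$ --- is false, and most starkly so in the essential case $i=0$, $j=d$. There $A \wr B$ as $(d+1)$-tuples, so $(A,B)$ is a circuit of $C(m,2d)$ and $|A|_{2d}$, $|B|_{2d}$ intersect transversely; hence \emph{no} triangulation of $C(m,2d)$ contains both, and in particular no section of $\mathcal{T}$ does. (They can coexist as $d$-simplices of the $(2d+1)$-dimensional $\mathcal{T}$ precisely because circuits of $C(m,2d+1)$ pair a $(d+1)$-tuple with a $(d+2)$-tuple.) Concretely, in the lower triangulation $\mathcal{T}_{l}$ of $C(6,3)$ one has $24,35 \in \simp(\mathcal{T}_{l})$, yet $|24|$ and $|35|$ are crossing diagonals of the hexagon, and in any Rambau chain representing $\mathcal{T}_{l}$ the single flip on $\{2,3,4,5\}$ deletes $24$ at the same moment it creates $35$, so every section contains at most one of them. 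Your fallback (induction on $j-i$, ``peeling off one flip at a time'') does not repair this: interpolating between $A$ and $B$ by intermediate internal $d$-simplices of $\mathcal{T}$ is exactly what the lemma asserts, so it cannot be assumed. Your second paragraph inherits the same defect, since when $A \wr B$ they are never both in $e(\mathcal{S})$ for any $\mathcal{S} \in \mathcal{S}(m,2d)$.

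The paper's proof stays entirely in dimension $2d+1$ and runs an induction along increasing bistellar flips: by \cite[Theorem 1.1]{rambau} every $\mathcal{T}$ is reached from the lower triangulation of $C(m,2d+1)$, where all of $\chainset{m}{d}$ is present and the claim is trivial. For the inductive step one shows the bridging property is preserved by a flip: a flip removing an intermediate $|S_{k}|$ introduces a $(d+1)$-simplex $|F|$ with $S_{k} \wr F$; since $|S_{k-1}|$ and $|S_{k+1}|$ survive the flip, $F$ cannot intertwine either of them, which forces $a_{k} \leqslant f_{k} \leqslant b_{k-1}$ and contradicts $b_{k-1} < a_{k}$. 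This one-flip-at-a-time invariance is the mechanism that replaces the common section your argument needs but cannot have.
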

\begin{proof}
Firstly, note that if $A, B \in \chainset{m}{d}$, then $S_{k} \in \chainset{m}{d}$.

We use induction on increasing bistellar flips of the triangulation: all triangulations of $C(m,2d+1)$ can be reached via increasing bistellar flips from the lower triangulation by \cite[Theorem 1.1]{rambau}. In the base case, which is the lower triangulation of $C(m,2d+1)$, all $d$-simplices are simplices of the triangulation $\mathcal{T}$. Hence the result holds trivially in this case.

We use contradiction to show the inductive step. Suppose that we perform an increasing bistellar flip on $\mathcal{T}$ by removing the $d$-simplex $|S_{k}|=|x_{0}, \dots, x_{i-1},a_{i}, \dots, a_{k-1}, b_{k}, \dots, b_{j},x_{j+1} \dots, x_{d}|$ for $k \in (i,j)$, so that we replace it with a $(d+1)$-simplex $|F|=|f_{0}, \dots, f_{d+1}|$ such that $S_{k} \wr F$. Then $|F|$ cannot intersect $|S_{k'}|$ transversely for $k' \neq k$, since, by the induction hypothesis, these are $d$-simplices of $\mathcal{T}$. Thus we must have $a_{k}\leqslant f_{k}$ and $f_{k} \leqslant b_{k-1}$, otherwise $S_{k+1} \wr F$ or $S_{k-1} \wr F$, respectively. But this is a contradiction, since $b_{k-1}<a_{k}$. Hence we can never perform an increasing bistellar flip by removing $S_{k}$, which means that the above property must be preserved by increasing bistellar flips.
\end{proof}

\begin{definition}\label{def-bridge}
Let $X \subseteq \chainset{m}{d}$. We say that $X$ is \emph{bridging} if whenever \[A:=(x_{0}, \dots, x_{i-1},a_{i}, \dots, a_{j}, x_{j+1}, \dots, x_{d}),\,B:=(x_{0}, \dots, x_{i-1},b_{i}, \dots, b_{j}, x_{j+1}, \dots, x_{d})\in X,\] where possibly $i=0$ or $j=d$, or both, such that $(a_{i}, \dots, a_{j}) \wr (b_{i}, \dots, b_{j})$, we have that \[S_{k}:=(x_{0}, \dots, x_{i-1},a_{i}, \dots, a_{k-1}, b_{k}, \dots, b_{j},x_{j+1} \dots, x_{d}) \in X\] for all $i \leqslant k \leqslant j+1$.
\end{definition}

Our strategy is now to see how the properties of being supporting and being bridging are affected by operations on triangulations. This will allow us to inductively construct a triangulation from a set of tuples which is supporting and bridging.

\begin{definition}
Let $X \subseteq \chainset{m}{d}$. We define
\begin{align*}
X/1 &:= \{ (a_{0}, \dots, a_{d}) \in X \mid a_{0} \neq 2\}, \\
X\backslash\{1,2\} &:= \{(a_{1}, \dots, a_{d}) \mid (2,a_{1}, \dots, a_{d}) \in X \}.
\end{align*}
Given a tuple $A \in X$ with $1 \notin A$, we define $1 \star A := \{1\}\cup A$. If $1 \notin A$ for all $A \in X$, we define $1\star X:=\{1\star A \mid A \in X\}$. We similarly define $2 \star X$, and so on.
\end{definition}

Note here that our notation is the same as \cite[Definition 2.17]{ot}, but our operations are different. This is because we want these operations on sets of tuples to correspond to the following operations on triangulations. These are the same as considered in \cite[Definition 2.16]{ot}, only we consider them in odd dimensions.

For a triangulation $\mathcal{T}$ of $C(m,\delta)$, we define $\mathcal{T}/1$ to be the triangulation of $C(m-1,\delta)$ obtained by continuously deforming $\mathcal{T}$ by moving vertex $1$ along the moment curve until it coincides with vertex $2$.

It is well-known that the triangulation of the vertex figure at 1 induced by a triangulation of $C(m,\delta)$ gives a triangulation of $C([2,m],\delta-1)$ \cite[Lemma 3.1]{rs-baues}. For a triangulation $\mathcal{T}$ of $C(m,\delta)$, we define $\mathcal{T}\backslash 1$ to be the triangulation of $C([2,m],\delta-1)$ given by the triangulation of the vertex figure at 1 of $\mathcal{T}$. Similarly, we define $\mathcal{T}\backslash\{1,2\}:=(\mathcal{T}\backslash 1)\backslash 2$.

For a triangulation $\mathcal{T}$ of $C(Y,\delta-1)$ and $x \notin Y$, we denote by $x \star \mathcal{T}$ the set of $\delta$-simplices \[\{|x \star S| \mid |S| \text{ is a }(\delta-1)\text{-simplex of }\mathcal{T}\}\] in $C(\{x\}\cup Y, \delta)$. This is in general only a partial triangulation.

\begin{lemma}\label{lem-contract-desc}
Let $\mathcal{T} \in \mathcal{S}(m,2d+1)$. Then $\simp(\mathcal{T}/1)=\simp(\mathcal{T})/1$.
\end{lemma}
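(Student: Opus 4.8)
The plan is to show the two sets $\simp(\mathcal{T}/1)$ and $\simp(\mathcal{T})/1$ coincide by analysing how the deformation "vertex $1$ towards vertex $2$" acts on $d$-simplices, and then verifying each tuple lies in one set iff it lies in the other. First I would recall the concrete description of the deformation $\mathcal{T}\mapsto\mathcal{T}/1$: a $(2d+1)$-simplex $|S|$ of $\mathcal{T}$ with $1\notin S$ remains a $(2d+1)$-simplex of $\mathcal{T}/1$ (with the same label); a $(2d+1)$-simplex $|S|$ of $\mathcal{T}$ with $1\in S$ but $2\notin S$ degenerates to the $(2d+1)$-simplex $|(S\setminus\{1\})\cup\{2\}|$ after relabelling $1\mapsto 2$, unless that relabelled tuple already occurs, in which case the simplex is absorbed; and a $(2d+1)$-simplex containing both $1$ and $2$ collapses to a simplex of dimension $2d$ and disappears. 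This is exactly the content of \cite[Definition 2.16]{ot} applied in the odd-dimensional case, and the same bookkeeping for $d$-faces is what I want to read off. In particular, after the deformation the internal $d$-simplices of $\mathcal{T}/1$ are precisely those internal $d$-simplices $|A|$ of $C(m-1,2d+1)$ — so $A\in\chainset{m-1}{d}$ by Lemma \ref{lem-int-odd} — which arise as a $d$-face of some $(2d+1)$-simplex of $\mathcal{T}/1$ obtained from $\mathcal{T}$ as above.

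Next I would translate the condition "$A\in\simp(\mathcal{T})/1$". By the definition of $X/1$, this means $A\in\chainset{m-1}{d}$, $2\notin A$... wait, more precisely $A=(a_0,\dots,a_d)$ with $a_0\neq 2$, and $A\in\simp(\mathcal{T})$, i.e.\ $|A|$ is an internal $d$-simplex of $\mathcal{T}$ with $A\subseteq[2,m]\setminus\{$nothing forbidden$\}$ except that $a_0\neq 2$ forces $1\notin A$ automatically is false — one must be careful: $\chainset{m}{d}$ already excludes $a_0=1$, so $A\in\simp(\mathcal{T})$ means $2\le a_0$, and the extra condition $a_0\neq 2$ leaves $a_0\geq 3$. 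So $\simp(\mathcal{T})/1$ consists of the internal $d$-simplices of $\mathcal{T}$ that do not touch vertices $1$ or $2$. I would then show this equals $\simp(\mathcal{T}/1)$. For the inclusion $\supseteq$: an internal $d$-simplex $|A|$ of $\mathcal{T}/1$ with $A\in\chainset{m-1}{d}$ must be a $d$-face of some internal $(2d+1)$-simplex of $\mathcal{T}/1$ (by Lemma \ref{lem-simp-below}); tracing back, either that $(2d+1)$-simplex already existed in $\mathcal{T}$ not touching $1$ — and then $A$ doesn't touch $1$, and if $A$ touched $2$ it would fail to be internal after… no, I should instead argue directly: since $a_0\geq 2$ (as $A\in\chainset{m-1}{d}$ means $a_0\neq 1$) I must show $a_0\neq 2$ and $|A|\in\mathcal{T}$. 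The point is that any $d$-simplex of $\mathcal{T}/1$ involving vertex $2$ as its smallest vertex was, before the deformation, either a simplex already in $\mathcal{T}$ (then $2\in A$, $1\notin A$) or the image under $1\mapsto 2$ of a simplex of $\mathcal{T}$ containing $1$; in the latter case I must rule out that this produces something new not in $\simp(\mathcal{T})$. For the inclusion $\subseteq$: if $|A|$ is internal in $\mathcal{T}$ and avoids $1,2$, the deformation fixes every $(2d+1)$-simplex of $\mathcal{T}$ through $|A|$ that avoids $1$, and such a simplex exists (Lemma \ref{lem-simp-below}), so $|A|$ is internal in $\mathcal{T}/1$.

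The cleanest route is probably to use Lemma \ref{lem-dey}: since both $\mathcal{T}/1$ and $\mathcal{T}$ are genuine triangulations (of $C(m-1,2d+1)$ and $C(m,2d+1)$ respectively), their internal $d$-simplex sets are determined by the combinatorics already in hand, and I can compare $(2d+1)$-simplices directly via \cite[Definition 2.16]{ot}, then take $d$-faces. I expect the main obstacle to be the careful handling of the "absorption" and "collapse" cases in the deformation: one must check that no internal $d$-simplex of $\mathcal{T}/1$ is created that was not visible in $\mathcal{T}$ (i.e.\ that the degeneration of a simplex containing $1$ cannot yield a genuinely new internal $d$-face avoiding $2$ that was not already a $d$-face of some simplex of $\mathcal{T}$), and conversely that genuine internal $d$-simplices of $\mathcal{T}$ avoiding $\{1,2\}$ survive as internal — not boundary — simplices of $\mathcal{T}/1$. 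Both follow from Gale's Evenness Criterion together with Lemma \ref{lem-int-odd} (internality of $|A|$ is the purely combinatorial condition $A\in\chainset{\cdot}{d}$, which is insensitive to whether we work in $C(m,2d+1)$ or $C(m-1,2d+1)$ as long as $2\notin A$), so once the deformation dictionary is set up correctly the verification is a finite case check rather than anything deep.
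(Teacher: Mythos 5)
Your approach is essentially the paper's, and the key observation is present in your proposal, but you leave it half-executed and treat as open a point that is actually immediate. The paper's proof is two lines: if $A\in\simp(\mathcal{T}/1)$ then, since $\mathcal{T}/1$ triangulates $C([2,m],2d+1)$ whose smallest vertex is $2$, Lemma \ref{lem-int-odd} forces $a_{0}>2$ (a $d$-simplex with $a_{0}=2$ lies on the boundary); hence $|A|$ contains neither $1$ nor $2$, its pre-image under the contraction is itself, and $A\in\simp(\mathcal{T})/1$. Conversely $A\in\simp(\mathcal{T})/1$ gives $a_{0}>2$, so $|A|$ is untouched by the contraction and remains internal. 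Your worry about the ``absorption'' and ``collapse'' cases of the deformation, and your statement that you ``must show $a_{0}\neq 2$'' and ``must rule out that this produces something new,'' are exactly the points that evaporate once you apply Lemma \ref{lem-int-odd} on the contracted polytope first: internality already excludes vertex $2$, so no internal $d$-simplex of $\mathcal{T}/1$ can be the image of a simplex through vertex $1$, and no case analysis of the $(2d+1)$-simplices (nor any appeal to Lemma \ref{lem-dey}) is needed. As written, your proposal is a correct plan with the decisive step flagged as ``a finite case check'' rather than carried out; supplying that one sentence closes it.
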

\begin{proof}
Let $A$ be a $(d+1)$-tuple in $\simp(\mathcal{T}/1)$. Then we cannot have $a_{0}=2$, otherwise $|A|$ is a boundary $d$-simplex. Hence $a_{0}>2$. But then the pre-image of $|A|$ under the contraction $1\rightarrow 2$ must be $|A|$. Therefore $A \in \simp(\mathcal{T})/1$.

Conversely, let $A$ be a $(d+1)$-tuple in $\simp(\mathcal{T})/1$. Then $A \in \simp(\mathcal{T})$ and $a_{0}>2$. Hence $|A|$ is unaffected by the contraction $1\rightarrow 2$, and so $A \in \simp(\mathcal{T}/1)$.
\end{proof}

\begin{lemma}\label{lem-delete-desc}
Let $\mathcal{T} \in \mathcal{S}(m,2d+1)$. Then $\simp(\mathcal{T}\backslash\{1,2\})=\simp(\mathcal{T})\backslash\{1,2\}$.
\end{lemma}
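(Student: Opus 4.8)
The plan is to unwind $\mathcal{T}\backslash\{1,2\}$ as an iterated vertex figure, translate both sides of the claimed identity into statements about which simplices lie in $\mathcal{T}$, and then reconcile them using Lemma~\ref{lem-simp-below}.

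First I would record the combinatorial content of the vertex-figure construction. Since $\mathcal{T}\backslash 1$ is the triangulation of $C([2,m],2d)$ cut out by the link of the vertex $1$, a tuple $\tau\subseteq[2,m]$ of size $k$ is a $(k-1)$-simplex of $\mathcal{T}\backslash 1$ if and only if $\{1\}\cup\tau$ is a $k$-simplex of $\mathcal{T}$; applying this again at the vertex $2$, which is the smallest vertex of $C([2,m],2d)$, a tuple $\sigma\subseteq[3,m]$ with $\#\sigma=d$ is a $(d-1)$-simplex of $\mathcal{T}\backslash\{1,2\}$ if and only if $\{1,2\}\cup\sigma$ is a $(d+1)$-simplex of $\mathcal{T}$. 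Since the extreme vertices here are $3$ and $m$, Lemma~\ref{lem-int-odd} applied in dimension $2d-1$ with vertex set $[3,m]$ tells us that such a $\sigma$ lies in $\simp(\mathcal{T}\backslash\{1,2\})$ exactly when, in addition, $\sigma$ is separated with $a_1\neq 3$ and $a_d\neq m$. On the other side, $\sigma=(a_1,\dots,a_d)\in\simp(\mathcal{T})\backslash\{1,2\}$ means precisely that $\{2\}\cup\sigma=(2,a_1,\dots,a_d)$ is an internal $d$-simplex of $\mathcal{T}$; by Lemma~\ref{lem-int-odd} this says that $\{2\}\cup\sigma$ is a $d$-simplex of $\mathcal{T}$ and $(2,a_1,\dots,a_d)\in\chainset{m}{d}$, and the latter is the elementary assertion that $\sigma$ is separated, $a_d\neq m$ and $a_1\geqslant 4$ --- which, since $\sigma\subseteq[3,m]$, is the same constraint on $\sigma$ as before. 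So the lemma reduces to the claim that, for a separated $\sigma=(a_1,\dots,a_d)\subseteq[3,m]$ with $a_1\neq 3$ and $a_d\neq m$,
\[ \{1,2\}\cup\sigma \text{ is a }(d+1)\text{-simplex of }\mathcal{T} \iff \{2\}\cup\sigma \text{ is a }d\text{-simplex of }\mathcal{T}. \]
The forward implication is trivial, as $\{2\}\cup\sigma$ is a $d$-face of $\{1,2\}\cup\sigma$. For the converse, the constraints on $\sigma$ say exactly that $(2,a_1,\dots,a_d)\in\chainset{m}{d}$, so by Lemma~\ref{lem-int-odd} the $d$-simplex $\{2\}\cup\sigma$ of $\mathcal{T}$ is internal. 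Lemma~\ref{lem-simp-below} then produces a $(2d+1)$-simplex $|(\{2\}\cup\sigma)\cup C|$ of $\mathcal{T}$ with $C$ a $(d+1)$-tuple satisfying $C\wr(2,a_1,\dots,a_d)$; the interlacing forces $c_0<2$, hence $c_0=1$, so $\{1,2\}\cup\sigma\subseteq(\{2\}\cup\sigma)\cup C$, and thus $\{1,2\}\cup\sigma$ --- a set of size $d+2$ contained in a simplex of $\mathcal{T}$ --- is a $(d+1)$-simplex of $\mathcal{T}$. Chaining the equivalences yields $\sigma\in\simp(\mathcal{T}\backslash\{1,2\})\iff\{2\}\cup\sigma\in\simp(\mathcal{T})\iff\sigma\in\simp(\mathcal{T})\backslash\{1,2\}$, which is the assertion.

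The step I expect to be fiddliest is the first one: justifying carefully that $\mathcal{T}\backslash 1$ is indeed (given by the link description of) a triangulation of $C([2,m],2d)$ and that performing the same operation at the new minimal vertex $2$ yields $\mathcal{T}\backslash\{1,2\}$ with simplices exactly those $\sigma$ for which $\{1,2\}\cup\sigma\in\mathcal{T}$ --- this is where one leans on \cite[Lemma 3.1]{rs-baues}. By contrast, the only genuinely geometric input in the heart of the argument is the one-line observation that completing an internal $d$-simplex containing vertex $2$ to a maximal simplex of $\mathcal{T}$ automatically drags in vertex $1$; everything else is bookkeeping with the inequalities defining $\chainset{m}{d}$.
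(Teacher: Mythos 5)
Your proof is correct, and its overall skeleton (unwind the vertex-figure description, match up the internality conditions on both sides, then prove the two implications) matches the paper's. The one place where you genuinely diverge is the nontrivial implication, namely that $2\star\sigma\in\simp(\mathcal{T})$ forces $|1\star 2\star\sigma|$ to be a $(d+1)$-simplex of $\mathcal{T}$. The paper gets this from Lemma \ref{lem-dey}: it checks that every $d$-face of $|1\star 2\star\sigma|$ is either in $\mathcal{T}$ or on the boundary, and that no $d$-simplex $B$ of $\mathcal{T}$ can satisfy $B\wr 1\star 2\star\sigma$ (impossible since that would need $1<b_{0}<2$), so the reconstruction criterion applies. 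You instead invoke Lemma \ref{lem-simp-below} to produce the unique $(2d+1)$-simplex $|(2\star\sigma)\cup C|$ of $\mathcal{T}$ lying below $|2\star\sigma|$, and observe that $C\wr(2\star\sigma)$ forces $c_{0}=1$, so $\{1,2\}\cup\sigma$ is a face of a maximal simplex of $\mathcal{T}$. Your route is slightly slicker — one inequality rather than a two-condition verification — though it leans on the (true) fact that every subset of the vertex set of a simplex of $\mathcal{T}$ spans a simplex of $\mathcal{T}$, whereas the paper's route stays entirely within the $d$-/$(d+1)$-skeleton bookkeeping it has already set up. One small caveat on your closing remark: the maximal simplex that "drags in vertex $1$" is not an arbitrary one containing $|2\star\sigma|$ but specifically the one below it supplied by Lemma \ref{lem-simp-below}; your actual argument uses this correctly, but the summary sentence is looser than the proof.
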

\begin{proof}
Let $A$ be a $d$-tuple in $\simp(\mathcal{T}\backslash\{1,2\})$. Then there must be a $(d+1)$-simplex $|1\star 2\star A|$ of $\mathcal{T}$. We must have $a_{0}>3$, since $|A|$ is internal in $C([3,m],2d+1)$. Therefore $2 \star A \in \simp(\mathcal{T})$, and so $A \in \simp(\mathcal{T})\backslash\{1,2\}$.

Conversely, let $A \in \simp(\mathcal{T})\backslash\{1,2\}$, so that $2 \star A \in \simp(\mathcal{T})$. Therefore $\mathcal{T}$ contains every $d$-face of the $(d+1)$-simplex $|1\star 2 \star A|$, since all the other $d$-faces lie on the boundary of $C(m,2d+1)$. Moreover, since there cannot be a $d$-simplex $B$ of $\mathcal{T}$ such that $B \wr 1\star 2 \star A$, we must have that this $(d+1)$-simplex is in $\mathcal{T}$ by Lemma \ref{lem-dey}. Hence, $|A|$ is a $d$-simplex of $\mathcal{T}\backslash\{1,2\}$. Furthermore, $a_{0}>3$ because $2 \star A \in \simp(\mathcal{T})$. Hence $A \in \simp(\mathcal{T}\backslash\{1,2\})$.
\end{proof}

\begin{lemma}\label{lem-contract-works}
Suppose that $X \subseteq \chainset{m}{d}$ is supporting and bridging. Then $X/1$ is also supporting and bridging.
\end{lemma}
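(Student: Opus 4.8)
The plan is to exploit that $X/1$ is literally a \emph{subset} of $X$, so that the witnessing objects required of $X/1$ by the supporting and bridging properties can be inherited verbatim from those guaranteed for $X$; the only thing that needs checking is that these inherited objects never acquire first entry $2$, so that they fall back inside $X/1$. The observation driving this is that every $A \in X/1$ satisfies $a_0 \geq 3$: membership in $\chainset{m}{d}$ forbids $a_0 = 1$, and membership in $X/1$ forbids $a_0 = 2$. I would record at the outset that $X/1 \subseteq \chainset{[2,m]}{d}$ (the analogue of $\chainset{m}{d}$ for the $(m-1)$-vertex polytope $C([2,m],2d+1)$ on the relabelled vertices), which is immediate from the separation condition together with $a_0 \notin \{1,2\}$ and $a_d \neq m$; note also that $\chainset{[2,m]}{d} \subseteq \chainset{m}{d}$, a containment used below.

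For the supporting property, take $A \in X/1$ and let $A'$ be the $d$-tuple witnessing that $X$ is supporting, so $A' \wr A$ and every $(d+1)$-subtuple $B \subset A \cup A'$ with $B \in \chainset{m}{d}$ lies in $X$. Since $A' \wr A$ interleaves the entries of $A'$ strictly between consecutive entries of $A$, every entry of $A \cup A'$ is at least $a_0 \geq 3$; in particular $A'$ is a genuine $d$-tuple of $[2,m]$, and every $(d+1)$-subtuple $B \subset A \cup A'$ has $b_0 \geq 3$. If such a $B$ lies in $\chainset{[2,m]}{d}$, then also $B \in \chainset{m}{d}$, hence $B \in X$ by the supporting property of $X$, and $b_0 \neq 2$ forces $B \in X/1$. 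So the same $A'$ witnesses that $X/1$ is supporting.

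For the bridging property, suppose $A = (x_0, \dots, x_{i-1}, a_i, \dots, a_j, x_{j+1}, \dots, x_d)$ and $B = (x_0, \dots, x_{i-1}, b_i, \dots, b_j, x_{j+1}, \dots, x_d)$ lie in $X/1$ with $(a_i, \dots, a_j) \wr (b_i, \dots, b_j)$. Since $A, B \in X$ and $X$ is bridging, each $S_k = (x_0, \dots, x_{i-1}, a_i, \dots, a_{k-1}, b_k, \dots, b_j, x_{j+1}, \dots, x_d)$ lies in $X$, hence in $\chainset{m}{d}$, for $i \leq k \leq j+1$; it remains only to see $(S_k)_0 \neq 2$. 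If $i \geq 1$ this is clear, as $(S_k)_0 = x_0 = a_0 \neq 2$ since $A \in X/1$. If $i = 0$, then $(S_k)_0$ is either $a_0$ or $b_0$, and $a_0 \geq 3$ because $A \in X/1$ while $b_0 > a_0$ from $(a_0, \dots, a_j) \wr (b_0, \dots, b_j)$, so $(S_k)_0 \geq 3 \neq 2$ in either case. Hence $S_k \in X/1$, and $X/1$ is bridging. The argument is essentially bookkeeping with the interleaving inequalities, so I do not expect a genuine obstacle; the one point deserving care is keeping the two vertex-set conventions ($[m]$ versus $[2,m]$) straight and confirming that the inherited witness $A'$ and the tuples $S_k$ genuinely lie in $\chainset{[2,m]}{d}$ rather than merely in $\chainset{m}{d}$ — which is exactly what the inequality $a_0 \geq 3$ delivers.
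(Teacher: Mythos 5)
Your proof is correct and follows essentially the same route as the paper's: inherit the witnessing tuple $A'$ and the tuples $S_k$ directly from the corresponding properties of $X$, and check that their first entries exceed $2$ so they survive into $X/1$. You are in fact slightly more careful than the paper on two points — distinguishing $\chainset{[2,m]}{d}$ from $\chainset{m}{d}$ explicitly, and handling the $i=0$ case of the bridging condition where the first entry of $S_k$ is $a_0$ or $b_0$ rather than $x_0$ — but the substance is identical.
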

\begin{proof}
We first show that $X/1$ must be supporting. Let $A \in X/1$. Then $A \in X$. Since $X$ is supporting, there must be $B$ such that $B \wr A$ and every separated $(d+1)$-tuple contained in $A \cup B$ is in $X$. But these separated $(d+1)$-tuples will also be contained in $X/1$, which is therefore also supporting.

Now we show that $X/1$ must be bridging. Let \[A:=(x_{0}, \dots, x_{i-1},a_{i}, \dots, a_{j}, x_{j+1}, \dots, x_{d}),\,B:=(x_{0}, \dots, x_{i-1},b_{i}, \dots, b_{j}, x_{j+1}, \dots, x_{d})\] $\in X/1$, where possibly $i=0$ or $j=d$, or both. Suppose these are such that $(a_{i}, \dots, a_{j}) \wr (b_{i}, \dots, b_{j})$. Then $A,B \in X$. Since $X$ is bridging, we must have \[S_{k}:=(x_{0}, \dots, x_{i-1},a_{i}, \dots, a_{k-1}, b_{k}, \dots, b_{j},x_{j+1} \dots, x_{d})\in X\] for all $i \leqslant k \leqslant j+1$. But then $S_{k} \in X/1$ for all $i \leqslant k \leqslant j+1$ since $x_{0} \neq 2$ by assumption.
\end{proof}

\begin{lemma}\label{lem-delete-works}
Suppose that $X\subseteq\chainset{m}{d}$ is supporting and bridging. Then $X\backslash\{1,2\}$ is also supporting and bridging.
\end{lemma}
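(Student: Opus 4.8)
The plan is to reduce both properties for $X\backslash\{1,2\}$ to the corresponding properties of $X$ via the $\star$-operation, using the observation that a $d$-tuple $A$ lies in $X\backslash\{1,2\}$ precisely when $2\star A\in X$. First I would record that $X\backslash\{1,2\}\subseteq\chainset{[3,m]}{d-1}$ (equivalently $\chainset{m-2}{d-1}$ after relabelling, this being the ambient set for $C([3,m],2d-1)$): if $2\star A=(2,a_1,\dots,a_d)\in X\subseteq\chainset{m}{d}$ then $a_1\geqslant 4$ and $a_d\leqslant m-1$, so $A$ is separated and avoids the first and last vertices of $[3,m]$. Since everything in the definitions of supporting and bridging concerns only the relative order of entries, prepending the small entry $2$ should carry instances over to $X$, with the side conditions (separated, not $1$, not $m$) staying easy to check because $2$ lies well below every other entry.

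For bridging I would argue directly. Given $A,B\in X\backslash\{1,2\}$ of the form appearing in Definition \ref{def-bridge} with middle blocks $(a_i,\dots,a_j)\wr(b_i,\dots,b_j)$, the tuples $2\star A$ and $2\star B$ lie in $X$ and differ in exactly the same block, now with common prefix $(2,x_0,\dots,x_{i-1})$; this is an instance of the bridging hypothesis for $X$ with the index $i$ increased by one. Bridging of $X$ then yields $2\star S_k\in X$ for all the intermediate tuples $S_k$, $i\leqslant k\leqslant j+1$ (and $2\star S_k\in\chainset{m}{d}$ automatically, as in the opening line of the proof of Lemma \ref{bridge}), hence $S_k\in X\backslash\{1,2\}$. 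This also disposes of the degenerate cases $i=0$ or $j=d$ without extra work.

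For supporting, take $A=(a_1,\dots,a_d)\in X\backslash\{1,2\}$, so $2\star A\in X$. Supporting of $X$ supplies a $d$-tuple $A''=(a''_0,\dots,a''_{d-1})$ with $A''\wr(2\star A)$ --- i.e.\ $2<a''_0<a_1<a''_1<a_2<\dots<a''_{d-1}<a_d$ --- such that every separated $(d+1)$-tuple inside $(2\star A)\cup A''$ lies in $X$. Since $2<a''_0<a_1$, I would discard $a''_0$ to obtain a $(d-1)$-tuple $A':=(a''_1,\dots,a''_{d-1})$ with $A'\wr A$, and claim $A'$ witnesses that $A$ is supported in $X\backslash\{1,2\}$: any $d$-tuple $B'\subseteq A\cup A'$ lying in $\chainset{[3,m]}{d-1}$ satisfies $2\star B'\subseteq\{2\}\cup(A\cup A')\subseteq(2\star A)\cup A''$, and $2\star B'\in\chainset{m}{d}$ (least entry $2$, next entry $\geqslant a_1\geqslant 4$ so the first gap is $\geqslant 2$, last entry $\leqslant a_d\leqslant m-1$, remaining gaps inherited from $B'$), so $2\star B'\in X$ and therefore $B'\in X\backslash\{1,2\}$. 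The only genuinely fiddly point --- and the one I would be most careful with --- is this supporting step: checking that deleting the entry $a''_0$, which sits strictly between $2$ and $a_1$, is harmless, that the resulting $A'$ really intertwines $A$, and that prepending $2$ does not violate separatedness or the endpoint conditions; together with the bookkeeping for small $d$ and the boundary index cases, this is the main (though still routine) obstacle.
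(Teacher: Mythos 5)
Your proof is correct and follows essentially the same route as the paper's: for bridging you prepend $2$ to reduce to the bridging property of $X$, and for supporting you apply the supporting property of $X$ to $2\star A$ and drop the first entry of the resulting witness tuple, lifting sub-tuples back via $2\star(-)$ exactly as in the paper (your verification that $2\star B'$ is separated and lies in $\chainset{m}{d}$ is, if anything, slightly more explicit than the paper's).
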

\begin{proof}
We first show that $X\backslash\{1,2\}$ must be supporting. Let $A \in X\backslash\{1,2\}$. Then $A':=2\star A \in X$. Thus, since $X$ is supporting, there is a $d$-tuple $B'$ such that $B' \wr A'$ and every separated sub-$(d+1)$-tuple of $A' \cup B'$ is in $X$. Then $B=(b'_{1}, \dots, b'_{d-1})$ is such that $B \wr A$. Let $C$ be a separated $d$-tuple contained in $A \cup B$. Then $2 \star C$ is contained in $A' \cup B'$, and so is in $X$, since $c_{0}\geqslant a_{0}>3$. This implies that $C \in X\backslash\{1,2\}$, as desired.

We now show that $X\backslash\{1,2\}$ must be bridging. Let \[A:=(x_{1}, \dots, x_{i-1},a_{i}, \dots, a_{j}, x_{j+1}, \dots, x_{d}),\,B:=(x_{1}, \dots, x_{i-1},b_{i}, \dots, b_{j}, x_{j+1}, \dots, x_{d})\] $\in X\backslash\{1,2\}$, where possibly $i=0$ or $j=d$, or both. Suppose that these are such that $(a_{i}, \dots, a_{j}) \wr (b_{i}, \dots, b_{j})$. Then $A':=2\star A,B':=2\star B \in X$. Since $X$ is bridging, we must have \[S'_{k}:=(2,x_{1}, \dots, x_{i-1},a_{i}, \dots, a_{k-1}, b_{k}, \dots, b_{j},x_{j+1} \dots, x_{d})\in X\] for all $i \leqslant k \leqslant j+1$. But then \[S_{k}:=(x_{1}, \dots, x_{i-1},a_{i}, \dots, a_{k-1}, b_{k}, \dots, b_{j},x_{j+1} \dots, x_{d}) \in X\backslash\{1,2\}\] for all $i \leqslant k \leqslant j+1$.
\end{proof}

The following technical proposition is key to proving our characterisation of triangulations of odd-dimensional cyclic polytopes. We shall use it in the subsequent proposition to construct a triangulation whose internal $d$-simplices are given by a particular supporting and bridging set of tuples. This is the most difficult step in the proof comprised by this section.

\begin{proposition}\label{prop-key-sup}
Let $X\subseteq \chainset{m}{d}$ be supporting and bridging for $C(m,2d+1)$. We suppose that $X$ is such that there are triangulations $\mathcal{U} \in \mathcal{S}([2,m],2d+1)$ and $\mathcal{W} \in \mathcal{S}([3,m],2d-1)$ such that $\simp(\mathcal{U})=X/1$ and $\simp(\mathcal{W})=X\backslash\{1,2\}$. Then $\mathcal{W}$ is a section of $\mathcal{U}\backslash 2$.
\end{proposition}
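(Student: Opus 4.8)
The plan is to unwind ``section'' through the vertex-figure construction, reduce to a condition about which simplices lie in $\mathcal{U}$, and then verify that condition using Lemma~\ref{lem-dey} together with the supporting and bridging properties of $X$.

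First I record the standard description of the vertex figure (see \cite[Lemma~3.1]{rs-baues}): realising $\mathcal{U}\backslash 2$ as a hyperplane slice of $C([2,m],2d+1)$ near the vertex $2$, a subset $\sigma \subseteq [3,m]$ spans a simplex of $\mathcal{U}\backslash 2$ if and only if $2\star\sigma$ spans a simplex of $\mathcal{U}$. Since the image of $s_{\mathcal{W}}$ is the subcomplex of $C([3,m],2d)$ generated by the simplices $|S|_{2d}$ with $|S|$ a $(2d-1)$-simplex of $\mathcal{W}$, and $\mathcal{U}\backslash 2$ is closed under faces, $\mathcal{W}$ is a section of $\mathcal{U}\backslash 2$ if and only if $|2\star S|$ is a simplex of $\mathcal{U}$ for every $(2d-1)$-simplex $|S|$ of $\mathcal{W}$; note that $|2\star S|$ is then automatically a $2d$-simplex. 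So I fix such an $S \subseteq [3,m]$, with $\#S = 2d$, and set $R := 2\star S$.

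Next I apply Lemma~\ref{lem-dey} to $\mathcal{U}$, whose internal $d$-simplices are $\simp(\mathcal{U}) = X/1$. Any subset of $[2,m]$ containing the minimal vertex $2$ lies in a lower facet, so by the analogue of Lemma~\ref{lem-int-odd} no internal $d$-face of $|R|$ contains $2$; hence $|R|$ is a simplex of $\mathcal{U}$ if and only if
\begin{enumerate}
\item every separated $(d+1)$-subset $A \subseteq S$ with $a_0 \neq 2$ and $a_d \neq m$ satisfies $A \in X/1$; and
\item no $C \in X/1$ intersects $|R|$ transversely.
\end{enumerate}
Inspecting the circuits of $C([2,m],2d+1)$ and again discarding the vertex $2$, the condition in (2) is that there is no $C \in X/1$ with $C \wr D$ for a subset $D \subseteq S$ of size $d+1$ or $d+2$; this is the shape in which the bridging property will enter.

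Finally, I translate the hypothesis $|S| \in \mathcal{W}$ by applying Lemma~\ref{lem-dey} to $\mathcal{W}$, which has dimension $2(d-1)+1$: every internal $(d-1)$-face $|A'|$ of $|S|$ satisfies $A' \in \simp(\mathcal{W})$, that is $2\star A' \in X$; and no internal $(d-1)$-simplex of $\mathcal{W}$ --- equivalently, no $C'$ with $2\star C' \in X$ --- intersects $|S|$ transversely, which by the circuit description of $C([3,m],2d-1)$ says there is no $(d+1)$-subset $E \subseteq S$ with $C' \wr E$. The core of the argument is to feed these two consequences of $|S|\in\mathcal{W}$ --- certain $2\star(\,\cdot\,)$ lie in $X$, certain $2\star(\,\cdot\,)$ do not --- into the hypothesis that $X$ is supporting and bridging. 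For (1): given $A\subseteq S$ as there, a suitable $(d-1)$-face $A'$ of $A$ is internal in $\mathcal{W}$, so $2\star A' \in X$; applying the supporting property to $2\star A'$ yields a $d$-tuple completing it, the ``wrong'' completions are excluded because they would force an internal $(d-1)$-simplex of $\mathcal{W}$ to cut $|S|$ transversely, and the surviving completion forces $A\in X = X/1$. For (2): a transverse intersection $C \wr D$ is pushed, using the bridging property and the incidences among the $d$-faces of $|S|$, down to an internal $(d-1)$-simplex of $\mathcal{W}$ cutting $|S|$ transversely, a contradiction. I expect this last, bookkeeping-heavy step --- matching the circuits of $C([2,m],2d+1)$ and of $C([3,m],2d-1)$ and running the supporting and bridging closures in the right order so that the positive and negative information from $|S|\in\mathcal{W}$ together deliver both (1) and (2) --- to be the main obstacle.
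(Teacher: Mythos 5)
Your reduction is sound and broadly parallel to the paper's: both routes pass through the vertex-figure correspondence and Lemma \ref{lem-dey}. (The paper reduces instead to showing that every $d$-simplex of $\mathcal{W}$ is a $d$-simplex of $\mathcal{U}\backslash 2$, equivalently that every $(d+1)$-simplex of $2\star\mathcal{W}$ is a $(d+1)$-simplex of $\mathcal{U}$, using that $\mathcal{U}\backslash 2$ and $\mathcal{W}$ are each determined by their $d$-skeleta; your choice to verify that the full $2d$-simplex $|2\star S|$ lies in $\mathcal{U}$ is legitimate but strictly more work, since your condition (2) must then also exclude $C\in X/1$ intertwining the $(d+2)$-subsets $D\subseteq S$, i.e.\ the $(d+1)$-faces of $|S|$ itself --- a case the paper's reduction never has to confront.)

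The genuine gap is that the core of the proof --- the part where the supporting and bridging hypotheses are actually deployed --- is not carried out; you defer it as ``the main obstacle'' and the sketch you offer does not work as described. For your condition (1), you propose to apply the supporting property to $2\star A'$ for a $(d-1)$-face $A'$ of $A$ and claim ``the surviving completion forces $A\in X$.'' But the supporting property only supplies \emph{some} $d$-tuple $B'$ with $B'\wr(2\star A')$ whose separated subtuples lie in $X$; there is no mechanism forcing this completion to involve the pre-specified tuple $A\subseteq S$, so no conclusion $A\in X$ follows. In the paper's argument, new memberships in $X$ are always produced by the \emph{bridging} property applied to two tuples already known to lie in $X$ (e.g.\ $G_{d+1}$ and $(2,f_0,\dots,f_{d-1})$), while the supporting property is used only negatively, via a two-case analysis on the minimal element of the supplied completion, to contradict an assumed transverse intersection. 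Until you exhibit the precise pairs of tuples to which bridging is applied, the case split for the supporting step, and the extra argument for $(d+2)$-subsets of $S$ in your condition (2), the proposal remains a plan rather than a proof.
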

\begin{proof}
For this it suffices to show that any $d$-simplex of $\mathcal{W}$ is a $d$-simplex of $\mathcal{U}\backslash 2$. This is because $\mathcal{U}\backslash 2$ is a triangulation of a $2d$-dimensional cyclic polytope, and hence is determined by its $d$-simplices by \cite[Lemma 2.15]{ot}. For $k>d$, a $k$-simplex $|A|_{2d-1}$ is a $k$-simplex of $\mathcal{W}$ if and only if all its $d$-faces are $d$-simplices of $\mathcal{W}$, by Lemma \ref{lem-dey}. Moreover, $|A|_{2d}$ is a $d$-simplex of $\mathcal{U}\backslash 2$ if and only if all its $d$-faces are $d$-simplices of $\mathcal{U}\backslash 2$, by \cite[Lemma 2.15]{ot}. Hence if $\mathcal{U}\backslash 2$ contains all the $d$-simplices of $\mathcal{W}$, it must contain all the higher-dimensional simplices of $\mathcal{W}$ as well.

Note that $d$-simplices of $\mathcal{U}\backslash 2$ result from $(d+1)$-simplices of $\mathcal{U}$ with $2$ as a vertex. Hence one can show that every $d$-simplex of $\mathcal{W}$ is a $d$-simplex of $\mathcal{U}\backslash 2$ by showing that every $(d+1)$-simplex of $2 \ast \mathcal{W}$ is a $(d+1)$-simplex of $\mathcal{U}$. In turn, by Lemma \ref{lem-dey}, one can show this by showing that every $d$-simplex of $2 \ast \mathcal{W}$ is a $d$-simplex of $\mathcal{U}$ and that no $d$-simplex of $\mathcal{U}$ intersects a $(d+1)$-simplex of $2 \ast \mathcal{W}$ transversely.	

We first show that no $d$-simplex of $\mathcal{U}$ intersects a $(d+1)$-simplex of $2 \ast \mathcal{W}$ transversely. Suppose that $|A|$ is a $d$-simplex of $\mathcal{U}$ such that $A \wr B$, where $|B|$ is a $(d+1)$-simplex of $2 \ast \mathcal{W}$. We then have that $(b_{1}, \dots, b_{d}) \in \simp(\mathcal{W})$, since $2 \leqslant b_{0}<a_{0}<b_{1}$, so that $b_{1}>3$, and $b_{d} < b_{d+1} \leqslant m$. This means that $(2,b_{1}, \dots, b_{d}) \in X$. Since $A \in \simp(\mathcal{U}) \subseteq X$, we have that $(2,a_{1}, \dots, a_{d}) \in X$ by applying the bridging condition to $(2,b_{1}, \dots, b_{d})$ and $A$. This implies that $(a_{1}, \dots, a_{d}) \in \simp(\mathcal{W})$. But $(a_{1}, \dots, a_{d}) \wr (b_{1}, \dots, b_{d+1})$, which is a $d$-simplex of $\mathcal{W}$, a contradiction.

Hence, we now show that every $d$-simplex of $2 \ast \mathcal{W}$ is a $d$-simplex of $\mathcal{U}$. It is clear that if $|2 \star E|$ is a $d$-simplex of $2 \star \mathcal{W}$, then $|2 \star E|$ is a $d$-simplex of $\mathcal{U}$. Indeed, $|2 \star E|$ is on the boundary of $C([2,m],2d+1)$.

Therefore, let $|F|$ be a $d$-simplex of $\mathcal{W}$, and hence of $2 \star \mathcal{W}$. We must show that $|F|$ is a $d$-simplex of $\mathcal{U}$. If $|F|$ is not a $d$-simplex of $\mathcal{U}$, then there must be a $(d+1)$-simplex $|G|$ of $\mathcal{U}$ such that $F \wr G$. Hence $|G_{i}|:=|g_{0}, \dots, g_{i-1}, \hat{g}_{i}, g_{i+1}, \dots, g_{d+1}|$ is a $d$-simplex of $\mathcal{U}$ for all $i \in \{0,\dots, d+1\}$.

Suppose first that $g_{0} \neq 2$. Then we must have that $G_{d+1} \in \simp(\mathcal{U}) \subseteq X$, since this is a separated $(d+1)$-tuple with $g_{0}>2$ and $g_{d}<g_{d+1} \leqslant m$. We know that $(2,f_{0}, \dots, f_{d-1}) \in X$, since $|f_{0}, \dots, f_{d-1}|$ must be an internal $(d-1)$-simplex of $\mathcal{W}$. This is because $f_{d-1} < f_{d} \leqslant m$ and $f_{0} > g_{0} \geqslant 3$. Then, since $X$ is bridging, we must have that $(2,g_{1}, \dots, g_{d}) \in X$ by applying the bridging condition to $G_{d+1}$ and $(2,f_{0}, \dots, f_{d-1})$. But then $|g_{1}, \dots, g_{d}|$ is a $(d-1)$-simplex of $\mathcal{W}$ which intersects $|F|$ transversely, a contradiction.

If $g_{0}=2$, then consider the following. We know that $(2,f_{1}, \dots, f_{d}) \in X$, since $|f_{1}, \dots, f_{d}|$ is an internal $(d-1)$-simplex of $\mathcal{W}$. This is because $f_{1}>f_{0} \geqslant 3 > 2=g_{0}$ and $f_{d}<g_{d+1} \leqslant m$. Since $X$ is supporting, there must exist a $d$-tuple $H \wr (2,f_{1}, \dots, f_{d})$, such that every sub-$(d+1)$-tuple of $H \cup (2,f_{1}, \dots, f_{d})$ is a $(d+1)$-tuple of $X$. If $h_{0}>f_{0}$, then note that $2 \star H$ must be a $(d+1)$-tuple of $X$, so that $|H|$ is a $(d-1)$-simplex of $\mathcal{W}$ with $H \wr F$, a contradiction. If $h_{0} \leqslant f_{0}$, then note that $(h_{0},f_{1}, \dots, f_{d})$ must be a $(d+1)$-tuple of $X$. We then have that $|h_{0},f_{1}, \dots, f_{d}|$ is a $d$-simplex of $\mathcal{U}$ which intersects $|G|$ transversely, since $h_{0}>2$ and $g_{0}=2$. This is another contradiction.

Therefore every $d$-simplex of $2\star\mathcal{W}$ is a $d$-simplex of $\mathcal{U}$, which gives us that $\mathcal{W}$ is indeed a section of $\mathcal{U}\backslash 2$, as desired.
\end{proof}

We can now inductively construct triangulations from supporting and bridging collections.

\begin{proposition}\label{prop-triang-exists}
Let $X \subseteq \chainset{m}{d}$ be supporting and bridging. Then there is a triangulation $\mathcal{T} \in \mathcal{S}(m,2d+1)$ such that $X=\simp(\mathcal{T})$.
\end{proposition}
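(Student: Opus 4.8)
The plan is to prove this by induction, using the operations $/1$ and $\backslash\{1,2\}$ to pass to smaller cyclic polytopes. I would induct on $m$, treating all $d$ simultaneously. When $m=2d+2$ the polytope $C(m,2d+1)$ is a simplex and so has a unique triangulation; one checks directly from the separation and endpoint conditions (cf.\ Lemma \ref{lem-int-odd}) that $\chainset{m}{d}=\emptyset$ whenever $m\leqslant 2d+2$, so necessarily $X=\emptyset$ and the unique triangulation works. The case $d=0$ is also immediate: a triangulation of the one-dimensional polytope $C(m,1)$ is just a subdivision of the segment at some subset of $\{2,\dots,m-1\}=\chainset{m}{0}$, every such subset is vacuously supporting and bridging, and $\mathcal{T}\mapsto\simp(\mathcal{T})$ is then visibly the desired bijection.

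For the inductive step, take $X\subseteq\chainset{m}{d}$ supporting and bridging with $m\geqslant 2d+3$ and $d\geqslant 1$. By Lemma \ref{lem-contract-works}, $X/1$ is supporting and bridging for $C(m-1,2d+1)$ (identifying the vertex set $[2,m]$ with $[m-1]$), so by the inductive hypothesis there is $\mathcal{U}\in\mathcal{S}([2,m],2d+1)$ with $\simp(\mathcal{U})=X/1$. By Lemma \ref{lem-delete-works}, $X\backslash\{1,2\}$ is supporting and bridging for $C(m-2,2d-1)$, so by the inductive hypothesis there is $\mathcal{W}\in\mathcal{S}([3,m],2d-1)$ with $\simp(\mathcal{W})=X\backslash\{1,2\}$. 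Proposition \ref{prop-key-sup} then applies and tells us that $\mathcal{W}$ is a section of $\mathcal{U}\backslash 2$.

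The heart of the argument is to reverse the contraction at vertex $1$: I would build a triangulation $\mathcal{T}\in\mathcal{S}([1,m],2d+1)$ by ``expanding'' $\mathcal{U}$ at vertex $2$ along the section $\mathcal{W}$. Since $\mathcal{W}$ is a section of $\mathcal{U}\backslash 2$ it splits $\mathcal{U}\backslash 2$ into a part weakly below and a part weakly above $s_{\mathcal{W}}$; let $\mathcal{T}$ consist of the simplices of $\mathcal{U}$ avoiding vertex $2$, the simplices $2\star\tau$ for $\tau$ a maximal simplex above $\mathcal{W}$, the simplices $1\star\tau$ for $\tau$ a maximal simplex below $\mathcal{W}$, the simplices $\{1,2\}\star\sigma$ for $\sigma$ a maximal simplex of $\mathcal{W}$, together with the simplices $1\star G$ for $G$ a lower facet of $C([2,m],2d+1)$. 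This is the odd-dimensional counterpart of the vertex-insertion construction for cyclic polytopes used in \cite{rambau,er}; one verifies that $\mathcal{T}$ is a genuine triangulation of $C([1,m],2d+1)$ with $\mathcal{T}/1=\mathcal{U}$ and $\mathcal{T}\backslash\{1,2\}=\mathcal{W}$. Granting this, the conclusion is bookkeeping: directly from the definitions of $/1$ and $\backslash\{1,2\}$ on sets of tuples, and because $a_{0}\neq 1$ for every $A\in X$, we have $X=(X/1)\sqcup 2\star(X\backslash\{1,2\})$, and the same identity holds with $\simp(\mathcal{T})$ in place of $X$; Lemma \ref{lem-contract-desc} gives $\simp(\mathcal{T})/1=\simp(\mathcal{T}/1)=\simp(\mathcal{U})=X/1$ and Lemma \ref{lem-delete-desc} gives $\simp(\mathcal{T})\backslash\{1,2\}=\simp(\mathcal{T}\backslash\{1,2\})=\simp(\mathcal{W})=X\backslash\{1,2\}$, whence $\simp(\mathcal{T})=X$. (Injectivity and the fact that this accounts for all triangulations, needed for Theorem \ref{thm-class-odd-dim}, then follow from Lemmas \ref{support}, \ref{bridge} and \ref{lem-dey}.)

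The main obstacle is exactly the construction in the previous paragraph: proving that the expanded complex $\mathcal{T}$ really is a triangulation of $C([1,m],2d+1)$ and that it contracts to $\mathcal{U}$ and deletes to $\mathcal{W}$. This requires care with the geometry of placing a new vertex on the moment curve immediately before vertex $2$, with how a section of the vertex figure $\mathcal{U}\backslash 2$ lifts to a ``slab'' of $(2d+1)$-simplices in the polytope, and with checking that the pieces glue without introducing extraneous internal $d$-simplices (so that $\mathcal{T}/1$ is exactly $\mathcal{U}$ and not something larger). Everything else — the two reductions, the appeal to Proposition \ref{prop-key-sup}, and the final combination via Lemmas \ref{lem-contract-desc}, \ref{lem-delete-desc} and \ref{lem-dey} — is routine once this construction is established.
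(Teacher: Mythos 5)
Your argument follows the paper's proof step for step: the same two base cases, the same reductions via Lemma \ref{lem-contract-works} and Lemma \ref{lem-delete-works}, the same appeal to Proposition \ref{prop-key-sup}, and the same final bookkeeping with Lemmas \ref{lem-contract-desc} and \ref{lem-delete-desc}. The one place you diverge is the step you flag as the ``main obstacle'': the existence of a triangulation $\mathcal{T}$ of $C(m,2d+1)$ with $\mathcal{T}/1=\mathcal{U}$ and $\mathcal{T}\backslash\{1,2\}=\mathcal{W}$ given that $\mathcal{W}$ is a section of $\mathcal{U}\backslash 2$. This is not something you need to construct by hand --- it is a known extension result for cyclic polytopes (\cite[Lemma 4.7]{rs-baues}, reformulated as \cite[Lemma 2.3]{thomas} and \cite[Proposition 2.22]{ot}), and the paper simply cites it. So your proof is complete once that citation is substituted for your sketched vertex-insertion construction; attempting the construction from scratch would indeed require the care you describe (and as sketched it is not quite right --- e.g.\ the simplices $1\star G$ for $G$ a lower facet of $C([2,m],2d+1)$ are $2d$-dimensional and in any case the cones over the facets visible from the new vertex are already accounted for by the ``below $\mathcal{W}$'' and $\{1,2\}\star\sigma$ pieces), but none of that work is necessary.
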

\begin{proof}
We show this by induction. The base cases consist of the case where $d=0$ and the case where $m=2d+2$. For $d=0$, triangulations of $C(m,2d+1)$ are given by subsets of vertices from $\{2,\dots, m-1\}$. Since the properties of being supporting or bridging are trivial for $d=0$, this case holds. For $m=2d+2$, $C(m,2d+1)$ is a simplex and so uniquely triangulates itself. In this case, $\chainset{m}{d}$ is empty, and so the unique triangulation is given by the empty set. Therefore the base cases hold.

For the inductive step, we consider triangulations of $C(m,2d+1)$ and suppose that the claim holds for $C(m',2d'+1)$ whenever $m'<m$ or $d'<d$. By Lemma \ref{lem-contract-works}, $X/1$ is both supporting and bridging. Hence, by the induction hypothesis, there is a triangulation $\mathcal{U} \in \mathcal{S}([2,m],2d+1)$ such that $\simp(\mathcal{U})=X/1$. By Lemma \ref{lem-delete-works}, $X\backslash\{1,2\}$ is also supporting and bridging. Hence, by the induction hypothesis, there is a triangulation $\mathcal{W} \in \mathcal{S}([3,m],2d-1)$ such that $\simp(\mathcal{W})=X\backslash\{1,2\}$.

By Proposition \ref{prop-key-sup}, we have that $\mathcal{W}$ is a section of $\mathcal{U}\backslash 2$. By \cite[Lemma 4.7]{rs-baues} as formulated in \cite[Lemma 2.3]{thomas} or \cite[Proposition 2.22]{ot}, we therefore have a triangulation $\mathcal{T} \in \mathcal{S}(m,2d+1)$ such that $\mathcal{T}/1=\mathcal{U}$ and $\mathcal{T}\backslash\{1,2\}=\mathcal{W}$. Hence $X/1=\simp(\mathcal{U})=\simp(\mathcal{T}/1)=\simp(\mathcal{T})/1$, by Lemma \ref{lem-contract-desc}, and $X\backslash\{1,2\}=\simp(\mathcal{W})=\simp(\mathcal{T}\backslash\{1,2\})=\simp(\mathcal{T})\backslash\{1,2\}$, by Lemma \ref{lem-delete-desc}. Hence $X=\simp(\mathcal{T})$, as desired.
\end{proof}

This at last establishes Theorem \ref{thm-class-odd-dim}, which states that triangulations of $C(m,2d+1)$ are in bijection with subcollections of $\chainset{m}{d}$ which are supporting and bridging via $\mathcal{T} \mapsto \simp(\mathcal{T})$. Lemma \ref{bridge} and Lemma \ref{support} give us that $\simp(\mathcal{T})$ is supporting and bridging for every triangulation $\mathcal{T}$. Lemma \ref{lem-dey} tells us that the assignment $\mathcal{T} \mapsto \simp(\mathcal{T})$ is injective. Finally Proposition \ref{prop-triang-exists} tells us that this is a surjection.

\begin{remark}
Theorem \ref{thm-class-odd-dim} generalises the bijection obtained in \cite{fr} between triangulations of three-dimensional cyclic polytopes and persistent graphs. Here the supporting and bridging properties correspond to the defining properties of persistent graphs, known in \cite{fr} as the \emph{bar property} and the \emph{X-property} respectively. The problem of characterising the $d$-skeleton of triangulations of $(2d + 1)$-dimensional cyclic polytopes was raised as an open problem in the conclusion of \cite{fr}. Theorem \ref{thm-class-odd-dim} solves this problem.
\end{remark}

\begin{example}\label{ex-odd-desc}
Consider the cyclic polytope $C(6,3)$. This has six triangulations, $\mathcal{T}_{l}, \mathcal{T}_{1}, \mathcal{T}_{2},$ $\mathcal{T}'_{1}, \mathcal{T}'_{2}, \mathcal{T}_{u}$, where $\simp(\mathcal{T}_{l})=\{24,25,35\},\,\simp(\mathcal{T}_{1})=\{24,25\},\, \simp(\mathcal{T}'_{1})=\{35,25\},\, \simp(\mathcal{T}_{2})=\{24\},$ $\, \simp(\mathcal{T}'_{2})=\{35\},\, \simp(\mathcal{T}_{u})=\emptyset$. The set $\{24,35\}$ is not obtained because it is not bridging, for which it would need to contain $25$. The set $\{25\}$ is not obtained because it is not supporting. The options for the supporting tuple are $3$ and $4$, which require $35$ and $24$ respectively.
\end{example}

\section{The higher Stasheff--Tamari orders in odd dimensions}\label{sect-odd}

We now give combinatorial and algebraic interpretations of the higher Stasheff--Tamari orders on triangulations of odd-dimensional cyclic polytopes. To obtain the algebraic interpretations, we first show how triangulations of odd-dimensional cyclic polytopes arise in the representation theory of $A_{n}^{d}$. This gives the other half of the picture from \cite{ot}, which shows how triangulations of even-dimensional cyclic polytopes arise in the representation theory of $A_{n}^{d}$.

Along the way, we show how our results can be used to give new realisations of the minimal embeddings of the second higher Stasheff--Tamari orders into Boolean lattices from \cite{thomas}. The advantage of our realisations is that they are direct, whereas the construction in \cite{thomas} is recursive.

We continue to work predominantly in the tilting framework but, again, we could equally work in the cluster-tilting framework. We end by showing how our results apply in the cluster-tilting framework in Section \ref{sect-max-green}, where odd-dimensional triangulations correspond to equivalence classes of maximal green sequences.

By a \emph{maximal chain} of tilting $A_{n}^{d}$-modules, we mean a sequence $(T_{1}, \dots, T_{r})$ of tilting $A_{n}^{d}$-modules such that  $T_{1}$ is the basic tilting module of projectives, $T_{r}$ is the basic tilting module of injectives, and, for $i \in [r-1]$, $T_{i+1}$ is a left mutation of $T_{i}$. We denote by $\mathcal{CT}(A_{n}^{d})$ the set of maximal chains of tilting $A_{n}^{d}$-modules. Given a maximal chain $C$ of tilting $A_{n}^{d}$-modules, we denote the set of indecomposable summands of modules of $C$ by $\Sigma(C)$.

By $\widetilde{\mathcal{CT}}(A_{n}^{d})$, we denote the set of equivalence classes of $\mathcal{CT}(A_{n}^{d})$ under the equivalence relation $\sim$, where, for $C_{1}, C_{2} \in \mathcal{CT}(A_{n}^{d})$, $C_{1} \sim C_{2}$ if and only if $\Sigma(C_{1})=\Sigma(C_{2})$. For $C \in \mathcal{CT}(A_{n}^{d})$, we denote its equivalence class in $\widetilde{\mathcal{CT}}(A_{n}^{d})$ by $[C]$.

We first observe the following lemma.

\begin{lemma}\label{lem-proj-inj}
Let $A \in \modset{n+2d}{d}$. Then the $A_{n}^{d}$-module $M_{A}$ is neither projective nor injective if and only if $|A|$ is an internal $d$-simplex in $C(n+2d,2d+1)$.
\end{lemma}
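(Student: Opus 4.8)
The plan is to translate the algebraic condition on $M_A$ into a combinatorial condition on the tuple $A$, and then to compare with Lemma \ref{lem-int-odd}. The relevant combinatorial translations already exist in the excerpt: from the bijections following Proposition \ref{prop-key}, the indecomposable summand $M_A$ of $M^{(d,n)}$ is projective-injective precisely when it corresponds to a $d$-simplex $|A|_{2d}$ of $C(n+2d,2d)$ lying on a facet, equivalently when $A \in \modset{n+2d}{d} \setminus \nonconsec{n+2d}{d}$ (the internal $d$-simplices of $C(n+2d,2d)$ being exactly those indexed by $\nonconsec{n+2d}{d}$). So the first step is to make precise the claim that $M_A$ is projective-but-not-injective, or injective-but-not-projective, exactly when $A \in \modset{n+2d}{d}$ but $a_0 = 1$ or $a_d = n+2d$ (one endpoint condition failing while $A$ remains separated). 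Concretely: $M_A$ projective $\iff$ $a_0 = 1$; $M_A$ injective $\iff$ $a_d = n+2d$; this should follow from the explicit description of $A_n^d$ and $M^{(d,n)}$, or be extractable from \cite{ot}. Hence $M_A$ is neither projective nor injective $\iff$ $a_0 \neq 1$ and $a_d \neq n+2d$, i.e. $\iff A \in \chainset{n+2d}{d}$ (recalling $A \in \modset{n+2d}{d}$ by hypothesis, and $\chainset{m}{d} = \{A \in \modset{m}{d} \mid a_0 \neq 1,\, a_d \neq m\}$).

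The second step is then immediate: by Lemma \ref{lem-int-odd} applied with $m = n+2d$, we have that $|A|$ is an internal $d$-simplex of $C(n+2d,2d+1)$ if and only if $A \in \chainset{n+2d}{d}$. Combining the two steps gives the equivalence asserted in the lemma. One subtlety to be careful about is the ambient dimension: $M_A$ naturally indexes a simplex in $C(n+2d,2d)$, but the lemma speaks of $C(n+2d,2d+1)$. Since $\chainset{n+2d}{d}$ is a purely combinatorial condition on the tuple, and both Lemma \ref{lem-int-odd} and the characterisation of internal $d$-simplices of $C(m,2d)$ (from the bijections after Proposition \ref{prop-key}) reduce internality to membership in this same set, the dimension shift causes no trouble — this is precisely the observation the lemma is recording.

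The main obstacle is the first step, specifically pinning down that projectivity of $M_A$ corresponds to $a_0 = 1$ and injectivity to $a_d = n+2d$. This requires either a direct computation with the quiver $Q^{(d,n)}$ and the module $M^{(d,n)}$, or — more cleanly — citing the relevant part of \cite{ot} (the description of which summands of $M^{(d,n)}$ are projective-injective is built into their Theorem 1.1, and the finer split into "projective" versus "injective" should follow from their identification of projectives and injectives under the bijection with $d$-simplices, using that projectives correspond to simplices containing vertex $1$ and injectives to those containing vertex $n+2d$, or the analogous lower/upper facet statement). I would present the argument by first recording this correspondence as a consequence of \cite{ot} in one or two sentences, then deducing that "neither projective nor injective" is equivalent to $A \in \chainset{n+2d}{d}$, and finally invoking Lemma \ref{lem-int-odd}.
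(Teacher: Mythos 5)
Your proposal is correct and follows the paper's own argument: the paper cites \cite[Theorem 3.6]{ot} for exactly the facts you identify as the key step ($M_{A}$ is projective if and only if $a_{0}=1$, and injective if and only if $a_{d}=n+2d$), and then concludes via Lemma \ref{lem-int-odd}. Your only hesitation — where to source the projectivity/injectivity criterion — is resolved by that citation.
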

\begin{proof}
By \cite[Theorem 3.6]{ot}, $M_{A}$ is projective if and only if $a_{0}=1$; and $M_{A}$ is injective if and only if $a_{d}=n+2d$. The result then follows from Lemma \ref{lem-int-odd}.
\end{proof}

Triangulations of odd-dimensional cyclic polytopes correspond to equivalence classes of maximal chains of tilting modules, as follows.

\begin{theorem}\label{odd-dim-triangs}
There is a bijection between $\mathcal{S}(n+2d,2d+1)$ and $\widetilde{\mathcal{CT}}(A_{n}^{d})$. Moreover, if a triangulation $\mathcal{T} \in \mathcal{S}(n+2d,2d+1)$ corresponds to an equivalence class of maximal chains $[C] \in \widetilde{\mathcal{CT}}(A_{n}^{d})$, then
\begin{enumerate}
\item there is a bijection between mutations in $C$ and $(2d+1)$-simplices of $\mathcal{T}$; and\label{op-tilt-1}
\item there is a bijection between the internal $d$-simplices of $\mathcal{T}$ and elements of $\Sigma(C)$ which are neither projective nor injective.\label{op-tilt-2}
\end{enumerate}
\end{theorem}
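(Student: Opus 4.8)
The plan is to establish the bijection by induction on $m = n+2d$ and on $d$, exactly paralleling the strategy of Proposition \ref{prop-triang-exists}, and then to use the combinatorial machinery already developed — chiefly Theorem \ref{thm-class-odd-dim} together with the OT correspondence for even-dimensional cyclic polytopes (tilting modules for $C(n+2d,2d)$) — to transport the statement into algebra. First I would show that an equivalence class $[C] \in \widetilde{\mathcal{CT}}(A_{n}^{d})$ is determined by, and determines, the set $\Sigma(C)$ of indecomposable summands appearing in $C$. A maximal chain of tilting modules from the projectives to the injectives is, via Theorem \ref{alg-cor-1st}, the same as a maximal chain in $\mathcal{S}_{1}(n+2d,2d)$ from the lower to the upper triangulation, and by \cite[Theorem 1.1]{rambau} such chains are exactly the ``slices'' of triangulations of $C(n+2d,2d+1)$: a triangulation $\mathcal{T} \in \mathcal{S}(n+2d,2d+1)$ gives, via its piecewise-linear sections, a maximal chain in $\mathcal{S}_{1}(n+2d,2d)$, and two such chains are $\sim$-equivalent iff they come from the same $\mathcal{T}$. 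This is really the content of Rambau's theorem combined with the fact (cf. \cite{dey} and Lemma \ref{lem-dey}) that $\mathcal{T}$ is recovered from the set of $2d$-simplices occurring in its sections.

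The key identification to make precise is: the $(d+1)$-tuples $A \in \modset{n+2d}{d}$ with $M_{A} \in \Sigma(C)$, excluding the projectives and injectives, are exactly the internal $d$-simplices of the corresponding triangulation $\mathcal{T}$ of $C(n+2d,2d+1)$. Given a chain $C$, a summand $M_{A}$ appears in some tilting module $T_{i}$ of $C$ iff the $d$-simplex $|A|_{2d}$ appears in the $i$-th triangulation $\mathcal{T}_{i}$ of the associated maximal chain in $\mathcal{S}_{1}(n+2d,2d)$, i.e.\ iff $|A|_{2d}$ appears in the section $s_{\mathcal{T}_{i}}$ of $\mathcal{T}$, i.e.\ iff $|A|_{2d+1}$ is a $d$-simplex of $\mathcal{T}$. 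By Lemma \ref{lem-proj-inj}, restricting to $M_{A}$ neither projective nor injective corresponds precisely to restricting to $A \in \chainset{n+2d}{d}$, which by Lemma \ref{lem-int-odd} is exactly the condition that $|A|$ be an \emph{internal} $d$-simplex of $C(n+2d,2d+1)$. This gives part (\ref{op-tilt-2}) and, simultaneously, shows $\simp(\mathcal{T}) = \{A \mid M_A \in \Sigma(C),\ M_A \text{ not proj./inj.}\}$; combined with Theorem \ref{thm-class-odd-dim} (that $\mathcal{T} \mapsto \simp(\mathcal{T})$ is a bijection onto supporting, bridging subsets of $\chainset{n+2d}{d}$), this pins down the bijection $\mathcal{S}(n+2d,2d+1) \leftrightarrow \widetilde{\mathcal{CT}}(A_{n}^{d})$.

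For part (\ref{op-tilt-1}), a single mutation $T_{i} \rightsquigarrow T_{i+1}$ in $C$ is, by Theorem \ref{alg-cor-1st} and Theorem \ref{main-thm-1st}, an increasing bistellar flip $\mathcal{T}_{i} \lessdot_{1} \mathcal{T}_{i+1}$ inside some $C(X,2d)$ with $X = A \cup B \in \subs{n+2d}{2d+2}{}$ and $A \wr B$; by Lemma \ref{up-low-fac} the tuple $X$ is a $(2d+1)$-simplex of $C(n+2d,2d+1)$, namely the unique $(2d+1)$-simplex of $\mathcal{T}$ whose lower facets meet $s_{\mathcal{T}_{i}}$ and whose upper facets meet $s_{\mathcal{T}_{i+1}}$. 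Conversely every $(2d+1)$-simplex $|X|$ of $\mathcal{T}$ is ``crossed'' exactly once as the section sweeps from bottom to top (since the sections $s_{\mathcal{T}_i}$ together fill out $\mathcal{T}$), producing exactly one mutation in $C$; that this assignment is independent of the choice of representative $C$ of $[C]$ follows because the set of $(2d+1)$-simplices of $\mathcal{T}$ is intrinsic to $\mathcal{T}$. The main obstacle I anticipate is handling this last well-definedness point carefully — i.e.\ showing that the bijection ``mutations $\leftrightarrow$ $(2d+1)$-simplices'' descends to the equivalence class and is genuinely a bijection rather than merely a surjection — and, relatedly, nailing down that distinct triangulations of $C(n+2d,2d+1)$ really do give $\sim$-inequivalent chains; both reduce to the reconstruction results (Lemma \ref{lem-dey}, \cite{dey}) but need to be invoked in the right order, and I would likely prove the whole theorem by the same $X/1$, $X\backslash\{1,2\}$ induction used in Proposition \ref{prop-triang-exists} so that these bookkeeping points become automatic.
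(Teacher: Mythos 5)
Your proposal is correct and follows essentially the same route as the paper: Rambau's theorem identifying triangulations of $C(n+2d,2d+1)$ with equivalence classes of maximal chains in $\mathcal{S}_{1}(n+2d,2d)$, transported to tilting modules via Theorem \ref{alg-cor-1st}, with Lemmas \ref{lem-proj-inj}, \ref{lem-int-odd} and \ref{lem-dey} handling the matching of internal $d$-simplices with non-projective-injective summands and the agreement of the two equivalence relations. The induction on $m$ and $d$ you sketch at the start and end is unnecessary scaffolding---the direct argument of your middle paragraphs is exactly the paper's proof.
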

\begin{proof}
This follows from \cite[Theorem 1.1(ii)]{rambau}, which states that triangulations of $C(n+2d,2d+1)$ are in bijection with maximal chains in $\mathcal{S}_{1}(n+2d,2d)$ under an equivalence relation of differing by a permutation of bistellar flip operations. By Theorem \ref{alg-cor-1st}, elements of $\mathcal{CT}(A_{n}^{d})$ correspond to maximal chains in $\mathcal{S}_{1}(n+2d,2d)$. Hence let $C=(T_{1}, \dots, T_{r}) \in \mathcal{CT}(A_{n}^{d})$ correspond to a maximal chain $\mathcal{C}=(\mathcal{T}_{1}, \dots, \mathcal{T}_{r})$ of $\mathcal{S}_{1}(n+2d,2d)$ which gives a triangulation $\mathcal{T}$ of $C(n+2d,2d+1)$.

We first establish the claims (\ref{op-tilt-1}) and (\ref{op-tilt-2}) for $\mathcal{T}$ and $C$. The claim (\ref{op-tilt-1}) is straightforward, because the $(2d+1)$-simplices of $\mathcal{T}$ correspond to increasing bistellar flips in $\mathcal{C}$ by \cite[Theorem 1.1(ii)]{rambau}. Then these correspond to mutations by Theorem \ref{alg-cor-1st}.

For claim (\ref{op-tilt-2}), let $M_{A} \in \Sigma(C)$ be neither projective nor injective, so that $M_{A}$ is an indecomposable summand of $T_{i}$ for some $i$. Then \cite[Theorem 1.1(ii)]{rambau} implies that $|A|$ is a $d$-simplex of $\mathcal{T}$. By Lemma \ref{lem-proj-inj}, $|A|$ is an internal $d$-simplex of $\mathcal{T}$. Conversely, if $|A|$ is an internal $d$-simplex of $\mathcal{T}$, then by Rambau's theorem, there is a triangulation $\mathcal{T}_{i}$ such that $|A|_{2d}$ is an internal $d$-simplex of $\mathcal{T}_{i}$. This implies that $M_{A}$ is an indecomposable summand of $T_{i}$, and by Lemma \ref{lem-proj-inj}, it is neither projective nor injective. This establishes claim (\ref{op-tilt-2}).

Now we must show that maximal chains of tilting $A_{n}^{d}$-modules are equivalent if and only if they give the same triangulation of $C(n+2d,2d+1)$. Let $C' \in \mathcal{CT}(A_{n}^{d})$ correspond to a triangulation $\mathcal{T}'$ of $C(n+2d,2d+1)$. Suppose that $C \sim C'$. By claim (\ref{op-tilt-2}), since $\Sigma(C)=\Sigma(C')$, we have that $\simp(\mathcal{T})=\simp(\mathcal{T}')$. Hence, by Lemma \ref{lem-dey} we have that $\mathcal{T}=\mathcal{T}'$ as required.

Conversely, it is clear that if $C$ and $C'$ correspond to the same triangulation, then we must have $C \sim C'$. This is because if $C$ and $C'$ correspond to the same triangulation, then they must have in common all indecomposable summands which are neither projective nor injective, by claim (\ref{op-tilt-2}). But, since all indecomposable projectives and injectives must also be summands of both $C$ and $C'$, we have that $\Sigma(C)=\Sigma(C')$. 
\end{proof}

\begin{remark}
Theorem \ref{thm-class-odd-dim} therefore also classifies sequences of mutations of basic tilting $A_{n}^{d}$-modules from the projectives to the injectives, up to equivalence.
\end{remark}

\subsection{First order}\label{odd-first}

In order to prove our combinatorial and algebraic characterisations of the first higher Stasheff--Tamari order we use the following fact about triangulations of polytopes.

\begin{lemma}\label{lem-triang-fact}
Let $\mathcal{T}$ be a triangulation of a $\delta$-dimensional polytope $P$, with $\Sigma$ an internal $k$-simplex of $\mathcal{T}$. Then $\Sigma$ is the intersection of at least $\delta-k+1$ different $\delta$-simplices of $\mathcal{T}$.
\end{lemma}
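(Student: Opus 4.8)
The plan is to localise the statement at a point in the relative interior of $\Sigma$, pass to a transverse slice, and thereby reduce it to a count of top cells in a triangulated sphere. Pick a point $x$ in the relative interior of $\Sigma$; since $\Sigma$ is internal, $x$ lies in the interior of $P$, so a small ball about $x$ is contained in $P$. Choose an affine subspace $N \ni x$ complementary to $\mathrm{aff}(\Sigma)$, so that $\dim N = \delta - k$. The minimal face of $\mathcal{T}$ containing $x$ is $\Sigma$ itself, hence the $\delta$-simplices of $\mathcal{T}$ through $x$ are exactly those having $\Sigma$ as a face; call them $\sigma_1, \dots, \sigma_m$. For a small enough neighbourhood of $x$, each $N \cap \sigma_i$ is a cone with apex $x$ over a $(\delta - k - 1)$-simplex $\lambda_i$, these cones have pairwise disjoint interiors, and together they cover a $(\delta-k)$-ball around $x$ in $N$. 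Consequently the $\lambda_i$ are precisely the facets of a triangulation of the linking sphere $S^{\delta - k - 1}$ (equivalently, they are the facets of $\mathrm{lk}(\Sigma, \mathcal{T})$, which is a triangulated $(\delta - k - 1)$-sphere because $\Sigma$ is an interior face of the triangulated ball $P$). Thus $m$ equals the number of facets of some triangulation of $S^{\delta - k - 1}$.

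The core of the argument is then the elementary fact that every triangulation of $S^{j}$ has at least $j + 2$ facets. I would prove this by a pseudomanifold count: fix a facet $F$; it has $j + 1$ ridges, each of which is contained in exactly one further facet (each ridge of a triangulated closed manifold lies in exactly two facets), and these $j+1$ further facets are pairwise distinct, since two distinct ridges $r \neq r'$ of $F$ cannot lie in a common facet $G \neq F$: their union would sit inside the proper common face $F \cap G$, which has dimension at most $j - 1$, a contradiction. Together with $F$ this produces at least $j + 2$ facets. Applying this with $j = \delta - k - 1$ gives $m \geq \delta - k + 1$, so $\Sigma$ is a face of at least $\delta - k + 1$ distinct $\delta$-simplices of $\mathcal{T}$.

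Finally, to confirm that $\Sigma$ is exactly the intersection of these simplices and not a proper subface, I would return to the transverse picture: $\bigcap_{i=1}^{m} \sigma_i$ corresponds under the slice to the cone with apex $x$ over $\bigcap_{i=1}^{m} \lambda_i$, and the intersection of all facets of a triangulated sphere is empty — if some vertex lay in every facet, the sphere would coincide with the star of that vertex, which is contractible. Hence $\bigcap_{i=1}^{m} \sigma_i = \Sigma$. The one genuinely delicate ingredient is the topological input that the transverse slice of the open star of an interior face $\Sigma$ is a $(\delta - k)$-ball with linking sphere a triangulated $(\delta - k - 1)$-sphere; this is standard for interior faces of triangulated balls, and I would either cite it directly or deduce it from the fact that $\mathcal{T}$ triangulates the ball $P$ together with the local conical structure of a simplicial complex at a point.
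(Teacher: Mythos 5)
Your proof is correct, but it takes a genuinely different route from the paper's. The paper argues by downward induction on $k$: the base case is that an internal $(\delta-1)$-simplex is shared by at least two $\delta$-simplices, and the inductive step writes $\Sigma$ as the intersection of two distinct internal $(k+1)$-simplices and merges their (necessarily distinct) families of top cells to gain one more $\delta$-simplex. You instead localise at the link of $\Sigma$, identify it with a triangulated $(\delta-k-1)$-sphere, and invoke the facts that a triangulated $j$-sphere has at least $j+2$ facets and that the intersection of all its facets is empty. Both arguments are sound; yours gives the standard, conceptually transparent PL picture, but it imports the nontrivial input that the link of an interior face of a triangulated ball is a PL sphere (legitimate here because a geometric triangulation of a polytope is a combinatorial ball, but a fact one must cite). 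It is worth noting that your pseudomanifold count can be run directly in $\mathcal{T}$ with no topology at all: fix one $\delta$-simplex $\sigma \supseteq \Sigma$; its $\delta-k$ codimension-one faces containing $\Sigma$ are all internal, hence each is shared with a further $\delta$-simplex, and these are pairwise distinct since two such facets of $\sigma$ span $\sigma$ and so cannot both lie in a common face of $\sigma$ with another top cell. This yields $1+(\delta-k)$ top cells and, choosing for each vertex $v \notin \Sigma$ of $\sigma$ the facet of $\sigma$ omitting $v$, also shows the common intersection is exactly $\Sigma$ — recovering your conclusion with only the base-case observation the paper itself uses, and avoiding the appeal to non-contractibility of spheres.
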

\begin{proof}
We prove the result by downwards induction on $k$. Our base case is $k=\delta-1$. Here $(\delta-1)$-simplices are facets of $\delta$-simplices. A facet of a given $\delta$-simplex must either be a shared facet with another $\delta$-simplex, or lie within a boundary facet of $P$. Hence an internal $(\delta-1)$-simplex must be the intersection of at least two $\delta$-simplices.

For the inductive step, we assume that the result holds for $k+1$. Let $\Sigma$ be a $k$-simplex of $\mathcal{T}$ for $k<\delta-1$. Then $\Sigma$ is a face of a $\delta$-simplex, and so must be the intersection of an least two $(k+1)$-simplices. Moreover, $\Sigma$ cannot lie in any boundary $(k+1)$-simplices, otherwise it is a boundary $k$-simplex. Thus, let $\Sigma$ be the intersection of two internal $(k+1)$-simplices $\Sigma^{1}$ and $\Sigma^{2}$. By the induction hypothesis, both $\Sigma^{1}$ and $\Sigma^{2}$ are the intersection of $\delta$-simplices $\{\Sigma_{1}^{1}, \dots, \Sigma_{l_{1}}^{1}\}$ and $\{\Sigma_{1}^{2}, \dots, \Sigma_{l_{2}}^{2}\}$ respectively, where $l_{1},l_{2} \geqslant \delta-k$. Since $\Sigma^{1}$ and $\Sigma^{2}$ are distinct, we must have that \[\{\Sigma_{1}^{1}, \dots, \Sigma_{l_{1}}^{1}\} \neq \{\Sigma_{1}^{2}, \dots, \Sigma_{l_{2}}^{2}\}.\] Thus $\#\{\Sigma_{1}^{1}, \dots, \Sigma_{l_{1}}^{1},\Sigma_{1}^{2}, \dots, \Sigma_{l_{2}}^{2}\} \geqslant \delta-k+1$, and so $\Sigma$ is the intersection of at least $\delta-k+1$ different $\delta$-simplices of $\mathcal{T}$.
\end{proof}

We now give a combinatorial characterisation of the first higher Stasheff--Tamari order in terms of sets of $(d+1)$-tuples.

\begin{theorem}\label{thm-hst1-odd}
Let $\mathcal{T}, \mathcal{T}' \in \mathcal{S}(m,2d+1)$. Then we have $\mathcal{T} \lessdot_{1} \mathcal{T}'$ if and only if $\simp(\mathcal{T})=\simp(\mathcal{T}')\cup\{A\}$ for some $A \in \chainset{m}{d}\setminus \simp(\mathcal{T}')$.
\end{theorem}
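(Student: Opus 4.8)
The plan is to prove both implications; the real content is the converse, for which I would construct the required increasing bistellar flip explicitly. For the forward implication, suppose $\mathcal{T} \lessdot_{1} \mathcal{T}'$, so $\mathcal{T}'$ is an increasing bistellar flip of $\mathcal{T}$ inside $C(X, 2d+1)$ for some $(2d+3)$-tuple $X = (x_{0}, \dots, x_{2d+2})$, with $\mathcal{T}$ restricting to the lower triangulation there. Writing $A := (x_{1}, x_{3}, \dots, x_{2d+1})$ and $B := (x_{0}, x_{2}, \dots, x_{2d+2})$, we have $A \wr B$ and $X = A \cup B$, so by Lemma \ref{lem-odd-flips} and Corollary \ref{cor-odd-flips} the simplex $|A|$ is the unique internal $d$-simplex of the lower triangulation of $C(X, 2d+1)$, while the upper triangulation has no internal $d$-simplices. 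Since the flip alters only simplices lying inside $C(X, 2d+1)$, and every $d$-simplex on $\partial C(X, 2d+1)$ lies in both the lower and the upper triangulations of $C(X, 2d+1)$ (hence in both $\mathcal{T}$ and $\mathcal{T}'$), the only internal $d$-simplex of $C(m, 2d+1)$ that changes is $|A|$, which lies in $\mathcal{T}$ but not in $\mathcal{T}'$. As $A \wr B$ gives $a_{0} > b_{0} \geqslant 1$ and $a_{d} < b_{d+1} \leqslant m$, we have $A \in \chainset{m}{d}$, so $\simp(\mathcal{T}) = \simp(\mathcal{T}') \cup \{A\}$ with $A \in \chainset{m}{d} \setminus \simp(\mathcal{T}')$.

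For the converse, suppose $\simp(\mathcal{T}) = \simp(\mathcal{T}') \cup \{A\}$ with $A \in \chainset{m}{d} \setminus \simp(\mathcal{T}')$; then $|A|$ is an internal $d$-simplex of $\mathcal{T}$ by Lemma \ref{lem-int-odd}. I would first reduce to the key claim that \emph{there is a $(d+2)$-tuple $B$ with $A \wr B$ such that $\mathcal{T}$ restricts to the lower triangulation of $C(A \cup B, 2d+1)$.} Given the claim, perform the increasing bistellar flip of $\mathcal{T}$ inside $C(A \cup B, 2d+1)$ to obtain a triangulation $\mathcal{T}''$ with $\mathcal{T} \lessdot_{1} \mathcal{T}''$; by the forward implication (and Corollary \ref{cor-odd-flips}, which pins the altered simplex down to $|A|$) we get $\simp(\mathcal{T}'') = \simp(\mathcal{T}) \setminus \{A\} = \simp(\mathcal{T}')$, whence $\mathcal{T}'' = \mathcal{T}'$ by Lemma \ref{lem-dey}, and therefore $\mathcal{T} \lessdot_{1} \mathcal{T}'$.

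To prove the key claim I would induct on $m$ and $d$, paralleling the structure of Section \ref{sect-odd-desc}, with trivial base cases $d = 0$ and $m = 2d+2$. For the inductive step, split on the value of $a_{0} \geqslant 2$. If $a_{0} = 2$, then by Lemma \ref{lem-delete-desc} the sets $\simp(\mathcal{T} \backslash \{1,2\})$ and $\simp(\mathcal{T}' \backslash \{1,2\})$ differ precisely by $\{(a_{1}, \dots, a_{d})\}$, which lies in $\chainset{[3,m]}{d-1}$, so induction (in smaller dimension) yields $\mathcal{T} \backslash \{1,2\} \lessdot_{1} \mathcal{T}' \backslash \{1,2\}$ together with its flip support. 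If $a_{0} \geqslant 3$, then $A \in \chainset{[2,m]}{d}$ automatically, and by Lemma \ref{lem-contract-desc} the sets $\simp(\mathcal{T}/1)$ and $\simp(\mathcal{T}'/1)$ differ precisely by $\{A\}$, so induction (in smaller $m$) yields $\mathcal{T}/1 \lessdot_{1} \mathcal{T}'/1$ with a flip support $C(Y, 2d+1)$. In both cases one must then \emph{lift} the flip found downstairs to the flip of $\mathcal{T}$ claimed above, checking — from the explicit descriptions of $/1$ and $\backslash\{1,2\}$, and the fact that $a_{0}$ and $a_{d}$ avoid the extreme coordinates — that $\mathcal{T}$ restricts to the lower triangulation of $C(A \cup B, 2d+1)$ for a suitable lift $B$ of the downstairs support (possibly with vertex $1$ in place of vertex $2$).

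I expect the lifting step to be the main obstacle: a priori a bistellar flip of $\mathcal{T}/1$ need not lift to one of $\mathcal{T}$, since contracting vertex $1$ onto vertex $2$ can shrink the link of $|A|$ when both $1$ and $2$ are link vertices, and the vertex figures at $1$ and $2$ must likewise be controlled in the $a_{0}=2$ branch — which is precisely where the supporting and bridging properties of $\simp(\mathcal{T})$ and $\simp(\mathcal{T}')$ (Definitions \ref{def-support} and \ref{def-bridge}, Lemmas \ref{support} and \ref{bridge}) come in. A more self-contained alternative that avoids the induction: use Lemma \ref{support} to obtain a $2d$-simplex $|A \cup A'|$ of $\mathcal{T}$ with $A' \wr A$, Lemma \ref{lem-simp-below} to obtain the unique $(2d+1)$-simplex of $\mathcal{T}$ immediately below $|A|$, and Lemma \ref{lem-triang-fact} to bound the $(2d+1)$-simplices through $|A|$ from below; then apply the bridging property to $\simp(\mathcal{T}) \setminus \{A\} = \simp(\mathcal{T}')$ to force the link of $|A|$ in $\mathcal{T}$ to be the boundary of a $(d+1)$-simplex on some $(d+2)$-tuple $B$ with $A \wr B$, so that the closed star of $|A|$ is $C(A \cup B, 2d+1)$ carrying its lower triangulation.
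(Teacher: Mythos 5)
Your forward direction is correct and matches the paper's (Corollary \ref{cor-odd-flips} applied to the flip support), and your reduction of the converse to the ``key claim'' --- that $\mathcal{T}$ restricts to the lower triangulation of $C(A\cup B,2d+1)$ for some $(d+2)$-tuple $B$ with $A\wr B$ --- is exactly the right target. But the key claim itself is never proved. Your first route (induction on $m$ and $d$ via $/1$ and $\backslash\{1,2\}$) stalls at the lifting step, which you yourself flag as the main obstacle and do not resolve; a flip of $\mathcal{T}/1$ genuinely need not lift to a flip of $\mathcal{T}$ without further argument. Your second route asserts that the bridging property applied to $\simp(\mathcal{T}')$ will ``force the link of $|A|$ in $\mathcal{T}$ to be the boundary of a $(d+1)$-simplex,'' but gives no mechanism for this, and it is not clear that bridging alone controls which $(2d+1)$-simplices of $\mathcal{T}$ contain $|A|$. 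So the heart of the converse is missing.

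The paper's argument fills precisely this gap, and does so without induction. The tuple $B$ is \emph{not} found intrinsically inside $\mathcal{T}$: since $|A|$ is internal but is not a $d$-simplex of the triangulation $\mathcal{T}'$, the circuit description yields a $(d+1)$-simplex $|B|$ \emph{of $\mathcal{T}'$} with $A\wr B$. One then shows directly that every $(2d+1)$-simplex $|S|$ of $\mathcal{T}$ having $|A|$ as a face satisfies $S\subseteq A\cup B$: any extra vertex $x$ of $S$ would produce either a tuple $A'\in\simp(\mathcal{T})\setminus\{A\}=\simp(\mathcal{T}')$ with $A'\wr B$, or a $d$-face of $B$ (which lies in $\simp(\mathcal{T}')\subseteq\simp(\mathcal{T})$) intertwining a $(d+1)$-face of $S$ --- both impossible, as intertwining simplices cannot coexist in one triangulation. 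Hence the only candidates are the $d+2$ simplices $|(A\cup B)\setminus\{b_i\}|$, and Lemma \ref{lem-triang-fact} (your correct instinct) forces all of them to be present, since an internal $d$-simplex must be the intersection of at least $d+2$ top-dimensional simplices. These constitute the lower triangulation of $C(A\cup B,2d+1)$, establishing the key claim; the endgame (Lemma \ref{lem-dey} identifying the flipped triangulation with $\mathcal{T}'$) is then as you describe. I recommend you replace both of your sketched routes with this direct argument: the essential idea you are missing is to extract $B$ from $\mathcal{T}'$ and then use the hypothesis $\simp(\mathcal{T})\setminus\{A\}=\simp(\mathcal{T}')$ to confine the star of $|A|$ in $\mathcal{T}$ to $A\cup B$.
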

\begin{proof}
Suppose first that $\mathcal{T}'$ is an increasing bistellar flip of $\mathcal{T}$. Then $\mathcal{T}$ and $\mathcal{T}'$ coincide everywhere but inside a copy of $C(2d+3,2d+1)$. This direction then follows from applying Corollary \ref{cor-odd-flips} to this copy of $C(2d+3,2d+1)$.

We now suppose that $\simp(\mathcal{T})=\simp(\mathcal{T}')\cup\{A\}$ for some $A \in \chainset{m}{d}\setminus \simp(\mathcal{T}')$. Since $|A|$ is not a $d$-simplex of $\mathcal{T}'$, there must be a $(d+1)$-simplex $|B|$ of $\mathcal{T}'$ such that $A \wr B$. Suppose that $|S|$ is a $(2d+1)$-simplex of $\mathcal{T}$ which has $|A|$ as a $d$-face. Suppose further that $|S|$ possesses a vertex $x \notin A \cup B$. If $x \in (b_{i-1},b_{i})$ for some $i \in [d+1]$, then $(A\setminus\{a_{i-1}\})\cup\{x\}=:A' \wr B$, which is a contradiction, since $A' \in \simp(\mathcal{T})\setminus \{A\}=\simp(\mathcal{T}')$. Similarly, if $x>b_{d+1}$, then $(b_{1}, \dots, b_{d+1}) \wr A \cup \{x\}$, which contradicts the fact that $(b_{1}, \dots, b_{d+1})\in \simp(\mathcal{T}') \subset \simp(\mathcal{T})$. The case $x<b_{0}$ can be treated in the same way.

Thus every $(2d+1)$-simplex $|S|$ of $\mathcal{T}$ with $|A|$ as a $d$-face has vertices in $A \cup B$. There are $d+2$ such $(2d+1)$-simplices, given by $S_{i}:=(A \cup B)\setminus\{b_{i}\}$ for each $b_{i} \in B$. The triangulation $\mathcal{T}$ must contain all of these $|S_{i}|$, since $A$ must be the intersection of at least $d+2$ different $(2d+1)$-simplices, by Lemma \ref{lem-triang-fact}. The set $\{|S_{i}|\}_{i=0}^{d+1}$ gives the lower triangulation of $C(A \cup B, 2d+1)$.  None of these $(2d+1)$-simplices can be contained in $\mathcal{T}'$, but every other $(2d+1)$-simplex of $\mathcal{T}$ must be contained in $\mathcal{T}'$ by Lemma \ref{lem-dey}. It then follows that $\mathcal{T}'$ must be obtained by replacing the lower triangulation of $C(A \cup B, 2d+1)$ with the upper triangulation, since these are the only two possible triangulations of $C(A \cup B, 2d+1)$. Hence $\mathcal{T}'$ is an increasing bistellar flip of $\mathcal{T}$.
\end{proof}

We now give an algebraic characterisation of the first higher Stasheff--Tamari order in terms of maximal chains of tilting $A_{n}^{d}$-modules. Our terminology is based on \cite{hi-no-gap}.

An \emph{oriented polygon} is a sub-poset of $\mathcal{S}_{1}(m,2d)$ formed of a union of chain of length $d+2$ with a chain of length $d+1$, such that these chains intersect only at the top and bottom. (For an illustration see Figure \ref{fig-flip}.) Here the \emph{length} of a chain is the number of covering relations in it. We think of an oriented polygon as being oriented from the longer side to the shorter side. If two maximal chains $C,C'$ differ only in that $C$ contains the longer side of an oriented polygon and $C'$ contains the shorter side, then we say that $C'$ is \emph{an increasing elementary polygonal deformation} of $C$. Note that an increasing elementary polygonal deformation decreases the length of the chain.

\begin{theorem}\label{thm-alg-odd-hst1}
Let $\mathcal{T}, \mathcal{T}' \in \mathcal{S}(n+2d,2d+1)$ correspond to equivalence classes of maximal chains of tilting modules $[C], [C'] \in \widetilde{\mathcal{CT}}(A_{n}^{d})$. Then $\mathcal{T}\lessdot_{1}\mathcal{T}'$ if and only if there are equivalence class representatives $\widehat{C} \in [C]$ and $\widehat{C'} \in [C']$ such that $\widehat{C'}$ is an increasing elementary polygonal deformation of $\widehat{C}$.
\end{theorem}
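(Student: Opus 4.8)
The plan is to transport everything to maximal chains of triangulations of $C(n+2d,2d)$, where the combinatorics is already available. By Theorem \ref{alg-cor-1st} a maximal chain of tilting $A_n^d$-modules is literally a maximal chain $\mathcal{C}=(\mathcal{T}_0\lessdot_1\cdots\lessdot_1\mathcal{T}_r)$ in $\mathcal{S}_1(n+2d,2d)$, and by Theorem \ref{odd-dim-triangs} (via Rambau's bijection \cite[Theorem 1.1(ii)]{rambau}) the equivalence class $[\mathcal{C}]$ — maximal chains being equivalent exactly when they differ by transpositions of commuting bistellar flips — corresponds to the triangulation $\mathcal{T}\in\mathcal{S}(n+2d,2d+1)$. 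Under these identifications an increasing elementary polygonal deformation of $\widehat{C}$ is: choose an oriented polygon $P\subseteq\mathcal{S}_1(n+2d,2d)$ whose longer side (a chain of $d+2$ covering relations) is a sub-chain of $\widehat{C}$, and replace it by the shorter side ($d+1$ covering relations). So the theorem becomes the assertion that, under Theorem \ref{odd-dim-triangs}, the covering relations of $\mathcal{S}_1(n+2d,2d+1)$ are precisely the increasing elementary polygonal deformations of the associated maximal chains.

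For the forward direction, suppose $\mathcal{T}\lessdot_1\mathcal{T}'$. By Theorem \ref{thm-hst1-odd} and its proof there is a $(2d+3)$-element subset $X=A\cup B\subseteq[n+2d]$ with $A\wr B$ such that $\mathcal{T}$ restricts to the lower triangulation of $C(X,2d+1)$, $\mathcal{T}'$ restricts to the upper triangulation, and $\mathcal{T},\mathcal{T}'$ agree outside $C(X,2d+1)$. Passing to chains, the $(2d+1)$-simplices of $\mathcal{T}$ inside $C(X,2d+1)$ are the $d+2$ simplices $S_i=X\setminus\{b_i\}$, which by Theorem \ref{odd-dim-triangs}(\ref{op-tilt-1}) index $d+2$ of the flips of $\mathcal{C}$, while the analogous count for $\mathcal{T}'$ gives $d+1$ flips. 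The key step is to select the representative $\widehat{C}\in[\mathcal{C}]$ in which these $d+2$ flips occur consecutively: they realise the maximal chain of $\mathcal{S}_1(X,2d)$ obtained by slicing the lower triangulation of $C(X,2d+1)$, and the remaining flips of $\mathcal{C}$ can be reorganised around this block, so it appears as an interval $\mathcal{R}\lessdot_1\cdots\lessdot_1\mathcal{R}'$ of $\widehat{C}$. Since $C(X,2d+1)$ has exactly two triangulations, $\mathcal{S}_1(X,2d)$ is itself an oriented polygon of side-lengths $d+2$ and $d+1$, so this block together with the $d+1$-step block for $\mathcal{T}'$ forms an oriented polygon $P$ meeting $\widehat{C}$ only along its longer side. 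Reorganising the chain for $\mathcal{T}'$ compatibly outside $C(X,2d)$ gives $\widehat{C'}\in[\mathcal{C'}]$ agreeing with $\widehat{C}$ except that the longer side of $P$ is replaced by the shorter one; as the lower triangulation of $C(X,2d+1)$ (with its $d+2$ maximal simplices, by Corollary \ref{cor-odd-flips}) is the longer side, $\widehat{C'}$ is an \emph{increasing} elementary polygonal deformation of $\widehat{C}$.

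For the converse, let $\widehat{C'}$ be an increasing elementary polygonal deformation of $\widehat{C}$ along an oriented polygon $P$ with bottom $\mathcal{R}$ and top $\mathcal{R}'$. One shows $P$ arises from a sub-polytope: the two bottom covering relations of $P$ are distinct increasing bistellar flips of $\mathcal{R}$, each flipping $\mathcal{R}$ from the lower to the upper triangulation of some $(2d+2)$-element sub-polytope, and the two top relations are distinct decreasing flips of $\mathcal{R}'$; using that the interval $[\mathcal{R},\mathcal{R}']$ with this polygon shape corresponds, via Theorem \ref{odd-dim-triangs}, to a triangulation of a cyclic polytope with a single internal $d$-simplex, one deduces that all of these sub-polytopes lie in a common $(2d+3)$-element sub-polytope $C(X,2d)$, that $\mathcal{R}$ restricts to its lower and $\mathcal{R}'$ to its upper triangulation, and that the longer side of $P$ is the slice of the lower triangulation of $C(X,2d+1)$. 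Hence the triangulations $\mathcal{T},\mathcal{T}'$ of $C(n+2d,2d+1)$ associated to $\widehat{C},\widehat{C'}$ agree outside $C(X,2d+1)$, with $\mathcal{T}$ restricting to the lower and $\mathcal{T}'$ to the upper triangulation there, so $\mathcal{T}'$ is an increasing bistellar flip of $\mathcal{T}$, i.e. $\mathcal{T}\lessdot_1\mathcal{T}'$. (Alternatively one can bypass the polygon-recognition: the deformation shortens the chain by one flip, so $\mathcal{T}'$ has one fewer $(2d+1)$-simplex than $\mathcal{T}$ by Theorem \ref{odd-dim-triangs}(\ref{op-tilt-1}); tracking summands through $P$ with Theorem \ref{odd-dim-triangs}(\ref{op-tilt-2}) and Corollary \ref{cor-odd-flips} gives $\simp(\mathcal{T}')=\simp(\mathcal{T})\setminus\{A\}$ for a single $A$, and Theorem \ref{thm-hst1-odd} concludes.)

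The main obstacle is the dictionary between oriented polygons in $\mathcal{S}_1(n+2d,2d)$ and the ``flip movies'' of $(2d+3)$-vertex sub-polytopes: concretely, making the $d+2$ local flips consecutive within the equivalence class in the forward direction, and recognising the sub-polytope $C(X,2d)$ underlying an abstract oriented polygon in the converse. Once that structural fact is in place, the rest is bookkeeping with Theorems \ref{thm-hst1-odd}, \ref{odd-dim-triangs}, \ref{alg-cor-1st} and Corollary \ref{cor-odd-flips}.
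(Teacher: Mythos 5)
Your overall strategy---transporting everything to maximal chains in $\mathcal{S}_{1}(n+2d,2d)$ via Theorem \ref{alg-cor-1st} and Rambau's correspondence, and matching the local flip block inside the $(2d+3)$-vertex subpolytope with an oriented polygon---is the same as the paper's. However, the two points you defer as ``the main obstacle'' are precisely where the content of the proof lies, and neither is closed. In the forward direction, the assertion that ``the remaining flips of $\mathcal{C}$ can be reorganised around this block'' so that the $d+2$ local flips occur consecutively within the equivalence class is exactly what has to be proved: a priori, other bistellar flips of the chain could be forced to interleave with the local block, and equivalence of chains only allows transposing flips in a restricted way. The paper closes this using Rambau's partial order $\prec$ on the $(2d+1)$-simplices of $\mathcal{T}$ (\cite[Definition 5.7, Corollary 5.9]{rambau}): it verifies that $|S_{2d+2}| \prec_{t} \cdots \prec_{t} |S_{0}|$ consists of covering relations for $\prec$, so no other maximal simplex $\Sigma$ can satisfy $|S_{2j}| \prec \Sigma \prec |S_{2i}|$, and then invokes \cite[Corollary 5.12]{rambau} to realise a linear extension of $\prec$ in which this block is an interval as an actual maximal chain of $\mathcal{S}_{1}(n+2d,2d)$. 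Without an argument of this kind the forward direction is incomplete.

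For the converse, your primary route (recognising an abstract oriented polygon in $\mathcal{S}_{1}(n+2d,2d)$ as the flip movie of a single $(2d+3)$-vertex subpolytope) is itself the hard step and is not carried out; your parenthetical alternative is closer to the paper's actual argument, but ``tracking summands through $P$ \ldots gives $\simp(\mathcal{T}')=\simp(\mathcal{T})\setminus\{A\}$ for a single $A$'' is precisely the claim requiring proof. The paper establishes it as follows: the deformation removes $d+2$ maximal simplices $\Sigma_{0},\Sigma_{2},\dots,\Sigma_{2d+2}$ and adds $d+1$ maximal simplices $\Sigma_{1},\dots,\Sigma_{2d+1}$; at least one added simplex, say $\Sigma_{1}$, is not in $\mathcal{T}$ and hence meets some $\Sigma_{2l}$ transversely in a circuit $(A,B)$ with $|A|$ an internal $d$-simplex and $|B|$ an internal $(d+1)$-simplex; since by Lemma \ref{lem-triang-fact} an internal $d$-simplex lies in at least $d+2$ maximal simplices and only $d+2$ were removed, one is forced to have $|A|=\bigcap_{i=0}^{d+1}\Sigma_{2i}$ and $|B|=\bigcap_{j=0}^{d}\Sigma_{2j+1}$, so $A$ is the unique internal $d$-simplex removed and Theorem \ref{thm-hst1-odd} concludes. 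You should supply this counting argument (or an equivalent) rather than assert its conclusion.
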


In the proof of this theorem we shall require the following tools. Following \cite[Definition 5.7]{rambau}, for $\delta$-simplices $\Sigma_{1}, \Sigma_{2}$ of a triangulation $\mathcal{T} \in \mathcal{S}(m,\delta)$ with $\delta$ vertices in common, we write that $\Sigma_{1} \imunder \Sigma_{2}$ if and only if $\Sigma_{1} \cap \Sigma_{2}$ lies in the upper facets of $\Sigma_{1}$ and the lower facets of $\Sigma_{2}$. The relation $\prec$ is defined as the transitive closure of $\imunder$, so that $\Sigma_{1} \dashrightarrow \Sigma_{2}$ implies that $\Sigma_{1} \prec \Sigma_{2}$. This is a partial order by \cite[Corollary 5.9]{rambau}.

\begin{proof}
Let $\mathcal{T}, \mathcal{T}'$ be triangulations of $C(n+2d,2d+1)$ corresponding respectively to $[C], [C'] \in \widetilde{\mathcal{CT}}(A_{n}^{d})$.

Suppose that $\mathcal{T}'$ is an increasing bistellar flip of $\mathcal{T}$. Let $S \in \binom{[n+2d]}{2d+3}$ be the $(2d+3)$-tuple of vertices giving the bistellar flip. Let $S_{i}:=(s_{0}, \dots, s_{i-1}, \hat{s}_{i}, s_{i+1}, \dots, s_{2d+2})$. The lower triangulation of $C(S,2d+1)$ consists of the $(2d+1)$-simplices $|S_{i}|$ for $i$ even and the upper triangulation consists of the $(2d+1)$-simplices $|S_{i}|$ for $i$ odd. Then $|S_{2j}| \imunder |S_{2i}|$ for $i < j$: $|S_{2i}| \cap |S_{2j}|$ is an upper facet of $|S_{2j}|$ and a lower facet of $|S_{2i}|$. Thus one can extend $\prec$ to the total order $\prec_{t}$ on the simplices of the lower triangulation of $C(S,2d+1)$ by \[|S_{2d+2}| \prec_{t} |S_{2d}| \prec_{t} \dots \prec_{t} |S_{0}|.\] This can be consistently extended to a total order on the $(2d+1)$-simplices of $\mathcal{T}$ which contains this chain as an interval. This would only be impossible if there were a $(2d+1)$-simplex $\Sigma$ of $\mathcal{T}$ such that $|S_{2j}| \prec \Sigma \prec |S_{2i}|$, where $i<j$. But, since  $|S_{2j}| \prec |S_{2i}|$ is a covering relation for $\prec$, we would have to have $\Sigma=|S_{2j}|$ or $\Sigma=|S_{2i}|$.

Therefore, by \cite[Corollary 5.12]{rambau}, there is a maximal chain $\widehat{\mathcal{C}}$ of $\mathcal{S}_{1}(n+2d,2d)$ corresponding to $\mathcal{T}$ such that the sequence of bistellar flips in $\widehat{\mathcal{C}}$ is \[(\Sigma_{1}, \dots, \Sigma_{r-1}, |S_{2d+2}|, |S_{2d}|, \dots, |S_{0}|, \Sigma'_{1}, \dots, \Sigma'_{s-1}).\] A similar argument shows that there exists a maximal chain $\widehat{\mathcal{C}'}$ of $\mathcal{S}_{1}(n+2d,2d)$ corresponding to $\mathcal{T}'$ such that the sequence of bistellar flips is \[(\Sigma_{1}, \dots, \Sigma_{r-1}, |S_{1}|, |S_{3}|, \dots, |S_{2d+1}|, \Sigma'_{1}, \dots, \Sigma'_{s-1}).\] Since the $(2d+1)$-simplices of $\mathcal{T}'$ outside $C(S,2d+1)$ are the same as those of $\mathcal{T}$, namely $\{\Sigma_{1}, \dots, \Sigma_{r-1}, \Sigma'_{1}, \dots, \Sigma'_{s-1}\}$, we may choose the same order on them in both maximal chains $\widehat{\mathcal{C}}$ and $\widehat{\mathcal{C}'}$. It follows from the description of triangulations of $C(2d+3,2d)$, see, for instance, \cite[Proof of Proposition 9.1]{thomas-bst}, that the chains in $\mathcal{S}_{1}(n+2d,2d)$ given here by $(|S_{2d+2}|, |S_{2d}|, \dots, |S_{0}|)$ and $(|S_{1}|, |S_{3}|, \dots, |S_{2d+1}|)$ intersect only at their top and bottom. Hence these chains form an oriented polygon.

Then, by Theorem \ref{odd-dim-triangs}, these correspond to $\widehat{C} \in \mathcal{CT}(A_{n}^{d})$, where \[\widehat{C}=(U_{1}, \dots, U_{r}, T_{1}, \dots, T_{d+1}, V_{1}, \dots, V_{s}),\] and $\widehat{C'} \in \mathcal{CT}(A_{n}^{d})$, where \[\widehat{C'}=(U_{1}, \dots, U_{r}, T'_{1}, \dots, T'_{d}, V_{1}, \dots, V_{s}).\] Thus $\widehat{C'}$ is an increasing elementary polygonal deformation of $\widehat{C}$, as required. Note that the $(2d+1)$-simplices in the sequences of bistellar flips of $\widehat{\mathcal{C}}$ and $\widehat{\mathcal{C}'}$ come in between the respective tilting modules of $\widehat{C}$ and $\widehat{C'}$, which correspond to triangulations.

Conversely, suppose that we have equivalence class representatives \[\widehat{C}=(U_{1}, \dots, U_{r}, T_{1}, \dots,T_{d+1}, V_{1}, \dots, V_{s})\] and \[\widehat{C'}=(U_{1}, \dots, U_{r}, T'_{1}, \dots, T'_{d}, V_{1}, \dots, V_{s})\] in $\mathcal{CT}(A_{n}^{d})$. Here, as before, let $\widehat{C}$ give the triangulation $\mathcal{T}$ and $\widehat{C'}$ give the triangulation $\mathcal{T}'$. We claim that this implies that $\mathcal{T}'$ is an increasing bistellar flip of $\mathcal{T}$. By Theorem \ref{odd-dim-triangs}, by transforming $\widehat{C}$ into $\widehat{C'}$, we have removed $d+2$ different $(2d+1)$-simplices $\{\Sigma_{0},\Sigma_{2} \dots, \Sigma_{2d+2}\}$ from $\mathcal{T}$ and replaced them by $d+1$ different $(2d+1)$-simplices $\{\Sigma_{1}, \Sigma_{3}, \dots, \Sigma_{2d+1}\}$. We can suppose that $\Sigma_{1}$ is not in the triangulation $\mathcal{T}$, since at least one of these simplices must not be. Hence it must intersect a $(2d+1)$-simplex of the triangulation $\mathcal{T}$ transversely, and so it must intersect $\Sigma_{2l}$ for some $l$.

Hence, there is a circuit $(A,B)$ with $|A| \subset \Sigma_{2l}$, $|B| \subset \Sigma_{1}$. By the description of the circuits of a cyclic polytope, one of $|A|,|B|$ is an internal $d$-simplex and the other is an internal $(d+1)$-simplex. Internal $d$-simplices are intersections of at least $d+2$ different $(2d+1)$-simplices and internal $(d+1)$-simplices are intersections of at least $d+1$ different $(2d+1)$-simplices, by Lemma \ref{lem-triang-fact}. Therefore, to remove an internal $d$-simplex, one must remove the $d+2$ different $(2d+1)$-simplices whose intersection it is. Hence, since $|A|$ and $|B|$ intersect each other transversely and so cannot be in the same triangulation, we must have $|A| = \bigcap_{i=0}^{d+1}\Sigma_{2i}$ and $|B| = \bigcap_{j=0}^{d}\Sigma_{2j+1}$. Moreover, the only internal $d$-simplex we can have removed from $\mathcal{T}$ is the intersection of these $d+2$ different $(2d+1)$-simplices, which is $A$, and so the remaining internal $d$-simplices of $\mathcal{T}$ are internal $d$-simplices of $\mathcal{T}'$. Thus $\mathcal{T}'$ is an increasing bistellar flip of $\mathcal{T}$ by Theorem \ref{thm-hst1-odd}. 
\end{proof}

\begin{figure}
\caption{An increasing elementary polygonal deformation of maximal chains of tilting modules.}\label{fig-flip}
\[\begin{tikzcd}
&& \vdots && \\
&& V_{1} \ar[u,rightsquigarrow] && \\
& T_{d+1} \ar[ur,rightsquigarrow] && T'_{d} \ar[ul,rightsquigarrow] & \\
& \vdots \ar[u,rightsquigarrow] \ar[Rightarrow, rr] && \vdots \ar[u,rightsquigarrow] && \\
& T_{1} \ar[u,rightsquigarrow] && T'_{1} \ar[u,rightsquigarrow] & \\
&& U_{r} \ar[ul,rightsquigarrow] \ar[ur,rightsquigarrow] && \\
&& \vdots \ar[u,rightsquigarrow] &&
\end{tikzcd}\]
\end{figure}
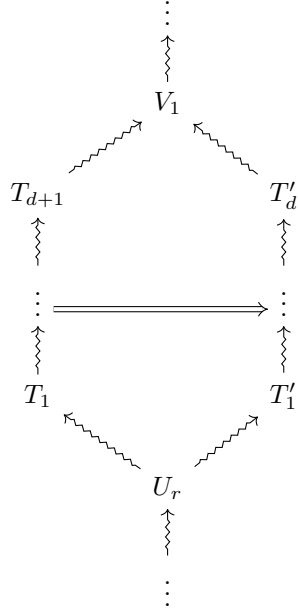

\begin{remark}
An \emph{$n$-category} is a category enriched in $(n-1)$-categories, where an ordinary category is a 1-category. By Theorem \cite[Theorem 4.4]{ot}, we have that tilting $A_{n}^{d}$-modules correspond bijectively to triangulations of $C(n+2d,2d)$. By Theorem \cite[Theorem 3.4]{kv-poly}, triangulations of $C(n+2d,2d)$ form an $(n-1)$-category. Hence the set of tilting $A_{n}^{d}$-modules forms an $(n-1)$-category. Indeed, the irreducible 1-morphisms of this category are left mutations, and the irreducible 2-morphisms are the increasing elementary polygonal deformations of equivalence classes of maximal chains from Theorem \ref{thm-alg-odd-hst1}.
\end{remark}

\subsection{Second order}\label{odd-second}

To obtain our combinatorial interpretation of the second higher Stasheff--Tamari order we first give an alternative to the interpretation of Edelman and Reiner in terms of submersion sets.

\begin{definition}
Let $\Sigma$ be a $k$-simplex in $C(m,\delta)$. Given a triangulation $\mathcal{T} \in \mathcal{S}(m, \delta)$, we say that $\mathcal{T}$ \emph{supermerges} $\Sigma$ if for all $x \in \Sigma$, \[s_{\Sigma}(x)_{\delta+1} \geqslant s_{\mathcal{T}}(x)_{\delta+1}.\] We then define the \emph{$k$-supermersion set of $\mathcal{T}$} to be \[\mathrm{sup}_{k}\mathcal{T}:= \left\lbrace |A| \mathrel{\Big|} A \in \subs{m}{k+1}{},~ \mathcal{T} \text{ supermerges } |A| \right\rbrace.\]
\end{definition}

\begin{remark}\label{rmk-d-simp-low-fac}
Consider $\mathrm{sup}_{d}\mathcal{T}$ for a triangulation $\mathcal{T} \in \mathcal{S}(m,2d+1)$. Every $d$-simplex in $C(m,2d+2)$ lies in a lower facet by Gale's Evenness Criterion. This is because the vertex tuple of a lower facet of $C(m,2d+2)$ is a disjoint union of $d+1$ pairs of consecutive numbers from $m$; any subset of $[m]$ of size $d+1$ is therefore a subset of a vertex tuple of a lower facet. Therefore no points in a $d$-simplex $|A|_{2d+2}$ can lie strictly above a section given by a triangulation. Hence, for any triangulation $\mathcal{T} \in \mathcal{S}(m, 2d+1)$, the supermersion set $\mathrm{sup}_{d}\mathcal{T}$ is precisely the set of $d$-simplices of $\mathcal{T}$.
\end{remark}

\begin{lemma}\label{lem-supset-d+1}
Let $\mathcal{T} \in \mathcal{S}(m,2d+1)$. Then, given a $(d+1)$-simplex $|A|$, we have that $|A| \in \mathrm{sup}_{d+1}\mathcal{T}$ if and only if every $d$-face of $|A|$ is in $\mathrm{sup}_{d}\mathcal{T}$. 
\end{lemma}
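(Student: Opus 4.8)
The proof splits into the two implications; the forward one is essentially formal, and the reverse one carries all the content. For the forward implication, observe that if $|B|$ is a $d$-face of the $(d+1)$-simplex $|A|$, so that $B \subseteq A$, then $s_{|B|}$ is simply the restriction of $s_{|A|}$ to $|B|_{2d+1}\subseteq |A|_{2d+1}$: both are the affine maps determined by sending each vertex to the corresponding vertex of $C(m,2d+2)$, and these agree on the common vertices $B$. Hence if $|A| \in \mathrm{sup}_{d+1}\mathcal{T}$, then $s_{|B|}(x)_{2d+2}=s_{|A|}(x)_{2d+2}\geqslant s_{\mathcal{T}}(x)_{2d+2}$ for all $x \in |B|_{2d+1}$, so $|B| \in \mathrm{sup}_{d}\mathcal{T}$.

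For the reverse implication, suppose every $d$-face of $|A|$ lies in $\mathrm{sup}_{d}\mathcal{T}$. By Remark \ref{rmk-d-simp-low-fac} this means every $d$-face of $|A|$ is a $d$-simplex of $\mathcal{T}$, so that $s_{\mathcal{T}}$ agrees with $s_{|A|}$ on all of $\partial |A|_{2d+1}$. I would then study the continuous function $g\colon |A|_{2d+1}\to \mathbb{R}$ given by $g(x):=s_{|A|}(x)_{2d+2}-s_{\mathcal{T}}(x)_{2d+2}$, which vanishes on $\partial|A|_{2d+1}$; the goal is to show $g \geqslant 0$, which is exactly the assertion $|A| \in \mathrm{sup}_{d+1}\mathcal{T}$. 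The simplices of $\mathcal{T}$ cut $|A|_{2d+1}$ into a polyhedral subdivision whose cells are the polytopes $|A|_{2d+1}\cap |S|_{2d+1}$ for $(2d+1)$-simplices $|S|$ of $\mathcal{T}$, and on each such cell $s_{\mathcal{T}}$ restricts to the affine map $s_{|S|}$, so $g$ is affine there. Since an affine function that is nonnegative at the vertices of a polytope is nonnegative on it, it suffices to check $g \geqslant 0$ at every vertex of this subdivision.

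The heart of the argument is to identify these vertices. A vertex $v$ lying on $\partial|A|_{2d+1}$ has $g(v)=0$. For a vertex $v$ in the relative interior of $|A|_{2d+1}$, let $|G|$ be the smallest face of $\mathcal{T}$ containing $v$; then $v$ is the unique point of $\mathrm{conv}(G)\cap\mathrm{conv}(A)$, and since $v$ is not a vertex of $|A|$ we must have $G\cap A=\emptyset$, so this non-empty intersection arises from a circuit of $C(m,2d+1)$ with one part contained in $G$ and the other in $A$. Using the description of the circuits of $C(m,2d+1)$ as a $(d+1)$-tuple intertwining a $(d+2)$-tuple, there are only two possibilities: either a $d$-face $P$ of $|A|$ intertwines a $(d+1)$-face $|Q|$ of $|G|$, which is absurd because $|Q|$ is then a $(d+1)$-simplex of $\mathcal{T}$ cutting the $d$-simplex $|P|$ of $\mathcal{T}$; or there is a $(d+1)$-tuple $P\subseteq G$ with $P\wr A$, so that $|P|$ is a $d$-simplex of $\mathcal{T}$ and $v=\mathrm{conv}(P)\cap\mathrm{conv}(A)$. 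In this remaining case, Lemma \ref{lem-odd-flips} (with $P$ as the $(d+1)$-tuple and $A$ as the $(d+2)$-tuple) shows $|P\cup A|_{2d+2}$ is a $(2d+2)$-simplex in which $|P|_{2d+2}$ is the intersection of its lower facets and $|A|_{2d+2}$ the intersection of its upper facets; as $v$ lies in the interior of $|P\cup A|_{2d+1}$, the fibre of this simplex over $v$ is a non-degenerate segment with bottom point $s_{|P|}(v)$ and top point $s_{|A|}(v)$, whence $g(v)=s_{|A|}(v)_{2d+2}-s_{|P|}(v)_{2d+2}>0$, using $s_{\mathcal{T}}(v)=s_{|P|}(v)$. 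Thus $g \geqslant 0$ at every vertex of the subdivision, so $g \geqslant 0$ on $|A|_{2d+1}$ and $|A| \in \mathrm{sup}_{d+1}\mathcal{T}$.

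I expect the main obstacle to be exactly this last paragraph: correctly enumerating the vertices of the subdivision induced on $|A|_{2d+1}$, and using the hypothesis that the $d$-faces of $|A|$ are $d$-simplices of $\mathcal{T}$ to rule out the "bad" circuit case. Once that bookkeeping is in place, the observations that $g$ is affine on each cell, is continuous, and vanishes on the boundary make the conclusion immediate.
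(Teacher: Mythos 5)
Your proof is correct, but it takes a genuinely different route from the paper's. The paper argues by contradiction: assuming $|A| \notin \mathrm{sup}_{d+1}\mathcal{T}$, it splits into the case where $s_{|A|}$ lies strictly below $s_{\mathcal{T}}$ throughout $\mathring{|A|}$ and the case where $|A|_{2d+2}$ crosses the section. In the first case, since $|A|$ is then not a simplex of $\mathcal{T}$ and no $(d+1)$-simplex of $\mathcal{T}$ can cut a $d$-face of $|A|$, there must be a $d$-simplex $|B|$ of $\mathcal{T}$ with $B \wr A$, and Lemma \ref{lem-odd-flips} places $s_{|A|}$ above $s_{|B|}=s_{\mathcal{T}}$ at their intersection point, a contradiction; in the second case the circuit description of $C(m,2d+2)$ produces a $(d+1)$-simplex of $\mathcal{T}$ meeting a $d$-face of $|A|$ transversely, contradicting the hypothesis. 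You instead give a direct argument: $g=s_{|A|}(\cdot)_{2d+2}-s_{\mathcal{T}}(\cdot)_{2d+2}$ is affine on each cell $|A|\cap|S|$ and vanishes on $\partial|A|$, so nonnegativity need only be checked at the vertices of the induced subdivision, and your circuit analysis at an interior vertex (ruling out the circuit with $(d+1)$-part in $A$, and applying Lemma \ref{lem-odd-flips} to the circuit $(P,A)$ with $P\subseteq G$) shows $g>0$ there. The combinatorial inputs are identical in both proofs --- the circuit description, Lemma \ref{lem-odd-flips}, and Remark \ref{rmk-d-simp-low-fac} --- and your identification of the interior vertices of the subdivision is sound (the key points being that a vertex of $|A|\cap|S|$ interior to $|A|$ forces $\mathrm{conv}(G)\cap\mathrm{conv}(A)=\{v\}$ with $G\cap A=\emptyset$, and that $v$ lies in the interior of $\mathrm{conv}(P\cup A)$ so the fibre is non-degenerate). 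Your version trades the paper's continuity/dichotomy bookkeeping for slightly heavier convex-geometric set-up, and in exchange it exhibits explicitly where $s_{|A|}$ sits strictly above the section.
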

\begin{proof}
The forwards direction is clear. Conversely, suppose that every $d$-face of $|A|$ is in $\mathrm{sup}_{d}\mathcal{T}$. Suppose for contradiction that $|A| \notin \mathrm{sup}_{d+1}\mathcal{T}$. Then either $s_{|A|}(x)_{2d+2} < s_{\mathcal{T}}(x)_{2d+2}$ for all $x \in \mathring{|A|}$, or $\mathring{|A|}$ intersects $s_{\mathcal{T}}(C(m,2d+1))$.

In the first case, $|A|$ cannot be a $(d+1)$-simplex of $\mathcal{T}$. Hence either there is a $d$-simplex of $\mathcal{T}$ which intersects $|A|$ transversely, or there is a $d$-face of $|A|$ which intersects a $(d+1)$-simplex of $\mathcal{T}$ transversely. But we cannot be in the second case, since every $d$-face of $|A|$ is a $d$-simplex of $\mathcal{T}$ by Remark \ref{rmk-d-simp-low-fac}. Therefore there is a $d$-simplex $|B|$ of $\mathcal{T}$ such that $B \wr A$, and so $|A|$ and $|B|$ intersect in a unique point $y$. But then $|B|$ is the intersection of the lower facets of $|A \cup B|_{2d+2}$ and $|A|$ is the intersection of the upper facets, by Lemma \ref{lem-odd-flips}. Therefore $s_{|A|}(y)_{2d+2} > s_{\mathcal{T}}(y)_{2d+2}$, which contradicts our assumption in this case.

In the second case, by the description of the circuits of $C(m,2d+2)$, there must be a $(d+1)$-simplex $|B|$ of $\mathcal{T}$ such that either $A \wr B$ or $B \wr A$. For the first option here let $(a_{1}, \dots, a_{d+1})$ and for the second option here let $A'=(a_{0}, \dots, a_{d})$. But then we have that $|A'|$ is a $d$-simplex of $\mathcal{T}$ and $A' \wr B$, which is a contradiction.
\end{proof}

We now prove the following theorem, which gives us an interpretation of the second higher Stasheff--Tamari order on $(2d+1)$-dimensional cyclic polytopes in terms of $d$-simplices.

\begin{theorem}\label{thm-supermersion}
Let $\mathcal{T}, \mathcal{T}' \in \mathcal{S}(m,\delta)$. Then $\mathcal{T} \leqslant_{2} \mathcal{T}'$ if and only if \[\mathrm{sup}_{\lfloor \frac{\delta}{2}\rfloor} \mathcal{T} \supseteq \mathrm{sup}_{\lfloor \frac{\delta}{2} \rfloor}\mathcal{T}'.\]
\end{theorem}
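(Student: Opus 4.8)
The plan is to reduce the statement to the known characterisation of $\leqslant_2$ via submersion sets, namely $\mathcal{T} \leqslant_2 \mathcal{T}'$ if and only if $\mathrm{sub}_{\lceil \delta/2 \rceil}(\mathcal{T}) \subseteq \mathrm{sub}_{\lceil \delta/2 \rceil}(\mathcal{T}')$ \cite[Proposition 2.15]{er}, and to relate supermersion sets to submersion sets one dimension up. The key observation is that a $k$-simplex $|A|$ in $C(m,\delta)$ is supermerged by $\mathcal{T}$ precisely when it is \emph{not} strictly below the section $s_{\mathcal{T}}$ anywhere, which should dovetail with a statement about $|A|_{\delta+1}$ being submerged (i.e. weakly below) the section of a triangulation of $C(m,\delta+1)$ associated to $\mathcal{T}$. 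First I would dispose of the even case $\delta = 2d$: here $\lfloor \delta/2 \rfloor = \lceil \delta/2 \rceil = d$, and I would show directly that for a $d$-simplex $|A|$ in $C(m,2d)$, $\mathcal{T}$ supermerges $|A|$ if and only if $|A|$ is \emph{not} submerged by $\mathcal{T}$ except for boundary simplices — more precisely, that the complement relationship plus self-duality of $\mathcal{S}_2(m,2d)$ (\cite[Proposition 2.11]{er}) converts $\mathrm{sub}_d(\mathcal{T}) \subseteq \mathrm{sub}_d(\mathcal{T}')$ into $\mathrm{sup}_d(\mathcal{T}) \supseteq \mathrm{sup}_d(\mathcal{T}')$. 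Concretely, by Proposition \ref{desc-sub} an internal $d$-simplex $|A|$ is submerged by $\mathcal{T}$ iff there is no $B \in e(\mathcal{T})$ with $B \wr A$; an analogous argument using Lemma \ref{up-low-fac} with the roles of upper and lower facets swapped should show $\mathcal{T}$ supermerges $|A|$ iff there is no $B \in e(\mathcal{T})$ with $A \wr B$, i.e. iff $|A|$ is not "submerged from above". These two conditions together with Theorem \ref{main-thm-2nd} give the even case cleanly.

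For the odd case $\delta = 2d+1$, so $\lfloor \delta/2 \rfloor = d$ and $\lceil \delta/2 \rceil = d+1$: here I would exploit Remark \ref{rmk-d-simp-low-fac}, which identifies $\mathrm{sup}_d \mathcal{T}$ with exactly the set of $d$-simplices of $\mathcal{T}$, i.e. $\mathrm{sup}_d\mathcal{T} = \simp(\mathcal{T}) \cup \{\text{boundary } d\text{-simplices}\}$. So the claimed inequality $\mathrm{sup}_d\mathcal{T} \supseteq \mathrm{sup}_d\mathcal{T}'$ is equivalent to $\simp(\mathcal{T}) \supseteq \simp(\mathcal{T}')$. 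The task is then to show $\mathcal{T} \leqslant_2 \mathcal{T}'$ iff $\simp(\mathcal{T}) \supseteq \simp(\mathcal{T}')$. The forward implication: if $\mathcal{T} \leqslant_2 \mathcal{T}'$, using \cite[Proposition 2.15]{er} we get $\mathrm{sub}_{d+1}(\mathcal{T}) \subseteq \mathrm{sub}_{d+1}(\mathcal{T}')$, and I would need to translate a containment of $(d+1)$-submersion sets back into a containment of $d$-simplex sets; Lemma \ref{lem-supset-d+1} is the right bridge, relating $(d+1)$-supermersion to $d$-supermersion, together with a dual statement relating $(d+1)$-submersion to $d$-submersion. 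The cleanest route is probably to prove the odd case by covering relations: by \cite[Theorem 1.1]{rambau} both orders are generated by their covering relations which here are increasing bistellar flips, and by Theorem \ref{thm-hst1-odd} a flip removes exactly one $d$-simplex from $\simp$; so $\mathcal{T} \lessdot_1 \mathcal{T}'$ implies $\simp(\mathcal{T}) \supsetneq \simp(\mathcal{T}')$, giving $\leqslant_1 \ \Rightarrow\ \supseteq$ immediately. For the reverse, I would show that $\simp(\mathcal{T}) \supseteq \simp(\mathcal{T}')$ with $\mathcal{T} \neq \mathcal{T}'$ forces the existence of an increasing bistellar flip $\mathcal{T} \lessdot_1 \mathcal{T}''$ with $\simp(\mathcal{T}') \subseteq \simp(\mathcal{T}'') \subsetneq \simp(\mathcal{T})$, and then induct on $\#\simp(\mathcal{T})$; this needs that some $d$-simplex of $\mathcal{T}$ not in $\mathcal{T}'$ can actually be flipped out, which follows by picking a $\prec$-maximal (in the sense of \cite[Definition 5.7, Corollary 5.9]{rambau}) such simplex. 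This would establish $\leqslant_1 \ \Leftrightarrow\ \supseteq$, and since $\leqslant_1$ is known to imply $\leqslant_2$ \cite{rambau} while $\leqslant_2$ implies $\supseteq$ via the submersion-set argument above, all three are equivalent.

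The main obstacle I anticipate is the odd-case direction "$\mathcal{T} \leqslant_2 \mathcal{T}' \Rightarrow \simp(\mathcal{T}) \supseteq \simp(\mathcal{T}')$" done purely geometrically, i.e. correctly converting the $(d+1)$-submersion containment from \cite[Proposition 2.15]{er} into a statement about $d$-simplices. The subtlety is that a $(d+1)$-simplex can be submerged by a triangulation without all (or any) of its $d$-faces being simplices of that triangulation, so Lemma \ref{lem-supset-d+1} only gives one direction of the face-wise reduction; I expect to need a careful argument — using the description of circuits of $C(m,2d+2)$ and Lemma \ref{lem-odd-flips} — showing that if $|A| \in \simp(\mathcal{T}')$ but $|A| \notin \simp(\mathcal{T})$, then there is a $(d+1)$-simplex $|B|$ of $\mathcal{T}$ with $A \wr B$, and that this $|B|$ witnesses a $(d+1)$-simplex submerged by $\mathcal{T}$ but not by $\mathcal{T}'$ (or a $d$-simplex doing so), contradicting the submersion containment. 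Alternatively, bypassing this entirely by the covering-relations/induction argument sketched above is cleaner, and that is the route I would actually write up, invoking \cite{rambau} for the fact that $\leqslant_1$ and $\leqslant_2$ have the same comparabilities when restricted appropriately — but one must still check that $\mathcal{T} \leqslant_2 \mathcal{T}'$ alone (without assuming $\leqslant_1$) yields $\simp(\mathcal{T}) \supseteq \simp(\mathcal{T}')$, which is where the genuine content lies and where I would focus the most care.
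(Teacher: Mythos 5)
Your even-dimensional argument and your treatment of the odd-dimensional direction $\mathcal{T}\leqslant_{2}\mathcal{T}'\Rightarrow\simp(\mathcal{T})\supseteq\simp(\mathcal{T}')$ are sound and close to the paper's (the paper handles the even case via the order-reversing permutation $\alpha$ of \cite[Proposition 2.11]{er} together with $\mathrm{sub}_{d}\,\alpha\mathcal{T}=\alpha\,\mathrm{sup}_{d}\mathcal{T}$, and the odd-case direction you flag as ``the genuine content'' by an even shorter observation: a $d$-simplex cannot cross a section transversely, so $|A|\in\mathrm{sup}_{d}\mathcal{T}'\setminus\mathrm{sup}_{d}\mathcal{T}$ forces $s_{|A|}(x)<s_{\mathcal{T}}(x)\leqslant s_{\mathcal{T}'}(x)=s_{|A|}(x)$, a contradiction). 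But the route you commit to writing up for the converse odd-dimensional direction has a fatal gap. You propose to deduce $\simp(\mathcal{T})\supseteq\simp(\mathcal{T}')\Rightarrow\mathcal{T}\leqslant_{2}\mathcal{T}'$ by first proving $\simp(\mathcal{T})\supseteq\simp(\mathcal{T}')\Rightarrow\mathcal{T}\leqslant_{1}\mathcal{T}'$, via an induction in which ``some $d$-simplex of $\mathcal{T}$ not in $\mathcal{T}'$ can actually be flipped out.'' Given Corollary \ref{thm-hst2-odd} (which is exactly what is being proved), the implication $\supseteq\;\Rightarrow\;\leqslant_{1}$ is equivalent to $\leqslant_{2}\;\Rightarrow\;\leqslant_{1}$ in odd dimensions, i.e.\ to the Edelman--Reiner conjecture itself, which the paper identifies as open (a strengthening of the no-gap conjecture, verified only computationally in small cases). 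There is no known way to guarantee that a $d$-simplex in $\simp(\mathcal{T})\setminus\simp(\mathcal{T}')$ is flippable: for $|A|$ to be removed by an increasing flip, its star in $\mathcal{T}$ must be exactly the lower triangulation of some $C(A\cup B,2d+1)$ with $A\wr B$, and choosing a $\prec$-maximal simplex does not deliver this ($\prec$ orders the top-dimensional simplices, not the $d$-simplices). Citing \cite{rambau} for ``the same comparabilities'' is also not available; Rambau's theorem gives boundedness of $\mathcal{S}_{1}$, not equality of the orders.

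The direction you must instead prove directly is $\mathrm{sup}_{d}\mathcal{T}\supseteq\mathrm{sup}_{d}\mathcal{T}'\Rightarrow\mathrm{sub}_{d+1}\mathcal{T}\subseteq\mathrm{sub}_{d+1}\mathcal{T}'$ --- precisely the ``obstacle'' you relegated to your alternative route, except that you attached it to the wrong implication. The paper does this by taking $|A|\in\mathrm{sub}_{d+1}\mathcal{T}$ and splitting into three cases ($|A|\in\mathrm{sub}_{d+1}\mathcal{T}'$; $|A|$ crosses $s_{\mathcal{T}'}$; $|A|\in\mathrm{sup}_{d+1}\mathcal{T}'$), ruling out the second via the circuit description and Lemma \ref{lem-odd-flips}, and reducing the third to the first via Lemma \ref{lem-supset-d+1} and the hypothesis. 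Your own circuit-based sketch (producing a transversal pair from a simplex in $\simp(\mathcal{T}')\setminus\simp(\mathcal{T})$) is a correct argument, but it proves $\leqslant_{2}\Rightarrow\supseteq$ by contraposition, not the implication that is actually missing. As written, the proposal does not establish the theorem.
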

\begin{proof}
We know from \cite{er} that $\mathcal{T} \leqslant_{2} \mathcal{T}'$ if and only if \[\mathrm{sub}_{\lceil \frac{\delta}{2}\rceil} \mathcal{T} \subseteq \mathrm{sub}_{\lceil \frac{\delta}{2} \rceil}\mathcal{T}'.\] In the case where $\delta$ is even, \[\bigfloor{ \frac{\delta}{2}} = \bigceil{\frac{\delta}{2}},\] so the result simply follows from the symmetry that exists in the even case via the permutation \[\alpha:=\begin{pmatrix}
1 & 2 & \dots & m \\
m & (m-1) & \dots & 1
\end{pmatrix}.\] By \cite[Proposition 2.11]{er} this gives an order-reversing bijection on $\mathcal{S}_{2}(m,2d)$. We write $\alpha\mathcal{T}$ and $\alpha\mathcal{T}'$ for the images of the respective triangulations under the permutation $\alpha$. By Proposition \ref{desc-sub} and its dual, $\mathrm{sub}_{d}\alpha\mathcal{T}=\alpha\mathrm{sup}_{d}\mathcal{T}$. Hence
\begin{align*}
\mathrm{sup}_{d}\mathcal{T} \supseteq \mathrm{sup}_{d}\mathcal{T}' &\iff \alpha\mathrm{sub}_{d}\alpha\mathcal{T} \supseteq \alpha\mathrm{sub}_{d}\alpha\mathcal{T}' \\
&\iff \mathrm{sub}_{d}\alpha\mathcal{T} \supseteq \mathrm{sub}_{d}\alpha\mathcal{T}' \\
&\iff \alpha\mathcal{T} \geqslant_{2} \alpha\mathcal{T}' \\
&\iff \mathcal{T} \leqslant_{2} \mathcal{T}'.
\end{align*} 

In the case where $\delta$ is odd, we also proceed using the interpretation of the second higher Stasheff--Tamari order in terms of submersion sets. Suppose that $\mathrm{sup}_{d} \mathcal{T} \supseteq \mathrm{sup}_{d}\mathcal{T}'$. Let $|A| \in  \mathrm{sub}_{d+1}\mathcal{T}$. There are three cases:
\begin{enumerate}
\item $|A| \in \mathrm{sub}_{d+1}\mathcal{T}'$,\label{1first}
\item $|A|_{2d+2}$ intersects $s_{\mathcal{T}'}(C(m,2d+1))$, and\label{1second}
\item $|A| \in \mathrm{sup}_{d+1}\mathcal{T}'$.\label{1third}
\end{enumerate}
We want to establish that we are in case (\ref{1first}). Hence, suppose that we are in case (\ref{1second}). Then, by the description of the circuits of $C(m,2d+2)$, there must be a $(d+1)$-simplex $B$ of $\mathcal{T}'$ such that either $B \wr A$, in which case let $B'=(b_{1}, \dots, b_{d+1})$, or $A \wr B$, in which case let $B'=(b_{0}, \dots, b_{d})$. Then, in either case, $|B'|$ is a $d$-simplex of $\mathcal{T}'$, so $|B'| \in \mathrm{sup}_{d}\mathcal{T}' \subseteq \mathrm{sup}_{d}\mathcal{T}$. Therefore $|B'|$ is a $d$-simplex of $\mathcal{T}$. But $B' \wr A$, so that $|B'|_{2d+2}$ is the intersection of the lower facets of the $(2d+2)$-simplex $|A \cup B'|_{2d+2}$, and $|A|$ is the intersection of its upper facets, by Lemma \ref{lem-odd-flips}. But this contradicts the fact that $|A|$ is submerged by $\mathcal{T}$.

Now we suppose that we are in case (\ref{1third}). Therefore if $|A'|$ is a $d$-face of $|A|$, then $|A'| \in \mathrm{sup}_{d}\mathcal{T}' \subseteq \mathrm{sup}_{d}\mathcal{T}$. Thus $|A| \in \mathrm{sup}_{d+1}\mathcal{T}$, by Lemma \ref{lem-supset-d+1}, since all its $d$-faces are in $\mathrm{sup}_{d}\mathcal{T}$. This means that $|A|$ is a $(d+1)$-simplex of $\mathcal{T}$, as $|A| \in (\mathrm{sup}_{d+1}\mathcal{T}) \cap (\mathrm{sub}_{d+1}\mathcal{T})$.

We may suppose that $|A|$ is not a $(d+1)$-simplex of $\mathcal{T}'$, since otherwise $|A| \in \mathrm{sub}_{d+1}\mathcal{T}'$ automatically. Hence, there must be a $d$-simplex $|B|$ of $\mathcal{T}'$ such that $B \wr A$, because every $d$-face of $|A|$ is a $d$-simplex of $\mathcal{T}'$, so a $(d+1)$-simplex of $\mathcal{T}'$ cannot intersect a $d$-face of $|A|$ transversely. Then $|B| \in \mathrm{sup}_{d}\mathcal{T}' \subseteq \mathrm{sup}_{d}\mathcal{T}$. But this is a contradiction because $|B|$ cannot be a $d$-simplex of $\mathcal{T}$, since it intersects $|A|$ transversely. Thus $\mathrm{sub}_{d+1}\mathcal{T} \subseteq \mathrm{sub}_{d+1}\mathcal{T}'$, as desired.

Now we suppose that $\mathcal{T} \leqslant_{2} \mathcal{T}'$. Let $|A| \in \mathrm{sup}_{d}\mathcal{T}'$. Then $|A|_{2d+2}$ cannot intersect $s_{\mathcal{T}}(C(m,2d+1))$ transversely, since it is too small: a circuit in $C(m,2d+2)$ consists of a pair of $(d+1)$-simplices. Therefore, we suppose for contradiction that $|A| \in \mathrm{sub}_{d}\mathcal{T}\setminus\mathrm{sup}_{d}\mathcal{T}$. This means that for all $x \in \mathring{|A|}$, \[s_{|A|}(x)_{2d+2} < s_{\mathcal{T}}(x)_{2d+2} \leqslant s_{\mathcal{T}'}(x)_{2d+2} = s_{|A|}(x)_{2d+2},\] which is a contradiction. Hence $\mathrm{sup}_{d}\mathcal{T}' \subseteq \mathrm{sup}_{d}\mathcal{T}$.
\end{proof}

\begin{remark}
One could, of course, consider complements of supermersion sets instead of supermersion sets. Since $d$-simplices in $C(n,2d+2)$ all lie on the lower facets, these would comprise the $d$-simplices which are \emph{strictly submerged} by a triangulation $\mathcal{T}$, that is: submerged by $\mathcal{T}$ without being a $d$-simplex of $\mathcal{T}$. The inclusion of these sets would be in the same direction as the second higher Stasheff--Tamari order, so some may have an aesthetic preference for this approach.

However, $d$-supermersion sets of triangulations $\mathcal{T}$ of $C(m,2+1)$ are more natural objects to consider. As written above, these are simply the $d$-simplices of $\mathcal{T}$. They also fit more naturally into our algebraic description of the higher Stasheff--Tamari orders in odd dimensions in Theorem \ref{thm-alg-odd-hst2}.
\end{remark}

\begin{remark}
Edelman and Reiner describe $\mathcal{S}_{2}(m,\delta)$ in terms of $\ceil{\frac{\delta}{2}}$-submersion sets. But \cite{dey} tells us that a triangulation of $C(m,\delta)$ is determined by its $\floor{\frac{\delta}{2}}$-simplices. Logically, then, the second higher Stasheff--Tamari order ought also to be controlled by $\floor{\frac{\delta}{2}}$-simplices. Theorem \ref{thm-supermersion} shows us that this is indeed the case.
\end{remark}

\begin{corollary}\label{thm-hst2-odd}
Let $\mathcal{T},\mathcal{T}' \in \mathcal{S}(m,2d+1)$. Then $\mathcal{T} \leqslant_{2} \mathcal{T}'$ if and only if $\simp(\mathcal{T}) \supseteq \simp(\mathcal{T}')$.
\end{corollary}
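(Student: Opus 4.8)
The plan is to read this off almost immediately from Theorem \ref{thm-supermersion} together with Remark \ref{rmk-d-simp-low-fac}. First I would specialise Theorem \ref{thm-supermersion} to $\delta = 2d+1$, so that $\lfloor \delta/2 \rfloor = d$ and the statement becomes: $\mathcal{T} \leqslant_{2} \mathcal{T}'$ if and only if $\mathrm{sup}_{d}\mathcal{T} \supseteq \mathrm{sup}_{d}\mathcal{T}'$. It then remains only to re-express the $d$-supermersion sets in terms of $\simp$.

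Next I would invoke Remark \ref{rmk-d-simp-low-fac}, which tells us that for every $\mathcal{T} \in \mathcal{S}(m,2d+1)$ the set $\mathrm{sup}_{d}\mathcal{T}$ is precisely the set of all $d$-simplices of $\mathcal{T}$. This set decomposes as the disjoint union of the internal $d$-simplices, namely $\simp(\mathcal{T})$, together with the boundary $d$-simplices. By Lemma \ref{lem-int-odd}, a $d$-simplex $|A|$ is on the boundary exactly when $A \in \binom{[m]}{d+1}\setminus\chainset{m}{d}$; call this fixed collection $\mathcal{B}$. The one point that deserves an explicit word is that $\mathcal{B}$ is the same for every triangulation and lies in every triangulation: since $C(m,2d+1)$ is simplicial, its boundary complex is fixed and every triangulation restricts to the trivial triangulation on the boundary, hence contains all of $\mathcal{B}$.

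Finally I would combine these: $\mathrm{sup}_{d}\mathcal{T} = \simp(\mathcal{T}) \sqcup \mathcal{B}$ and $\mathrm{sup}_{d}\mathcal{T}' = \simp(\mathcal{T}') \sqcup \mathcal{B}$, and since $\simp(\mathcal{T}')\cap\mathcal{B} = \emptyset$, the inclusion $\mathrm{sup}_{d}\mathcal{T} \supseteq \mathrm{sup}_{d}\mathcal{T}'$ holds if and only if $\simp(\mathcal{T}) \supseteq \simp(\mathcal{T}')$. Together with the reformulation of Theorem \ref{thm-supermersion} from the first step, this yields the corollary. I do not anticipate any genuine obstacle here; the result is a direct consequence of the two cited facts, and the only subtlety is the (standard) observation that the boundary $d$-simplices are triangulation-independent.
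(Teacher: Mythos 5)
Your proposal is correct and follows the paper's own argument exactly: specialise Theorem \ref{thm-supermersion} to $\delta=2d+1$, identify $\mathrm{sup}_{d}\mathcal{T}$ with the set of all $d$-simplices of $\mathcal{T}$ via Remark \ref{rmk-d-simp-low-fac}, and note that the boundary $d$-simplices (the complement of $\chainset{m}{d}$, by Lemma \ref{lem-int-odd}) are common to every triangulation. The paper's proof is a compressed version of precisely this reasoning, so nothing further is needed.
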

\begin{proof}
The set $\simp(\mathcal{T})$ consists of precisely the $(d+1)$-tuples giving internal $d$-simplices of $\mathcal{T}$ by Lemma \ref{lem-int-odd}, while $\mathrm{sup}_{d}\mathcal{T}$ consists of all $d$-simplices of $\mathcal{T}$. It is then clear that $\mathrm{sup}_{d}\mathcal{T} \supseteq \mathrm{sup}_{d}\mathcal{T}'$ if and only if $\simp(\mathcal{T}) \supseteq \simp(\mathcal{T}')$, since boundary $d$-simplices are contained in every triangulation.
\end{proof}

\begin{remark}
Our interpretations of the higher Stasheff--Tamari orders in odd dimensions generalise the interpretation from \cite{fr} of the higher Stasheff--Tamari orders in dimension three using the corresponding persistent graph.
\end{remark}

As a consequence of Corollary \ref{thm-hst2-odd}, we obtain embeddings of the second higher Stasheff--Tamari posets into Boolean lattices of minimal rank, as in \cite{thomas}. Describing these embeddings in terms of submersion sets and supermersion sets makes them transparent.

\begin{corollary}
There is an embedding \[\iota\colon\mathcal{S}_{2}(m,2d+1) \hookrightarrow 2^{\binom{m-d-2}{d+1}},\] where the usual order on the Boolean lattice $2^{\binom{m-d-2}{d+1}}$ is reversed.
\end{corollary}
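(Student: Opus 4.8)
The plan is to take $\iota$ to be the map sending a triangulation $\mathcal{T}$ of $C(m,2d+1)$ to its set of internal $d$-simplices $\simp(\mathcal{T})$, viewed as a subset of $\chainset{m}{d}$. Almost all of the work needed has already been done in the preceding results, so the proof is essentially bookkeeping together with one counting statement.

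First I would invoke Theorem \ref{thm-class-odd-dim}: the assignment $\mathcal{T} \mapsto \simp(\mathcal{T})$ is a bijection between $\mathcal{S}(m,2d+1)$ and the supporting and bridging subcollections of $\chainset{m}{d}$, and hence in particular an injection $\mathcal{S}(m,2d+1)\hookrightarrow 2^{\chainset{m}{d}}$. Next, Corollary \ref{thm-hst2-odd} states that $\mathcal{T}\leqslant_{2}\mathcal{T}'$ if and only if $\simp(\mathcal{T})\supseteq\simp(\mathcal{T}')$, so this injection is an embedding of posets once the Boolean lattice $2^{\chainset{m}{d}}$ is equipped with the opposite of its usual (inclusion) order.

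It then remains to identify $\#\chainset{m}{d}$ with $\binom{m-d-2}{d+1}$. For this I would use the standard de-interlacing bijection $(a_{0},\dots,a_{d})\mapsto (a_{0},a_{1}-1,\dots,a_{d}-d)$, which carries $\modset{m}{d}$ bijectively onto the strictly increasing $(d+1)$-tuples with entries in $[m-d]$, and which restricts --- using that $A\in\chainset{m}{d}$ means precisely $a_{0}\geqslant 2$ and $a_{d}\leqslant m-1$ --- to a bijection between $\chainset{m}{d}$ and the strictly increasing $(d+1)$-tuples with entries in $\{2,\dots,m-d-1\}$. Since that latter index set has $m-d-2$ elements, we get $\#\chainset{m}{d}=\binom{m-d-2}{d+1}$, so $2^{\chainset{m}{d}}\cong 2^{\binom{m-d-2}{d+1}}$ as Boolean lattices, which yields the claimed embedding $\iota$.

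There is no genuine obstacle here: the two points that need care are getting the orientation of the Boolean order right (it must be reversed, since triangulations that are larger in $\leqslant_{2}$ have fewer internal $d$-simplices) and checking the off-by-one in the index shift. One can additionally remark that $\binom{m-d-2}{d+1}$ is exactly the rank of the minimal Boolean embedding constructed recursively in \cite{thomas}, so that $\iota$ provides a direct, non-recursive such embedding.
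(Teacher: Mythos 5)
Your proposal is correct and follows essentially the same route as the paper: the paper also defines $\iota(\mathcal{T}):=\simp(\mathcal{T})$, cites Lemma \ref{lem-dey} for injectivity (which is the same content as the injectivity in Theorem \ref{thm-class-odd-dim} that you invoke), relies on Corollary \ref{thm-hst2-odd} for the order-reversal, and asserts $\#\chainset{m}{d}=\binom{m-d-2}{d+1}$. Your explicit de-interlacing count of $\chainset{m}{d}$ is a correct verification of a step the paper leaves unjustified.
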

\begin{proof}
Define $\iota\colon \mathcal{S}_{2}(m,2d+1) \hookrightarrow 2^{\chainset{m}{d}}$ by \[\iota(\mathcal{T}):=\simp(\mathcal{T}).\] This is an embedding by Lemma \ref{lem-dey}. Then $\#\chainset{m}{d}=\binom{m-d-2}{d+1}$.
\end{proof}

One can use a similar technique to embed $\mathcal{S}_{2}(m,2d)$ into the smallest possible Boolean lattice. Such an embedding is realised if one restricts submersion sets to internal $d$-simplices which do not lie on the lower facets of $C(m,2d+1)$.

\begin{proposition}
There is an embedding \[\iota\colon\mathcal{S}_{2}(m,2d) \hookrightarrow 2^{\binom{m-d-1}{d+1}},\] where the order on the Boolean lattice $2^{\binom{m-d-1}{d+1}}$ is as usual.
\end{proposition}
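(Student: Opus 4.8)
The plan is to run the same argument as for the preceding corollary, replacing the map $\simp$ by a suitable restriction of the $d$-submersion set. By \cite[Proposition 2.15]{er} we have $\mathcal{T} \leqslant_{2} \mathcal{T}'$ for $\mathcal{T}, \mathcal{T}' \in \mathcal{S}(m,2d)$ if and only if $\mathrm{sub}_{d}(\mathcal{T}) \subseteq \mathrm{sub}_{d}(\mathcal{T}')$, which already embeds $\mathcal{S}_{2}(m,2d)$ into $2^{\binom{m}{d+1}}$ with the usual order; the point is to cut the ambient Boolean lattice down to size $\binom{m-d-1}{d+1}$. I would take the ground set to be
\[ Z := \{ A \in \modset{m}{d} \mid a_{0} \neq 1 \},\]
and define $\iota(\mathcal{T}) := \{ |A| \mid A \in Z,\ |A| \in \mathrm{sub}_{d}(\mathcal{T}) \}$, that is, $\mathrm{sub}_{d}(\mathcal{T})$ restricted to $Z$.

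The heart of the argument is to show that $\mathrm{sub}_{d}(\mathcal{T})$ and $\iota(\mathcal{T})$ determine one another, by checking that every $d$-simplex of $C(m,2d)$ lying outside $Z$ is submerged by \emph{every} triangulation. Such a simplex is either a boundary $d$-simplex --- which lies in, hence is submerged by, every triangulation, as used repeatedly in Section~\ref{even-second} --- or an internal $d$-simplex $|A|$ with $a_{0}=1$; in the latter case Proposition~\ref{desc-sub} applies and there is simply no $(d+1)$-tuple $B$ with $B \wr A$ at all, since $B \wr A$ would force $b_{0} < 1$, so $|A|$ is submerged by every $\mathcal{T}$. Consequently $\mathrm{sub}_{d}(\mathcal{T}) = N \sqcup \iota(\mathcal{T})$, where $N$ is the set of all $d$-simplices of $C(m,2d)$ outside $Z$ and is independent of $\mathcal{T}$. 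Hence $\iota(\mathcal{T}) \subseteq \iota(\mathcal{T}')$ iff $\mathrm{sub}_{d}(\mathcal{T}) \subseteq \mathrm{sub}_{d}(\mathcal{T}')$ iff $\mathcal{T} \leqslant_{2} \mathcal{T}'$; in particular $\iota$ is injective (if $\iota(\mathcal{T}) = \iota(\mathcal{T}')$ then $\mathcal{T} \leqslant_{2} \mathcal{T}' \leqslant_{2} \mathcal{T}$, so $\mathcal{T} = \mathcal{T}'$), so $\iota$ is a poset embedding into $2^{Z}$.

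It remains to count $Z$ and to match it with the geometric description alluded to above: $Z$ is in bijection with the $(d+1)$-subsets of $\{2, \dots, m\}$ no two of which are consecutive, so $\#Z = \binom{(m-1)-(d+1)+1}{d+1} = \binom{m-d-1}{d+1}$. For the geometric description one checks that, for $A \in \nonconsec{m}{d}$, the simplex $|A|$ lies on a lower facet of $C(m,2d+1)$ precisely when $a_{0}=1$: if $a_{0}=1$ then, using $a_{d} \leqslant m-1$ (which is part of membership in $\nonconsec{m}{d}$), the set $\{1\} \cup \{a_{1},a_{1}+1\} \cup \dots \cup \{a_{d},a_{d}+1\}$ is a lower facet of $C(m,2d+1)$ containing $A$ by Gale's Evenness Criterion; and if $a_{0} \geqslant 2$ then $A$ cannot fit inside $\{1\}$ together with $d$ disjoint consecutive pairs, since its $d+1$ pairwise non-adjacent elements would occupy $d+1$ distinct pairs. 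Thus $Z$ is exactly the set of internal $d$-simplices of $C(m,2d)$ avoiding all lower facets of $C(m,2d+1)$. I do not anticipate a genuine obstacle: the shape of the proof is forced by that of the previous corollary, and the only real work is the elementary facet bookkeeping just described.
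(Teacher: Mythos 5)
Your proposal is correct and follows essentially the same route as the paper: restrict the $d$-submersion set to the $d$-simplices not automatically submerged, and count the remaining ground set. Your set $Z$ of separated $(d+1)$-tuples with $a_{0}\neq 1$ is exactly the paper's $\chainset{m+1}{d}$ (the internal $d$-simplices of $C(m,2d)$ avoiding the lower facets of $C(m,2d+1)$), and your justification via Proposition~\ref{desc-sub} that no $B$ can intertwine such an $A$ when $a_{0}=1$ is a valid (indeed slightly more explicit) substitute for the paper's appeal to Gale's Evenness Criterion.
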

\begin{proof}
If a $(d+1)$-tuple $A$ is either such that $|A|_{2d+1}$ is on the lower facets of $C(m,2d+1)$, or such that $|A|_{2d}$ lies in the facets of $C(m,2d)$, then $|A|_{2d}$ is submerged by every triangulation of $C(m,2d)$. Such $d$-simplices can therefore be ignored. The internal $d$-simplices of $C(m,2d)$ which do not lie on the lower facets of $C(m,2d+1)$ are precisely those with vertex tuples in $\chainset{m+1}{d}$. Hence define \[\iota(\mathcal{T}):=\{|A| \in \mathrm{sub}_{d}\mathcal{T} \mid A \in \chainset{m+1}{d}\}\] for a triangulation $\mathcal{T}$ of $C(m,2d)$. Then since $\#\chainset{m+1}{d}=\binom{m-d-1}{d+1}$, this gives our desired embedding. That this is an injection follows, of course, from the characterisation of the second higher Stasheff--Tamari order in terms of submersion sets from \cite[Proposition 2.15]{er}.
\end{proof}

The second order has the following interpretation on maximal chains of tilting modules.

\begin{theorem}\label{thm-alg-odd-hst2}
Given two triangulations $\mathcal{T}, \mathcal{T}' \in \mathcal{S}(n+2d,2d+1)$ corresponding to equivalence classes of maximal chains of tilting modules $[C], [C'] \in \widetilde{\mathcal{CT}}(A_{n}^{d})$, then $\mathcal{T} \leqslant_{2} \mathcal{T}'$ if and only if $\Sigma(C) \supseteq  \Sigma(C')$.
\end{theorem}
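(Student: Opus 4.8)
The plan is to reduce the statement to the combinatorial characterisation of the second higher Stasheff--Tamari order given in Corollary \ref{thm-hst2-odd}, which says that $\mathcal{T} \leqslant_{2} \mathcal{T}'$ if and only if $\simp(\mathcal{T}) \supseteq \simp(\mathcal{T}')$. It therefore suffices to show that $\Sigma(C) \supseteq \Sigma(C')$ holds precisely when $\simp(\mathcal{T}) \supseteq \simp(\mathcal{T}')$. First I would observe that the statement is well-posed on equivalence classes, since $\Sigma$ depends on a maximal chain only through its class in $\widetilde{\mathcal{CT}}(A_{n}^{d})$ by the very definition of the relation $\sim$.

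The key step is to describe $\Sigma(C)$ explicitly. Every maximal chain $C = (T_{1}, \dots, T_{r}) \in \mathcal{CT}(A_{n}^{d})$ begins at the basic tilting module of projectives and ends at the basic tilting module of injectives, so $\Sigma(C)$ contains every indecomposable projective and every indecomposable injective $A_{n}^{d}$-module; write $\mathcal{P}$ and $\mathcal{I}$ for these two sets, which are fixed and do not depend on the chain. By Lemma \ref{lem-proj-inj}, an indecomposable summand $M_{A}$ of $M^{(d,n)}$ is projective or injective exactly when $|A|$ fails to be an internal $d$-simplex of $C(n+2d,2d+1)$, which by Lemma \ref{lem-int-odd} means $A \notin \chainset{n+2d}{d}$. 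The remaining summands appearing in $\Sigma(C)$ --- those that are neither projective nor injective --- are, by the second part of Theorem \ref{odd-dim-triangs}, in bijection with the internal $d$-simplices of $\mathcal{T}$, hence they are exactly $\{ M_{A} \mid A \in \simp(\mathcal{T})\}$. This gives
\[
\Sigma(C) = \mathcal{P} \cup \mathcal{I} \cup \{ M_{A} \mid A \in \simp(\mathcal{T})\},
\]
and the analogous identity holds for $C'$ and $\mathcal{T}'$ with the same sets $\mathcal{P}$ and $\mathcal{I}$.

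Finally I would use that $A \mapsto M_{A}$ is a bijection from $\modset{n+2d}{d}$ onto the indecomposable summands of $M^{(d,n)}$: this makes the containment $\{ M_{A} \mid A \in \simp(\mathcal{T})\} \supseteq \{ M_{A} \mid A \in \simp(\mathcal{T}')\}$ equivalent to $\simp(\mathcal{T}) \supseteq \simp(\mathcal{T}')$, and since $\mathcal{P} \cup \mathcal{I}$ is disjoint from both of these sets of modules (their elements correspond to tuples in $\chainset{n+2d}{d}$) and is common to $\Sigma(C)$ and $\Sigma(C')$, the displayed description yields $\Sigma(C) \supseteq \Sigma(C')$ if and only if $\simp(\mathcal{T}) \supseteq \simp(\mathcal{T}')$. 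Combining this with Corollary \ref{thm-hst2-odd} gives exactly $\mathcal{T} \leqslant_{2} \mathcal{T}'$ if and only if $\Sigma(C) \supseteq \Sigma(C')$. I do not anticipate a genuine obstacle here; the only point requiring care is the clean disjoint-union decomposition of $\Sigma(C)$ into the fixed projective/injective part and the variable $\simp(\mathcal{T})$-part, so that containment of the $\Sigma$'s reduces without loss to containment of the $\simp$'s.
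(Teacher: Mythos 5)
Your proposal is correct and follows essentially the same route as the paper's proof: both reduce to Corollary \ref{thm-hst2-odd} via Theorem \ref{odd-dim-triangs}(2), observing that $\Sigma(C)$ decomposes as the fixed set of projectives and injectives (present in every maximal chain) together with the summands indexed by $\simp(\mathcal{T})$. You simply spell out the disjointness of the decomposition more explicitly than the paper does, which is a reasonable elaboration rather than a different argument.
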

\begin{proof}
Let $\mathcal{T}$ be a triangulation of $C(n+2d,2d)$ corresponding to an equivalence class $[C]$ of maximal chains of tilting $A_{n}^{d}$-modules. By Theorem \ref{odd-dim-triangs}, we have that $M_{A} \in \Sigma(C)$ if and only if either $A \in \simp(\mathcal{T})$, or $M_{A}$ is projective or injective. Since the projectives and the injectives are tilting modules in every maximal chain, the result follows from Corollary \ref{thm-hst2-odd}.
\end{proof}

%The orders in odd dimensions are somehow too similar for separate examples
\begin{example}\label{ex-odd}
We continue considering triangulations of $C(6,3)$, as in Example \ref{ex-odd-desc}. We remind the reader that this cyclic polytope has six triangulations $\mathcal{T}_{l}, \mathcal{T}_{1}, \mathcal{T}_{2}, \mathcal{T}'_{1}, \mathcal{T}'_{2}, \mathcal{T}_{u}$, where
\[\begin{tabular}{lll}
$\simp(\mathcal{T}_{l})=\{24,25,35\}$; & $\simp(\mathcal{T}_{1})=\{24,25\}$; & $\simp(\mathcal{T}_{2})=\{24\}$; \\
\\
$\simp(\mathcal{T}'_{1})=\{25,35\}$; & $\simp(\mathcal{T}'_{2})=\{35\}$; & $\simp(\mathcal{T}_{u})= \emptyset$.
\end{tabular}\]

By Corollary \ref{thm-hst2-odd}, we have that $\mathcal{T}_{l} \leqslant_{2} \mathcal{T}_{1} \leqslant_{2} \mathcal{T}_{2} \leqslant_{2} \mathcal{T}_{u}$ and $\mathcal{T}_{l} \leqslant_{2} \mathcal{T}'_{1} \leqslant_{2} \mathcal{T}'_{2} \leqslant_{2} \mathcal{T}_{u}$. Moreover, it can be seen that the two orders are the same in this case by Theorem \ref{thm-hst1-odd}, since the covering relations are single-step reverse inclusions.

Consider the algebra $A_{4}^{1}$, which is the path algebra of \[1 \rightarrow 2 \rightarrow 3 \rightarrow 4.\] This has Auslander--Reiten quiver as shown in Figure \ref{fig-ar-quiv}. These indecomposable modules correspond to the 2-tuples in Figure \ref{fig-tuple}, which give 1-simplices in $C(6,3)$. We highlight in red the indecomposable modules which are neither projective nor injective. By Theorem \ref{odd-dim-triangs}, these correspond to internal $1$-simplices of $C(6,3)$, and so control the higher Stasheff--Tamari orders by Theorem \ref{thm-hst1-odd} and Corollary \ref{thm-hst2-odd}.

\begin{figure}
\caption{The Auslander--Reiten quiver of $A_{4}$.}\label{fig-ar-quiv}
\[\begin{tikzpicture}
\node(4) at (9,0) {${\tcs{4}}$};
\node(3) at (6,0) {\color{red} ${\tcs{3}}$};
\node(2) at (3,0) {\color{red} ${\tcs{2}}$};
\node(1) at (0,0) {${\tcs{1}}$};

\node(21) at (1.5,1.5) {${\tcs{2\\1}}$};
\node(32) at (4.5,1.5) {\color{red} ${\tcs{3\\2}}$};
\node(43) at (7.5,1.5) {${\tcs{4\\3}}$};

\node(432) at (6,3) {${\tcs{4\\3\\2}}$};
\node(321) at (3,3) {${\tcs{3\\2\\1}}$};

\node(4321) at (4.5,4.5) {${\tcs{4\\3\\2\\1}}$};

\draw[->] (1) -- (21);
\draw[->] (2) -- (32);
\draw[->] (3) -- (43);

\draw[->] (21) -- (321);
\draw[->] (32) -- (432);

\draw[->] (321) -- (4321);

\draw[->] (21) -- (2);
\draw[->] (32) -- (3);
\draw[->] (43) -- (4);

\draw[->] (321) -- (32);
\draw[->] (432) -- (43);

\draw[->] (4321) -- (432);
\end{tikzpicture}\]
\end{figure}
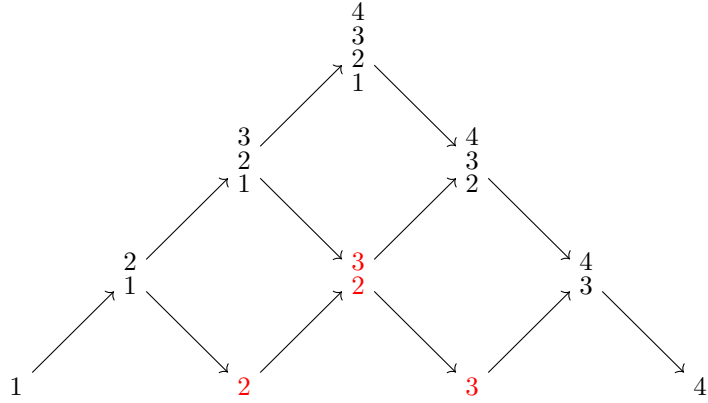

\begin{figure}
\caption{2-tuples corresponding to indecomposable $A_{4}$-modules.}\label{fig-tuple}
\[\begin{tikzpicture}
\node(4) at (0,0) {$M_{13}$};
\node(3) at (3,0) {\color{red} $M_{24}$};
\node(2) at (6,0) {\color{red} $M_{35}$};
\node(1) at (9,0) {$M_{46}$};

\node(34) at (1.5,1.5) {$M_{14}$};
\node(23) at (4.5,1.5) {\color{red} $M_{25}$};
\node(12) at (7.5,1.5) {$M_{36}$};

\node(234) at (3,3) {$M_{15}$};
\node(123) at (6,3) {$M_{26}$};

\node(1234) at (4.5,4.5) {$M_{16}$};

\draw[->] (4) -- (34);
\draw[->] (3) -- (23);
\draw[->] (2) -- (12);

\draw[->] (34) -- (234);
\draw[->] (23) -- (123);

\draw[->] (234) -- (1234);

\draw[->] (34) -- (3);
\draw[->] (23) -- (2);
\draw[->] (12) -- (1);

\draw[->] (234) -- (23);
\draw[->] (123) -- (12);

\draw[->] (1234) -- (123);
\end{tikzpicture}\]
\end{figure}
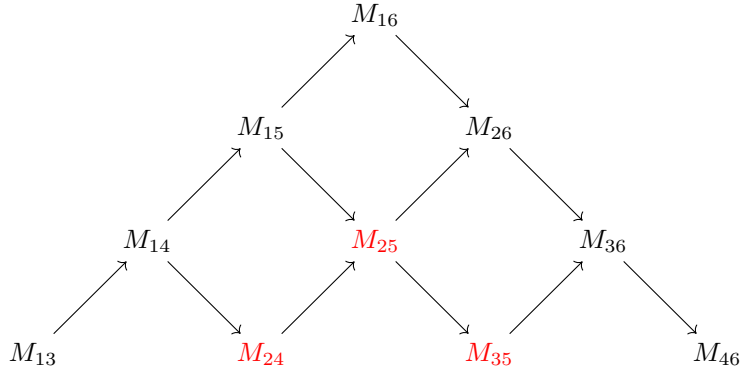

There are six maximal chains of tilting modules up to equivalence, corresponding to the respective triangulations. These are shown in Figure \ref{fig-max-chains}. Here $C_{l}$ corresponds to $\mathcal{T}_{l}$, and so on. We again highlight in red the indecomposable modules which are neither projective nor injective.

\begin{figure}
\caption{Maximal chains of tilting $A_{3}$-modules.}\label{fig-max-chains}
\begin{align*}
C_{l} :=\, &{\tcs{1}} \oplus {\tcs{2\\1}} \oplus {\tcs{3\\2\\1}} \oplus {\tcs{4\\3\\2\\1}} \leadsto {\color{red} {\tcs{2}}} \oplus {\tcs{2\\1}} \oplus {\tcs{3\\2\\1}} \oplus {\tcs{4\\3\\2\\1}} \leadsto {\color{red} {\tcs{2}}} \oplus {\color{red} {\tcs{3\\2}}} \oplus {\tcs{3\\2\\1}} \oplus {\tcs{4\\3\\2\\1}} \leadsto {\color{red} {\tcs{2}}} \oplus {\color{red} {\tcs{3\\2}}} \oplus {\tcs{4\\3\\2}} \oplus {\tcs{4\\3\\2\\1}} \leadsto \\
 &{\color{red} {\tcs{3}}} \oplus {\color{red} {\tcs{3\\2}}} \oplus {\tcs{4\\3\\2}} \oplus {\tcs{4\\3\\2\\1}} \leadsto {\color{red} {\tcs{3}}} \oplus {\tcs{4\\3}} \oplus {\tcs{4\\3\\2}} \oplus {\tcs{4\\3\\2\\1}} \leadsto {\tcs{4}} \oplus {\tcs{4\\3}} \oplus {\tcs{4\\3\\2}} \oplus {\tcs{4\\3\\2\\1}}\,, \\
C_{1} :=\, &{\tcs{1}} \oplus {\tcs{2\\1}} \oplus {\tcs{3\\2\\1}} \oplus {\tcs{4\\3\\2\\1}} \leadsto {\color{red} \tcs{2}} \oplus {\tcs{2\\1}} \oplus {\tcs{3\\2\\1}} \oplus {\tcs{4\\3\\2\\1}} \leadsto {\color{red} {\tcs{2}}} \oplus {\color{red} {\tcs{3\\2}}} \oplus {\tcs{3\\2\\1}} \oplus {\tcs{4\\3\\2\\1}} \leadsto {\color{red} {\tcs{2}}} \oplus {\color{red} {\tcs{3\\2}}} \oplus {\tcs{4\\3\\2}} \oplus {\tcs{4\\3\\2\\1}} \leadsto \\
 & {\color{red} {\tcs{2}}} \oplus {\tcs{4}} \oplus {\tcs{4\\3\\2}} \oplus {\tcs{4\\3\\2\\1}} \leadsto {\tcs{4}} \oplus {\tcs{4\\3}} \oplus {\tcs{4\\3\\2}} \oplus {\tcs{4\\3\\2\\1}}\,, \\
C'_{1} :=\, &{\tcs{1}} \oplus {\tcs{2\\1}} \oplus {\tcs{3\\2\\1}} \oplus {\tcs{4\\3\\2\\1}} \leadsto {\tcs{1}} \oplus {\color{red} {\tcs{3}}} \oplus {\tcs{3\\2\\1}} \oplus {\tcs{4\\3\\2\\1}} \leadsto {\color{red} {\tcs{3}}} \oplus {\color{red} {\tcs{3\\2}}} \oplus {\tcs{3\\2\\1}} \oplus {\tcs{4\\3\\2\\1}} \leadsto {\color{red} {\tcs{3}}} \oplus {\color{red} {\tcs{3\\2}}} \oplus {\tcs{4\\3\\2}} \oplus {\tcs{4\\3\\2\\1}} \leadsto \\
 & {\color{red} {\tcs{3}}} \oplus {\tcs{4\\3}} \oplus {\tcs{4\\3\\2}} \oplus {\tcs{4\\3\\2\\1}} \leadsto {\tcs{4}} \oplus {\tcs{4\\3}} \oplus {\tcs{4\\3\\2}} \oplus {\tcs{4\\3\\2\\1}}\,, \\
C_{2} :=\, &{\tcs{1}} \oplus {\tcs{2\\1}} \oplus {\tcs{3\\2\\1}} \oplus {\tcs{4\\3\\2\\1}} \leadsto {\color{red} {\tcs{2}}} \oplus {\tcs{2\\1}} \oplus {\tcs{3\\2\\1}} \oplus {\tcs{4\\3\\2\\1}} \leadsto {\color{red} {\tcs{2}}} \oplus {\tcs{4}} \oplus {\tcs{2\\1}} \oplus {\tcs{4\\3\\2\\1}} \leadsto {\color{red} {\tcs{2}}} \oplus {\tcs{4}} \oplus {\tcs{4\\3\\2}} \oplus {\tcs{4\\3\\2\\1}} \leadsto \\
 &{\tcs{4}} \oplus {\tcs{4\\3}} \oplus {\tcs{4\\3\\2}} \oplus {\tcs{4\\3\\2\\1}}\,, \\
C'_{2} :=\, &{\tcs{1}} \oplus {\tcs{2\\1}} \oplus {\tcs{3\\2\\1}} \oplus {\tcs{4\\3\\2\\1}} \leadsto {\tcs{1}} \oplus {\color{red} {\tcs{3}}} \oplus {\tcs{3\\2\\1}} \oplus {\tcs{4\\3\\2\\1}} \leadsto {\color{red} {\tcs{3}}} \oplus {\tcs{1}} \oplus {\tcs{4\\3}} \oplus {\tcs{4\\3\\2\\1}} \leadsto {\color{red} {\tcs{3}}} \oplus {\tcs{4\\3}} \oplus {\tcs{4\\3\\2}} \oplus {\tcs{4\\3\\2\\1}} \leadsto \\
 &{\tcs{4}} \oplus {\tcs{4\\3}} \oplus {\tcs{4\\3\\2}} \oplus {\tcs{4\\3\\2\\1}}\,, \\
C_{u} :=\, &{\tcs{1}} \oplus {\tcs{2\\1}} \oplus {\tcs{3\\2\\1}} \oplus {\tcs{4\\3\\2\\1}} \leadsto {\tcs{1}} \oplus {\tcs{4}} \oplus {\tcs{2\\1}} \oplus {\tcs{4\\3\\2\\1}} \leadsto {\tcs{1}} \oplus {\tcs{4}} \oplus {\tcs{4\\3}} \oplus {\tcs{4\\3\\2\\1}} \leadsto {\tcs{4}} \oplus {\tcs{4\\3}} \oplus {\tcs{4\\3\\2}} \oplus {\tcs{4\\3\\2\\1}}\,.
\end{align*}
\end{figure}

An example of an increasing elementary polygonal deformation is that between $C_{1}$ and $C_{2}$. This is shown in Figure \ref{fig-ex-flip}. This increasing elementary polygonal deformation reflects the fact that $\mathcal{T}_{1} \lessdot_{1} \mathcal{T}_{2}$, by Theorem \ref{thm-alg-odd-hst1}.

\begin{figure}
\caption{An increasing elementary polygonal deformation between the maximal chains of tilting modules $C_{1}$ and $C_{2}$.}\label{fig-ex-flip}
\[\begin{tikzcd}
&& {\tcs{4}} \oplus {\tcs{4\\3}} \oplus {\tcs{4\\3\\2}} \oplus {\tcs{4\\3\\2\\1}} && \\
&& {\color{red} {\tcs{2}}} \oplus {\tcs{4}} \oplus {\tcs{4\\3\\2}} \oplus {\tcs{4\\3\\2\\1}} \ar[u,rightsquigarrow] && \\
& {\color{red} {\tcs{2}}} \oplus {\color{red} {\tcs{3\\2}}} \oplus {\tcs{4\\3\\2}} \oplus {\tcs{4\\3\\2\\1}} \ar[ur,rightsquigarrow] && & \\
& {} \ar[rr,Rightarrow] &&  {\color{red} {\tcs{2}}} \oplus {\tcs{4}} \oplus {\tcs{2\\1}} \oplus {\tcs{4\\3\\2\\1}} \ar[uul,rightsquigarrow] & \\
& {\color{red} {\tcs{2}}} \oplus {\color{red} {\tcs{3\\2}}} \oplus {\tcs{3\\2\\1}} \oplus {\tcs{4\\3\\2\\1}} \ar[uu,rightsquigarrow] &&  & \\
&& {\color{red} {\tcs{2}}} \oplus {\tcs{2\\1}} \oplus {\tcs{3\\2\\1}} \oplus {\tcs{4\\3\\2\\1}} \ar[ul,rightsquigarrow] \ar[uur,rightsquigarrow] &&\\
&& {\tcs{1}} \oplus {\tcs{2\\1}} \oplus {\tcs{3\\2\\1}} \oplus {\tcs{4\\3\\2\\1}}\,. \ar[u,rightsquigarrow] &&
\end{tikzcd}\]
\end{figure}

\begin{figure}
\caption{The poset $\mathcal{S}_{1}(6,3)=\mathcal{S}_{2}(6,3)$.}\label{fig-odd-poset}
\[
\begin{tikzpicture}

% Bottom triangulation

\node(24l) at (0,0) {\tiny $M_{24}$};
\node(25l) at (0.7,0.7) {\tiny $M_{25}$};
\node(35l) at (1.4,0) {\tiny $M_{35}$};

\draw[->] (24l) -- (25l);
\draw[->] (25l) -- (35l);
\draw[red,thick] plot [smooth cycle] coordinates {(0.2,-0.2) (-0.3,-0.2) (-0.3,0.2) (0.7,1) (1.7,0.2) (1.7,-0.2) (1.2,-0.2) (0.7,0.4)};

% Bottom left triangulation

\node(24bl) at (-3,3) {\tiny $M_{24}$};
\node(25bl) at (-2.3,3.7) {\tiny $M_{25}$};
\node(35bl) at (-1.6,3) {\tiny $M_{35}$};

\draw[->] (24bl) -- (25bl);
\draw[->] (25bl) -- (35bl);
\draw[red,thick] plot [smooth cycle] coordinates {(-2.8,2.8) (-3.3,2.8) (-3.3,3.2) (-2.4,4) (-1.9,3.7)};

% Bottom right triangulation

\node(24br) at (3,3) {\tiny $M_{24}$};
\node(25br) at (3.7,3.7) {\tiny $M_{25}$};
\node(35br) at (4.4,3) {\tiny $M_{35}$};

\draw[->] (24br) -- (25br);
\draw[->] (25br) -- (35br);
\draw[red,thick] plot [smooth cycle] coordinates {(4.2,2.8) (4.7,2.8) (4.7,3.2) (3.8,4) (3.3,3.7)};

% Top left triangulation

\node(24tl) at (-3,6) {\tiny $M_{24}$};
\node(25tl) at (-2.3,6.7) {\tiny $M_{25}$};
\node(35tl) at (-1.6,6) {\tiny $M_{35}$};

\draw[->] (24tl) -- (25tl);
\draw[->] (25tl) -- (35tl);
\draw[red,thick] plot [smooth cycle] coordinates {(-2.7,5.8) (-3.3,5.8)  (-3.3,6.2) (-2.7,6.2)};

% Top right triangulation

\node(24tr) at (3,6) {\tiny $M_{24}$};
\node(25tr) at (3.7,6.7) {\tiny $M_{25}$};
\node(35tr) at (4.4,6) {\tiny $M_{35}$};

\draw[->] (24tr) -- (25tr);
\draw[->] (25tr) -- (35tr);
\draw[red,thick] plot [smooth cycle] coordinates {(4.1,5.8) (4.7,5.8)  (4.7,6.2) (4.1,6.2)};

% Top triangulation

\node(24u) at (0,9) {\tiny $M_{24}$};
\node(25u) at (0.7,9.7) {\tiny $M_{25}$};
\node(35u) at (1.4,9) {\tiny $M_{35}$};

\draw[->] (24u) -- (25u);
\draw[->] (25u) -- (35u);

% Arrows between triangulations

\draw[->] (-0.2,0.7) -- (-2.2,2.7);
\draw[->] (1.6,0.7) -- (3.6,2.7);
\draw[->] (-2.3,4.2) -- (-2.3,5.7);
\draw[->] (3.7,4.2) -- (3.7,5.7);
\draw[->] (-2,7) -- (-0.3,8.7);
\draw[->] (3.4,7) -- (1.7,8.7);

% Triangulation labels

\node at (-0.8,0.3) {$\mathcal{T}_{l}$};
\node at (-3.8,3.3) {$\mathcal{T}_{1}$};
\node at (2.2,3.3) {$\mathcal{T}'_{1}$};
\node at (-3.8,6.3) {$\mathcal{T}_{2}$};
\node at (2.2,6.3) {$\mathcal{T}'_{2}$};
\node at (-0.8,9.3) {$\mathcal{T}_{u}$};

\end{tikzpicture}
\]
\end{figure}
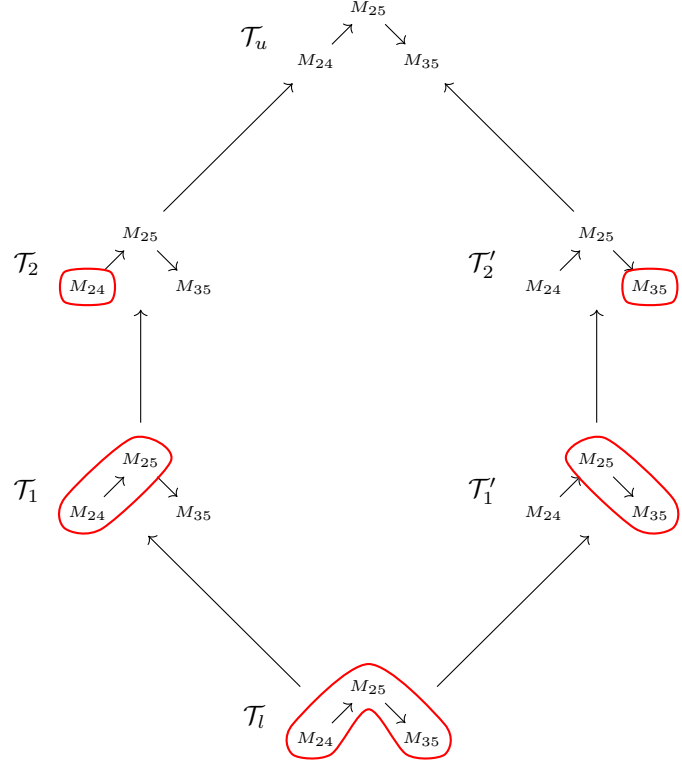

One can read off the second order from the maximal chains in Figure \ref{fig-max-chains}. All maximal chains contain as indecomposable summands the modules \[{\tcs{1}}\, ,\,{\tcs{2\\1}}\, ,\, {\tcs{3\\2\\1}}\, ,\, {\tcs{4\\3\\2\\1}}\, ,\, {\tcs{4\\3\\2}}\, ,\, {\tcs{4\\3}}\, ,\, {\tcs{4}},\] since these are the projectives and injectives. Hence, these summands can be ignored. Then the second order is determined by reverse inclusion with respect to the indecomposable summands \[{\tcs{2}}\, ,\, {\tcs{3\\2}}\, ,\, {\tcs{3}}.\] These correspond to the internal 1-simplices $|24|,|25|,|35|$ respectively.

In Figure \ref{fig-odd-poset} we illustrate the poset $\mathcal{S}_{1}(6,3)=\mathcal{S}_{2}(6,3)$. The circled modules correspond to the internal 1-simplices of the triangulation, and hence to the summands of the corresponding maximal chain of tilting modules which are neither projective nor injective.
\end{example}

\subsection{Maximal green sequences}\label{sect-max-green}

In this section, we use the cluster-tilting framework to explain the connection between triangulations of odd-dimensional cyclic polytopes and the maximal green sequences of Keller \cite{kel-green,dem-kel}.

Let $\Lambda$ be a $d$-representation-finite $d$-hereditary $K$-algebra, where $K$ is a field. We define a \emph{$d$-maximal green sequence} for $\Lambda$ to be a sequence $(T_{1}, \dots, T_{r})$ of cluster-tilting objects in $\mathcal{V}_{\Lambda}$ such that  $T_{1}=\Lambda$, $T_{r}=\Lambda[d]$, and, for $i \in [r-1]$, $T_{i+1}$ is a left mutation of $T_{i}$. Let $\mathcal{MG}_{d}(\Lambda)$ denote the set of $d$-maximal green sequences of $\Lambda$. Given a $d$-maximal green sequence $G$, we denote the set of indecomposable summands of cluster-tilting objects occurring in $G$ by $\Sigma(G)$. We write $G \sim G'$ if and only if $\Sigma(G)=\Sigma(G')$. As before, we use $\widetilde{\mathcal{MG}}_{d}(\Lambda)$ to denote the set of equivalence classes of $\mathcal{MG}_{d}(\Lambda)$ under the relation $\sim$.

\begin{remark}
The discrepancies mentioned in Remark \ref{rmk-clus-tilt} do not affect the notion of $d$-maximal green sequences. It follows from \cite[Theorem 5.7(2)]{ot} that any mutation of an Oppermann--Thomas cluster-tilting object is an Oppermann--Thomas cluster-tilting object. Since $\Lambda$ is an Oppermann--Thomas cluster-tilting object, any cluster-tilting object occurring in a $d$-maximal green sequence corresponds to an Oppermann--Thomas cluster-tilting object.
\end{remark}

Since cluster-tilting objects for $A_{n}^{d}$ are in bijection with tilting modules for $A_{n+1}^{d}$ by \cite[Theorem 1.1 and Theorem 1.2]{ot}, we have that the set $\widetilde{\mathcal{MG}}_{d}(A_{n}^{d})$ is in bijection with $\widetilde{\mathcal{CT}}(A_{n+1}^{d})$. Hence we obtain the following theorem, which corresponds to Theorem \ref{odd-dim-triangs} in the tilting framework.

\begin{theorem}\label{thm-max-green-desc}
There is a bijection between triangulations of $C(n+2d+1,2d+1)$ and $\widetilde{\mathcal{MG}}_{d}(A_{n}^{d})$. Moreover, if a triangulation $\mathcal{T}$ of $C(n+2d+1,2d+1)$ corresponds to a $d$-maximal green sequence $[G] \in \widetilde{\mathcal{MG}}_{d}(A_{n}^{d})$, then
\begin{enumerate}
\item there is a bijection between mutations in $G$ and $(2d+1)$-simplices of $\mathcal{T}$; and
\item  there is a bijection between the internal $d$-simplices of $\mathcal{T}$ and elements of $\Sigma(G)$ which are neither projectives nor shifted projectives. 
\end{enumerate}
\end{theorem}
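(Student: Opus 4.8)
The plan is to deduce this from the tilting-framework statement, Theorem \ref{odd-dim-triangs}, applied to the algebra $A_{n+1}^{d}$ in place of $A_{n}^{d}$, by transporting everything along the bijection between the cluster-tilting objects of $A_{n}^{d}$ and the tilting modules of $A_{n+1}^{d}$ supplied by \cite[Theorem 1.1, Theorem 1.2]{ot}. Since $(n+1)+2d=n+2d+1$, the polytope appearing in Theorem \ref{odd-dim-triangs} for $A_{n+1}^{d}$ is exactly $C(n+2d+1,2d+1)$, so that $\widetilde{\mathcal{CT}}(A_{n+1}^{d})$ is already in bijection with $\mathcal{S}(n+2d+1,2d+1)$, compatibly with $(2d+1)$-simplices and internal $d$-simplices as in the two ``moreover'' clauses. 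It therefore remains to identify $\widetilde{\mathcal{MG}}_{d}(A_{n}^{d})$ with $\widetilde{\mathcal{CT}}(A_{n+1}^{d})$, together with the relevant bookkeeping of summands.

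The bijection of \cite[Theorem 1.1, Theorem 1.2]{ot} sends basic cluster-tilting objects in $\mathcal{V}_{A_{n}^{d}}$ to basic tilting $A_{n+1}^{d}$-modules contained in $M^{(d,n+1)}$ and restricts to a bijection on indecomposables. First I would check that it carries left mutations to left mutations: by the cluster-tilting analogue of Proposition \ref{prop-key} together with the cluster-tilting counterpart of \cite[Theorem 3.8]{ot} (both established in \cite[Section 5, Section 6]{ot}), a left mutation of cluster-tilting objects is exactly an exchange replacing a summand $M_{A}$ by $M_{B}$ with $A\wr B$, which is precisely the description of a left mutation of tilting $A_{n+1}^{d}$-modules via Proposition \ref{prop-key} and \cite[Theorem 3.8]{ot}; since the two bijections of indecomposables are given by the same index sets, the mutations correspond. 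Next I would record that $\Lambda=A_{n}^{d}$ and $\Lambda[d]$ correspond under this bijection to the projective and the injective tilting $A_{n+1}^{d}$-module respectively, these being the source and the sink of the left-mutation order; this can be read off \cite{ot}. Consequently a $d$-maximal green sequence of $A_{n}^{d}$ (a chain of left mutations from $\Lambda$ to $\Lambda[d]$) maps to a maximal chain of tilting $A_{n+1}^{d}$-modules (from projectives to injectives), and conversely; since the bijection of indecomposables identifies $\Sigma(G)$ with $\Sigma(C)$ for corresponding $G$ and $C$, it descends to a bijection $\widetilde{\mathcal{MG}}_{d}(A_{n}^{d}) \to \widetilde{\mathcal{CT}}(A_{n+1}^{d})$.

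It then remains to transport the two ``moreover'' clauses. Composing the above with Theorem \ref{odd-dim-triangs} for $A_{n+1}^{d}$ gives the bijection between $\mathcal{S}(n+2d+1,2d+1)$ and $\widetilde{\mathcal{MG}}_{d}(A_{n}^{d})$. For (1), the mutations of $G$ correspond bijectively to the mutations of the associated chain $C$ of tilting modules, which by Theorem \ref{odd-dim-triangs}(\ref{op-tilt-1}) correspond bijectively to the $(2d+1)$-simplices of $\mathcal{T}$. For (2), an indecomposable summand of a module in $C$ is projective (respectively injective) as an $A_{n+1}^{d}$-module if and only if its preimage in $\mathcal{V}_{A_{n}^{d}}$ is a summand of $\Lambda$ (respectively of $\Lambda[d]$), i.e. a projective (respectively a shifted projective); hence the elements of $\Sigma(C)$ that are neither projective nor injective correspond exactly to the elements of $\Sigma(G)$ that are neither projectives nor shifted projectives, and by Theorem \ref{odd-dim-triangs}(\ref{op-tilt-2}) these are in bijection with the internal $d$-simplices of $\mathcal{T}$.

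The main obstacle is the content of the second paragraph: one must verify that the set-theoretic bijection of \cite{ot} between cluster-tilting objects and tilting modules is in fact an isomorphism of the mutation structures and matches the distinguished objects $\Lambda$ and $\Lambda[d]$ with the projective and injective tilting modules, and one must handle with care the objects that are simultaneously ``projective and injective'' on each side. All of this is implicit in \cite{ot} and consistent with Remark \ref{rmk-clus-tilt}; assembling it cleanly is the one genuine step, after which the theorem is a formal consequence of Theorem \ref{odd-dim-triangs}.
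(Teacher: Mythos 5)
Your proposal is correct and follows essentially the same route as the paper, which derives the theorem from Theorem \ref{odd-dim-triangs} by invoking the bijection of \cite[Theorem 1.1, Theorem 1.2]{ot} between cluster-tilting objects for $A_{n}^{d}$ and tilting $A_{n+1}^{d}$-modules to identify $\widetilde{\mathcal{MG}}_{d}(A_{n}^{d})$ with $\widetilde{\mathcal{CT}}(A_{n+1}^{d})$. In fact you spell out more of the bookkeeping (compatibility of mutations, matching of $\Lambda$ and $\Lambda[d]$ with the projective and injective tilting modules, and the treatment of projective-injective summands) than the paper does, which leaves these steps implicit.
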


\begin{remark}
Theorem \ref{thm-class-odd-dim} therefore also classifies $d$-maximal green sequences for $A_{n}^{d}$ up to equivalence.
\end{remark}

Moreover, one can interpret the higher Stasheff--Tamari orders on the equivalence classes of $d$-maximal green sequences analogously to Theorem \ref{thm-alg-odd-hst1} and Theorem \ref{thm-alg-odd-hst2}.

\begin{theorem}\label{thm-max-green}
Let $\mathcal{T}, \mathcal{T}' \in \mathcal{S}(n+2d+1,2d+1)$ correspond to equivalence classes of $d$-maximal green sequences $[G],[G'] \in \widetilde{\mathcal{MG}}_{d}(A_{n}^{d})$.
\begin{enumerate}
\item We have that $\mathcal{T}\lessdot_{1}\mathcal{T}'$ if and only if there are equivalence class representatives $\widehat{G} \in [G]$ and $\widehat{G}' \in [G']$ such that $\widehat{G}'$ is an increasing elementary polygonal deformation of $\widehat{G}$.
\item We have that $\mathcal{T} \leqslant_{2} \mathcal{T}'$ if and only if $\Sigma(G) \supseteq  \Sigma(G')$.
\end{enumerate} 
\end{theorem}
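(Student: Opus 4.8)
The plan is to deduce Theorem \ref{thm-max-green} from its tilting-framework counterparts, Theorem \ref{thm-alg-odd-hst1} and Theorem \ref{thm-alg-odd-hst2}, by transporting them along the bijection between $\widetilde{\mathcal{MG}}_{d}(A_{n}^{d})$ and $\widetilde{\mathcal{CT}}(A_{n+1}^{d})$ noted before Theorem \ref{thm-max-green-desc}. The first task is to pin down exactly what structure this bijection preserves. It arises from the bijection of \cite[Theorem 1.1, Theorem 1.2]{ot} between cluster-tilting objects in $\mathcal{V}_{A_{n}^{d}}$ and tilting $A_{n+1}^{d}$-modules, which is summand-wise: indecomposable summands on both sides are indexed by $\nonconsec{n+2d+1}{d}$, equivalently by internal $d$-simplices of $C(n+2d+1,2d)$. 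Under this common indexing, the relation $A\wr B$ controls both $\mathrm{Hom}_{\mathcal{U}_{A_{n}^{d}}}(M_{B},M_{A}[d])\neq 0$ and $\mathrm{Ext}_{A_{n+1}^{d}}^{d}(M_{B},M_{A})\neq 0$, by Proposition \ref{prop-key} and its cluster-tilting analogue at the end of Section \ref{sect-back-clus-tilt}. Consequently, left mutations of cluster-tilting objects in $\mathcal{V}_{A_{n}^{d}}$ match left mutations of tilting $A_{n+1}^{d}$-modules (Theorem \ref{thm-clus-tilt}(1) against Theorem \ref{alg-cor-1st} applied to $A_{n+1}^{d}$); hence $d$-maximal green sequences of $A_{n}^{d}$ match maximal chains of tilting $A_{n+1}^{d}$-modules, compatibly with the equivalence relations $\sim$ (both are ``same set of indecomposable summands''), with the summand sets $\Sigma(\cdot)$, and --- since increasing elementary polygonal deformations are formulated purely inside the poset $\mathcal{S}_{1}(n+2d+1,2d)$ of $\lessdot_{1}$-covering relations, which is common to both sides --- with increasing elementary polygonal deformations. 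Finally, the bijection of Theorem \ref{thm-max-green-desc} to $\mathcal{S}(n+2d+1,2d+1)$ and the bijection of Theorem \ref{odd-dim-triangs} (with $n$ replaced by $n+1$) to the same set agree under this identification, as both come from Rambau's bijection \cite[Theorem 1.1]{rambau} between triangulations of $C(n+2d+1,2d+1)$ and maximal chains in $\mathcal{S}_{1}(n+2d+1,2d)$ modulo reordering.

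Given this dictionary, part (1) is precisely Theorem \ref{thm-alg-odd-hst1} for $A_{n+1}^{d}$ and part (2) is precisely Theorem \ref{thm-alg-odd-hst2} for $A_{n+1}^{d}$. If one prefers a self-contained argument, both can instead be proved directly in the cluster-tilting framework: for part (2), combine Corollary \ref{thm-hst2-odd} (which says $\mathcal{T}\leqslant_{2}\mathcal{T}'$ if and only if $\simp(\mathcal{T})\supseteq\simp(\mathcal{T}')$) with Theorem \ref{thm-max-green-desc}(2), noting that $\Lambda$ and $\Lambda[d]$ occur in every $d$-maximal green sequence so that $\Sigma(G)\supseteq\Sigma(G')$ reduces to containment of the parts that are neither projective nor shifted projective; for part (1), follow the proof of Theorem \ref{thm-alg-odd-hst1} verbatim, replacing ``maximal chain of tilting modules'' by ``$d$-maximal green sequence'' and invoking Theorem \ref{thm-hst1-odd}, Lemma \ref{lem-triang-fact}, the reordering results \cite[Corollary 5.12]{rambau}, and Theorem \ref{thm-clus-tilt}(1) in place of Theorem \ref{alg-cor-1st}. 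In either approach the crux is that an increasing bistellar flip of triangulations of $C(n+2d+1,2d+1)$ is realised inside a copy of $C(2d+3,2d+1)$, and the two triangulations of $C(2d+3,2d)$ corresponding to its lower and upper triangulations form an oriented polygon --- a chain of length $d+2$ glued to a chain of length $d+1$ along their endpoints --- inside $\mathcal{S}_{1}(n+2d+1,2d)$, i.e. an increasing elementary polygonal deformation of $d$-maximal green sequences.

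I do not anticipate a substantive obstacle: the geometric and combinatorial content is identical to the tilting case, which is already settled. The only real work lies in the translation dictionary of the first paragraph --- verifying that the cluster-tilting/tilting identification is an isomorphism of all the structures in play (the intertwining relation, left mutation, and therefore oriented polygons and $\Sigma$-containment), which in the end rests on the compatibility of the two forms of Proposition \ref{prop-key}. The remaining care is bookkeeping: keeping the index shift $n\mapsto n+1$ straight, matching the endpoints $\Lambda\leftrightarrow$ projective tilting module and $\Lambda[d]\leftrightarrow$ injective tilting module, and checking that $\sim$ corresponds to $\sim$, all of which are immediate once the summand-wise nature of the identification is made explicit.
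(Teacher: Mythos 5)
Your proposal is correct and matches the paper's intended argument: the paper states Theorem \ref{thm-max-green} without a separate proof, deriving it from Theorems \ref{thm-alg-odd-hst1} and \ref{thm-alg-odd-hst2} via exactly the summand-wise bijection between $\widetilde{\mathcal{MG}}_{d}(A_{n}^{d})$ and $\widetilde{\mathcal{CT}}(A_{n+1}^{d})$ coming from \cite[Theorem 1.1, Theorem 1.2]{ot}, whose compatibility with mutation, $\Sigma(\cdot)$, and the intertwining relation rests on the two forms of Proposition \ref{prop-key}, just as you describe. Your first paragraph simply spells out the dictionary the paper leaves implicit.
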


Since it is known for dimension 3 that the higher Stasheff--Tamari orders are equal and are lattices \cite[Theorem 4.9 and Theorem 4.10]{er}, we have the following corollary.

\begin{corollary}\label{cor-a-green-lat}
Given ($1$-)maximal green sequences $[G], [G'] \in \widetilde{\mathcal{MG}}_{1}(A_{n})$ we have that $\Sigma(G) \supseteq \Sigma(G')$ if and only if there exists a set of equivalence classes of maximal green sequences $[G_{0}], \dots, [G_{r}]$, where $G_{0} \sim G$, $G_{r} \sim G'$, and, for each $i \in [r]$, there exist $\widehat{G}_{i} \in [G_{i}]$ and $\widehat{G}_{i - 1} \in [G_{i - 1}]$ such that $\widehat{G}_{i}$ is an increasing elementary polygonal deformation of $\widehat{G}_{i - 1}$. 

Moreover, the set $\widetilde{\mathcal{MG}}_{1}(A_{n})$ forms a lattice under either of these equivalent orders.
\end{corollary}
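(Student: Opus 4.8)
The plan is to transfer the statement through the bijection of Theorem \ref{thm-max-green-desc} and then invoke the results of Edelman and Reiner for three-dimensional cyclic polytopes. For $d=1$ this bijection identifies $\widetilde{\mathcal{MG}}_{1}(A_{n})$ with $\mathcal{S}(n+3,3)$, and under this identification Theorem \ref{thm-max-green}(2) reads: $\Sigma(G) \supseteq \Sigma(G')$ if and only if the corresponding triangulations satisfy $\mathcal{T} \leqslant_{2} \mathcal{T}'$, while Theorem \ref{thm-max-green}(1) reads: $\mathcal{T} \lessdot_{1} \mathcal{T}'$ if and only if some representatives of the corresponding equivalence classes are related by an increasing elementary polygonal deformation.

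First I would show that the two conditions in the corollary are each equivalent to one of the two higher Stasheff--Tamari comparisons. By construction $\leqslant_{1}$ is the partial order generated by its covering relations $\lessdot_{1}$, so $\mathcal{T} \leqslant_{1} \mathcal{T}'$ holds exactly when there is a chain $\mathcal{T} = \mathcal{T}_{0} \lessdot_{1} \mathcal{T}_{1} \lessdot_{1} \dots \lessdot_{1} \mathcal{T}_{r} = \mathcal{T}'$. Letting $[G_{i}]$ denote the equivalence class of maximal green sequences corresponding to $\mathcal{T}_{i}$, each covering relation $\mathcal{T}_{i-1} \lessdot_{1} \mathcal{T}_{i}$ provides, via Theorem \ref{thm-max-green}(1), representatives $\widehat{G}_{i-1} \in [G_{i-1}]$ and $\widehat{G}_{i} \in [G_{i}]$ with $\widehat{G}_{i}$ an increasing elementary polygonal deformation of $\widehat{G}_{i-1}$; since the corollary only demands such representatives step by step, no compatibility across consecutive steps is required. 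Here $[G_{0}] = [G]$ and $[G_{r}] = [G']$, so $G_{0} \sim G$ and $G_{r} \sim G'$. Conversely, any such chain of increasing elementary polygonal deformations yields, step by step, covering relations $\mathcal{T}_{i-1} \lessdot_{1} \mathcal{T}_{i}$, hence $\mathcal{T} \leqslant_{1} \mathcal{T}'$. Thus the right-hand condition of the corollary is equivalent to $\mathcal{T} \leqslant_{1} \mathcal{T}'$, while the left-hand condition is equivalent to $\mathcal{T} \leqslant_{2} \mathcal{T}'$; since $\mathcal{S}_{1}(n+3,3) = \mathcal{S}_{2}(n+3,3)$ by \cite[Theorem 4.9]{er}, the two conditions coincide.

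For the lattice statement, observe that this common order transports along the bijection of Theorem \ref{thm-max-green-desc} to the poset $\mathcal{S}_{1}(n+3,3) = \mathcal{S}_{2}(n+3,3)$, which is a lattice by \cite[Theorem 4.10]{er}; hence $\widetilde{\mathcal{MG}}_{1}(A_{n})$ is a lattice under either of the equivalent orders. The whole argument is bookkeeping; the only place to be attentive is the passage between chains of covering relations in $\mathcal{S}_{1}(n+3,3)$ and chains of increasing elementary polygonal deformations, which is handled precisely by the equivalence in Theorem \ref{thm-max-green}(1) together with the characterisation of $\lessdot_{1}$ as the covering relations of $\leqslant_{1}$.
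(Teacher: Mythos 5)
Your proposal is correct and is essentially the argument the paper intends: transfer the two conditions through the bijection of Theorem \ref{thm-max-green-desc} to the two higher Stasheff--Tamari orders on $\mathcal{S}(n+3,3)$ via Theorem \ref{thm-max-green}, then invoke \cite[Theorem 4.9 and Theorem 4.10]{er} for their equality and the lattice property. The paper gives no further detail than this, so your careful handling of the step-by-step choice of representatives is just a fuller write-up of the same route.
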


\begin{remark}
In independent work, \cite{gorsky_phd,gorsky_note,gorsky_forthcoming} Gorsky defines two orders on the set of equivalence classes of maximal green sequences of a Dynkin quiver, using combinatorics of the associated Coxeter group, and proves that they are the same. For type $A$ quivers, these coincide with the two higher Stasheff--Tamari orders considered here.
\end{remark}

\begin{remark}\label{ex-counter-green-lat}
It is not in general true that the set of equivalence classes of maximal green sequences of a finite-dimensional algebra is a lattice. For example, the preprojective algebra of $A_{2}$ only has two maximal green sequences. These are not equivalent to each other, and nor are they related by either of the relations described above. Hence in this case the set of maximal green sequences modulo equivalence is not a lattice.

One might wonder whether the set of equivalence classes of maximal green sequences is a lattice for other hereditary algebras. However, computer calculations reveal that the set of equivalence classes of maximal green sequences of the path algebra of Dynkin type $D_{4}$ is not a lattice.
\end{remark}

\begin{remark}
A common way of considering a maximal green sequence for $d=1$ is as a chain of torsion classes \cite{nagao}. A natural question to ask, therefore, is whether there exists an analogous description for $d>1$.

For a cluster-tilting object $T$ for $A_{n}^{d}$, the associated $d$-torsion class ought to be $T^{\bot} \cap \mathrm{add}\, M^{(d,n)}$. Indeed, this class corresponds to the internal $d$-simplices of the supermersion set of the associated triangulation of $C(n+2d+1,2d)$, excluding internal $d$-simplices belonging to the upper triangulation, which are in every supermersion set.

But there are two problems here. Firstly, maximal chains of these $d$-torsion classes are maximal chains in the second order, whereas $\mathcal{MG}_{d}(A_{n}^{d})$ consists of maximal chains in the first order. The two will only be the same if the two higher Stasheff--Tamari orders are equal. Secondly, the $d$-torsion classes that are generated in this way do not satisfy any definitions of higher torsion classes that have appeared so far in the literature, such as \cite{jorg-torsion,mcm-supp-tilt}.
\end{remark}

\section{Computations}\label{sect-comp}

We used the description of the higher Stasheff--Tamari orders in Theorems \ref{thm-int-even-comb} and \ref{thm-int-odd-comb} to construct the posets in Sage. We then tested the Edelman--Reiner conjecture. The conjecture held in all cases tested, as detailed in Figure \ref{fig-table-equiv}. The number of triangulations of a cyclic polytope grows rapidly \cite{jos-kast}, so only relatively small examples are computable.

\begin{figure}
\caption{Equivalence of the higher Stasheff--Tamari orders for $C(c+\delta,\delta)$.}\label{fig-table-equiv}
\[\begin{tabular}{c|rrrrrrrrrrrrrrr}
$c\backslash\delta$ & 4 & 5 & 6 & 7 & 8 & 9 & 10 & 11 & 12 & 13 & 14 & 15 & 16 \\
\hline
4 & \cmark & \cmark & \cmark & \cmark & \cmark & \cmark & \cmark & \cmark & \cmark & \cmark & \cmark & \cmark & \cmark \\
5 & \cmark & \cmark & \cmark &&&&&&&&&& \\
6 & \cmark &&&&&&&&&&&&
\end{tabular}\]
\end{figure}

The authors of \cite{err} use computer calculations to show that $\mathcal{S}_{2}(9,4)$ is not a lattice. By our calculations, $\mathcal{S}_{2}(9,4)$ and $\mathcal{S}_{1}(9,4)$ are the same poset, which implies that $\mathcal{S}_{1}(9,4)$ is not a lattice either. A counter-example to the first order's being a lattice was not previously known \cite[\S 5]{rr}. We tested the lattice property of the first order in several other cases, as shown in Figure \ref{fig-table-lat}.

\begin{figure}
\caption{Lattice property for $\mathcal{S}_{1}(c+\delta,\delta)$.}\label{fig-table-lat}
\[\begin{tabular}{c|rrrrrrrrrrrrrrr}
$c\backslash\delta$ & 4 & 5 & 6 & 7 & 8 & 9 & 10 & 11 & 12 & 13 & 14 & 15 & 16 \\
\hline
4 & \cmark & \cmark & \cmark & \cmark & \cmark & \cmark & \cmark & \cmark & \cmark & \cmark & \cmark & \cmark & \cmark  \\
5 & \xmark & \xmark & \xmark & \xmark & \xmark &&&&&&&& \\
6 & \xmark &&&&&&&&&&&&
\end{tabular}\]
\end{figure}

\begin{figure}
\caption{$\mathcal{S}_{1}(14,10)=\mathcal{S}_{2}(14,10)$, generated using Sage.}
\[\includegraphics[scale=0.05]{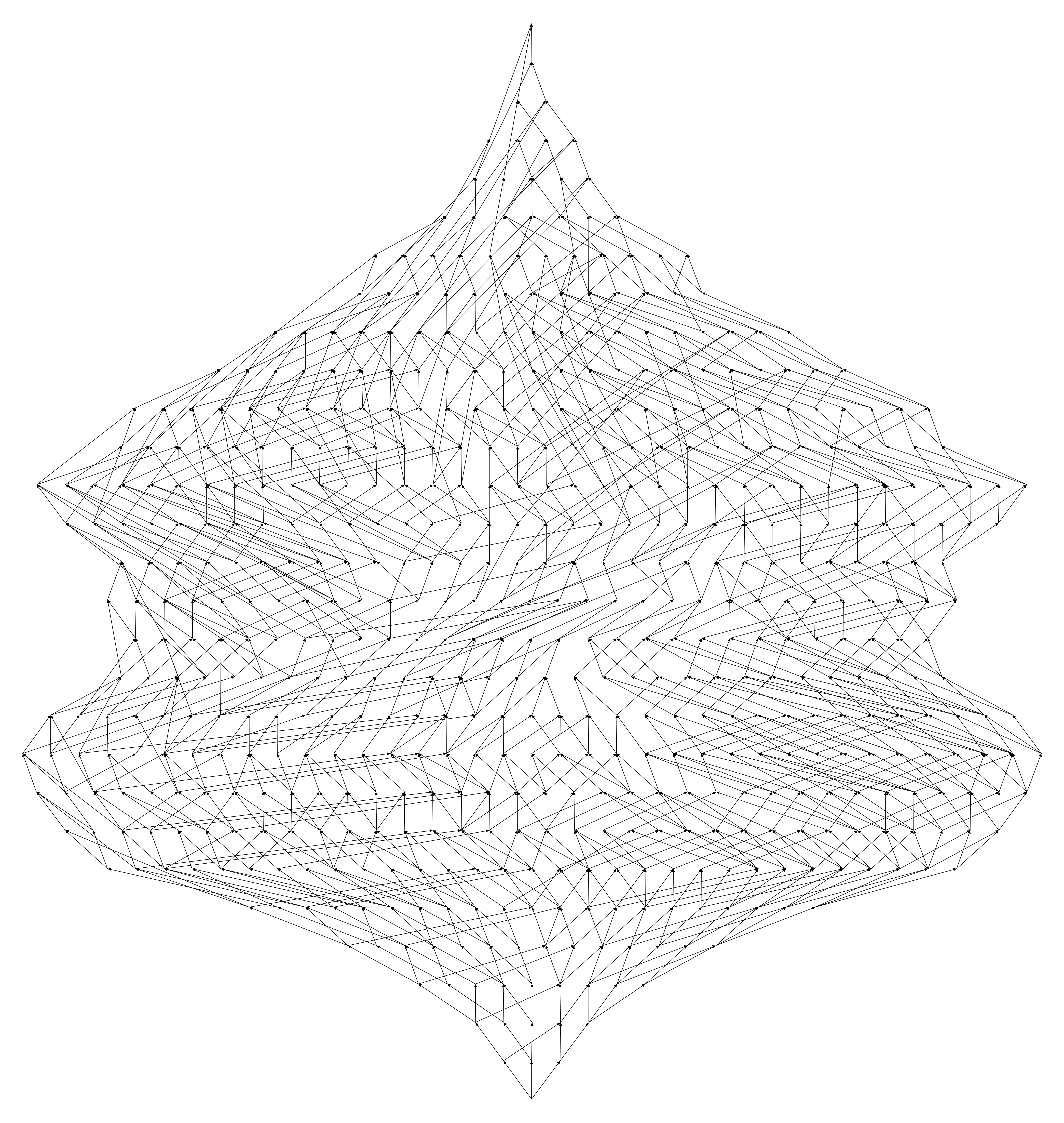}\]
\end{figure}

\printbibliography

\end{document}